\pgfplotsset{compat=1.14}
\newcommand{\subscript}[2]{$#1 #2$}
\newtheorem{thm}{Theorem}[section]
\newtheorem{lemma}[thm]{Lemma}
\newtheorem{obser}[thm]{Observation}
\newtheorem*{notation}{Notation}
\newtheorem{prop}[thm]{Proposition}
\newtheorem*{definition}{Definition}
\newtheorem{cor}[thm]{Corollary}
\newtheorem{claim}[thm]{Claim}
\newtheorem{fact}[thm]{Fact}
\newtheorem*{remark}{Remark}
\theoremstyle{definition}
\def\E{\mathbb{E}}
\DeclareMathOperator\codim{codim}
\DeclareMathOperator\comp{comp}
\DeclareMathOperator\Ber{Ber}
\title{The upper tail problem for induced 4-cycles in sparse random graphs}
\author{Asaf Cohen Antonir}
\begin{document}
\maketitle
\begin{abstract}
    Building on the techniques from the breakthrough paper of Harel, Mousset and Samotij, which solved the upper tail problem for cliques, we compute the asymptotics of the upper tail for the number of induced copies of the 4-cycle in the binomial random graph $G_{n,p}$. We observe a new phenomenon in the theory of large deviations of subgraph counts. This phenomenon is that, in a certain (large) range of $p$, the upper tail of the induced 4-cycle does not admit a naive mean-field approximation.  
\end{abstract}
\section{Introduction}
%Suppose $Y=(Y_1,Y_2,\ldots ,Y_N)$ is a sequence of independent Bernoulli random variables with mean $p$ and $X=X(Y)$ is a $N$-variate polynomial. 

The study of the random variable counting the number of copies of a given graph in the binomial random graph $G_{n,p}$ has a very long history and many things are known about it. 
In particular, for every graph $H$, when the expected number of copies of $H$ in $G_{n,p}$ tends to infinity a `law of large numbers', see Bollob\'as \cite{Bol81}, and a central limit theorem are known, see Ruci\'nski \cite{Ruc88}.

After establishing such results, it is natural to ask what is the probability of the event that this random variable differs from its expectation by a significant amount.
In this paper, we will consider only the `upper tail' problem: what is the probability of a random variable exceeding its expectation by a multiplicative factor $1+\delta$, where $\delta$ is some positive real.

We continue by recalling some definitions to make our discussion more rigorous.
For every non-negative integer $n$ and a sequence $p=p(n)\in (0,1)$, we let $G_{n,p}$ be the binomial random graph with $n$ vertices and density $p$. Furthermore, for any graph $H$, let $X^{H}_{n,p}$ be the random variable counting the number of (labeled) copies of $H$ in $G_{n,p}$. Further, for any $\delta$ a positive real and a random variable $Y$ we write $\mathbf{UT}(Y,\delta)$ for the event $\{Y\geq (1+\delta)\E[Y]\}$.

The work on the problem of estimating (the logarithm of) the upper tail probability of $X_{n,p}^{H}$ was initiated by the famous works of Kim and Vu \cite{KimVu04}, Vu \cite{Vu01}, Janson and Ruci\'nski \cite{JanRuc02}.
This problem turns out to be so difficult\footnote{This problem was called `the infamous upper tail' by Janson and Ruci\'nski \cite{JanRuc02}.} because when $H$ is a connected graph with at least two edges $X_{n,p}^{H}$ is \emph{not} a linear function of independent Bernoulli random variables. In the case of linear functions, life is easier and much is known about its large deviation properties, see \cite{DemZei02}. 

In a famous paper \cite{JanOleRuc04}, Janson, Oleszkiewicz, and Ruci\'nski estimated the logarithm of the upper tail probability for every graph $H$ up to a multiplicative factor of $O(\log(1/p))$. Seven years later, Chatterjee \cite{Cha12} and DeMarco--Kahn \cite{DeMKah10} closed this gap up to a multiplicative factor of $O(1)$ when $H$ is a triangle and DeMarco--Kahn \cite{DeMKah12} generalized this for a clique of arbitrary size.

Next, one would like to obtain a first order approximation of the logarithm of the upper tail probability. The first to do that were Chatterjee and Varadhan \cite{ChaVar11}.
They obtained a first order approximation of the upper tail probability for any graph, under the assumption that $p$ is a constant. Their proof relied on the regularity method and therefore extends only to $p$ tending to zero very slowly, for more discussion about this see \cite{LubZha15}.

The general strategy for estimating the logarithm of the upper tail probability, used by Chatterjee and Varadhan as well as all later works on this subject, is to establish a `large deviation principle'.
That is to prove that the logarithm of the upper tail probability is asymptotically equal to the solution to a minimization problem over a {non-trivial} set of product probability measures.
These minimization problems are called `variational problems'. After achieving such large deviation principle one is then left with solving the variational problem.

In a breakthrough paper of Chatterjee–Dembo \cite{ChaDem16}, the authors established a `large deviation principle' not only for $X_{n,p}^H$ when $p=\Omega(n^{-\varepsilon})$, but also for a large class of functions of $N$ independent Bernoulli random variables with mean $p=\Omega(N^{-\varepsilon})$. They proved that for any `smooth enough' function $f\colon \{0,1\}^N \to \mathbb{R}$ and a sequence of independent Bernoulli random variables $Y=(Y_1,Y_2,\ldots,Y_N)\in \{0,1\}^N$, all with mean $p$, we have the following:
\begin{equation}\label{eq_var_problem}
    -\log \mathbb{P}\left(\mathbf{UT}(f(Y),\delta)\right)=(1+o(1))\inf \left \{\sum _{i=1}^N I_p(q_i):\E[f(\Tilde{Y})]\geq (1+\delta)\E[f(Y)]\right\},
\end{equation}
where $I_p(q)=q\log\left(\frac{q}{p}\right)+(1-q)\log\left(\frac{1-q}{1-p}\right)$ is the relative entropy (the Kullback--Leibler divergence) between $\Ber(q)$ and $\Ber(p)$, and $\Tilde{Y}=(\Tilde{Y}_1,\Tilde{Y}_2,\ldots,\Tilde{Y}_N)\in \{0,1\}^N$ is a sequence of independent Bernoulli random variables with $\E[\Tilde{Y}_i]=q_i$ for each $i$.

This was further developed by Eldan \cite{Eld18} and Augeri \cite{Aug18,Aug19}.
These methods are completely different from the ones used in the dense regime. 

Revisiting the ideas from Chatergee--Varhadan \cite{ChaVar11}, Cook--Dembo \cite{CooDem20} developed a decomposition theorem similar to Szemer\'edi's regularity lemma and a corresponding counting lemma which are suitable for sparse graphs. Using these they extended the range of sparsity of $p$ were the variational approximation \eqref{eq_var_problem} holds (for every graph $H$). 
Generalizing this method, Cook--Dembo--Pham \cite{CooDemPha2021} pushed the bounds even further in the case of subgraph count. They also obtained an approximation of the logarithm of the upper tail probability for the count of uniform sub-hypergraphs in the binomial random uniform hypergraph model, and the induced count of graphs in $G_{n,p}$. Their result combined with the solution to the corresponding variational problem for uniform hypergraph cliques, and one more non-trivial 3-uniform hypergraph which were given by Liu--Zhao in \cite{LiuZha2021}, yields estimations for the sub-hypergraph count in the binomial random 
hypergraph model in the sparse regime.
We also note that to the best of our knowledge the solution to the `induced variational problem' is not known apart from the case where $H$ is a clique. The case of the induced subgraph count will be of main interest in this paper, and will be discussed later. 

%In this series of papers the authors obtained a first order approximation for the logarithm of the upper tail probability, where the development in each of these papers extended the range of sparsity of $p$ which still guarantees such first order approximation.

To estimate the asymptotics of the logarithm of the upper tail probability, one also needs to solve the variational problem in the right-hand side of \eqref{eq_var_problem}. 
In the case of the random variable $X_{n,p}^H$, Lubetzky and Zhao \cite{LubZha17}, and Bhattacharya, Ganguly, Lubetzky, and Zhao \cite{BhaGanLubZha17} solved this variational problem for all $H,n$ and $p$ satisfying $n^{-1/\Delta_H}\ll p \ll 1$. This then leads to a first order approximation of the logarithm of the upper tail probability for $X_{n,p}^{H}$ for every graph $H$ and $p\geq n^{-c_H}$. We wish to emphasize that, in all known cases, when $f$ counts the number of copies of a given $H$ in $G_{n,p}$,  the solution to the corresponding variational problem (the right hand side of \eqref{eq_var_problem}) always satisfies $q_i\in \{p,1\}$ when $p=o(1)$.

A recent surprising result due to Gunby \cite{Gun20} studied the upper tail probability for subgraph counts in a random regular graph $G_{n,d}$, where he also proved a large deviation principle. However, for a particular graph, and certain range of $d$ (the regularity) the answer for this variational problem is achieved with three possible values of $q_i$ and not two as in the binomial random graph model. 

In a breakthrough paper of Harel, Mousset and Samotij \cite{HarMouSam19}, using a combinatorial approach, the authors managed to extend the range where the variational problem (the same one as before) bounds from above the logarithm of the upper tail probability for the count of cliques to the optimal range of $p$, which is $p^{\frac{r-1}{2}}\gg n^{-1}(\log(n))^{\frac{1}{r-2}}$ for a clique on $r$ vertices\footnote{That is because below this threshold they also showed a Poisson approximation.}. 
We wish to emphasize that their approach for solving the upper tail problem is completely different from any previous techniques in the papers mentioned above.
This, together with the solution to the variational problem, settled the problem of the first order approximation of the logarithm of the upper tail probability for cliques. 
Moreover, they established the correct range of $p$, up to a polylogarithmic factor, where the variational problem bounds from above the logarithm of the upper tail probability for non-bipartite regular graphs\footnote{For bipartite regular graphs their result was not optimal, but also a lot of progress was made.}.

Building on the combinatorial ideas of Harel, Mousset and Samotij, Basak and Basu \cite{BasBas19} extended the range of $p$ to the optimal range for all regular graphs, including bipartite graphs. Again, this with the solution to the variational problem settled the problem of estimating the logarithm of the upper tail probability for regular graphs.
We summarize this discussion with the following first order approximation of the upper tail problem for regular graphs in the `localized regime'. 
\begin{thm}[Due to \cite{HarMouSam19,BasBas19}]\label{thm_main_thm_HMS_BB}
    For any $\Delta\geq 2$ and a $\Delta$-regular connected graph $H$ the following holds: 
    \begin{equation*}
        \frac{-\log\mathbb{P}\left(X_{n,p}^{H}\geq(1+\delta)\E\left[X_{n,p}^{H}\right]\right)}{n^2p^{\Delta}\log(1/p)}=(1+o(1))
            \begin{cases}
                \min\{\theta_H,\frac{1}{2}\delta^{2/v_H}\}   &\text{ if }\frac{1}{\sqrt{n}}\ll p^{\Delta/2} \ll 1,\\
                \frac{1}{2}\delta^{2/v_H} &\text{ if } \frac{\log^{\frac{1}{v_H-2}}(n)}{n}\ll p^{\Delta/2} \ll \frac{1}{\sqrt{n}},
            \end{cases}
    \end{equation*}
where $\theta_H$ is the unique solution to the equation $P_H(\theta)=1+\delta$ where $P_H$ is the independence polynomial of $H$\,\footnote{The independent polynomial of $H$ is $P_H(X)=\sum_{k=0}^{\alpha(H)}s_k X^k$ where $s_k$ is the number of independent sets in $H$ of size $k$.}.
\end{thm}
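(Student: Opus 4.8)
The plan is to split the estimate of $\mathbb{P}(\mathbf{UT}(X_{n,p}^{H},\delta))$ into a soft lower bound and a hard upper bound, and then to evaluate the resulting variational quantity. Write $\Delta=\Delta_H$ and
\[
\phi_n \;:=\; \inf\Big\{\textstyle\sum_{e\in\binom{[n]}{2}} I_p(q_e)\ :\ \sum_{\psi}\prod_{e\in E(\psi)}q_e\ \ge\ (1+\delta)\,\E\big[X_{n,p}^{H}\big]\Big\},
\]
the infimum over symmetric arrays $q\in[0,1]^{\binom{[n]}{2}}$ (one may assume $q_e\ge p$, since the upper tail is monotone) and $\psi$ ranging over labelled copies of $H$ in $K_n$; equivalently $\sum_{\psi}\prod_{e}q_e=\E_{\mu_q}[X_{n,p}^{H}]$ for $\mu_q=\bigotimes_e\Ber(q_e)$. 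I would prove, throughout the stated range of $p$,
\[
(1-o(1))\,\phi_n\ \le\ -\log\mathbb{P}(\mathbf{UT}(X_{n,p}^{H},\delta))\ \le\ (1+o(1))\,\phi_n,
\]
and then that $\phi_n=(1+o(1))\min\{\theta_H,\tfrac12\delta^{2/v_H}\}\,n^2p^{\Delta}\log(1/p)$ for $n^{-1/2}\ll p^{\Delta/2}\ll1$, while $\phi_n=(1+o(1))\tfrac12\delta^{2/v_H}\,n^2p^{\Delta}\log(1/p)$ in the sparser window. The right-hand inequality is the easy half of a large-deviation bound: pick a product measure $\mu_q$ under which $\mathbf{UT}$ is typical and use $-\log\mathbb{P}(\mathbf{UT})\le D(\mu_q\,\|\,\mu_p)+o\big(D(\mu_q\,\|\,\mu_p)\big)$, taking $q$ to be one of the two configurations below; checking that $X_{n,p}^{H}$ concentrates under $\mu_q$ is routine. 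The left-hand inequality is the hard half, for which I would invoke the combinatorial method of Harel--Mousset--Samotij \cite{HarMouSam19}, extended to all regular $H$ (bipartite ones included) by Basak--Basu \cite{BasBas19}.

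For the value of $\phi_n$ I would plug in the two standard near-optimal configurations. \textbf{Hub:} fix $S\subseteq[n]$ with $|S|=\lceil\beta np^{\Delta}\rceil$ and set $q_e=1$ whenever $e$ meets $S$, $q_e=p$ otherwise. Classifying a copy $\psi$ of $H$ by $W=\psi^{-1}(S)\subseteq V(H)$ and using $\Delta$-regularity (so $e_H(V(H)\setminus W)=e_H-\Delta|W|+e_H(W)$) gives
\[
\E_{\mu_q}\big[X_{n,p}^{H}\big]\;=\;(1+o(1))\,\E\big[X_{n,p}^{H}\big]\sum_{W\subseteq V(H)}\beta^{|W|}p^{e_H(W)};
\]
as $p\to0$ only the independent $W$ survive, so the sum tends to the independence polynomial $P_H(\beta)$, and $\beta=\theta_H$ makes the left-hand side $(1+o(1))(1+\delta)\E[X_{n,p}^{H}]$ at entropy cost $(1+o(1))\theta_H n^2p^{\Delta}\log(1/p)$. \textbf{Clique:} fix $S$ with $|S|=\lceil\delta^{1/v_H}np^{\Delta/2}\rceil$ and set $q_e=1$ for $e\subseteq S$, $q_e=p$ otherwise; the copies of $H$ lying entirely inside $S$ are then deterministically present and number $(1+o(1))|S|^{v_H}=(1+o(1))\delta\,\E[X_{n,p}^{H}]$, while copies using between $1$ and $v_H-1$ vertices of $S$ contribute a lower-order amount (connectedness of $H$ enters here, via $e_H(W)\le\Delta|W|/2$ with equality only when $W$ is a union of components), so $\E_{\mu_q}[X_{n,p}^{H}]\ge(1+o(1))(1+\delta)\E[X_{n,p}^{H}]$ at entropy cost $(1+o(1))\tfrac12\delta^{2/v_H}n^2p^{\Delta}\log(1/p)$. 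This gives the upper bound on $\phi_n$; the matching lower bound is precisely the subgraph-count variational problem solved by Lubetzky--Zhao \cite{LubZha17} and, for general $H$, by Bhattacharya--Ganguly--Lubetzky--Zhao \cite{BhaGanLubZha17}: one reduces, up to a $1+o(1)$ factor, to minimising $e(W)\log(1/p)$ over graphs $W$ on $[n]$ such that planting $W$ already yields $\ge(1+\delta)\E[X_{n,p}^{H}]$ copies in expectation, and shows a near-optimal $W$ must essentially contain either a clique on $\ge(1-o(1))\delta^{1/v_H}np^{\Delta/2}$ vertices or a hub with core of size $\ge(1-o(1))\theta_H np^{\Delta}$, whence $e(W)\ge(1-o(1))\min\{\theta_H,\tfrac12\delta^{2/v_H}\}n^2p^{\Delta}$.

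It remains to assemble these and to separate the windows. For $n^{-1/2}\ll p^{\Delta/2}\ll1$ both configurations are available and the above gives the first line of the theorem. For $\tfrac{\log^{1/(v_H-2)}(n)}{n}\ll p^{\Delta/2}\ll n^{-1/2}$ the hub is no longer competitive --- its cheapest realisation, a single vertex joined to all others, costs $\sim n\log(1/p)$, which in this window exceeds $\tfrac12\delta^{2/v_H}n^2p^{\Delta}\log(1/p)$ --- so the clique governs $\phi_n$ (here the evaluation of $\phi_n$ needs only this clique-beats-hub comparison, not the full machinery of \cite{LubZha17,BhaGanLubZha17}) and the answer is $\tfrac12\delta^{2/v_H}$; one must still verify that in this window the clique's large-deviation cost still beats the Poissonian alternative and that the Harel--Mousset--Samotij upper bound still applies, and the threshold $\log^{1/(v_H-2)}(n)/n$ is exactly where that ceases to hold (below it a Poisson limit takes over, which is why the theorem stops there). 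I expect the main obstacle to be the left-hand inequality above: bounding $\mathbb{P}(\mathbf{UT})$ from above by the variational value throughout the sparse range, which needs the sharp high-moment / entropic-stability estimates ruling out any ``diffuse'' route to the upper tail --- absent a planted clique or hub, $X_{n,p}^{H}$ is too concentrated to reach $(1+\delta)\E[X_{n,p}^{H}]$. The bipartite case is genuinely more delicate, since there the clique and hub contributions scale differently and the clean dichotomy for the seed graphs requires the refinement of Basak--Basu.
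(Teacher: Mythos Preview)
This theorem is not proved in the paper; it is stated there only as a summary of the results of \cite{HarMouSam19,BasBas19} (together with the variational-problem solutions of \cite{LubZha17,BhaGanLubZha17}), and the paper offers no argument of its own for it. Your sketch correctly identifies precisely those ingredients and how they combine --- the lower bound by planting a clique or a hub, the upper bound via the combinatorial large-deviation machinery of Harel--Mousset--Samotij extended by Basak--Basu, and the evaluation of $\phi_n$ via \cite{LubZha17,BhaGanLubZha17} --- so there is nothing in the paper to compare your proposal against beyond confirming that you are pointing at the right references.

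One small inaccuracy worth flagging: your explanation of why the hub drops out when $p^{\Delta/2}\ll n^{-1/2}$ is not quite the right mechanism. The issue is not that a single-vertex hub becomes too expensive, but that the hub construction needs $|S|=\theta_H n p^{\Delta}$ vertices in its small side, and once $np^{\Delta}\ll 1$ this is less than one, so no graph of hub type can achieve the required boost $\E_{\mu_q}[X_{n,p}^H]\ge(1+\delta)\E[X_{n,p}^H]$ at all. (A single-vertex hub only gives $P_H(1/(np^{\Delta}))$-fold boost via your own computation, which blows up rather than hitting $1+\delta$; the construction simply ceases to make sense.) The paper's introductory discussion makes exactly this point. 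This does not affect the correctness of your outline, but the reasoning you give for the transition at $p^{\Delta/2}\asymp n^{-1/2}$ should be replaced by the feasibility argument above.
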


In this paper we estimate the logarithm of the upper tail probability for the count of \emph{induced} copies of $C_4$ in $G_{n,p}$ in the range $\frac{\log^9(n)}{n}\ll p\ll 1$, which is optimal up to $\log^9(n)$ in the lower bound.
To the best of our knowledge, this is the first exact result for the upper tail probability of a random variable counting the number of induced copies of a given non-complete graph in $G_{n,p}$.
Our proofs rely heavily on the combinatorial approach of Harel, Mousset and Samotij. 

We now present the main result of this paper. For this, from now on let $X$ be the random variable counting the number of \emph{induced} copies of $C_4$ in $G_{n,p}$.

\begin{restatable}{thm}{thebesttheorem}
    \label{thm_main_result}
    There is an explicit sequence $0=c_1<c_2<\ldots\leq 1/3$ such that the following holds for $p\gg \frac{\log^9(n)}{n}$:
    \[
        \frac{-\log \mathbb{P}(X\geq (1+\delta)\E[X])}{n^2p^2\log(1/p)}=(1+o(1))
        \begin{cases}
            \rho_k(n,p) \sqrt{\frac{\delta}{2}}
            &\text{ if } n^{-1+c_{k-1}}\leq p\leq n^{-1+c_{k}},\\
            \sqrt{\frac{\delta}{2}} &\text{ if }n^{-2/3}\leq p\ll \frac{1}{\sqrt{n}\log(n)},\\
            \sqrt{\frac{\delta}{2}+\frac{1}{{128}}}-\frac{1}{\sqrt{128}} &\text{ if }n^{-1/2}\ll p \ll 1,
        \end{cases}
    \]
    where $\rho_k(n,p)=\sqrt{\frac{k}{k-1}}\left(1-\frac{2}{k}+\frac{\log(n)}{k\log(p)}\right)<1$.
\end{restatable}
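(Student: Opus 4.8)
The plan is to follow the combinatorial framework of Harel--Mousset--Samotij, establishing matching bounds on $-\log\mathbb{P}(\mathbf{UT}(X,\delta))$. I would first record $\E[X]=(1+o(1))n^4p^4$ for $p=o(1)$ together with the concentration input that drives the lower bound on $\mathbb{P}(\mathbf{UT})$: after conditioning $G_{n,p}$ on a ``planting'' (a prescribed family of pairs forced to be edges, plus a cheap prescribed family forced to be non-edges), the induced‑$C_4$ count concentrates about its conditional mean provided the relevant second‑neighbourhood quantities are $\gg\log n$; this is where the hypothesis $p\gg\log^9(n)/n$ is spent.

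For the \emph{lower bound on $\mathbb{P}(\mathbf{UT})$} I would exhibit one planting per range. The key structural fact, which is what makes the answer deviate from the naive ``independence polynomial'' prediction of the regular case, is that a vertex of an \emph{induced} $C_4$ cannot be adjacent to all the other vertices of that copy (its diagonal partner must be a non‑neighbour), so a ``hub'' may only be joined to a restricted set. One family of gadgets is a bipartite block $K_{a,b}$ whose first‑order contribution is the $\binom a2\binom b2$ induced $C_4$'s with two vertices in each part: optimizing $a,b$ against the entropy cost gives the mean‑field value $\sqrt{\delta/2}$ when $a$ may be taken $\asymp np$ (the middle range $n^{-2/3}\le p\ll n^{-1/2}/\log n$), and gives $\sqrt{k/(k-1)}\sqrt{\delta/2}$ when $a$ is forced to a fixed integer $k$ (the sparse range $p\le n^{-2/3}$), the latter refined by the entropy of the choice of $B$ — this produces the $\log n/(k\log p)$ term — and then optimized over the integer $k$, which must jump at the thresholds $c_k$, to yield $\rho_k(n,p)\sqrt{\delta/2}$. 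The second, genuinely different gadget, available only once $\asymp np^2$ vertices can serve as one side, i.e. only once $p\gtrsim n^{-1/2}$, is a hub block $K_{a,b}$ with $a=\Theta(np^2)$ and $b$ a constant fraction of $[n]$: its extra copies split into a quadratic part $\sim(\text{planted edges})^2$ (copies inside the block) and a linear part $\sim(\text{planted edges})$ (copies using two planted edges at a common vertex, one further vertex of $G_{n,p}$ and two random edges), so writing the cost as $A\,n^2p^2\log(1/p)$ the attainable excess has the form $2A^2+cA$ for an explicit $c$ obtained by optimizing how $[n]$ is split; this is exactly why the correction $\sqrt{\delta/2+1/128}-1/\sqrt{128}$ appears in the dense range and nowhere else.

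The hard direction is the \emph{upper bound on $\mathbb{P}(\mathbf{UT})$}, i.e. that no planting does better, and in particular that $\sqrt{\delta/2}$ is \emph{not} attainable outside the middle range. Following HMS I would argue by core extraction: on $\{X\ge(1+\delta)\E[X]\}$ the excess is, up to $o(\E[X])$, attributable to induced $C_4$'s meeting a bounded‑complexity ``core'' — a small set of vertices of atypically large degree together with a bipartite‑like dense remnant — after which one union‑bounds over the possible cores, estimating $\mathbb{P}(\text{core}\subseteq G)\le p^{m}$ with $m$ the number of core edges and bounding the number of cores of a given combinatorial type. The resulting discrete optimization over core types must reproduce the three cases of the theorem, with the integer $k$ and the thresholds $c_k$ coming out of comparing a $k$‑vertex hub against a $(k\pm1)$‑vertex hub and against the bipartite block; in the dense range one could alternatively invoke the Cook--Dembo--Pham reduction to the variational problem \eqref{eq_var_problem} and then solve that problem, its optimizer being the hub block above. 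The main obstacles I anticipate: (1) making the core extraction simultaneously lossless (to recover the sharp constant, not just the order) and exhaustive (the answer is genuinely not a clean minimum of two simple expressions, precisely because of the quadratic--linear interplay, so the extraction must see both mechanisms at once); (2) controlling uniformly the many lower‑order copy types — one, two or three planted edges, and copies straddling the core and the random part — since a term negligible for one $p$ dominates for another; and (3) pushing the concentration and enumeration bounds down to $p\gg\log^9(n)/n$, which accounts for the (non‑optimal) power of $\log n$.
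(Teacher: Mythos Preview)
Your overall strategy matches the paper's: HMS-style seed/core extraction for the upper bound, and planting bipartite blocks for the lower bound, with the family of $K_{k,\ell}$'s giving the $\rho_k$ factor and the $K_{\Theta(np^2),\Theta(n)}$ hub giving the dense-regime constant.

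Two concrete corrections. First, the hypothesis $p\gg\log^9(n)/n$ is spent in the \emph{upper} bound, not the lower. The lower bound needs no concentration: once you condition on the plant, a one-line Markov argument
\[
\mathbb{P}\bigl(X\ge t\mid\text{plant}\bigr)\ \ge\ \frac{\E[X\mid\text{plant}]-t}{n^4}\ \ge\ p^{O(1)}
\]
suffices, valid for all $p\gg n^{-1}$. The polylog threshold enters only when counting cores in the sparse regime, where one needs that a core with $m$ edges has at most $m/2+m^{3/4}$ non-isolated vertices; this requires $m\gg\log^8 n$, hence $n^2p^2\gg\log^{\Theta(1)}n$. (Also, $\E[X]=(1+o(1))n^4p^4/8$, not $n^4p^4$; the factor of $8$ is exactly what produces the $1/128$.)

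Second, and more substantively, your upper-bound sketch is missing the two ingredients that actually deliver the sharp constants. One is the elementary extremal bound
\[
N_{ind}(C_4,G)\ \le\ \frac{m\,(m-v(G)+1)}{4}\qquad\text{for graphs with }\delta(G)\ge2,
\]
which converts ``core has $\ge(\delta-\varepsilon)\E[X]$ induced $C_4$'s'' into the vertex bound $v_m\le m-4(\delta-\varepsilon)\E[X]/m+1$; combined with the entropy estimate $\log|\mathcal C_m|\le -v_m\log(np^2)+o(m\log n)$ this already yields the correct answer in the middle and dense regimes. The other is the sparse-regime optimisation: the paper parametrises a core by its edge-degree profile $x=(x_2,\dots,x_R,x_{>R})$, where $x_i$ is the number of edges incident to a vertex of degree $i$, bounds both $N_{ind}(C_4,G)$ and $v(G)$ as explicit polynomials in $x$, and then maximises the linear form $\langle x,u\rangle$ with $u_i=\log p-\tfrac{1}{i}\log(np^2)$ over the constraint region by a ``push mass to the $k$-th coordinate'' argument. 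The thresholds $c_k$ fall out of the sign of $i^2u_i^2-(i-1)(i+1)u_{i+1}^2$, not from a direct $k$-hub versus $(k\pm1)$-hub comparison. Your description ``small set of vertices of atypically large degree together with a bipartite-like dense remnant'' misses this: in the sparsest regime the optimal cores are essentially $K_{k,\ell}$'s, most of whose vertices have degree exactly $k$, and the optimisation is over which integer $k$. Finally, invoking Cook--Dembo--Pham for the dense regime would work in principle, but the paper's combinatorial argument is uniform across all three regimes; the naive mean-field variational problem is solved in the paper only to exhibit that its answer \emph{differs} from the truth in the sparse regime.
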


The sequence above is explicitly defined in Section \ref{sec_6}.

There are several interesting phenomena which distinguish Theorem \ref{thm_main_result} from previous results concerning the upper tail problem for subgraph counts.

The first, and most noticeable difference is the infinite number of phase transitions. To the best of our knowledge, there is no earlier example of an upper tail problem exhibiting infinitely many phase transitions in its first order approximation. To understand the reasons for this phenomenon let us first discuss further Theorem \ref{thm_main_thm_HMS_BB}. 
A strategy to bound from below the upper tail probability is to observe that, conditioned on the event $C \subseteq G_{n,p}$ for some $C$ satisfying $\E\left[X_{n,p}^{H}\mid C\subseteq G_{n,p}\right]\geq (1+\delta)\E\left[X_{n,p}^{H}\right]$, the upper tail event holds with ‘decent’ probability. This allows us to bound the lower tail probability from below by the probability of $C \subseteq G_{n,p}$ (which is $p^{e_C}$) times the ‘decent’ probability above (which is $p^{o(e_C)}$). We refer to this as `planting a copy of $C$'. Of course one would like to consider the smallest such $C$, to increase the lower bound as much as possible.

In the previous works mentioned earlier, it was shown that for $\Delta$-regular graphs there are two natural candidates for $C$.
These candidates are:
a clique on $\Theta(np^{\Delta/2})$ vertices or a spanning complete bipartite graph with the smaller side of size $\Theta(np^{\Delta})$. The second construction is often called a \emph{Hub}.
When $p\ll n^{-1/\Delta}$, the small side of the Hub construction needs to be smaller than one and hence at that point the Hub construction is no longer valid, and we are left only with the clique construction as a lower bound for the upper tail probability. This is the reason for the two different regimes in Theorem \ref{thm_main_thm_HMS_BB}.

Even though $C_4$ is a regular graph, the presence of a clique in $G_{n,p}$ does not boost the expected number of \emph{induced} 4-cycles by a significant amount. Therefore, the first construction is no longer valid.
The analog of this construction in our case is a balanced and complete bipartite graph with $\Theta(np)$ vertices. Surprisingly, although amongst all graphs with a given number of edges the complete and balanced bipartite graph maximizes the number of induced copies of $C_4$ (see \cite{BolNarTac86,DanDanBalMat20} and Lemma~\ref{lemma_inducibility_like}) planting it does not always give the strongest lower bound on the upper tail probability. 

This is because, in a range of densities $p$, there is a large family $\mathcal{K}_p$ of subgraphs of $K_n$, where
each $C\in \mathcal{K}_p$ satisfies $\E[X\mid C\subseteq G_{n,p}]\geq (1+\delta)\E[X]$ and has only slightly suboptimal size (amongst all $C$ with this property), that is so `large' that
\[
  \log\mathbb{P}(C\subseteq G_{n,p}\text{ for some } C \in \mathcal{K}_p) \ge - (1-\Omega(1)) \min_C e(C) \log(1/p),
\] 
where the minimum ranges over all graphs $C$ with $\E[X\mid C\subseteq G_{n,p}]\geq (1+\delta)\E[X]$; moreover, conditioned on the event that $C \subseteq G_{n,p}$ for some $C \in \mathcal{K}_p$, the upper tail event still holds with `decent' probability.  Note that this is very different from the case of non-induced regular graphs, as here we condition on the appearance of \emph{some} graph from $\mathcal{K}_p$ rather than a single graph (which corresponds to tilting the measure of $G_{n,p}$ to $G_{n, \bar{q}}$ for some $\bar{q} \in \{p, 1\}^{\binom{n}{2}}$).  In fact, we will show (in Section~\ref{sec_7}) that the lower bound obtained with the use of $\mathcal{K}_p$ is stronger than any bound corresponding to the more general tilting $G_{n,p}$ to $G_{n,\bar{q}}$ for some $\bar{q} \in [0,1]^{\binom{n}{2}}$.

Let us elaborate on the structure of the graphs in $\mathcal{K}_p$.
The family $\mathcal{K}_p$ depends on $p$ in the following way: provided $n^{-1+c_{k-1}}\leq p\leq n^{-1+c_k}$ for some integer $k \ge 2$, the set $\mathcal{K}_p$ comprises all complete bipartite graphs with sides $k$ and $\ell=2\sqrt{\frac{\delta \E[X]}{k(k-1)}}$.
Note that, for any fixed $k$ and large $n$, the graph $K_{k,\ell}$ has more edges than the complete and balanced bipartite graph with the same number of induced copies of $C_4$.  However, the key point here is that we are no longer planting a \emph{single} copy of this graph.  Since $\ell$ is large, there are many embeddings of $K_{k,\ell}$ in $K_n$ and the probability that $G_{n,p}$ contains \emph{some} such copy is affected by the combinatorial factor of the number of possible embeddings. It turns out that the lower bound obtained by considering these families is actually tight in the range where $\frac{\log^9(n)}{n}\ll p\ll n^{-2/3-o(1)}$.  Further, between $\frac{\log^9(n)}{n}$ and $n^{-2/3-o(1)}$ the family $\mathcal{K}_p$ changes infinitely many times depending on $p$, which explains the infinite number of phase transitions. This will be explained in more details in Section~\ref{sec_7}.

\subsection*{Organisation of the paper} In Section \ref{sec_2}, we prove a general large deviation principle for nonnegative functions of independents Bernoulli variables which is a version of Theorem~9.1 in \cite{HarMouSam19}, which was stated in \cite{HarMouSam19} but was not proven. In Section \ref{sec_3}, we continue by connecting the general large deviation principle to our large deviation problem. To this end we define special classes of graphs called core graphs. Then we modify the general result proved in Section \ref{sec_2} using this notion of core graphs. 

Section \ref{sec_4} is independent of the others and gives lower bounds for the upper tail probability by planting graphs in the denser regimes and families of graphs in the sparser regimes.

In Section \ref{sec_5}, we solve the variational problem presented in Section \ref{sec_3}. Furthermore, we make a connection between the number of core graphs with $m$ edges and the maximum number of vertices a core graph with $m$ edges might have.  

Section \ref{sec_6} uses the results from Section \ref{sec_5} to provide upper bounds for the logarithm of the upper tail probability. This section is divided into three parts. First is the dense regime $n^{-1/2}\ll p\ll 1$, second is the sparse regime $\frac{\log^9(n)}{n}\ll p\ll \frac{1}{\sqrt{n}\log(n)}$ which we also split into two cases: the dense case in the sparse regime $n^{-2/3}\ll p\ll \frac{1}{\sqrt{n}\log(n)}$ and the sparser cases in the sparse regime $n^{-1+c_{k-1}}\ll p\ll n^{-1+c_{k}}$. Before the second split, we develop some tools used in both cases. 

In Section \ref{sec_7} we solve the naive mean field variational problem, showing that it is different the solution to the variational problem we used.

Appendix \ref{app_A} was added for completeness, where we reprove results of \cite{LubZha17, BhaGanLubZha17} in a language of graphs rather than graphons.

\subsection*{Acknowledgments} The author thanks his advisor Wojciech Samotij for his guidance through out the write-up of this paper, as well as helpful and fruitful discussions.
The author also thanks Arnon Chor and Dor Elboim for helpful discussions. Lastly, the author thanks Eden Kuperwasser for fruitful discussions and for creating the figure in this paper.

\section{Main tools - polynomials in the hypercube}\label{sec_2}
We start with some notations and definitions which can also be found in \cite{HarMouSam19}. We work within the probability space $(\{0,1\}^N,\Ber({p})^N)$.
Suppose $I\subseteq [N]$ and $x\in \{0,1\}^N$. We say that
\[
F(I,x)=\{y\in\{0,1\}^N: y_n=x_n \text{ for all } n\in I \}
\]
is a subcube centered at $(I,x)$ with \emph{codimension} $|I|$ denoted by $\codim(F)=|I|$. Note that every subcube is centered at some pair $(I,x)$. Moreover, if a subcube $F$ is centered at $(I,x)$ and it is also centered at $(J,y)$ then $I=J$. Hence, the codimension is well defined. For every subcube $F$ centered at $(I,x)$ we define the \emph{one-supcube} and the \emph{zero-supcube} of $F$ to be $F^{(1)}=F(I_1,x)$ and $F^{(0)}=F(I_0,x)$ where $I_i=\{n\in I:x_n=i\}$ for $i=0,1$. Moreover, for every subcube $F$ we say that the \emph{one-codimension} of it is the codimension of $F^{(1)}$ and the \emph{zero-codimension} of it is the codimension of $F^{(0)}$ denoted by $\codim_i(F)$ for $i=1$ and $i=0$ respectively. A simple observation is that any subcube $F$ always satisfies $F=F^{(0)}\cap F^{(1)}$ and $\codim(F)=\codim(F^{(0)})+\codim(F^{(1)})$. 
For every $X\colon \{0,1\}^N \rightarrow  \mathbb{R}_{\geq 0}$ we define the \emph{complexity} of $X$ denoted by $\comp(X)$ to be the smallest integer $d$ for which it is possible to represent $X$ as a linear combination with nonnegative coefficients of indicator functions of subcubes with codimension at most $d$. Note that the complexity is well defined as for every such function $X$ we can write \[X=\sum\limits_{x\in \{0,1\}^N}X(x)\mathbbm {1}_{F([N],x)}.\] Moreover, this shows that for any $X$ we have $\comp(X)\leq N$.
Assume now that $Y$ is a random variable taking values in $\{0,1\}^N$ and that $X=X(Y)$. Given a subcube $F\subseteq \{0,1\}^N$, we write $\mathbb{E}_F[X]=\E[X\mid Y\in F]$ for the expectation of $X$ conditioned on $Y\in F$.
We further define $\Phi_X\colon \mathbb {R}_{\geq 0 }\rightarrow \mathbb {R}_{\geq 0 }\cup \{\infty \}$ by
\begin{align*}
    \Phi_X(\delta)=\min \{&-\log \mathbb{P}(Y\in F):F\subseteq \{0,1\}^N\text{ is a subcube with }\mathbb{E}_F[X]\geq (1+\delta)\mathbb {E}[X]\}.
\end{align*}
Our main tool is \cite[Theorem~9.1]{HarMouSam19}, which was stated but not proved. 
We prove a slightly different version of the theorem, but the essence remains the same.

\begin{thm}\label{thm:9.1}
For every positive integer $d$ and all positive real numbers $\varepsilon$ and $\delta$ with $\varepsilon <\frac{1}{2}$, there is a positive constant $K=K(\varepsilon,\delta,d)$ such that the following holds. Let $Y$ be a sequence of $N$ independent $\Ber(p)$ random variables for some $p\in (0,1/2)$ and assume that $X=X(Y)$ is nonnegative with complexity at most $d$ and satisfies $\Phi_X(\delta -\varepsilon)\geq K\log(\frac{1}{p})$. Denote by $\mathcal{F}$ the collection of all subcubes $F\subseteq \{0,1\}^N$ satisfying
\begin{enumerate}[label=(\subscript{F}{{\arabic*}})]
    \item\label{cond_1} $\mathbb{E}_F[X]\geq (1+\delta -\varepsilon)\E [X]$,
    \item\label{cond_2} $\codim (F)\leq K\cdot\Phi _X(\delta+\varepsilon)$.
\end{enumerate}
Then,
\begin{equation*} \label{eq_Main1}
    \mathbb{P}(X\geq (1+\delta)\E[X])\leq (1+\varepsilon) \mathbb{P}(Y\in F \text{ for some } F\in \mathcal{F}).
\end{equation*}
\end{thm}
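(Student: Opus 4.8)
The goal is to bound the upper tail probability $\mathbb{P}(X \ge (1+\delta)\E[X])$ by the probability that $Y$ lands in a subcube from a small structured family $\mathcal{F}$. The natural strategy, following the combinatorial philosophy of Harel--Mousset--Samotij, is an iterative "peeling" argument: one repeatedly finds a low-codimension subcube on which the conditional expectation of $X$ is boosted, conditions on it, and continues; the complexity bound $\comp(X) \le d$ ensures the process cannot go on for more than a bounded number of rounds, and the hypothesis $\Phi_X(\delta - \varepsilon) \ge K\log(1/p)$ guarantees that whenever the conditional expectation is still not boosted by the target factor, the remaining ``bad'' probability mass is tiny.

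\textbf{Step 1: Decompose $X$ and pass to a single subcube.} Write $X = \sum_j a_j \mathbbm{1}_{F_j}$ with $a_j \ge 0$ and $\codim(F_j) \le d$, which is possible by definition of complexity. Split the indices into a ``heavy'' part and a ``light'' part according to whether $\mathbb{E}_{F_j}[X]$ is at least, say, $(1+\delta-\varepsilon)\E[X]$ or not; equivalently, compare the contribution of each $F_j$ on the event $\{X \ge (1+\delta)\E[X]\}$. The heavy part is handled by the family $\mathcal{F}$ directly, since each such $F_j$ satisfies \ref{cond_1} and $\codim(F_j) \le d \le K \Phi_X(\delta + \varepsilon)$ (using $K$ large and $\Phi_X \ge 1$). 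For the light part, one shows that the event $\{X \ge (1+\delta)\E[X]\}$ forces $Y$ into a region that, after conditioning on a carefully chosen subcube of controlled codimension, still has large conditional expectation — this is where the iteration happens.

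\textbf{Step 2: The iteration and stopping rule.} Run a greedy process: maintain a current subcube $G$ (initially the whole hypercube, codimension $0$). At each stage, if $\mathbb{E}_G[X] \ge (1+\delta - \varepsilon)\E[X]$ and $\codim(G) \le K\Phi_X(\delta+\varepsilon)$, stop and add $G$ to $\mathcal{F}$. Otherwise, use the bounded complexity to find a coordinate $i$ and a value $b \in \{0,1\}$ such that restricting to $\{y_i = b\}$ increases the conditional expectation by a definite multiplicative factor depending only on $d$ — this follows from the representation of $X$ as a nonnegative combination of indicators of codimension-$\le d$ subcubes, since on the event $X$ is large some subcube $F_j$ with large $a_j$ must be ``active'', and its $d$ defining coordinates give the boost. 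Because each step multiplies the conditional expectation by a factor $> 1$ bounded away from $1$ in terms of $d$, and because the conditional expectation cannot exceed the (bounded) number of active subcubes times $\max_j a_j$ — more precisely because of a complexity-based ceiling argument — the process terminates after $O_d(1)$ rounds, so the final codimension is $O_d(1)$, which is $\le K\Phi_X(\delta+\varepsilon)$ for $K$ large. Accounting along branches with the union bound over at most $2^{O_d(1)}$ leaves gives the factor $1+\varepsilon$.

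\textbf{Step 3: Discarding the genuinely unlikely mass.} The one case not covered above is when the greedy process would need to continue but the codimension already exceeds the budget, or when $X$ is large on a subcube whose conditional expectation stubbornly stays below $(1+\delta-\varepsilon)\E[X]$; here one invokes $\Phi_X(\delta - \varepsilon) \ge K\log(1/p)$ together with a bound of the form $\mathbb{P}(Y \in F) \ge p^{\codim(F)}$ (valid for $p < 1/2$) to argue that the total probability of such leaves is at most $2^{O_d(1)} p^{K\Phi_X(\delta+\varepsilon)} \ll \varepsilon\, \mathbb{P}(Y \in F \text{ for some } F \in \mathcal{F})$, or is simply negligible compared to the left-hand side we wish to bound.

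\textbf{Main obstacle.} The delicate point is quantitatively controlling the iteration: one must show that the greedy boosting step is always available with a \emph{uniform} (in $N$, $p$) multiplicative gain depending only on $d$, and that the number of rounds before reaching the target $(1+\delta-\varepsilon)$ is bounded purely in terms of $d, \delta, \varepsilon$. This requires a careful pigeonhole on the nonnegative-combination representation of $X$ — roughly, tracking which codimension-$\le d$ ``atoms'' $F_j$ dominate the conditional expectation and showing that conditioning on their defining coordinates makes progress — and then choosing $K = K(\varepsilon, \delta, d)$ large enough to absorb both the codimension budget and the union-bound loss. Getting these dependencies to line up, and ensuring the ``bad'' leaves are dominated using $\Phi_X(\delta - \varepsilon) \ge K \log(1/p)$ rather than $\Phi_X(\delta + \varepsilon)$, is the technical heart of the proof.
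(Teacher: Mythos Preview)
Your proposal has a genuine gap: the claim that the greedy conditioning process terminates in $O_d(1)$ rounds (and hence produces a subcube of codimension $O_d(1)$) is false, and the supporting assertion that ``conditioning on one coordinate boosts the conditional expectation by a multiplicative factor bounded away from $1$ in terms of $d$'' is not true in general. For a simple counterexample, take $X=\sum_{i=1}^N Y_i$, which has complexity $1$; conditioning on any single coordinate changes $\E[X]$ by an additive $O(1)$, i.e.\ a multiplicative factor $1+O(1/(Np))$, so no uniform gain exists. More to the point, the theorem itself only promises subcubes of codimension up to $K\cdot\Phi_X(\delta+\varepsilon)$, and in the paper's applications this quantity is of order $n^2p^2\log(1/p)$ --- enormously larger than any constant depending on $d,\varepsilon,\delta$. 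Your Step~3 is also circular as written: you compare the ``bad'' mass to $\mathbb{P}(Y\in F\text{ for some }F\in\mathcal{F})$, which is exactly the quantity you are trying to relate to the upper tail.

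The paper's argument is not a greedy iteration but a \emph{high-moment} computation. One sets $\ell=\lfloor (K/d)\Phi_X(\delta+\varepsilon)\rfloor$ (so $\ell$ is large, not $O_d(1)$) and lets $Z$ be the indicator of the event $\{Y\notin F\text{ for all }F\in\mathcal{F}\}$. Writing $X=\sum_j\alpha_j\mathbbm{1}_{F_j}$ with $\alpha_j\ge 0$ and $\codim(F_j)\le d$, one expands $\E[X^\ell Z]$ over tuples $(F_1,\dots,F_\ell)$ and shows inductively that each partial intersection $F_1\cap\dots\cap F_{k-1}$, having codimension $\le d(k-1)\le d\ell$, is \emph{not} in $\mathcal{F}$ whenever the relevant term survives; hence $\E_{F_1\cap\dots\cap F_{k-1}}[X]<(1+\delta-\varepsilon)\E[X]$, and peeling off one factor at a time yields $\E[X^\ell Z]\le((1+\delta-\varepsilon)\E[X])^\ell$. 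Markov then gives
\[
\mathbb{P}\bigl(X\ge(1+\delta)\E[X],\,Z=1\bigr)\le\Bigl(\tfrac{1+\delta-\varepsilon}{1+\delta}\Bigr)^{\ell}.
\]
Separately, a one-line planting argument (any subcube $F$ with $\E_F[X]\ge(1+\delta+\varepsilon)\E[X]$ gives $\mathbb{P}(X\ge(1+\delta)\E[X])\ge \mathbb{P}(Y\in F)\cdot\varepsilon\E[X]/M$, with $M\le p^{-d}\E[X]$) shows $-\log\mathbb{P}(X\ge(1+\delta)\E[X])\le \Phi_X(\delta+\varepsilon)+O_d(\log(1/p))$. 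Choosing $K$ large makes the first display at most $(\varepsilon/2)\mathbb{P}(X\ge(1+\delta)\E[X])$, and rearranging gives the theorem. The essential idea you are missing is that the ``iteration depth'' must be allowed to grow with $\Phi_X$, and that this is handled cleanly by the $\ell$-th moment rather than by a branching greedy search.
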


To prove the theorem we state and prove the following lemmas.

\begin{lemma}\label{lemma_upper}
Let $Y$ be a random variable taking values in $\{0,1\}^N$ and let $X=X(Y)$ be a real-valued function of $Y$. Suppose that $\E[X]>0$ and that $X\leq M$ always. Then for all positive $\varepsilon$ and $\delta$,
$$-\log\mathbb{P}(X\geq (1+\delta)\E [X])\leq \Phi_X(\delta+\varepsilon)+\log\bigg (\frac{M}{\varepsilon\E[X]}\bigg)$$
\end{lemma}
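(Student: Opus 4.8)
The statement is a standard ``reverse Markov'' (Paley--Zygmund-type) estimate, and the plan is to exhibit a single subcube on which $X$ exceeds the target threshold with probability bounded below by $\varepsilon\E[X]/M$. First I would dispose of the degenerate case: if $\Phi_X(\delta+\varepsilon)=\infty$, meaning no subcube $F$ with $\E_F[X]\geq(1+\delta+\varepsilon)\E[X]$ exists, then the right-hand side of the claimed inequality is $+\infty$ and there is nothing to prove. So assume $\Phi_X(\delta+\varepsilon)<\infty$ and fix a subcube $F\subseteq\{0,1\}^N$ attaining the minimum in the definition of $\Phi_X(\delta+\varepsilon)$, so that $\E_F[X]\geq(1+\delta+\varepsilon)\E[X]$ and $-\log\mathbb{P}(Y\in F)=\Phi_X(\delta+\varepsilon)$. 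Note $\mathbb{P}(Y\in F)>0$ since $\Phi_X(\delta+\varepsilon)$ is finite, so $\E_F[X]$ is well defined.

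Next I would run the reverse Markov argument inside $F$. Write $a=(1+\delta)\E[X]$, which is positive because $\E[X]>0$ and $\delta>0$ (in particular $M>0$, since $X\le M$ must allow $X\ge a>0$). Splitting the conditional expectation according to whether $X\geq a$,
\[
    (1+\delta+\varepsilon)\E[X]\ \le\ \E_F[X]\ =\ \E_F\!\big[X\,\mathbbm{1}_{X\geq a}\big]+\E_F\!\big[X\,\mathbbm{1}_{X<a}\big]\ \le\ M\cdot\mathbb{P}_F(X\geq a)+a,
\]
where on the first term I used $X\le M$ and on the second $X\mathbbm{1}_{X<a}\le a$. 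Rearranging gives
\[
    \mathbb{P}_F\big(X\geq (1+\delta)\E[X]\big)\ \ge\ \frac{(1+\delta+\varepsilon)\E[X]-(1+\delta)\E[X]}{M}\ =\ \frac{\varepsilon\,\E[X]}{M}.
\]

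Finally I would combine the two bounds. Since $F$ is an event of positive probability,
\[
    \mathbb{P}\big(X\geq (1+\delta)\E[X]\big)\ \ge\ \mathbb{P}(Y\in F)\cdot\mathbb{P}_F\big(X\geq(1+\delta)\E[X]\big)\ \ge\ \mathbb{P}(Y\in F)\cdot\frac{\varepsilon\,\E[X]}{M},
\]
and taking $-\log$ of both sides yields
\[
    -\log\mathbb{P}\big(X\geq(1+\delta)\E[X]\big)\ \le\ -\log\mathbb{P}(Y\in F)+\log\!\Big(\frac{M}{\varepsilon\E[X]}\Big)\ =\ \Phi_X(\delta+\varepsilon)+\log\!\Big(\frac{M}{\varepsilon\E[X]}\Big),
\]
as desired. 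I do not anticipate any real obstacle here; the only points requiring a moment of care are the degenerate case $\Phi_X(\delta+\varepsilon)=\infty$ and the observation that the hypotheses force $M>0$ and $a>0$ so that the two elementary bounds in the split are valid even if $X$ takes negative values.
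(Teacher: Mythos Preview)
Your proof is correct and follows essentially the same route as the paper's: handle the $\Phi_X(\delta+\varepsilon)=\infty$ case trivially, pick an optimal subcube $F$, apply the reverse-Markov bound $\E_F[X]\le M\cdot\mathbb{P}_F(X\ge t)+t$ to get $\mathbb{P}_F(X\ge t)\ge \varepsilon\E[X]/M$, and then take $-\log$ of $\mathbb{P}(X\ge t)\ge \mathbb{P}(Y\in F)\cdot\mathbb{P}_F(X\ge t)$. Your extra remarks about $M>0$ and the validity of the split for possibly negative $X$ are harmless refinements of the same argument.
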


\begin{proof}
Let $t=(1+\delta)\E[X]$. If $\Phi_X(\delta+\varepsilon)=\infty$ then the claim is vacuously true.
Otherwise, there exists a subcube $F$ such that $-\log\mathbb{P}(Y\in F)=\Phi_X(\delta+\varepsilon)$ and $\E_F[X]\geq (1+\delta+\varepsilon)\E[X]=t+\varepsilon \E[X]$. By $X\leq M$ we have $\E_F[X]\leq M\cdot  \mathbb{P}(X\geq t \mid Y\in F)+t$. Thus, $\mathbb{P}(X\geq t \mid Y\in F)\geq\frac{\varepsilon \E[X]}{M}$ and hence,
\[
\mathbb{P}(X\geq t)\geq \mathbb{P}(X\geq t\mid Y\in F)\mathbb{P}(Y\in F )\geq\frac{\varepsilon \E[X]}{M}\mathbb{P}(Y\in F ).
\]
Now taking the negative logarithm gives the assertion of the lemma.
\end{proof}

\begin{fact}\label{claim_intersection_of_subcubes}
Suppose $F_1,\ldots,F_k$ are subcubes of $\{0,1\}^N$. If $F_1\cap\cdots\cap F_k$ is nonempty, then it is also a subcube of $\{0,1\}^N$ and, moreover, $\codim(F_1\cap\cdots\cap F_k)\leq \sum_{i=1}^k \codim(F_i)$.
\end{fact}

\begin{proof}
We prove the statement for $k=2$; the case $k>2$ follows by a simple inductive argument. Let $I_1,I_2$ and $x_1,x_2$ be such that $F_i$ is a subcube centered at $(I_i,x_i)$ for $i=1,2$. Define $x\in \{0,1\}^N$ in the following way: for all $i\in I_1$ and $j\in I_2$ put $(x)_i=(x_1)_i$ and $(x)_j=(x_2)_j$ and for all $i\not\in I_1\cup I_2$ put $(x)_i=0$.
This is indeed a well defined element in $\{0,1\}^N$ as for all $i\in I_1\cap I_2$ we have, $(x_1)_i=(x_2)_i$, otherwise it will contradict our assumption that $F_1\cap F_2\neq \emptyset$. It follows from the definition of $x$ that $F_1\cap F_2=F(I_1\cup I_2,x)$ and hence $F_1\cap F_2$ is a subcube. Furthermore, we have
\[
  \codim(F_1\cap F_2)=|I_1\cup I_2|\leq |I_1|+|I_2|=\codim(F_1)+\codim(F_2).\qedhere
\]
\end{proof}

\begin{lemma}\label{lemma_second}
Let $Y$ be a random variable taking values in $\{0,1\}^N$ and let $X=X(Y)$ be a nonnegative real-valued function with complexity bounded by $d$.
Then for every positive integer $\ell$ and all positive real numbers $\varepsilon$ and $\delta$ with $\varepsilon<1+\delta$,
$$\mathbb{P}(X\geq (1+\delta)\E [X]\text{ and }Y \not \in F\text{ for all }F\in \mathcal{F})\leq {\bigg(\frac{1+\delta-\varepsilon}{1+\delta}\bigg)}^\ell,$$
where $$\mathcal{F}=\{F\in \{0,1\}^N:F\text{ is a subcube and }\codim(F)\leq d\ell\text{ and }\E_F[X]\geq(1+\delta-\varepsilon)\E[X]\}.$$
\end{lemma}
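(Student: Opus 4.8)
The plan is to iteratively "peel off" subcubes witnessing the conditional expectation being large, using the complexity bound to control codimensions, until either we have peeled off $\ell$ subcubes or the remaining mass is too small. Write $X=\sum_{j} a_j \mathbbm{1}_{G_j}$ as a nonnegative combination of indicators of subcubes $G_j$ each of codimension at most $d$ (possible since $\comp(X)\le d$). The key observation is that conditioning $X$ on a subcube $F$ only "resolves" coordinates in the center of $F$, so $\E_F[X]$ is again a nonnegative combination of conditional expectations of the $\mathbbm{1}_{G_j}$ over $F$, and the product structure of $\Ber(p)^N$ means these behave multiplicatively. More precisely, I would prove by induction on $\ell$ the statement: if $A$ is any subcube with $\codim(A)\le d(\ell_0-\ell)$ for the running index, then the probability (conditioned on $Y\in A$) that $X\ge(1+\delta)\E[X]$ while $Y\notin F$ for every $F\in\mathcal{F}$ is at most $\big(\tfrac{1+\delta-\varepsilon}{1+\delta}\big)^{\ell}$; the base case $A=\{0,1\}^N$, $\ell=0$ is trivial since the probability is at most $1$.

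For the inductive step, suppose we are conditioning on $Y\in A$ and we are not yet done. Consider the event $E=\{X\ge(1+\delta)\E[X]\}$. If $\mathbb{P}(E\mid Y\in A)$ is already at most $\tfrac{1+\delta-\varepsilon}{1+\delta}$ times the bound for $\ell-1$, we are done, so assume it is large. Since on $E$ we have $X\ge(1+\delta)\E[X]$, and $X$ is a nonnegative combination of the $\mathbbm{1}_{G_j}$, a Markov/averaging argument shows there must exist an index $j$ such that $\E_A[\mathbbm{1}_{G_j}\cdot \mathbbm{1}_E]$ is a sizeable fraction of $\E_A[\mathbbm{1}_{G_j}]$, and in fact one can find a subcube $B\subseteq A$ — obtained by intersecting $A$ with $G_j$ (or with the relevant supcube), which by Fact~\ref{claim_intersection_of_subcubes} has $\codim(B)\le \codim(A)+d$ — on which the conditional expectation $\E_B[X]$ jumps up by a factor of at least $1/\big(1-\tfrac{\varepsilon}{1+\delta}\big)=\tfrac{1+\delta}{1+\delta-\varepsilon}$ compared with the relevant reference. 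Repeating this, after at most $\ell$ steps either we land inside some $F\in\mathcal{F}$ (because the accumulated codimension is at most $d\ell$ and $\E_F[X]\ge(1+\delta-\varepsilon)\E[X]$, contradicting $Y\notin F$ for all $F\in\mathcal{F}$ on the event we are bounding), or each step contributed a factor $\tfrac{1+\delta-\varepsilon}{1+\delta}$ to the probability, giving the claimed bound.

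The cleanest way to organize the recursion is probably: track a subcube $F_t$ (with $F_0$ the whole cube) and a "target" $(1+\delta)\E[X]$; at step $t$, if $\codim(F_t)\le d\ell$ and $\E_{F_t}[X]\ge(1+\delta-\varepsilon)\E[X]$ then $F_t\in\mathcal{F}$ and the event $\{Y\in F_t,\ Y\notin F \ \forall F\in\mathcal F\}$ is empty, contributing nothing; otherwise $\E_{F_t}[X]<(1+\delta-\varepsilon)\E[X]$, so on the event $\{X\ge(1+\delta)\E[X]\}$ the "overshoot" $X-\E_{F_t}[X]$ has conditional expectation at least $\tfrac{\varepsilon}{1+\delta}\cdot(1+\delta)\E[X]$ relative to... — and here the complexity representation is used to select a single summand $\mathbbm{1}_{G_j}$ of $X$ whose conditional probability is boosted, producing $F_{t+1}=F_t\cap G_j^{(1)}$ with $\codim(F_{t+1})\le\codim(F_t)+d$ and $\mathbb{P}(Y\in F_{t+1}\mid Y\in F_t)$ bounded, while $\E_{F_{t+1}}[X]$ strictly increases toward the target.

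I expect the \textbf{main obstacle} to be making the "there exists a good summand $G_j$ to condition on" step rigorous and getting exactly the factor $\tfrac{1+\delta-\varepsilon}{1+\delta}$ rather than something weaker: one has to argue that if $\E_{F_t}[X]$ is below $(1+\delta-\varepsilon)\E[X]$ but the event $\{X\ge(1+\delta)\E[X]\}$ still has non-negligible conditional probability, then conditioning on the right supcube both increases $\E[X]$ by the multiplicative amount $\tfrac{1+\delta}{1+\delta-\varepsilon}$ (or drives it past the threshold) \emph{and} costs at most a factor $\tfrac{1+\delta-\varepsilon}{1+\delta}$ in probability — essentially a weighted-averaging/pigeonhole over the $G_j$ combined with the fact that each $G_j$ is a subcube so that $\E_{F_t\cap G_j}[\mathbbm1_{G_i}]$ is monotone in the conditioning. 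The interplay between "probability lost" and "expectation gained" has to be balanced so the two exponents match; this is the technical heart, and it is exactly the kind of bookkeeping that the clique paper \cite{HarMouSam19} carries out, so I would follow their scheme closely.
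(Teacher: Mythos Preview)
Your proposal sketches an adaptive peeling argument, but the step you flag as the ``main obstacle'' is a real one, and the paper does \emph{not} resolve it the way you anticipate. There is no selection of a single summand $G_j$ at each stage and no balancing of ``probability lost'' against ``expectation gained''. Trying to pick one $G_j$ adaptively runs into the problem that the good choice depends on the realization of $Y$, so the resulting $F_t$ is not a fixed subcube and the membership test $F_t\in\mathcal{F}$ is ill-posed; if instead you try to make $F_t$ depend only on $F_{t-1}$, there is no averaging argument that hands you a single $G_j$ achieving exactly the factor $\tfrac{1+\delta-\varepsilon}{1+\delta}$.

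The paper's route is a moment computation that dodges this entirely. Let $Z$ be the indicator of the event $\{Y\notin F\text{ for all }F\in\mathcal{F}\}$, and more generally for any subcube $S$ let $Z_S$ be the indicator that $Y\notin F$ for all $F\in\mathcal{F}$ with $F\supseteq S$; note $Z\le Z_S$ whenever $Y\in S$. Markov's inequality on $XZ$ (using $Z^\ell=Z$) gives
\[
\mathbb{P}\big(X\ge(1+\delta)\E[X],\ Z=1\big)\le \frac{\E[X^\ell Z]}{\big((1+\delta)\E[X]\big)^\ell}.
\]
Now expand $X^k=\sum_{F_1,\dots,F_k}\alpha_{F_1}\cdots\alpha_{F_k}\,\mathbbm{1}_{F_1}\cdots\mathbbm{1}_{F_k}$ and replace $Z$ by the larger $Z_{F_1\cap\cdots\cap F_k}$ term-by-term. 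For any sequence with $\mathbb{P}\big(\mathbbm{1}_{F_1\cap\cdots\cap F_{k-1}}Z_{F_1\cap\cdots\cap F_{k-1}}=1\big)>0$ one checks that $F_1\cap\cdots\cap F_{k-1}\notin\mathcal{F}$ (codimension is at most $d(k-1)\le d\ell$ by Fact~\ref{claim_intersection_of_subcubes}, so failure of membership forces $\E_{F_1\cap\cdots\cap F_{k-1}}[X]<(1+\delta-\varepsilon)\E[X]$), and moreover on that event $Z_{F_1\cap\cdots\cap F_{k-1}}$ equals $\mathbbm{1}_{F_1\cap\cdots\cap F_{k-1}}$ identically. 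Peeling off the last factor then gives
\[
\E[X^k Z]\le (1+\delta-\varepsilon)\E[X]\cdot \E[X^{k-1}Z],
\]
and iterating yields $\E[X^\ell Z]\le\big((1+\delta-\varepsilon)\E[X]\big)^\ell$, which is exactly the desired bound. The point is that summing over \emph{all} sequences $(F_1,\dots,F_k)$ and using linearity replaces your pigeonhole step; no single $G_j$ ever needs to be chosen.
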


\begin{proof}
Given $S\subseteq \{0,1\}^N$ a subcube, let $Z_S$ be the indicator random variable of the event that $Y\not \in F$ for all $F\in \mathcal{F}$ with $F\supseteq S$. Note that $S\subseteq S'$ implies $Z_S\leq Z_{S'}$ and let $Z=Z_{\emptyset}$. Since $XZ\geq 0$ and $Z^\ell=Z$, Markov's inequality gives 
\begin{equation}\label{eq_Markov}
    \mathbb{P}(X\geq (1+\delta)\E[X]\text{ and } Z=1) = \mathbb{P}(XZ\geq (1+\delta)\E[X])\leq \frac{\E[X^\ell Z]}{((1+\delta)\E[X])^\ell}.
\end{equation}
To simplify the notation, for every $S\subseteq \{0,1\}^N$ we write  $\mathbbm{1}_S$ for $\mathbbm{1}_{\{Y\in S\}}$.
Write $X=\sum_{F}\alpha _F\mathbbm {1}_{F}$, where the sum ranges over all subcubes of $\{0,1\}^N$, each coefficient $\alpha_F$ is nonnegative, and $\alpha_F=0$ for all $F$ with $codim(F)>d$. Then for every $k\in [\ell]$, 
\begin{align*}
    \E[X^kZ]=&\sum\limits_{F_1,\ldots,F_k}\alpha_{F_1}\cdots\alpha_{F_k}\E[\mathbbm {1}_{F_1}\cdots\mathbbm {1}_{F_k}\cdot Z]\\
    \leq&\sum\limits_{F_1,\ldots,F_k}\alpha_{F_1}\cdots\alpha_{F_k}\E[\mathbbm {1}_{F_1}\cdots\mathbbm {1}_{F_k}\cdot Z_{F_1\cap \cdots \cap F_k}]\\
    \leq&\sum\limits_{F_1,\ldots,F_{k-1}}\alpha_{F_1}\cdots\alpha_{F_{k-1}}\E[\mathbbm {1}_{F_1}\cdots\mathbbm {1}_{F_{k-1}}\cdot Z_{F_1\cap \cdots \cap F_{k-1}}]\\
    &\qquad\qquad\qquad\qquad\; \;\cdot\E[X\mid \mathbbm {1}_{F_1}\cdots\mathbbm {1}_{F_{k-1}}\cdot Z_{F_1\cap \cdots \cap F_{k-1}}=1],
\end{align*}
where we may let the third sum range only over sequences $F_1,\ldots, F_{k-1}$ for which the event $\{\mathbbm {1}_{F_1}\cdots\mathbbm {1}_{F_{k-1}}\cdot Z_{F_1\cap \cdots \cap F_{k-1}}=1\}$ has a positive probability of occurring.
\begin{claim}
    For any such sequence, 
    \[F_1\cap \ldots \cap F_{k-1}\not \in \mathcal{F}\; \text{ and } \; \mathbbm {1}_{F_1}\cdots\mathbbm {1}_{F_{k-1}}\cdot Z_{F_1\cap \cdots \cap F_{k-1}}=\mathbbm {1}_{F_1}\cdots\mathbbm {1}_{F_{k-1}}.\]
\end{claim}

\begin{proof}
To see this, note that if 
$\{\mathbbm {1}_{F_1}\cdots\mathbbm {1}_{F_{k-1}}\cdot Z_{F_1\cap \cdots \cap F_{k-1}}=1\}$ has positive probability of occurring, then there exists a $y$ such that $y\in F_1\cap \ldots \cap F_{k-1}$ and $Z_{F_1\cap \ldots \cap F_{k-1}}(y)=1$. That means $y\in F_1\cap \ldots \cap F_{k-1}$ and $y\not \in F$ for any subcube $F\in \mathcal{F}$ such that $F\supseteq F_1\cap \ldots \cap F_{k-1}$. Hence, $F_1\cap \ldots \cap F_{k-1}\not \in \mathcal{F}$. For the second part assume towards a contradiction that $\mathbbm {1}_{F_1}\cdots\mathbbm {1}_{F_{k-1}}\cdot Z_{F_1\cap \cdots \cap F_{k-1}} < \mathbbm {1}_{F_1}\cdots\mathbbm {1}_{F_{k-1}}$, meaning that there is $y'\in F_1\cap \ldots \cap F_{k-1}$ such that $Z_{ F_1\cap \ldots \cap F_{k-1}}(y')=0$. Therefore, there exists a subcube $F^*\in \mathcal {F}$ such that, $F^*\supseteq  F_1\cap \ldots \cap F_{k-1}$. This is a contradiction as now, $y\in  F_1\cap \ldots \cap F_{k-1}\subseteq F^*$ but by our assumption, $y\not \in F^*$ as $F^*\in \mathcal{F}$.
\end{proof}

Since $\{\mathbbm{1}_{F_1\cap\cdots\cap F_{k-1}}=1\}$ has a positive probability of occurring, Fact \ref{claim_intersection_of_subcubes} asserts that $F_1\cap\cdots\cap F_{k-1}$ is a subcube, and $\codim({F_1\cap \cdots \cap F_{k-1}})\leq d(k-1)\leq d\ell$. Therefore
\[
\E[X\mid \mathbbm {1}_{F_1}\cdots\mathbbm {1}_{F_{k-1}}\cdot Z_{F_1\cap\cdots\cap F_{k-1}}=1]=\E_{F_1\cap \cdots \cap F_{k-1}}[X]<(1+\delta-\varepsilon)\E[X],
\]
as otherwise $F_1\cap \cdots \cap F_{k-1}$ would belong to $\mathcal{F}$. It follows that
\begin{align*}
    \sum\limits_{F_1,\ldots,F_k}\alpha_{F_1}\cdots\alpha_{F_k}\E[\mathbbm {1}_{F_1}\cdots\mathbbm {1}_{F_k}\cdot Z_{F_1\cap \cdots \cap F_k}]&\\
    <(1+\delta-\varepsilon)\E[X]\sum\limits_{F_1,\ldots,F_{k-1}}&\alpha_{F_1}\cdots\alpha_{F_{k-1}}\E[\mathbbm {1}_{F_1}\cdots\mathbbm {1}_{F_{k-1}}\cdot Z_{F_1\cap \cdots \cap F_{k-1}}].
\end{align*}
By induction, we see that $\E[X^\ell Z]<((1+\delta-\varepsilon)\E[X])^\ell$. Substituting this inequality into \eqref{eq_Markov} completes the proof.
\end{proof}

Now we use the previous lemmas to prove the theorem.

\begin{proof}[Proof of Theorem \ref{thm:9.1}]
Let $K$ be a large constant that may depend on $\varepsilon,\delta$ and $d$. Furthermore, let $t=(1+\delta)\E[X]$ and $\ell =\left\lfloor{K/d\cdot \Phi_X(\delta+\varepsilon)}\right\rfloor$. Define
\begin{equation*}
    \mathcal{F}=\{F\subseteq\{0,1\}^N:F\text{ is a subcube with }\codim(F)\leq d\ell \text{ and }\E_F[X]\geq (1+\delta-\varepsilon)\E[X]\}.
\end{equation*}
It follows from Lemma \ref{lemma_second} that 
\begin{equation*}
    \mathbb{P}(X\geq t\text{ and }Y\not \in F\text{ for all }F\in\mathcal{F})\leq \bigg(1-\frac{\varepsilon}{1+\delta}\bigg)^\ell.
\end{equation*}
As $\comp(X)\leq d$, we can write
\[
X=\sum\limits_{j\in J}\alpha_j\mathbbm {1}_{\{Y \in F_j\}}
\]
for some $J$ so that, for all $j\in J$, the coefficient $\alpha_j$ is nonnegative and $F_j$ is a subcube with $\codim(F_j)\leq d$.
Put $M=\sum \alpha _i$ and note that $X\leq M$ always. Applying Lemma \ref{lemma_upper} gives
\begin{align*}
    -\log\mathbb{P}(X\geq t)\leq \Phi_X(\delta+\varepsilon)+\log\bigg (\frac{M}{\varepsilon\E[X]}\bigg).
\end{align*}
Note also that $\E[X]\geq M p^d$ as $\comp(X)\leq d$ and therefore $\mathbb{P} (Y\in F_j)\geq \min\{p,1-p\}^d=p^d$ for every $j\in J$. Therefore,
\begin{align*}
    -\log\mathbb{P}(X\geq t)\leq \Phi_X(\delta+\varepsilon)+\log\bigg (\frac{1}{\varepsilon p^d}\bigg).
\end{align*}
Thus, provided $K$ is sufficiently large we also have
\begin{align*}
    -\log\mathbb{P}(X\geq t)\leq (1+\varepsilon)\Phi_X(\delta+\varepsilon),
\end{align*}
as we assumed that $\Phi _X(\delta +\varepsilon)\geq \Phi _X(\delta -\varepsilon)\geq K\log (1/p)$. 
Putting all of this together gives that for sufficiently large $K$, we have 
\begin{align*}\label{eq_no_seeds}
    \mathbb{P}(X\geq t\text{ and }Y\not\in F\text{ for all }F\in\mathcal{F})&\leq \bigg(1-\frac{\varepsilon}{1+\delta}\bigg)^\ell\leq e^{-\frac{\varepsilon}{1+\delta}\left\lfloor{K/d\cdot \Phi_X(\delta+\varepsilon)}\right\rfloor}\\
    & \leq (\varepsilon/2)\cdot \mathbb{P}(X\geq t).
\end{align*}
The assertion of the theorem now follows: 
\begin{equation*}
    \mathbb{P}(X\geq t)\leq{(1-\varepsilon/2)}^{-1}\cdot \mathbb{P}(Y\in F \text{ for some }F\in \mathcal{F})\leq (1+\varepsilon )\mathbb{P}(Y\in F \text{ for some }F\in \mathcal{F}) 
\end{equation*}
where the last inequality holds for $\varepsilon < 1/2$.
\end{proof}

When $p\ll 1$ and $\comp(X)=O(1)$, the majority of the contribution to both $\E_F[X]$ and $\mathbb{P}(Y\in F)$ comes from the one-supcube $F^{(1)}\supseteq F$. We prove a straightforward corollary of Theorem \ref{thm:9.1} which will be more convenient to work with. We start by proving the following lemma.

\begin{lemma}\label{lemma_third}
The following holds for every positive integer $d$ and all positive real numbers $\varepsilon$ and $\delta$ with $\varepsilon<1$. Let $Y$ be a sequence of $N$ independent $\Ber(p)$ random variables for some $p< 1-\big({\frac{1+\delta-\varepsilon}{1+\delta-\varepsilon/2}\big)^{1/d}}$ and assume that $X=X(Y)$ has complexity at most $d$. Let $F$ be a subcube satisfying $\mathbb{E}_F[X]\geq (1+\delta -\varepsilon/2)\E [X]$. Then, $\E_{F^{(1)}}[X]\geq (1+\delta-\varepsilon)\E[X]$ where $F^{(1)}$ is the one-supcube of $F$.
\end{lemma}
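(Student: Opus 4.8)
The plan is to exploit the explicit product structure of the measure $\Ber(p)^N$ together with the low-complexity representation of $X$, comparing $\E_F[X]$ and $\E_{F^{(1)}}[X]$ term by term. First I would fix a center: say $F$ is centered at $(I,x)$, so that $I_1=\{i\in I:x_i=1\}$, $I_0=\{i\in I:x_i=0\}$, and $F^{(1)}=F(I_1,x)$. Since $\comp(X)\le d$, write $X=\sum_{j\in J}\alpha_j\mathbbm{1}_{\{Y\in F_j\}}$ with every $\alpha_j\ge 0$ and every $F_j$ a subcube with $\codim(F_j)\le d$. Then $\E_F[X]=\sum_{j}\alpha_j\,\mathbb{P}(Y\in F_j\mid Y\in F)$ and likewise $\E_{F^{(1)}}[X]=\sum_{j}\alpha_j\,\mathbb{P}(Y\in F_j\mid Y\in F^{(1)})$, so it suffices to compare these conditional probabilities for each fixed $j$ and then sum against the nonnegative weights.

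Next I would compute the conditional probabilities directly. Conditioning on a subcube simply pins the coordinates in its center and leaves the rest i.i.d.\ $\Ber(p)$; so if $F_j$ is centered at $(J_j,z_j)$, then $\mathbb{P}(Y\in F_j\mid Y\in F)=0$ unless $x$ and $z_j$ agree on $I\cap J_j$, in which case it equals $\prod_{i\in J_j\setminus I}p^{(z_j)_i}(1-p)^{1-(z_j)_i}$, and the same formula holds for $F^{(1)}$ with $I$ replaced by $I_1$. If $F_j$ is compatible with $F$, then it is automatically compatible with $F^{(1)}$ (agreement on $I_1\subseteq I$), and the two products differ only in the factors indexed by $i\in J_j\cap I_0$, each of which equals $1-p$ because there $(z_j)_i=x_i=0$. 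Hence
\[
 \mathbb{P}(Y\in F_j\mid Y\in F^{(1)})=(1-p)^{|J_j\cap I_0|}\,\mathbb{P}(Y\in F_j\mid Y\in F)\ge (1-p)^{d}\,\mathbb{P}(Y\in F_j\mid Y\in F),
\]
using $|J_j\cap I_0|\le\codim(F_j)\le d$ and $1-p\le 1$; if $F_j$ is incompatible with $F$ the inequality is trivial since the right-hand side is $0$. Summing over $j$ with the nonnegative coefficients $\alpha_j$ yields $\E_{F^{(1)}}[X]\ge (1-p)^d\,\E_F[X]$.

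Finally, I would combine this with the hypotheses: $\E_{F^{(1)}}[X]\ge (1-p)^d(1+\delta-\varepsilon/2)\E[X]$, and the assumption $p<1-\bigl(\tfrac{1+\delta-\varepsilon}{1+\delta-\varepsilon/2}\bigr)^{1/d}$ is precisely what is needed to get $(1-p)^d\ge\tfrac{1+\delta-\varepsilon}{1+\delta-\varepsilon/2}$ (note both numerator and denominator are positive since $\varepsilon<1$), hence $(1-p)^d(1+\delta-\varepsilon/2)\ge 1+\delta-\varepsilon$, which finishes the proof. I do not expect a genuine obstacle; the only points requiring care are the compatibility bookkeeping (handling incompatible $F_j$ and checking that compatibility with $F$ implies compatibility with $F^{(1)}$) and the use of $\alpha_j\ge 0$, which is essential to pass from the term-by-term bound to the bound on the conditional expectations.
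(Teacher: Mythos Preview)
Your proposal is correct and follows essentially the same approach as the paper: write $X$ as a nonnegative combination of subcube indicators of codimension at most $d$, establish the termwise inequality $\mathbb{P}(Y\in F_j\mid Y\in F^{(1)})\ge (1-p)^d\,\mathbb{P}(Y\in F_j\mid Y\in F)$, sum, and use the hypothesis on $p$. Your version is in fact more detailed in justifying the key inequality (the explicit product computation and the compatibility bookkeeping), which the paper asserts more tersely via the bound $\codim_0(F_j)\le d$.
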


\begin{proof}
As $X$ has complexity $d$ we can write 
\[
X=\sum\limits_{j\in J}\alpha_j\mathbbm {1}_{\{Y \in F_j\}}
\] 
for some $J$ so that, for all $j\in J$, the coefficient $\alpha_j$ is nonnegative and $F_j$ is a subcube with $\codim(F_j)\leq d$. Moreover, by our assumptions on $F_j$, we have $\codim_0(F_j)\leq \codim(F_j)\leq d$. Thus, we have the following inequality,
\begin{equation*}
    \mathbb{P}(Y\in F_j\mid Y\in F^{(1)})\geq (1-p)^d \cdot \mathbb{P}(Y\in F_j\mid Y\in F).
\end{equation*}
It follows that 
\begin{align}
    \E_{F^{(1)}}[X]&=\sum\limits_{j\in J}\alpha_j\E_{F^{(1)}}[\mathbbm {1}_{\{Y\in F_j\}}]\geq \sum\limits_{j\in J}\alpha_j(1-p)^d\cdot\mathbb{E}_{F}[\mathbbm {1}_{\{Y\in F_j\}}]\nonumber\\
    &\geq \sum\limits_{j\in J}\alpha_j\Big(\frac{1+\delta-\varepsilon}{1+\delta-\varepsilon/2}\Big)\mathbb{E}_{F}[\mathbbm {1}_{F_j}]=\Big(\frac{1+\delta-\varepsilon}{1+\delta-\varepsilon/2}\Big)\E_F[X]\geq (1+\delta-\varepsilon)\E[X]\label{lemma_line_2},
\end{align}
where the first inequality in \eqref{lemma_line_2} follows from the assumption on $p$ and the second inequality in \eqref{lemma_line_2} follows from the assumption on $\E_F[X]$.
\end{proof}

Using this lemma we now state and prove a straightforward corollary of Theorem \ref{thm:9.1}.

\begin{cor}\label{Main_tool}
For every positive integer $d$ and all positive real numbers $\varepsilon$ and $\delta$ with $\varepsilon <{1}$, there is a positive constant $K=K(\varepsilon,\delta,d)$ such that the following holds. Let $Y$ be a sequence of $N$ independent $\Ber(p)$ random variables for some $p<\min \{1-\big({\frac{1+\delta-\varepsilon}{1+\delta-\varepsilon/2}\big)^{1/d}}, 1/2\}$ and assume that $X=X(Y)$ has complexity at most $d$ and satisfies $\Phi_X(\delta -\varepsilon)\geq K\log(\frac{1}{p})$. Denote by $\mathcal{F}_1$ the collection of all subcubes $F\subseteq \{0,1\}^N$ satisfying
\begin{enumerate}[label=(\subscript{H}{{\arabic*}})]
    \item \label{thm_2_cond_1} $\mathbb{E}_F[X]\geq (1+\delta -\varepsilon)\E [X]$,
    \item \label{thm_2_cond_2} $\codim F\leq K\cdot\Phi _X(\delta+\varepsilon)$,
    %\item \label{thm_2_cond_3} $\E_F[X]-\E_{F'}[X]\geq \E [X]/(K\cdot \Phi _X(\delta+\varepsilon)) $ for every subcube $F'\supsetneq F$, and 
    \item\label{thm_2_cond_4} $\codim_1(F)=\codim(F)$.
\end{enumerate}
%and for every $m$ denote by $\mathcal{F}^*_{m}$ the set of all subcubes $F$ in $\mathcal{F}^*$ with $\codim(F)=m$.
Then
\begin{equation*}
    \mathbb{P}(X\geq (1+\delta)\E[X])\leq (1+\varepsilon)\mathbb{P}(Y\in F \text{ for some }F\in \mathcal{F}_1).
\end{equation*}
\end{cor}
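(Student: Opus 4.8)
The plan is to deduce Corollary \ref{Main_tool} from Theorem \ref{thm:9.1} by passing from each subcube produced by the theorem to its one-supcube, and checking that the family of one-supcubes still satisfies the required three conditions. Concretely, apply Theorem \ref{thm:9.1} with the \emph{same} $d$, the \emph{same} $\varepsilon$, but with $\varepsilon$ replaced by $\varepsilon/2$ in the roles where we need a slightly stronger conclusion — more precisely, run Theorem \ref{thm:9.1} with parameters $(\varepsilon/2,\delta,d)$ to obtain a constant $K' = K(\varepsilon/2,\delta,d)$ and a family $\mathcal{F}$ of subcubes $F$ with $\E_F[X]\geq(1+\delta-\varepsilon/2)\E[X]$ and $\codim(F)\leq K'\cdot\Phi_X(\delta+\varepsilon/2)$, such that $\mathbb{P}(X\geq(1+\delta)\E[X])\leq(1+\varepsilon/2)\mathbb{P}(Y\in F\text{ for some }F\in\mathcal{F})$. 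The hypothesis $\Phi_X(\delta-\varepsilon/2)\geq K'\log(1/p)$ needed here follows from our assumed $\Phi_X(\delta-\varepsilon)\geq K\log(1/p)$ once $K$ is chosen large enough relative to $K'$, since $\Phi_X$ is monotone nondecreasing in its argument.

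Next I would replace each $F\in\mathcal{F}$ by its one-supcube $F^{(1)}$. By Lemma \ref{lemma_third} (applicable because of the hypothesis on $p$), $\E_{F^{(1)}}[X]\geq(1+\delta-\varepsilon)\E[X]$, so condition \ref{thm_2_cond_1} holds. Condition \ref{thm_2_cond_4}, namely $\codim_1(F^{(1)})=\codim(F^{(1)})$, is automatic: a one-supcube fixes only coordinates to the value $1$, so its zero-codimension is $0$. For condition \ref{thm_2_cond_2} observe that $\codim(F^{(1)})\leq\codim(F)\leq K'\cdot\Phi_X(\delta+\varepsilon/2)\leq K'\cdot\Phi_X(\delta+\varepsilon)$ by monotonicity of $\Phi_X$, and this is at most $K\cdot\Phi_X(\delta+\varepsilon)$ provided $K\geq K'$. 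Hence every $F^{(1)}$ lies in $\mathcal{F}_1$. Finally, since $F\subseteq F^{(1)}$ we have $\{Y\in F\}\subseteq\{Y\in F^{(1)}\}$, so
\[
\mathbb{P}(Y\in F\text{ for some }F\in\mathcal{F})\leq\mathbb{P}(Y\in F'\text{ for some }F'\in\mathcal{F}_1),
\]
and combining with the bound from Theorem \ref{thm:9.1} gives
\[
\mathbb{P}(X\geq(1+\delta)\E[X])\leq(1+\varepsilon/2)\mathbb{P}(Y\in F'\text{ for some }F'\in\mathcal{F}_1)\leq(1+\varepsilon)\mathbb{P}(Y\in F'\text{ for some }F'\in\mathcal{F}_1).
\]

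There is no serious obstacle here; the corollary is genuinely a packaging of Theorem \ref{thm:9.1} with Lemma \ref{lemma_third}. The only points requiring care are bookkeeping ones: choosing $K=K(\varepsilon,\delta,d)$ large enough to dominate the constant $K(\varepsilon/2,\delta,d)$ coming out of Theorem \ref{thm:9.1} (so that both the codimension bound and the entropy hypothesis transfer), and verifying that the hypothesis on $p$ in the corollary is exactly what Lemma \ref{lemma_third} needs — indeed the condition $p<1-\big(\frac{1+\delta-\varepsilon}{1+\delta-\varepsilon/2}\big)^{1/d}$ is imposed precisely so that Lemma \ref{lemma_third} applies with this $\varepsilon$. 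One should also note that the slack $\varepsilon/2$ versus $\varepsilon$ in the ``$\E_F[X]\geq\cdots$'' threshold is consumed entirely by the passage $F\mapsto F^{(1)}$, while the slack in the $(1+\varepsilon)$ prefactor is split as $(1+\varepsilon/2)$ from the theorem and an additional factor absorbed trivially; if one prefers, run Theorem \ref{thm:9.1} with prefactor parameter $\varepsilon/2$ to be safe.
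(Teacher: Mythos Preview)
Your proposal is correct and follows essentially the same route as the paper's proof: apply Theorem~\ref{thm:9.1} with $\varepsilon$ replaced by $\varepsilon/2$, pass from each $F\in\mathcal{F}$ to its one-supcube $F^{(1)}$, and invoke Lemma~\ref{lemma_third} together with the inclusion $F\subseteq F^{(1)}$ to verify \ref{thm_2_cond_1}--\ref{thm_2_cond_4}. Your bookkeeping around the constants $K$ versus $K'$ and the monotonicity of $\Phi_X$ is in fact slightly more explicit than the paper's, but the argument is the same.
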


\begin{proof}
Applying Theorem \ref{thm:9.1} with $\varepsilon$ replaced by $\varepsilon/2$ we obtain 
\begin{equation}\label{eq_main_cor_left}
    \mathbb{P}(X\geq (1+\delta)\E[X])\leq (1+\varepsilon) \mathbb{P}(Y\in F \text{ for some } F\in \mathcal{F}),
\end{equation}
where $\mathcal{F}$ is the collection of all subcubes $F\subseteq \{0,1\}^N$ satisfying \ref{cond_1} and \ref{cond_2} where $\varepsilon$ is replaced with $\varepsilon/2$. Letting $\mathcal{F}_{1}=\{F^{(1)}:F\in \mathcal{F}\}$, we obtain \ref{thm_2_cond_2} and \ref{thm_2_cond_4} for every subcube $F\in \mathcal{F}_1$ due to \ref{cond_2} and the definition of one-supcubes. Noting that for every subcube $F$ we have $F\subseteq F^{(1)}$ we obtain also that \begin{equation}\label{eq_main_cor_right}
    \mathbb{P}(Y\in F \text{ for some } F\in \mathcal{F})\leq \mathbb{P}(Y\in F \text{ for some } F\in \mathcal{F}_1).
\end{equation}
Combining \eqref{eq_main_cor_left} and \eqref{eq_main_cor_right} we obtain that
\begin{equation*}
    \mathbb{P}(X\geq (1+\delta)\E[X])\leq (1+\varepsilon) \mathbb{P}(Y\in F \text{ for some } F\in \mathcal{F}_1).
\end{equation*}
Furthermore, Lemma \ref{lemma_third} implies that $\mathcal{F}_1$ also satisfies \ref{thm_2_cond_1}.
\end{proof}

We call a subcube $F$ a \emph{seed} if $F$ is an element of $\mathcal{F}_1$ defined in Corollary \ref{Main_tool}. A formal definition is given in the following section.
A general way one would use Corollary \ref{Main_tool} to bound upper tails of counts of induced subgraphs is to define a special type of seeds called \emph{structured seeds}. These structured seeds are subcubes in $\mathcal{F}_1$ from Corollary \ref{Main_tool}, with a stronger condition than \ref{thm_2_cond_1}. This condition is a combinatorial condition involving various supcubes of the subcubes in $\mathcal{F}_1$ (this will be explained further in Section \ref{sec_3}). Then one would define a \emph{core subcube} to be a subcube containing a structured seed such that `every coordinate counts'. In the following section we define these special subcubes. We will also show that Corollary \ref{Main_tool} can be modified to bound the upper tail probability via the probability of $Y$ being an element in a core subcube.

\section{From seeds to modified cores}\label{sec_3}

Note that for any subcube $F\subseteq \{0,1\}^{E(K_n)}$, $F^{(1)}$ the one-supcube of $F$ corresponds to all subgraphs of $K_n$ containing a specific subgraph of $K_n$ with $\codim_1({F})$ edges. Therefore, from this point onwards we think of one-supcubes as a family of subgraphs of $K_n$ containing a specific subgraph. In particular, instead of writing $\E_F[X]$ for some one-supcube (of some subcube) we will write $\E_G[X]$ for the graph corresponding to $F$. The subgraphs corresponding to the members of $\mathcal{F}_1$ from Corollary \ref{Main_tool} will be called seeds.
We start with a definition which will be useful in this section as well as the next ones.

\begin{definition} \label{def_N(n,m,H)}
Suppose $H$ and $G$ are graphs and let $e$ be an edge of $G$. We define:
\begin{itemize}
  %\item
  %$A(H,G)$ is the set of all induced copies of $H$ in $G$.
  %\item
  %$A(e,H,G)$ is the set of all induced copies of $H$ in $G$ that contains the edge $e$.
  \item
  $N_{ind}(H,G)$ is the number of induced copies of $H$ in $G$.
  \item
  $N_{ind}(e,H,G)$ is the number of induced copies of $H$ in $G$ that contain $e$.
\end{itemize}
\end{definition}

Our main concern in this paper is the random variable counting the number of induced copies of $C_4$ in $G_{n,p}$. Therefore, we let $X=N_{ind}(C_4,G_{n,p})$. We will use this notation from this point onward. We now continue by defining seed graphs.

\begin{definition}
Let $\varepsilon,\delta,K$ be positive reals. In addition let $p\in (0,1)$. Then we define $\mathcal{S}(\varepsilon,\delta,K)$ to be the collection of all spanning subgraphs $G\subseteq K_n$ satisfying:
\begin{enumerate}[label=(\subscript{S}{{\arabic*}})]
    \item \label{cond_2_seeds}$e(G)\leq K\cdot\Phi _X(\delta+\varepsilon)$ and
    \item \label{cond_1_seeds}$\mathbb{E}_G[X]\geq (1+\delta -\varepsilon)\E [X]$.
\end{enumerate}
We call the graphs in $\mathcal{S}(\varepsilon,\delta,K)$ seeds. Furthermore, for every positive integer $m$ we define $\mathcal{S}_m(\varepsilon,\delta,K)$ to be the set of all seeds with $m$ edges.
\end{definition}

Since $\E_G[X]$ is determined by the number of induced copies of various subgraphs of $C_4$ in $G$, it will be convenient to restate \ref{cond_1_seeds} in the above definition with a graph theoretic condition, giving rise to the notion of \emph{structured seeds}.
In the definition we use the graph obtained from $K_{1,2}$ by adding an isolated vertex; we denote this graph by $K_{1,2}\sqcup K_{1}$. Now we define the structured seeds.

\begin{definition}
Let $\varepsilon,\delta,K$ be positive reals. In addition let $p\in (0,1)$. Then we define $\mathcal{S}'(\varepsilon,\delta,K)$ to be the collection of all spanning subgraphs $G\subseteq K_n$ satisfying:
\begin{enumerate}[label=(\subscript{S'}{{\arabic*}})]
    \item $e(G)\leq K\cdot\Phi _X(\delta+\varepsilon)$ and
    \item $N_{ind}(C_4,G)+N_{ind}(K_{1,2}\sqcup K_{1},G)p^2\geq (\delta-\varepsilon)\E[X]$.
\end{enumerate}
We call the graphs in $\mathcal{S}'(\varepsilon,\delta,K)$ structured seeds. Furthermore, for every positive integer $m$ we define $\mathcal{S}'_m(\varepsilon,\delta,K)$ to be the set of all structured seeds with $m$ edges.
\end{definition}

The following is a lemma relating the seeds and the structured seeds, which we prove later.

\begin{lemma}\label{claim_F_becomes_G}
Let $\varepsilon,\delta,K$ be positive reals and suppose also that $\varepsilon\leq \delta/2$ and $p\ll 1$. Then, there exists a positive constant $C=C(\varepsilon,\delta,K)$ such that for any $m\leq K\Phi_X(\delta-\varepsilon)$ and large enough $n$ we have,
\[
\mathcal{S}_m(\varepsilon,\delta,C)\subseteq \mathcal{S}'_m(2\varepsilon,\delta,C).
\]
\end{lemma}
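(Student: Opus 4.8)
The goal is to show that if $G$ is a seed with $m$ edges, i.e. $e(G) \le C\cdot\Phi_X(\delta+\varepsilon)$ and $\E_G[X] \ge (1+\delta-\varepsilon)\E[X]$, then $G$ is a structured seed (with $\varepsilon$ replaced by $2\varepsilon$), i.e. $N_{ind}(C_4,G) + N_{ind}(K_{1,2}\sqcup K_1, G)\,p^2 \ge (\delta - 2\varepsilon)\E[X]$. The first condition is identical up to the choice of constant, so the whole content is the passage from the conditional expectation $\E_G[X]$ to the graph-theoretic quantity. The plan is to expand $\E_G[X] = \E[N_{ind}(C_4, G\cup G_{n,p})]$ by linearity over the $\binom{n}{4}\cdot 3$ potential labelled induced $C_4$'s and classify each potential copy according to how many of its four edges (and how many of its two non-edges, relative to the $C_4$) already lie in $G$.

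\medskip

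\noindent\emph{Step 1: Expansion.} Condition on $G \subseteq G_{n,p}$; then $G_{n,p}$ restricted to the non-edges of $G$ is still an independent $\Ber(p)$ family. For a fixed vertex-4-set with a fixed cyclic labelling, an induced $C_4$ appears iff the four ``cycle'' pairs are present and the two ``diagonal'' pairs are absent. Split into cases by the set $A$ of cycle-pairs already in $G$ and whether either diagonal pair is in $G$ (if a diagonal pair is in $G$, this copy contributes $0$ and can be discarded). The surviving terms have probability $p^{4-|A|}(1-p)^{2-d}$ where $d\in\{0,1,2\}$ counts the diagonals forced absent; since $p\ll 1$, $(1-p)^{2-d} = 1 - O(p)$.

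\medskip

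\noindent\emph{Step 2: Isolating the main terms.} The dominant contributions as $p\to 0$ are: (i) $|A|=4$ (all four cycle edges in $G$, diagonals not in $G$) — this is exactly $N_{ind}(C_4,G)\cdot(1-O(p))$; (ii) $|A|=2$ with the two $G$-edges forming a path $K_{1,2}$ on the four vertices and the remaining two cycle pairs absent in $G$ — summing over the four vertices and the choice of which path, this is $N_{ind}(K_{1,2}\sqcup K_1, G)\cdot p^2\cdot(1-O(p))$, where the extra isolated vertex accounts for the fourth vertex of the $C_4$ not touched by the path. All terms with $|A|=3$, $|A|\le 1$, or $|A|=2$ with the two edges forming a matching are lower order: they carry a factor $p^{\ge 3}$ relative to the leading scale (here one uses that each such configuration is counted by a subgraph count of $G$ that is at most $e(G)^{O(1)} \le (C\Phi_X)^{O(1)}$, which, multiplied by the extra powers of $p$, is $o(\delta\E[X])$ — here I'd invoke that $\E[X] = \Theta(n^4 p^4)$ and $\Phi_X$ is polynomially bounded in the relevant range, or more cleanly bound these counts directly by $n^2 e(G) \le n^2\cdot C\Phi_X$ type estimates).

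\medskip

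\noindent\emph{Step 3: Assembling the inequality and absorbing errors.} Putting Steps 1–2 together,
\[
\E_G[X] = \E[X] + N_{ind}(C_4,G) + N_{ind}(K_{1,2}\sqcup K_1,G)\,p^2 + \mathrm{Err},
\]
where $\E[X]$ is the count with $|A|=0$ and no relevant $G$-structure (this recovers $\E[N_{ind}(C_4,G_{n,p})]$ up to the same $O(p)$ corrections and the at-most-$e(G)$ pairs that differ), and $\mathrm{Err} = O(p)\cdot(N_{ind}(C_4,G) + \E[X]) + (\text{lower order terms from Step 2})$. The hypothesis $\E_G[X] \ge (1+\delta-\varepsilon)\E[X]$ then gives
\[
N_{ind}(C_4,G) + N_{ind}(K_{1,2}\sqcup K_1,G)\,p^2 \ge (\delta-\varepsilon)\E[X] - |\mathrm{Err}|.
\]
Since $\mathrm{Err} = o(\E[X])$ (for large $n$, using $p\ll 1$ and the bound $e(G)\le C\Phi_X$ to control the subleading subgraph counts), for $n$ large enough $|\mathrm{Err}| \le \varepsilon\E[X]$, which yields $N_{ind}(C_4,G) + N_{ind}(K_{1,2}\sqcup K_1,G)\,p^2 \ge (\delta-2\varepsilon)\E[X]$, i.e. the structured-seed condition with parameter $2\varepsilon$. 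The constant $C=C(\varepsilon,\delta,K)$ enters only to keep $e(G)$ bounded enough that the error estimate goes through; one can take $C$ a large multiple of $K$.

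\medskip

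\noindent\emph{Main obstacle.} The delicate point is Step 2: making rigorous that every configuration other than the two named ones is genuinely lower order. This requires a clean uniform bound on the number of copies of various small graphs in $G$ in terms of $e(G)$ (hence in terms of $\Phi_X$), combined with the known order of magnitude of $\E[X] = \Theta(n^4p^4)$ and a bound on $\Phi_X(\delta+\varepsilon)$ that is polynomial in $n$ with a small enough exponent in the sparse range $p \gg \log^9(n)/n$. The bookkeeping of the $(1-p)$ factors on the diagonal pairs and the fact that $\E[N_{ind}(C_4,G_{n,p})]$ itself differs from $\E[X]$ only on the $\le e(G)$ pairs present in $G$ must also be tracked, but these are $O(p)$ or $O(e(G)p^3)$ corrections and cause no trouble.
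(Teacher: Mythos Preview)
Your approach is essentially the same as the paper's: expand $\E_G[X]-\E[X]$ as $\sum_{\emptyset\neq H\subseteq^* C_4} N_{ind}(H,G)\,p^{4-e(H)}$, keep the $H=C_4$ and $H=K_{1,2}\sqcup K_1$ terms, and show the remaining three terms ($P_4$, $M_2$, $K_2\sqcup 2K_1$) are $o(\E[X])$.

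There is one concrete slip in Step~2. The claim that the $|A|=3$, $|A|\le 1$, and $|A|=2$-matching terms ``carry a factor $p^{\ge 3}$ relative to the leading scale'' is false: the $P_4$ term is $N_{ind}(P_4,G)\cdot p \le m^2 p$, only one power of $p$ below the $C_4$ term $N_{ind}(C_4,G)\le m^2/4$. This is in fact the dominant error, and to kill it you must use a quantitative bound on $m$. The paper does this by invoking (forward reference) $\Phi_X(\delta)=O_\delta(\sqrt{\E[X]}\log(1/p))$, so $m=O(n^2p^2\log(1/p))$, and hence $m^2p + m^2p^2 + mn^2p^3 = O(n^4p^5\log^2(1/p)) = O(p\log^2(1/p)\,\E[X]) = o(\E[X])$. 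Your ``Main obstacle'' paragraph shows you see this is the crux, but ``$\Phi_X$ polynomially bounded with small enough exponent'' is too vague; you need exactly $m=o(n^2p^{3/2})$ to make $m^2p=o(n^4p^4)$, and the actual bound $m=O(n^2p^2\log(1/p))$ is what delivers it.
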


%In the sequel we use a weaker condition rather than the first item in the definition of structured seeds.
%This condition is $N_{ind}(C_4,G)+N_{ind}(K_{1,2},G)np^2\geq (\delta-\varepsilon)\E[X]$, which follows from the first item in the definition of structured seeds. That is because $N_{ind}(K_{1,2}\sqcup K_{1},G)\leq N_{ind}(K_{1,2},G)v(G)$ for any graph $G$. 
Now we are ready to define what a core graph is. This is given in the following definition.

\begin{definition}
Let $\varepsilon,\delta,K$ be positive reals. In addition let $p\in (0,1)$. Then we define $\mathcal{C}(\varepsilon,\delta,K)$ to be collection of all structured seeds $G\in \mathcal{S}'(\varepsilon,\delta,K)$ satisfying the following:\\
For all $e\in E(G)$
    
\[
    N_{ind}(e,C_4,G)+N_{ind}(e,K_{1,2}\sqcup K_{1},G)p^2\geq \varepsilon \E[X]/(2K\cdot \Phi _X(\delta+\varepsilon)).
\]
We call the graphs in $\mathcal{C}(\varepsilon,\delta,K)$ core graphs. Furthermore, for every positive integer $m$ we define $\mathcal{C}_m(\varepsilon,\delta,K)$ to be the set of all cores with $m$ edges.
\end{definition}
In cases where the parameters $\varepsilon,\delta$ and $K$ can be understood from the context we omit them.
%Similar to the previous comment regarding structured seeds, many times we will use a weaker property of core graphs. Namely, for every core graph $G$ and $e$ and edge of $G$ we have, 
%\[
%    N_{ind}(e,C_4,G)+N_{ind}(e,K_{1,2},G)np^2\geq \varepsilon \E[X]/(2K\cdot \Phi _X(\delta+\varepsilon)).
%\] 
The main aim of this section is to prove the following theorem which we derive using Corollary~\ref{Main_tool} and several lemmas.

\begin{thm}\label{thm:main_cores_1}
For all positive real numbers $\varepsilon,\delta,p$ with $\varepsilon <{\delta}$ and $p<\min \{1-\big({\frac{1+\delta-\varepsilon}{1+\delta-\varepsilon/2}\big)^{1/6}}, 1/2\}$, there is a positive constant $K=K(\varepsilon,\delta)$ such that following holds:
\begin{equation}
    \mathbb{P}(X\geq (1+\delta)\E[X])\leq (1+\varepsilon)\mathbb{P}(G \subseteq G_{n,p} \text{ for some }G\in \mathcal{C}(\varepsilon,\delta,K)).
\end{equation}
In particular,
\[
    \mathbb{P}(X\geq (1+\delta)\E[X])\leq (1+\varepsilon)\sum \limits_m |\mathcal{C}_m(\varepsilon,\delta,K)|p^m.
\]
\end{thm}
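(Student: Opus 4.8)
The strategy is to start from Corollary~\ref{Main_tool} applied with $d=6$ (since $X=N_{ind}(C_4,G_{n,p})$ has complexity at most $6$, being a nonnegative linear combination of indicators of subcubes of codimension at most $\binom{4}{2}=6$), and then pass successively from seeds, to structured seeds, to core graphs, losing only a bounded multiplicative factor at each stage. First I would fix $\varepsilon$ and note that it suffices to prove the statement with $\varepsilon$ replaced by $\varepsilon/10$ (say), so that all the constant losses accumulated in the three reduction steps still fit inside a factor $1+\varepsilon$. Applying Corollary~\ref{Main_tool} with parameter $\varepsilon'=\varepsilon/10$ and $d=6$ gives a constant $K_0=K_0(\varepsilon,\delta)$ and a family $\mathcal{F}_1$ of one-supcubes; translating the language of subcubes into the language of subgraphs of $K_n$ (as in the paragraph opening Section~\ref{sec_3}), the members of $\mathcal{F}_1$ correspond exactly to the seeds in $\mathcal{S}(\varepsilon',\delta,K_0)$, where condition \ref{thm_2_cond_1} becomes \ref{cond_1_seeds} and \ref{thm_2_cond_2} becomes \ref{cond_2_seeds}, with the one-codimension condition \ref{thm_2_cond_4} ensuring that the subcube really is a one-supcube. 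This yields
\[
    \mathbb{P}(X\geq (1+\delta)\E[X])\leq (1+\varepsilon')\,\mathbb{P}\bigl(G\subseteq G_{n,p}\text{ for some }G\in\mathcal{S}(\varepsilon',\delta,K_0)\bigr).
\]

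\textbf{From seeds to structured seeds.} Next I would invoke Lemma~\ref{claim_F_becomes_G}, which requires $\varepsilon'\leq\delta/2$ (true for small $\varepsilon$) and $p\ll 1$, to replace the family of seeds by a family of structured seeds: there is $C=C(\varepsilon',\delta,K_0)$ with $\mathcal{S}_m(\varepsilon',\delta,C)\subseteq\mathcal{S}'_m(2\varepsilon',\delta,C)$ for every $m\leq K\Phi_X(\delta-\varepsilon')$ and $n$ large. Since seeds automatically satisfy $e(G)\leq K\cdot\Phi_X(\delta+\varepsilon')$, and one should choose $K$ at least $C$, every seed with the relevant number of edges is a structured seed, so
\[
    \mathbb{P}\bigl(G\subseteq G_{n,p}\text{ for some seed }G\bigr)\leq\mathbb{P}\bigl(G\subseteq G_{n,p}\text{ for some }G\in\mathcal{S}'(2\varepsilon',\delta,K)\bigr).
\]
Here one must be a little careful about the edge-count range: the bound in Lemma~\ref{claim_F_becomes_G} is stated for $m\le K\Phi_X(\delta-\varepsilon')$, while seeds are only guaranteed $m\le K\Phi_X(\delta+\varepsilon')$; I would handle this either by shrinking $\varepsilon$ once more or by noting that in the relevant regime $\Phi_X$ is monotone enough that the discrepancy is absorbed — this bookkeeping is routine but should be spelled out.

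\textbf{From structured seeds to core graphs — the main step.} The heart of the argument is to replace an arbitrary structured seed by a \emph{core}, by iteratively deleting ``useless'' edges. Given $G\in\mathcal{S}'(2\varepsilon',\delta,K)$, if some edge $e\in E(G)$ fails the core inequality, i.e.
\[
    N_{ind}(e,C_4,G)+N_{ind}(e,K_{1,2}\sqcup K_1,G)p^2<\varepsilon''\E[X]/(2K\cdot\Phi_X(\delta+\varepsilon'')),
\]
then deleting $e$ decreases the quantity $N_{ind}(C_4,G)+N_{ind}(K_{1,2}\sqcup K_1,G)p^2$ by less than that amount (here one needs that deleting an edge only removes induced copies through $e$ — but deleting an edge can also \emph{create} induced copies of $K_{1,2}\sqcup K_1$; I would check that the net change is still controlled, which is where the precise choice of the graphs $C_4$ and $K_{1,2}\sqcup K_1$ matters, and is \emph{the expected main obstacle}). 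Since $G$ has at most $K\cdot\Phi_X(\delta+\varepsilon'')$ edges, after removing all such edges the total loss is less than $\varepsilon''\E[X]/2$, so the structured-seed inequality degrades from $(\delta-2\varepsilon')\E[X]$ to roughly $(\delta-2\varepsilon'-\varepsilon''/2)\E[X]\ge(\delta-3\varepsilon')\E[X]$ for a suitable choice of $\varepsilon''$, and the edge bound only improves; hence the resulting graph $G'\subseteq G$ is a core in $\mathcal{C}(3\varepsilon',\delta,K)$. Finally, since $G'\subseteq G$, the event $G\subseteq G_{n,p}$ implies $G'\subseteq G_{n,p}$, so
\[
    \mathbb{P}\bigl(G\subseteq G_{n,p}\text{ for some structured seed }G\bigr)\leq\mathbb{P}\bigl(G\subseteq G_{n,p}\text{ for some }G\in\mathcal{C}(3\varepsilon',\delta,K)\bigr).
\]
Chaining the three inequalities and choosing $\varepsilon'=\varepsilon/10$ (or whatever makes $3\varepsilon'\le\varepsilon$ and $(1+\varepsilon')\le 1+\varepsilon$) gives the first displayed inequality of the theorem. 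The ``in particular'' statement then follows by the union bound: $\mathbb{P}(G\subseteq G_{n,p}\text{ for some }G\in\mathcal{C})\le\sum_G p^{e(G)}=\sum_m|\mathcal{C}_m|p^m$. The main obstacle, as noted, is the monotonicity/stability analysis of the edge-deletion process, since induced subgraph counts are not monotone — one must track both the $C_4$ term and the $K_{1,2}\sqcup K_1$ correction term together and verify that the $p^2$-weighting is exactly what makes the combined quantity behave well under edge deletion.
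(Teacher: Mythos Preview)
Your proposal is correct and follows essentially the same route as the paper: apply Corollary~\ref{Main_tool} with $d=6$ to obtain seeds, use Lemma~\ref{claim_F_becomes_G} to pass to structured seeds, and then run an edge-deletion process (packaged in the paper as Lemma~\ref{lemma_forth} and Corollary~\ref{cor:seeds_contains_cores}) to extract a core subgraph; the union bound gives the ``in particular''. Your flagged ``main obstacle'' --- the non-monotonicity of induced counts under edge deletion --- is not actually an obstacle: removing an edge $e=uv$ destroys precisely the induced copies of $C_4$ and of $K_{1,2}\sqcup K_1$ that use $e$ as an edge, while any newly created induced copy must lie on a $4$-set containing both $u$ and $v$ and hence only \emph{increases} $N(G\setminus e)$, so the net decrease of $N(G)=N_{ind}(C_4,G)+N_{ind}(K_{1,2}\sqcup K_1,G)p^2$ is at most $N_{ind}(e,C_4,G)+N_{ind}(e,K_{1,2}\sqcup K_1,G)p^2$, which is exactly the inequality your deletion argument needs.
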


We start by proving Lemma \ref{claim_F_becomes_G} relating seeds and structured seeds.

\begin{proof}[Proof of Lemma \ref{claim_F_becomes_G}]
Suppose $G\in \mathcal{S}_m(\varepsilon,\delta,K)$.
By \ref{cond_1_seeds} we have
\begin{align*}
   (\delta-\varepsilon)\E[X] \leq \E_G[X]-\E[X] \leq \sum\limits_{\emptyset \neq H{\subseteq}^* C_4}N_{ind}(H,G)p^{4-e(H)},
\end{align*}
where ${\subseteq}^*$ stands for spanning subgraphs.
Observe that 
\[
    N_{ind}(H,G)\leq 
        \begin{cases}
            mn^2 &\text{ if } H=K_2\sqcup 2K_1 \\
            m^2&\text{ if } H=M_2\text{ or }P_4,
        \end{cases}
\]
where $K_2\sqcup2K_1$ is the graph on four vertices and one edge, $M_2$ is a matching of size two, and $P_4$ is the path with 3 edges; this indeed holds as two edges of $G$ span at most one induced $M_2$ or $P_4$. Therefore, we obtain
\begin{align}\label{eq_very_long_1_1}
    (\delta-\varepsilon)\E[X]\leq& N_{ind}(C_4,G)+N_{ind}(K_{1,2}\sqcup K_{1},G)p^2 +\underbrace{m^2p}_{N_{ind}(P_4,G)}+\underbrace{m^2p^2}_{N_{ind}(M_2,G)}+\underbrace{mn^2p^3}_{N_{ind}(K_2\sqcup 2K_1,G)} \nonumber\\
    \leq& N_{ind}(C_4,G)+N_{ind}(K_{1,2}\sqcup K_{1},G)p^2+2pm^2+mn^2p^3.
\end{align}
In Section \ref{sec_5} (Claim~\ref{claim_evalutation_of_Phi}) we prove that $\Phi _X (\delta)=O_{\delta}(\sqrt{\E[X]}\log(1/p))$. Therefore, as $\E[X]=\Theta (n^4p^4)$, we have $m = O_{\varepsilon,\delta}(n^2p^2\log(1/p))$. Further, as $p\ll 1$ we get from \eqref{eq_very_long_1_1} the following inequality for large enough $n$,
\begin{align*}
    (\delta-\varepsilon)\E[X]&\leq N_{ind}(C_4,G)+N_{ind}(K_{1,2}\sqcup K_{1},G)p^2+O(p\log^2 (1/p)\E[X])\\
                            &\leq N_{ind}(C_4,G)+N_{ind}(K_{1,2}\sqcup K_{1},G)p^2+\varepsilon\E[X].
\end{align*}
Therefore, we obtain $N_{ind}(C_4,G)+N_{ind}(K_{1,2} \sqcup K_{1},G)p^2\geq (\delta-2\varepsilon)\E[X]$ for large enough $n$. This is the assertion of the lemma.
\end{proof}
Informally, the next claim is that every structured seed contains a core. Formally this is given in the following claim.
\begin{lemma}\label{lemma_forth}
Suppose $\varepsilon,\delta,K$ are positive reals. Then for every structured seed $G\in \mathcal{S}'$ and every nonnegative real $s$, there exists a subgraph $G^*\subseteq G$ such that:
\begin{enumerate}[label=(\subscript{C}{{\arabic*}})]
    \item\label{Not a lot of edges in a core} $e(G)\leq K\cdot\Phi _X(\delta+\varepsilon)$,
    \item\label{N(G) larger} $N_{ind}(C_4,G^*)+N_{ind}(K_{1,2}\sqcup K_{1},G^*)p^2\geq (\delta-\varepsilon)\E[X]-s$, and
    \item \label{lemma_forth_cond_2}$N_{ind}(e,C_4,G^*)+N_{ind}(e,K_{1,2}\sqcup K_{1},G^*)p^2\geq \frac{s}{K\Phi _X(\delta+\varepsilon)}$ for every edge $e\in E(G^*)$.
\end{enumerate}
\end{lemma}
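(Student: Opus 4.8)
Lemma~\ref{lemma_forth} is the standard ``pruning'' step, and I would prove it by a greedy deletion argument. The plan is to start with $G^*_0 = G$ and repeatedly delete a ``useless'' edge, i.e.\ an edge $e$ violating \ref{lemma_forth_cond_2} with respect to the current graph, until no such edge remains; the final graph is the desired $G^*$. Property \ref{Not a lot of edges in a core} is immediate since $G^* \subseteq G$ and $G$ is a structured seed (indeed the statement as written refers to $e(G)$, which is unchanged). Property \ref{lemma_forth_cond_2} holds by the stopping rule: we stop precisely when every remaining edge $e$ satisfies $N_{ind}(e,C_4,G^*) + N_{ind}(e,K_{1,2}\sqcup K_1,G^*)p^2 \ge s/(K\Phi_X(\delta+\varepsilon))$. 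So the only real content is to verify \ref{N(G) larger}, i.e.\ that not too much of the count $N_{ind}(C_4,\cdot) + N_{ind}(K_{1,2}\sqcup K_1,\cdot)p^2$ is destroyed along the way.

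For the quantitative part I would introduce the ``potential'' $\Psi(G') := N_{ind}(C_4,G') + N_{ind}(K_{1,2}\sqcup K_1,G')p^2$ and track how much it drops at each deletion. When we delete an edge $e$ from the current graph $G'$, the induced copies of $C_4$ (resp.\ $K_{1,2}\sqcup K_1$) that are lost are exactly those containing $e$, \emph{plus} we may also \emph{gain} some induced copies (deleting an edge can turn a $K_4$-minus-an-edge into an induced $C_4$, etc.). The key observation is that the net decrease of $\Psi$ is at most the number of induced copies of $C_4$ or $K_{1,2}\sqcup K_1$ that contained $e$ in $G'$, which is the quantity being bounded in the deletion criterion --- wait, one must be slightly careful because deleting $e$ can \emph{create} new induced copies not through $e$. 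The cleanest way around this is to bound the total loss differently: instead of summing per-deletion drops, note $\Psi(G) - \Psi(G^*) \le \sum_{e \in E(G)\setminus E(G^*)}\big(N_{ind}(e,C_4,G_e) + N_{ind}(e,K_{1,2}\sqcup K_1,G_e)p^2\big)$ where $G_e$ is the graph at the moment $e$ is deleted, since each induced copy present in $G$ but not in $G^*$ must have been destroyed at some deletion step and hence contained the edge deleted at that step. Each term in this sum is $< s/(K\Phi_X(\delta+\varepsilon))$ by the deletion rule, and there are at most $e(G) \le K\Phi_X(\delta+\varepsilon)$ deletions, so the total loss is $< s$. Therefore $\Psi(G^*) \ge \Psi(G) - s \ge (\delta-\varepsilon)\E[X] - s$, using that $G \in \mathcal{S}'$ satisfies the structured-seed inequality. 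This gives \ref{N(G) larger}.

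The one subtlety worth spelling out carefully --- and the main (minor) obstacle --- is the bookkeeping in the inequality $\Psi(G) - \Psi(G^*) \le \sum_e (\cdots)$: I need to argue that every induced copy of $C_4$ or $K_{1,2}\sqcup K_1$ that is in $G$ but not in $G^*$ gets ``charged'' to exactly the deletion step at which it first disappears, and that at that step it was an induced copy containing the deleted edge. An induced subgraph $H$ of $G$ on a vertex set $S$ ceases to be a copy of the target only when an edge inside $S$ is deleted (deleting an edge with an endpoint outside $S$ does not affect whether $G'[S] \cong H$), so the first deletion that destroys it removes an edge of $G'[S]$, hence an edge belonging to that very induced copy in the graph $G_e$ present just before deletion. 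This makes the charging valid. Everything else is routine: the number of deletion steps is at most $e(G)$, and $e(G) \le K\Phi_X(\delta+\varepsilon)$ by \ref{Not a lot of edges in a core} (equivalently the structured-seed condition), so the product bound $\big(\text{\# steps}\big)\cdot s/(K\Phi_X(\delta+\varepsilon)) \le s$ goes through.
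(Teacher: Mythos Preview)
Your proof is correct and follows essentially the same greedy-deletion strategy as the paper. The only difference is bookkeeping: the paper deletes an edge $e$ whenever the \emph{actual drop} $N(G_k)-N(G_{k+1})$ is below the threshold $s/e(G)$, so that \ref{N(G) larger} follows immediately from a telescoping sum, while \ref{lemma_forth_cond_2} is then deduced from the inequality $N_{ind}(e,C_4,G^*)+N_{ind}(e,K_{1,2}\sqcup K_1,G^*)p^2 \ge N(G^*)-N(G^*\setminus e)$. You instead delete when the \emph{edge contribution} itself is below threshold, which gives \ref{lemma_forth_cond_2} for free but forces you to prove \ref{N(G) larger} via the charging argument you spell out. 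Your handling of the one subtlety---that deleting an edge can \emph{create} induced copies---is careful and correct; the paper sidesteps this issue by working with the net drop directly.
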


\begin{proof}
In this proof, it would be convenient to let 
\[
    N(G)=N_{ind}(C_4,G)+N_{ind}(K_{1,2}\sqcup K_{1},G)p^2.
\]
This is because then, \ref{N(G) larger} is equivalent to $N(G^*)\geq (\delta-\varepsilon)\E[X]-s$ and \ref{lemma_forth_cond_2} is equivalent to $N(G^*)-N(G^*\setminus e)\geq \frac{s}{K\Phi _X(\delta+\varepsilon)}$ for every $e$ an edge in $G^*$. 

Define the sequences $G=G_0\supseteq G_1\supseteq \cdots \supseteq G_r=G^*$ and $e_1,e_2,\ldots,e_r\in G$ by repeatedly setting $G_{k+1}$ to be a subgraph of $G_k$ obtained by the deletion of an edge $e_k$ such that
\[
N(G_k)-N(G_{k+1})<\frac{s}{e(G)},
\]
as long as such edge exists. The graph $G^*$ satisfies \ref{Not a lot of edges in a core} as it is a subgraph of a structured seed. We also claim that, the subgraph $G^*$ satisfies \ref{lemma_forth_cond_2}. That is because, if there is an edge $e$ in $G^*$ with $N(G^*)-N(G^*\setminus e)<\frac{s}{e(G)}$ the process would have continued by deleting this edge $-$ a contradiction. Recalling that $G$ is a structured seed we find that, $e(G)\leq K\Phi_X(\delta+\varepsilon)$ and thus,
\[
    N(G^*)\geq \frac{s}{e(G)}\geq \frac{s}{K\Phi_X(\delta+\varepsilon)}. 
\]
Finally, since $r\leq e(G)$, we have
\begin{align*}
    N(G)-N(G^*)=\sum \limits_{k=0}^{r-1}N(G_k)-N(G_{k+1})\leq \frac{rs}{e(G)}\leq s.   
\end{align*}
Rearranging this inequality and recalling that $G$ is a structured seed we obtain the assertion of the lemma,
\[
    N(G^*)=N_{ind}(C_4,G^*)+N_{ind}(K_{1,2}\sqcup K_{1},G^*)p^2\geq (\delta-\varepsilon)\E[X]-s. \qedhere
\]
\end{proof}

Applying Lemma \ref{lemma_forth} invoked with $\varepsilon$ replaced by $\varepsilon/2$ and $s=\varepsilon \E[X]/2$ yields the following corollary.

\begin{cor}\label{cor:seeds_contains_cores}
Suppose $\varepsilon,\delta,K$ are positive reals. Suppose further that $G\in \mathcal{S}'(\varepsilon/2,\delta,K)$ is a structured seed. Then, there exists a core $G^* \in \mathcal{C}(\varepsilon,\delta,K)$ such that $G^*\subseteq G$.
\end{cor}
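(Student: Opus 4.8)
The statement to prove is Corollary~\ref{cor:seeds_contains_cores}, which claims that any structured seed $G \in \mathcal{S}'(\varepsilon/2,\delta,K)$ contains a core $G^* \in \mathcal{C}(\varepsilon,\delta,K)$.

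The plan is to simply invoke Lemma~\ref{lemma_forth} with the right choice of parameters and check that the output graph $G^*$ meets the definition of a core in $\mathcal{C}(\varepsilon,\delta,K)$. First I would apply Lemma~\ref{lemma_forth} to the structured seed $G$, with the $\varepsilon$ appearing there replaced by $\varepsilon/2$ and with the free parameter $s$ set to $s = \varepsilon\E[X]/2$. This produces a subgraph $G^* \subseteq G$ satisfying conditions \ref{Not a lot of edges in a core}, \ref{N(G) larger}, and \ref{lemma_forth_cond_2} of that lemma (with $\varepsilon$ there being $\varepsilon/2$).

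Next I would translate the three conclusions into the three requirements defining $\mathcal{C}(\varepsilon,\delta,K)$. Condition \ref{Not a lot of edges in a core} is exactly the edge bound $e(G^*) \le e(G) \le K\cdot\Phi_X(\delta+\varepsilon)$ in the definition of a structured seed. Condition \ref{N(G) larger} with our substitutions reads
\[
N_{ind}(C_4,G^*)+N_{ind}(K_{1,2}\sqcup K_1, G^*)p^2 \ge (\delta-\varepsilon/2)\E[X] - \varepsilon\E[X]/2 = (\delta-\varepsilon)\E[X],
\]
which is precisely the second defining inequality of a structured seed in $\mathcal{S}'(\varepsilon,\delta,K)$; together with the edge bound this shows $G^* \in \mathcal{S}'(\varepsilon,\delta,K)$. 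Finally, condition \ref{lemma_forth_cond_2} with $s = \varepsilon\E[X]/2$ gives, for every $e \in E(G^*)$,
\[
N_{ind}(e,C_4,G^*)+N_{ind}(e,K_{1,2}\sqcup K_1,G^*)p^2 \ge \frac{\varepsilon\E[X]/2}{K\Phi_X(\delta+\varepsilon)} = \frac{\varepsilon\E[X]}{2K\cdot\Phi_X(\delta+\varepsilon)},
\]
which is exactly the extra per-edge condition in the definition of a core graph. Hence $G^* \in \mathcal{C}(\varepsilon,\delta,K)$, as desired.

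There is essentially no obstacle here; the only point requiring a moment's care is bookkeeping the parameter shifts — making sure that the $\varepsilon/2$ fed into Lemma~\ref{lemma_forth} combines with the choice $s = \varepsilon\E[X]/2$ to land on the clean threshold $(\delta - \varepsilon)\E[X]$, and that the same $K$ is used throughout so that the edge bound and the per-edge bound refer to the identical quantity $K\cdot\Phi_X(\delta+\varepsilon)$. Since $G \subseteq G$ and $G^* \subseteq G$, all containment relations are immediate, and the corollary follows at once.
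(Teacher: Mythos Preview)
Your proposal is correct and matches the paper's approach exactly: the paper simply says ``Applying Lemma~\ref{lemma_forth} invoked with $\varepsilon$ replaced by $\varepsilon/2$ and $s=\varepsilon\E[X]/2$ yields the following corollary,'' and you have spelled out the verification. One tiny bookkeeping slip: when you substitute $\varepsilon \to \varepsilon/2$ in the lemma, the quantity $\Phi_X(\delta+\varepsilon)$ in conditions~\ref{Not a lot of edges in a core} and~\ref{lemma_forth_cond_2} also becomes $\Phi_X(\delta+\varepsilon/2)$, so you actually get $e(G^*)\le K\Phi_X(\delta+\varepsilon/2)$ and the per-edge lower bound $\frac{\varepsilon\E[X]}{2K\Phi_X(\delta+\varepsilon/2)}$; both of these imply the required inequalities in $\mathcal{C}(\varepsilon,\delta,K)$ because $\Phi_X$ is nondecreasing in its argument.
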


Now we derive Theorem \ref{thm:main_cores_1} from the above claims and Corollary \ref{Main_tool}.

\begin{proof}[Proof of Theorem \ref{thm:main_cores_1}]
Applying Lemma \ref{claim_F_becomes_G} with $\varepsilon$ replaced by $\varepsilon/4$ and Corollary \ref{cor:seeds_contains_cores} with $\varepsilon$ replaced by $\varepsilon/2$ we find that 
\begin{align*}
    \mathbb{P}(G \subseteq G_{n,p} \text{ for some }G\in \mathcal{S}(\varepsilon/4,\delta,K)) &\leq \mathbb{P}(G \subseteq G_{n,p} \text{ for some }G\in \mathcal{S}'(\varepsilon/2,\delta,K))\\
    &\leq \mathbb{P}(G \subseteq G_{n,p} \text{ for some }G\in \mathcal{C}(\varepsilon,\delta,K)).
\end{align*}
Applying Corollary \ref{Main_tool} with $\varepsilon$ replaced with $\varepsilon/2$ we obtain
\[
    \mathbb{P}(X\geq (1+\delta)\E[X])\leq (1+\varepsilon)\mathbb{P}(G \subseteq G_{n,p} \text{ for some }G\in \mathcal{S}(\varepsilon/4,\delta,K)).
\]
Combining the above inequalities we obtain the assertion of the theorem, that is 
\[
    \mathbb{P}(X\geq (1+\delta)\E[X])\leq (1+\varepsilon)\mathbb{P}(G \subseteq G_{n,p} \text{ for some }G\in \mathcal{C}(\varepsilon,\delta,K)).\qedhere
\]
\end{proof}

\section{Lower bounds}\label{sec_4}

The aim of this section is to give lower bounds for $\mathbb{P}(X\geq (1+\delta)\E [X])$ for every positive $\delta$. We do so by presenting a family of graphs such that `planting' them in $G_{n,p}$ increases the expectation of $X$ by a multiplicative factor of $1+\delta$.
More formally, when we say `planting' we mean changing the probability measure on the hypercube $\{0,1\}^{\binom{n}{2}}$ from the usual product measure of $G_{n,p}$ to the probability measure of $G_{n,p}$ conditioned on the existence of a subgraph from a predetermined family of graphs. The graphs in the families that we will consider will satisfy $\E_G[X]\geq (1+\delta)\E[X]$. We choose these families as such because, on the event that $G\subseteq G_{n,p}$ for $G\subseteq K_n$ such that $\E_G[X]\geq (1+\delta)\E[X]$ the probability of $X\geq (1+\delta)\E[X]$ is pretty `large'.

Since for every labeled graph $G\subseteq K_n$ the probability of $G_{n,p}$ containing $G$ is $p^{e(G)}$ it makes sense to consider such graphs with the smallest $e(G)$. In contrast, in our case there is a wide range of $p$ where the strongest lower bound is obtained by planting \emph{some} member of a large family of graphs; each individual graph in the family is sub-optimal in terms of the number of edges, but the size of the family compensates for the difference in the number of edges between the optimal graph and the graphs in our family. This family is, roughly speaking all embeddings of an unbalanced complete bipartite graph into $K_n$.

%This will be useful as we choose this family such that planting each member of the family increases the expected number of induced copies of $C_4$ by a significant amount. 

%This is not enough as the event of seeing these subgraphs might be very rare. 

%Therefore, we choose this family in such a way that the appearance of each member of it in $G_{n,p}$ happens with probability which is not too small.

%It happens because there are a lot of ways to embed these constructions in $K_n$. This combinatorial factor of embedding turns out to be more significant than the difference in the number of edges between the optimal and sub-optimal constructions. 

More precisely, denoting the complete bipartite graph with sides of size $s$ and $t$ by $K_{s,t}$, we will define integers $m_0,m_2,m_3,\ldots $ and $m_*$ such that $k|m_k$ for $k\neq 0$ and $n|m_*$ and $m_k\approx 2\sqrt{k/(k-1)\delta \E[X]}$ for $k\neq 0$, $m_0\approx 2\sqrt{\delta \E[X]}$, and $m_*\approx (\sqrt{1+2\delta}-1)\sqrt{2\E[X]}$.

Note that provided that $n^{-1} \ll p\ll n^{-1/2}$ the constructions $K_{k,m_k/k}$ and $K_{\sqrt{m_0},\sqrt{m_0}}$ contain $\delta \E[X]$ induced copies of $C_4$, up to lower order terms. In addition, we will show later that $H=K_{2m_*/n,n/2}$ admits $\E_{H}[X]\geq (1+\delta) \E[X]$.

Denote by $\mathcal{E}_{k}$ the set of all copies of $K_{k,m_k/k}$ in $K_n$ when $k\neq 0,1$. Denote by $\mathcal{E}_{0}$ the set of all $K_{\sqrt{m_0},\sqrt{m_0}}$ in $K_n$ and and by $\mathcal{E}_*$ denote the set of all copies of $H$ in $K_n$. Planting one of $\mathcal{E}_k,\mathcal{E}_0$ or $\mathcal{E}_*$ yields a lower bound for the probability of the upper tail event which is valid for all values of $p$. As was mentioned in the introduction the significant different between this work and previous ones is the need to plant a large family of sub-optimal graphs and not a single optimal graph. This is true only for the families $\mathcal{E}_k$ where $k\neq 0$. In the case of $\mathcal{E}_0,\mathcal{E}_*$ we could as well plant a single graph from these sets as $|\mathcal{E}_0|$ and $|\mathcal{E}_*|$ are negligible.

One can compare these bounds, and see that the best one depends on $p$ in the following way. There exists an increasing sequence $\{c_k\}_{k=1}^\infty$ with $c_1=0$ and $\lim_{k\rightarrow \infty}c_k=1/3$ so that,
%Define $c_2=0$ and %c_k=\left(\sqrt{\frac{k-2}{k-1}}-\sqrt{\frac{k-1}{k}}\right)/\left({\sqrt{\frac {(k-3)^2}{(k-1)(k-2)}}-\sqrt{\frac{(k-2)^2}{(k-1)k}}}\right)
%c_k=\frac{2k-4}{3k-8}+\frac{k}{8-3k}\sqrt{\frac{k-2}{k}}$. 
provided $n^{-1+c_{k-1}}\ll p\ll n^{-1+c_{k}}$ for some integer $k\geq 2$ we obtain the strongest lower bound on the upper tail probability by planting $\mathcal{E}_k$. The best family to plant when $n^{-2/3}\ll p \ll n^{-1/2}$ is $\mathcal{E}_0$ which should be thought of as the `limit' of $\mathcal{E}_k$ when $k$ goes to infinity. Lastly, $\mathcal{E}^*$ is the best family to plant when $n^{-1/2} \ll p\ll 1$.

The main result of this section is a formalisation of the above discussion. In order to make things rigorous from now on we fix $\delta,\varepsilon$ to be positive reals and let
\[
r_k=\Bigg\{
\begin{array}{@{}l@{\thinspace}l}
        2 &\text{ for } k=0, \\
        2\sqrt{k/(k-1)} &\text{ for }k\geq 2,\\
\end{array}
\]
when $k$ is some non negative integer. Moreover, note the following:
For all $n^{-1}\ll p\ll 1$ we have $\E[X]=\Omega(n^4p^4)$. Therefore, for clarity of the presentation we assume that $r_k\sqrt{(\delta+\varepsilon)\E[X]}$ is an integer divisible by $k$ for any $2\leq k\leq O(np)$. Furthermore, if $p\gg n^{-1/2}$ we have $\sqrt{\E[X]}/n =\Omega(np^2)$, hence for clarity of the presentation we assume $\sqrt{C(\varepsilon,\delta)\E[X]}/n$ is an integer where $C(\varepsilon,\delta)$ will be specified later.
The lower bounds given by the following theorem should be thought of as the probability of the appearance of $K_{k,m_k/k}$ in $G_{n,p}$ for some fixed integer $2\leq k\leq O(np)$ (provided $p\ll n^{-1/2}$) and the appearance of $H$ in $G_{n,p}$. Where we define:
\begin{align*}
    m_k&=r_k\sqrt{(\delta+\varepsilon) \E[X]},\\
    m_*&=C(\varepsilon,\delta)\sqrt{\E[X]}/2,
\end{align*}
where $C(\varepsilon,\delta)=\sqrt{r+d^2}-d$ and $r=16(\delta+3\varepsilon/2)$ and $d=\sqrt{2}/(1+\varepsilon)$. Now we are ready to state the main result of this section. We wish to emphasize that both $m_k$ and $m_*$ depend on $\delta$ and $\varepsilon$.

%All of this will be explained more formally in the following claims to come. In order to state the main theorem of this section let us introduce some notation.

\begin{thm} \label{thm_main_upper_2}
Let $\varepsilon,\delta,C$ be positive real numbers and let $2\leq k\leq Cnp$ be a positive integer. Then the following holds
\[
\mathbb{P}(X\geq(1+\delta)\E[X])\geq 
\begin{cases}
             \left( p^{m_k} \binom{n}{m_k/k}\right)^{1+\varepsilon}  &\text{ for } n^{-1}\ll p\ll n^{-1/2} \\
             p^{({1+\varepsilon})m_*} &\text{ for } n^{-1/2}\ll p\ll 1\\
\end{cases}
\]
for large enough $n$.
\end{thm}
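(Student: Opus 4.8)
The plan is to prove the two cases separately, in each case lower-bounding the upper-tail probability by the probability that $G_{n,p}$ contains a member of the relevant planted family. For both cases the key structural facts we need are: (i) a \emph{counting estimate} showing that planting the relevant bipartite graph actually pushes $\E_G[X]$ above $(1+\delta)\E[X]$, and (ii) a \emph{second-moment / containment estimate} translating the combinatorial size of the family into a gain in probability. I would begin by recording, for a complete bipartite graph $K_{a,b}\subseteq K_n$ with $a\le b$, the exact count $N_{ind}(C_4,K_{a,b})=\binom{a}{2}\binom{b}{2}$; more importantly, for $G=K_{a,b}$ planted into $K_n$ one has $\E_G[X]=\E[X]+\Theta(\binom a2\binom b2)$ plus lower-order terms coming from partially-planted copies (copies of $C_4$ using one, two or three planted edges), exactly as in the expansion $\E_G[X]-\E[X]=\sum_{\emptyset\neq H\subseteq^* C_4} N_{ind}(H,G)p^{4-e(H)}$ that already appears in the proof of Lemma~\ref{claim_F_becomes_G}. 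The point is that with the chosen sizes $m_k=r_k\sqrt{(\delta+\varepsilon)\E[X]}$ (so $a=k$, $b=m_k/k$, hence $\binom a2\binom b2\sim \tfrac{k(k-1)}{2}\cdot\tfrac{(m_k/k)^2}{2}=\tfrac{(\delta+\varepsilon)\E[X]}{2}\cdot(r_k^2/(2\cdot\tfrac{k}{k-1}))\cdot$\,; with $r_k=2\sqrt{k/(k-1)}$ this is $(\delta+\varepsilon)\E[X]\,(1+o(1))$), every lower-order term is $O(p\,\mathrm{polylog}\cdot\E[X])=o(\varepsilon\E[X])$ in the range $n^{-1}\ll p\ll n^{-1/2}$ (using $b=\Theta(n^2p^2\log(1/p)/k)$ and $n^{-1}\ll p$ so that the fully-planted count dominates), and hence $\E_G[X]\ge(1+\delta)\E[X]$ for large $n$. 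The analogous computation for $H=K_{2m_*/n,\,n/2}$ gives $N_{ind}(C_4,H)=\binom{2m_*/n}{2}\binom{n/2}{2}\sim \tfrac{(2m_*/n)^2}{2}\cdot\tfrac{n^2}{8}=\tfrac{m_*^2}{4}$, but here the partially-planted contributions are \emph{not} negligible — in fact $N_{ind}(K_{1,2}\sqcup K_1,H)p^2$ is comparable, which is exactly why $m_*$ is defined through $C(\varepsilon,\delta)=\sqrt{r+d^2}-d$ rather than through a clean square root; I would verify that $\E_H[X]\ge(1+\delta)\E[X]$ by plugging $e(H)=m_*$, $N_{ind}(C_4,H)=\Theta(m_*^2)$ and $N_{ind}(K_{1,2}\sqcup K_1,H)=\Theta(m_*^2/p^2\cdot\text{const})$ into the expansion and solving the resulting quadratic in $m_*$ — the constants $r=16(\delta+3\varepsilon/2)$, $d=\sqrt2/(1+\varepsilon)$ are precisely the coefficients of that quadratic.

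For the second case ($n^{-1/2}\ll p\ll1$) the containment step is trivial: $H$ is a single fixed graph with $m_*$ edges, so $\mathbb P(H\subseteq G_{n,p})=p^{m_*}$, and by the monotonicity argument above (conditioning on $H\subseteq G_{n,p}$ gives $\E_H[X]\ge(1+\delta)\E[X]$, and since $X$ restricted to that event is concentrated / at least a constant-probability fraction of the mass lies above its conditional mean by Paley--Zygmund or a direct variance bound as in Lemma~\ref{lemma_upper}) one gets $\mathbb P(X\ge(1+\delta)\E[X])\ge c\cdot p^{m_*}\ge p^{(1+\varepsilon)m_*}$ for large $n$, the last step because $p\ll1$ so $p^{\varepsilon m_*}\to0$ slower than any constant — wait, rather because $c\ge p^{\varepsilon m_*}$ eventually since $m_*\to\infty$ and $p<1$; one absorbs the constant $c$ and any $p^{o(m_*)}$ losses into the extra $p^{\varepsilon m_*}$ factor. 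Concretely I would apply Lemma~\ref{lemma_upper} with a bound $M$ on $X$ and $\delta$ replaced by $\delta-\varepsilon'$ after planting, but it is cleaner to just say: conditioned on $H\subseteq G_{n,p}$, $\E[X\mid H]\ge(1+\delta)\E[X]$ and $\var(X\mid H)=o(\E[X\mid H]^2)$ (standard, as $\E[X]\to\infty$ and $H$ is small), so $\mathbb P(X\ge(1+\delta)\E[X]\mid H)\ge\tfrac12$ for large $n$; hence $\mathbb P(X\ge(1+\delta)\E[X])\ge\tfrac12 p^{m_*}$.

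For the first case ($n^{-1}\ll p\ll n^{-1/2}$, $k$ fixed) the containment step is the crux. We want $\mathbb P(G_{n,p}\supseteq K_{k,m_k/k}\text{ for some copy})\ge (p^{m_k}\binom n{m_k/k})^{1+\varepsilon}$, and then the same variance argument (conditioned on containing \emph{some} such copy, $\E[X\mid\cdot]\ge(1+\delta)\E[X]$ and the conditional variance is small) upgrades this to the same bound on the upper tail, up to an extra polynomial factor absorbed into $\varepsilon$. To bound the containment probability from below I would fix the left side to be a \emph{single} fixed $k$-set $A=[k]$ (this only loses a factor $n^{-k}=p^{o(m_k)}$, absorbed into the exponent $1+\varepsilon$) and count: for $G_{n,p}$ to contain $K_{A,B}$ for some $(m_k/k)$-set $B$ disjoint from $A$, it suffices that the set $B_0$ of vertices $v\notin A$ adjacent to \emph{all} of $A$ has size $\ge m_k/k$. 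Each $v$ lies in $B_0$ independently with probability $p^k$, so $|B_0|\sim\mathrm{Bin}(n-k,p^k)$ with mean $\sim np^k$. Since $m_k/k=\Theta(n^2p^2\log(1/p)/k^2)$ and $np^k$ — for $k\ge2$ and $p\gg n^{-1}$ we have $np^k$ could be much smaller than $m_k/k$, so this is a \emph{large-deviation} event for the Binomial, not a typical one; its probability is $\exp(-(1+o(1))\tfrac{m_k}{k}\log(\tfrac{m_k/k}{np^k}))=\exp(-(1+o(1))\tfrac{m_k}{k}(k\log(1/p)+\log\tfrac{m_k/k}{n}))$, which one checks equals $(p^{m_k}\binom n{m_k/k})^{1+o(1)}$ using $\binom n{m_k/k}=\exp((1+o(1))\tfrac{m_k}{k}\log\tfrac{nk}{m_k})$ and $m_k/k=n^{o(1)}\cdot$(power of $n$)\, — the $\log(1/p)$ term dominates, giving $p^{m_k}$, and the binomial coefficient supplies the correction $(m_k/k)!^{-1}\cdot n^{m_k/k}$-type factor. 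The $(1+\varepsilon)$ in the exponent is the slack that swallows all the $o(\cdot)$ errors and the $n^{-k}$ from fixing $A$. \emph{The main obstacle} is precisely this last calculation: matching the Binomial large-deviation rate $\tfrac{m_k}{k}\log(\tfrac{m_k/k}{np^k})$ against $-\log(p^{m_k}\binom n{m_k/k})=m_k\log(1/p)-\log\binom n{m_k/k}$ and confirming the two agree to leading order (so that the loss is only sub-exponential, hence absorbable into $p^{\varepsilon m_k}$); one has to be careful that $m_k/k\to\infty$, that $m_k/k=o(n)$ so $\binom n{m_k/k}\approx(nk/m_k)^{m_k/k}/\sqrt{2\pi m_k/k}\cdot e^{m_k/k}$ via Stirling, and that all the error terms — Stirling's $\log$ corrections, the $\log(n-k)$ vs $\log n$, the $\lfloor\cdot\rfloor$ and divisibility cleanups assumed at the start of the section — are each $o(m_k\log(1/p))$. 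Everything else (the variance bound for the conditional concentration, the monotonicity of $X$ used to pass from "contains some copy" to the upper-tail event) is routine and identical across the two cases.
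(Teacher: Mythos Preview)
Your proposal is correct in its overall strategy and would work, but the paper's execution is cleaner in two places, so let me point those out.

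\textbf{Case 1, the containment estimate.} You frame the appearance of $K_{[k],B}$ as a Binomial large-deviation event for $|B_0|\sim\mathrm{Bin}(n-k,p^k)$ and then have to match the rate $\tfrac{m_k}{k}\log\!\big(\tfrac{m_k/k}{np^k}\big)$ against $-\log\big(p^{m_k}\binom{n}{m_k/k}\big)$; you flag this matching as the ``main obstacle.'' The paper sidesteps this entirely by defining, for each $A\in\binom{[n]\setminus[k]}{m_k/k}$, the event $F_{A,k}=\{\bigcap_{i\le k}N(i)=A\text{ and }[k]\text{ is independent}\}$. These events are \emph{pairwise disjoint}, each has probability at least $p^{m_k}(1-p)^{k^2}(1-p^k)^{n-k}$, and there are $\binom{n-k}{m_k/k}$ of them; so the union has probability at least $\binom{n-k}{m_k/k}p^{m_k}\cdot p^{o(m_k)}$. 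No rate-matching or Stirling estimates are needed beyond the trivial $\binom{n-k}{m_k/k}\ge\binom{n}{m_k/k}^{1+o(1)}$ and $(1-p)^{k^2}(1-p^k)^{n-k}=p^{o(m_k)}$. Your Binomial route gives the same answer but with more bookkeeping; also note that your remark about ``losing a factor $n^{-k}$'' from fixing $A=[k]$ is spurious --- the target bound contains no $\binom{n}{k}$ factor, so fixing the small side costs nothing.

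\textbf{The conditional-probability step.} You propose to show $\var(X\mid\cdot)=o(\E[X\mid\cdot]^2)$ and apply Paley--Zygmund. That works but requires a genuine second-moment computation. The paper instead uses the crude deterministic bound $X\le n^4$: from $\E[X\mid F_{A,k}]\ge(1+\delta+\tfrac{3\varepsilon}{4})\E[X]$ one gets immediately
\[
\mathbb{P}\big(X\ge(1+\delta)\E[X]\mid F_{A,k}\big)\ \ge\ \frac{(3\varepsilon/4)\,\E[X]}{n^4}\ \ge\ p^5
\]
for large $n$. The same Markov-type trick (formalised as Lemma~\ref{lemma_upper}) handles Case~2. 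This is a one-line substitute for your variance argument, and it automatically accounts for the non-edges inside $[k]$ that you would otherwise have to track to ensure the planted $C_4$'s are \emph{induced} in $G_{n,p}$ --- the definition of $F_{A,k}$ already forces $[k]$ to be independent.

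In Case~2 your reading of the constants is right: the partially-planted $K_{1,2}\sqcup K_1$ term $\Theta(m_* n^2 p^2)$ is comparable to the fully-planted $C_4$ term $\Theta(m_*^2)$, and the quadratic in $m_*$ that this produces is exactly why $m_*$ is defined via $\sqrt{r+d^2}-d$.
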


In the following lemma we claim that the expected number of induced copies of $C_4$ conditioned on one labeled copy of $K_{k,m_k/k}$ or $H$ being a subgraph of $G_{n,p}$ in a suitable range of $p$ is at least $(1+\delta +\varepsilon/2)\E[X]$ provided $n$ is large enough. To this end let us introduce the following notations.
From now on we assume that the vertex set of $G_{n,p}$ is $[n]$. For every positive integer $k\leq n$ and $A\subset [n]\setminus[k]$ with $|A|=m_k/k$ define the following events:
\begin{enumerate}[label=(\Roman*)]
    \item 
    $F_{A,k}$ is the event that $\cap_{i=1}^{k}N(i)=A$ and there are no edges between any $i,j\in [k]$,
    \item 
    $F_k=\cup_{A\in \binom{[n]\setminus[k]}{m_k/k}}F_{A,k}$.
\end{enumerate}
Now we are ready to state the lemma.

\begin{lemma} \label{lemma_estimation_of_conditioned_expectation}
    Suppose $\varepsilon,\delta,C$ are positive reals, $2\leq k\leq Cnp$ is some positive integer and $A\subseteq [n]\setminus [k]$ with $|A|=m_k/k$. Then the following holds:
    \begin{enumerate}
        \item\label{lower_bound_k} Suppose $n^{-1}\ll p\ll n^{-1/2}$. Then, for large enough $n$ we have 
        \[
            \E[X\mid F_{A,k}]\geq (1+\delta+3\varepsilon/4)\E[X].
        \]
        \item\label{lower_bound_H} Suppose $n^{-1/2}\ll p\ll 1$. Then, for large enough $n$ we have 
        \[
            \E_{H}[X]\geq (1+\delta+\varepsilon)\E[X].
        \]
    \end{enumerate}
\end{lemma}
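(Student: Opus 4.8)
The plan is to compute $\E[X \mid F_{A,k}]$ (resp. $\E_H[X]$) by splitting the induced copies of $C_4$ in $G_{n,p}$ into two groups: those whose edge set is disjoint from the edges we have conditioned on, and those that use at least one such edge. For the first group the conditioning barely changes the count: conditioning on $F_{A,k}$ fixes only $O(m_k \cdot k + \binom{k}{2}) = O(m_k)$ edge-slots (the non-edges inside $[k]$, the edges from $[k]$ to $A$, and the non-edges from $[k]$ to $[n]\setminus([k]\cup A)$), and the number of induced $C_4$'s entirely avoiding these slots is $(1-o(1))\E[X]$ provided $m_k = o(n^4 p^4 / (n^2 p)) = o(n^2 p^3)$, which one checks holds in the range $n^{-1}\ll p \ll n^{-1/2}$ since $m_k = \Theta(\sqrt{\E[X]}) = \Theta(n^2 p^2)$. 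The gain comes from the second group. Conditioned on $F_{A,k}$, the vertices of $[k]$ together with any two vertices $a,a' \in A$ form a copy of $K_{k,|A|}$ restricted to $\{a,a'\}$, and for each pair $a, a' \in A$ and each pair $i,j\in[k]$ the four vertices $\{i,j,a,a'\}$ induce a $C_4$ exactly when $a a' \notin E(G_{n,p})$, which happens with probability $1-p = 1-o(1)$; note the non-edges $ij$ are forced by $F_{A,k}$, so this really is an induced $C_4$. This produces $(1-o(1))\binom{k}{2}\binom{|A|}{2} \cdot (\text{labellings})$ new induced copies. With $|A| = m_k/k$ and $r_k = 2\sqrt{k/(k-1)}$, the count $\binom{k}{2}\binom{m_k/k}{2}$ times the appropriate labelled-copy factor works out to $(1-o(1)) \cdot \tfrac14 r_k^2 \cdot \tfrac{k-1}{k}\cdot (\delta+\varepsilon)\E[X] = (1-o(1))(\delta+\varepsilon)\E[X]$, so that $\E[X \mid F_{A,k}] \ge (1+\delta+3\varepsilon/4)\E[X]$ for large $n$; one must also check that induced $C_4$'s using exactly one or three of the conditioned slots contribute only a lower-order amount (they are bounded by $O(m_k \cdot n^2 p^2) = o(\E[X])$, using $m_k = o(n^2 p^2)$).

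For part \ref{lower_bound_H}, the argument is structurally the same but now $H = K_{2m_*/n, n/2}$ is already a subgraph of $K_n$, so $\E_H[X]$ just means the expected number of induced $C_4$'s in $G_{n,p} \cup H$ conditioned on $H \subseteq G_{n,p}$; equivalently we re-run the computation with $k = 2m_*/n$ (which is now $\Theta(np)$, i.e. growing) and $|A| = n/2$. Here the ``new'' induced $C_4$'s are of two kinds: those using two vertices from the small side and two from the big side, contributing $\sim \binom{n/2}{2}\binom{2m_*/n}{2}\cdot(\text{labellings})\cdot(1-p)^{?}$, and—because the big side has linear size and hence the induced subgraph structure among its $n/2$ vertices is a fresh $G_{n/2,p}$—one must also track the interaction terms carefully. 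The key numerical input is the definition $m_* = C(\varepsilon,\delta)\sqrt{\E[X]}/2$ with $C(\varepsilon,\delta) = \sqrt{r+d^2}-d$, $r = 16(\delta + 3\varepsilon/2)$, $d = \sqrt2/(1+\varepsilon)$: this is precisely the positive root of the quadratic $C^2 + 2dC = r$, i.e. $C^2/16 + (C/16)\cdot(2d\cdot 8) = \dots$, arranged so that the quadratic gain in induced $C_4$'s (from both the ``2+2 across the bipartition'' copies and the correction coming from planting edges inside what would be $G_{n/2,p}$) sums to exactly $(\delta+\varepsilon)\E[X]$ to leading order. So the plan is: expand $N_{ind}(C_4, G_{n,p}\cup H)$, isolate the deterministic contribution forced by $H$ (a sum of two binomial-coefficient terms, one linear and one quadratic in $m_*$, coming respectively from copies using one vs. two vertices of the small side—wait, using a vertex of the small side forces the two incident $H$-edges so only the ``opposite'' edge is random), show the random part is $(1-o(1))\E[X]$, and verify the deterministic part equals $(1-o(1))(\delta+\varepsilon)\E[X]$ using the definition of $C(\varepsilon,\delta)$.

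The main obstacle I expect is bookkeeping: correctly enumerating which induced $C_4$'s become ``guaranteed'' (or have boosted probability) after the conditioning, keeping track of labelled vs. unlabelled copies and the combinatorial factors $\binom{k}{2}$, $\binom{|A|}{2}$, $2!$, $2!$ that appear, and—most delicately in part \ref{lower_bound_H}—separating the deterministic boost from $H$ from the genuinely random copies without double counting, since with $|A|$ linear in $n$ the big side carries a non-negligible number of its own induced $C_4$'s. Once the correct quadratic in $m_k$ (resp. $m_*$) is identified, matching it against $(\delta+\varepsilon)\E[X]$ is exactly the algebra that defines $r_k$ and $C(\varepsilon,\delta)$, so there is no hidden difficulty there; the only real care needed is the estimate $m_k = o(n^2 p^3)$ — equivalently $p \gg n^{-1/2}$ fails at the upper end, so one should double-check the boundary $p \ll n^{-1/2}$ is exactly what makes the lower-order terms negligible. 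I would also double-check the $(1-p)$ factors: every ``new'' $C_4$ needs one specific \emph{non}-edge among the big-side pair, present with probability $1-p = 1-o(1)$, so these factors are harmless, but they must be acknowledged to justify that the copies are genuinely \emph{induced}.
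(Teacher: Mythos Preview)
Your overall architecture matches the paper's: for a lower bound on $\E[X\mid F_{A,k}]$ (resp.\ $\E_H[X]$), discard all but two types of induced $C_4$, namely those entirely inside the planted structure and those entirely in the fresh random graph outside it. But several of your numerical checks are wrong, and this matters because it is exactly where you claim the routine verification lies.

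In part~\ref{lower_bound_k}: you say the background copies give $(1-o(1))\E[X]$ ``provided $m_k = o(n^2p^3)$'', and then assert this holds since $m_k=\Theta(n^2p^2)$. But $n^2p^2=o(n^2p^3)$ would need $p\to\infty$. The paper instead observes that every conditioned slot is incident to $[k]$, so an induced $C_4$ avoids all conditioned slots iff all four of its vertices lie in $[n]\setminus[k]$; the expected count of these is $\binom{n-k}{4}\big/\binom{n}{4}\cdot\E[X]=(1-o(1))\E[X]$ simply because $k\le Cnp=o(n)$. This is both simpler and correct. Likewise, you claim the ``one or three slot'' copies are $O(m_k\cdot n^2p^2)=o(\E[X])$ ``using $m_k=o(n^2p^2)$'', which is again false since $m_k=\Theta(n^2p^2)$. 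You do not need this estimate at all: for a lower bound you may simply drop those copies.

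In part~\ref{lower_bound_H}: the small side of $H$ has size $2m_*/n=\Theta(np^2)$, not $\Theta(np)$ as you write. More importantly, you never pin down the mechanism behind the linear-in-$m_*$ term. It comes from induced $C_4$'s on one small-side vertex $a$, two big-side vertices $b,c$, and one vertex $d$ isolated in $H$ (i.e.\ $d\in[n]\setminus[2m_*/n+n/2]$): the cycle is $a\,b\,d\,c$, with $ab,ac\in H$ deterministically, $bd,cd$ random (probability $p^2$), and $ad,bc$ random non-edges. The paper packages this as $N_{ind}(K_{1,2}\sqcup K_1,H)\,p^2(1-p)^2$ and computes $N_{ind}(K_{1,2}\sqcup K_1,H)\ge m_*n^2/8+O(m_*^2)$; together with $N_{ind}(C_4,H)=m_*^2/4+O(m_*n)$ this gives precisely the quadratic $m_*^2/4+m_*n^2p^2/8\ge(\delta+3\varepsilon/2)\E[X]$ whose positive root defines $C(\varepsilon,\delta)$. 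Your sketch gestures at ``one vs.\ two vertices of the small side'' but does not identify which copies produce the linear term or why two (not one) random edges are needed there.
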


\begin{proof}

We start with the first item in the lemma. Assume $n^{-1}\ll p\ll n^{-1/2}$.
First, note that 
\[
    E[X\mid F_{A,k}]\geq N_{ind}(C_4,K_{k,m_k/k})(1-p)+\E[N_{ind}(C_4,G_{n-k,p})].
\]
Let us compute these two quantities.

Since $K_{s,t}$ contains exactly $\binom{s}{2}\binom{t}{2}$ induced copies of $C_4$ we obtain that $K_{k,m_k/k}$ contains 
\[
    (k-1)m_k^2/4k-O(m_k)=(\delta+\varepsilon)\E[X]-O(\E[X]^{1/2})
\]
induced copies of $C_4$ as $r_k=2\sqrt{k/(k-1)}$.
Further, as $k=O(np)$ and for all nonnegative integers $a,b,c$ we have $\binom{a-c}{b}/\binom{a}{b}\geq \left(\frac{a-b-c}{a-b}\right)^b$, we deduce
\[
    \E[N_{ind}(C_4,G_{n-k,p})]=\frac{\binom{n-k}{4}}{\binom{n}{4}}\E[X]\geq \left(1-\frac{k}{n-4}\right)^4\E[X]=(1-o(1))\E[X].
\]
Combining the above we obtain, 
\[
    \E[X\mid F_{A,k}]\geq (1-o(1))(\delta+\varepsilon)\E[X]-O(\E[X]^{1/2})+(1-o(1))\E[X]\geq (1+\delta+3\varepsilon/4)\E[X].
\]

For the second item assume $n^{-1/2}\ll p\ll 1$. Similar to the first case we have
\[
    \E_H[X]\geq (N_{ind}(C_4,H)+N_{ind}(K_{1,2}\sqcup K_{1},H)p^2)(1-p)^2+\E[N_{ind}(C_4,G_{n-2m_*/n,p})].
\]
We now compute these quantities. 

Since $K_{s,t}$ contains exactly $\binom{s}{2}\binom{t}{2}$ induced copies of $C_4$, we obtain that $H$ contains ${m_*^2}/{4}+O(m_*n)$ induced copies of $C_4$. Moreover, thinking of $H$ as a spanning subgraph of $K_n$ by adding isolated vertices, we see that $H$ contains at least $m_*n^2/8+O(m_*^2)$ induced copies of $K_{1,2}\sqcup K_{1}$ as we can choose one vertex from one side, two form the other side and another isolated vertex. Furthermore, as for all nonnegative integers $a,b,c$ we have $\binom{a-c}{b}/\binom{a}{b}\geq \left(\frac{a-b-c}{a-b}\right)^b$ and $2m_*/n=O(np^2)=o(n)$, we deduce
\[
    \E[N_{ind}(C_4,G_{n-2m_*/n,p})]=\frac{\binom{n-2m_*/n}{4}}{\binom{n}{4}}\E[X]\geq \left( 1-\frac{2m_*}{n(n-4)}\right)^4\E[X]\geq (1-o(1))\E[X],
\]
taking $n$ large enough we have, $\E[N_{ind}(C_4,G_{n-2m_*/n,p})]\geq (1-\varepsilon/4)\E[X]$. 
Recalling that $p\gg n^{-1/2}$ and combining all the above bounds we obtain,
\[
    \E_H[X]\geq {m_*^2}/{4}+m_*n^2p^2/8+(1-\varepsilon/4)\E[X]+o(m_*^2).
\]
By this inequality and the definition of $m_*$ one can check that for large enough $n$,
\[
    \E_H[X]\geq (\delta+3\varepsilon/2)\E[X]+(1-\varepsilon/4)\E[X]-\varepsilon/4\E[X]=(1+\delta+\varepsilon)\E[X].
\]
This completes the proof.
\end{proof}

Now we are ready to prove Theorem \ref{thm_main_upper_2}. Before proving the theorem we make the following remark. Suppose $\gamma>0$ is a real number, $k=np$ and $p\gg n^{-1}$. Then $m_0\leq m_k\leq (1+\gamma)m_0$ provided $n$ is sufficiently large. Therefore, Theorem \ref{thm_main_upper_2} invoked with $\varepsilon=\gamma$ implies:
\[
    \mathbb{P}(X\geq (1+\delta)\E[X])\geq \left(p^{m_k}\binom{n}{m_k/np}\right)^{(1+\gamma)}\geq p^{(1+\gamma)^2m_0}
\]
for large enough $n$. Thus, by setting $\gamma=\sqrt{1+\varepsilon}-1$, and letting $\{c_k\}_{k=2}^\infty$ be any increasing sequence satisfying $c_2=0$ and $\lim_{k\rightarrow \infty}c_k=1/3$, we have the following corollary of Theorem~\ref{thm_main_upper_2}, which will be shown to be tight in the next sections for some specific sequence $c_k$.

\begin{cor} \label{thm_main_upper_1}
Let $\varepsilon$ and $\delta$ be positive real numbers and let $k\geq 2$ be positive integer. Then the following holds
\[
\log \mathbb{P}(X\geq (1+\delta)\E[X])\geq 
\begin{cases}
             (1+\varepsilon)(m_k\log p+\log \binom{n}{m_k/k} )&\text{ for } n^{-1+c_{k-1}}\ll p\ll n^{-1+c_{k}},\\
             (1+\varepsilon)m_0\log(p)  &\text{ for } n^{-2/3}\ll p\ll n^{-1/2}, \\
             {(1+\varepsilon)m_*}\log(p) &\text{ for } n^{-1/2}\ll p\ll 1.\\
\end{cases}
\]
for large enough $n$.
\end{cor}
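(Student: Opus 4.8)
The goal is to prove Theorem~\ref{thm_main_upper_2}, which gives lower bounds on $\mathbb{P}(X\geq(1+\delta)\E[X])$ by planting bipartite graphs. The plan is to combine Lemma~\ref{lemma_estimation_of_conditioned_expectation}, which controls the conditional expectation after planting, with a second-moment-type (or more precisely, a conditional concentration) argument that shows the upper tail event actually occurs with probability $p^{o(e(C))}$ once we condition on planting. Throughout I would work with $\varepsilon$ small and absorb polynomial factors in $n$ into the $(1+\varepsilon)$-exponent, since $p$ is polynomially small in $n$ so $\binom{n}{m_k/k}$ and similar combinatorial factors are at most $p^{-O_\varepsilon(\sqrt{\E[X]})}=p^{-\varepsilon m_k}$-type corrections.

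First I would handle the denser regime $n^{-1/2}\ll p\ll 1$, which is easier because only a single copy of $H=K_{2m_*/n,n/2}$ needs to be planted. By Lemma~\ref{lemma_estimation_of_conditioned_expectation}\ref{lower_bound_H} we have $\E_H[X]\geq(1+\delta+\varepsilon)\E[X]$. I would then argue that, conditioned on $H\subseteq G_{n,p}$, the random variable $X$ is concentrated enough around its conditional mean that $\mathbb{P}(X\geq(1+\delta)\E[X]\mid H\subseteq G_{n,p})\geq 1/2$ (or at least $\geq p^{\varepsilon m_*}$, which is all that is needed). This concentration can be obtained by writing $X$ conditioned on $H$ as a sum of the (deterministic) contribution from edges inside $H$ plus a function of the remaining independent edges, and noting the fluctuations of the latter are of lower order than the $\varepsilon\E[X]$ gap — a Chebyshev/Janson-type bound suffices. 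Then $\mathbb{P}(X\geq(1+\delta)\E[X])\geq \mathbb{P}(H\subseteq G_{n,p})\cdot\tfrac12 = p^{e(H)}/2 = p^{m_*}/2$, and since $p^{m_*}/2\geq p^{(1+\varepsilon)m_*}$ for large $n$ this gives the claim.

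For the sparse regime $n^{-1}\ll p\ll n^{-1/2}$ the key difference is that I must plant \emph{some} copy of $K_{k,m_k/k}$ from the family, using the combinatorial gain from the $\binom{n}{m_k/k}$ choices of the large side. Concretely, I would lower-bound $\mathbb{P}(F_k)$ — the event that there exist $k$ vertices (say $[k]$, or any fixed $k$-set, with the large side ranging over all $\binom{n-k}{m_k/k}$ choices) pairwise nonadjacent with common neighbourhood exactly some set $A$ of size $m_k/k$. A direct computation gives $\mathbb{P}(F_{A,k}) = p^{m_k}(1-p^k)^{n-k-m_k/k}(1-p)^{\binom k2}$ for each fixed $A$, and these events for distinct $A$ are disjoint, so $\mathbb{P}(F_k)\geq \binom{n-k}{m_k/k}p^{m_k}(1-p^k)^{n}(1-p)^{\binom k2}$. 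Since $p^k\ll n^{-1}$ in the relevant range and $k$ is a constant, the last two factors are $1-o(1)$, giving $\mathbb{P}(F_k)\geq(1-o(1))\binom{n}{m_k/k}p^{m_k}$, up to replacing $\binom{n-k}{\cdot}$ by $\binom n\cdot$ at negligible cost. Then, conditioned on $F_{A,k}$ for any fixed $A$, Lemma~\ref{lemma_estimation_of_conditioned_expectation}\ref{lower_bound_k} gives $\E[X\mid F_{A,k}]\geq(1+\delta+3\varepsilon/4)\E[X]$, and the same conditional concentration argument as above yields $\mathbb{P}(X\geq(1+\delta)\E[X]\mid F_{A,k})\geq 1/2$ for every $A$; summing over the disjoint events $F_{A,k}$ gives $\mathbb{P}(X\geq(1+\delta)\E[X])\geq\tfrac12\mathbb{P}(F_k)\geq\tfrac12(1-o(1))\binom{n}{m_k/k}p^{m_k}$. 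Finally, since $\binom{n}{m_k/k}p^{m_k}\leq p^{-\varepsilon m_k/2}\cdot(\cdots)$ is anyway polynomially controlled, raising to the power $1+\varepsilon$ only weakens the bound, so $\bigl(p^{m_k}\binom{n}{m_k/k}\bigr)^{1+\varepsilon}$ follows for large $n$.

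The main obstacle is the conditional concentration step: one needs that, after planting, $X$ does not dip below $(1+\delta)\E[X]$ too often. The planted structure contributes a deterministic $\approx(\delta+\varepsilon)\E[X]$ (or more), and the ambient random graph contributes $(1\pm o(1))\E[X]$ in expectation with fluctuations that must be shown to be $o(\varepsilon\E[X])$; there is also a cross-term counting induced $C_4$'s using some planted and some random edges, which must be bounded similarly. I would control the variance of these contributions by the standard computation for (induced) subgraph counts — the dominant variance term is of order $n^6p^{6}$-ish, i.e.\ $o(\E[X]^2)$ in the regimes considered since $p\gg n^{-1}$ — so Chebyshev's inequality gives a conditional failure probability $o(1)$, which is more than enough. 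Care is needed that the cross-terms involving the (comparatively large, size $\Theta(np)$) planted side do not blow up the variance, but since the planted side has only $\Theta(np)$ vertices and $O(\E[X]^{1/2})$ edges, these terms remain of lower order. I expect this to be the one genuinely technical part; everything else is a disjointness-of-events count and a direct application of the already-stated conditional-expectation lemma.
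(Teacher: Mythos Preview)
Your overall architecture matches the paper's: plant $K_{k,m_k/k}$ (resp.\ $H=K_{2m_*/n,n/2}$), use the disjointness of the events $F_{A,k}$ over the choice of $A$ to pick up the $\binom{n}{m_k/k}$ factor, and invoke Lemma~\ref{lemma_estimation_of_conditioned_expectation} for the conditional expectation. The one substantive difference is in how you bound $\mathbb{P}(X\geq(1+\delta)\E[X]\mid F_{A,k})$ from below. You propose a Chebyshev/variance argument to get conditional concentration; the paper instead uses the one-line Markov-type bound (this is exactly the idea behind Lemma~\ref{lemma_upper}): since $X\leq n^4$ always,
\[
\E[X\mid F_{A,k}]\leq (1+\delta)\E[X]+n^4\cdot\mathbb{P}\bigl(X\geq(1+\delta)\E[X]\mid F_{A,k}\bigr),
\]
so the conditional probability is at least $\varepsilon\E[X]/(2n^4)\geq p^5$. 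This is already $p^{o(m_k)}$ and completely sidesteps any variance computation, cross-terms, or care about the planted side inflating fluctuations. Your Chebyshev route would likely work, but it is the ``genuinely technical part'' you flagged, and it is entirely avoidable.

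One small correction: you write ``$k$ is a constant'' when bounding $(1-p)^{\binom{k}{2}}(1-p^k)^{n}$. In Theorem~\ref{thm_main_upper_2} the parameter $k$ ranges up to $Cnp$, and the middle case of the Corollary (the $m_0$ bound for $n^{-2/3}\ll p\ll n^{-1/2}$) is obtained precisely by taking $k\approx np$. For such $k$, $(1-p)^{\binom{k}{2}}$ is \emph{not} $1-o(1)$ in the whole range; the paper instead shows these factors are $\geq p^{\varepsilon m_k/2}$ via $k^2p=O(n^2p^3)=o(m_k\log(1/p))$ and $np^k\leq np^2=o(m_k\log(1/p))$, which is what is actually needed.
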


Note that in the proof of Theorem \ref{thm_main_upper_2} we use a similar method as was used in \cite{HarMouSam19}.

\begin{proof}[Proof of Theorem \ref{thm_main_upper_2}]
Through out the proof we assume that the vertex set of $G_{n,p}$ is $[n]$.

We start with the first item. To this end fix an integer $2\leq k\leq Cnp$ and $\varepsilon$ some positive real and assume that $n^{-1}\ll p\ll n^{-1/2}$. Let $A\subset [n]\setminus[k]$ with $|A|=m_k/k$ and recall the definitions of the events $F_{A,k}$ and $F_k$:
\begin{enumerate}[label=(\Roman*)]
    \item 
    $F_{A,k}$ is the event that $\cap_{i=1}^{k}N(i)=A$ and there are no edges between any $i,j\in [k]$,
    \item 
    $F_k=\cup_{A\in \binom{[n]\setminus[k]}{m_k/k}}F_{A,k}$.
\end{enumerate}
Note that for any $A,B\subseteq [n]\setminus[k]$ such that $A\neq B$ with $|A|=|B|=m_k/k$ we have $F_{A,k}\cap F_{B,k}=\emptyset$. 
Therefore, we have the following,
\begin{align}\label{eq_main_thm2}
    \mathbb{P}(X\geq(1+\delta)\E[X])&\geq\mathbb{P}(F_k\text{ and }X\geq(1+\delta)\E[X])\nonumber\\
    &=\sum\limits_{A\in\binom{[n]\setminus[k]}{m_k/k}}\mathbb{P}(X\geq(1+\delta)\E[X]\mid F_{A,k})\cdot \mathbb{P}(F_{A,k})\nonumber \\
    &\geq \sum\limits_{A\in\binom{[n]\setminus[k]}{m_k/k}}\mathbb{P}(X\geq(1+\delta)\E[X]\mid F_{A,k})\cdot p^{m_k}(1-p)^{k^2}(1-p^k)^{(n-k)}.
\end{align}
Lemma \ref{lemma_estimation_of_conditioned_expectation} asserts that provided  $n^{-1}\ll p\ll n^{-1/2}$ we have the following for all $A\subset [n]\setminus[k]$ with $|A|=m_k/k$:
\[
    \E[X\mid F_{A,k}]\geq (1+\delta+\varepsilon/2)\E[X].
\] 
Note that $X\leq n^4$ always, we can bound $\E[X\mid F_{A,k}]$ from above (similar to the proof of Lemma \ref{lemma_upper}) as follows:
\begin{equation*}
    \E[X\mid F_{A,k}]\leq (1+\delta)\E[X]+\mathbb{P}(X\geq(1+\delta)\E[X]\mid F_{A,k})\cdot n^4.
\end{equation*} 
Combining the two inequalities we obtain, 
\begin{equation*} 
    \mathbb{P}(X\geq(1+\delta)\E[X]\mid F_{A,k})\geq \frac{\varepsilon\cdot \E[X]}{2n^4}.
\end{equation*}
Therefore, for large enough $n$ we also have,
\begin{equation}\label{eq_lower_bound_thm2}
        \mathbb{P}(X\geq(1+\delta)\E[X]\mid F_{A,k})\geq \varepsilon p^4(1-p)^2/(2\cdot 4^4) \geq p^5.
\end{equation}
Substituting \eqref{eq_lower_bound_thm2} into \eqref{eq_main_thm2} gives, 
\begin{align}\label{eq_lowerbound_upper_tail}
     \mathbb{P}(X\geq(1+\delta)\E[X])&\geq \sum\limits_{A\in\binom{[n]\setminus[k]}{m_k/k}} p^5 p^{m_k}(1-p)^{k^2}(1-p^k)^{n-k}\nonumber\\
     &= \binom{n-k}{m_k/k}p^{m_k+5}(1-p)^{k^2}(1-p^k)^{n-k}.
\end{align}
We now show that $p^5(1-p)^{k^2}(1-p^k)^{n-k}\geq p^{o(m_k)}$ and $\binom{n-k}{m_k/k}\geq \binom{n}{m_k/k}^{1+o(1)}p^{o(m_k)}$.

First, as $p\ll n^{-1/2}, m_k\gg 1$ and $2\leq k\leq Cnp$, we have the following for large $n$:
\begin{align}\label{eq_bounding_prob}
    p^5(1-p)^{k^2}(1-p^k)^{(n-k)}&\geq \exp(5\log(p)-k^2(p+p^2)-(n-k)(p^k+p^{2k})) \nonumber\\
                                 &\geq \exp(5\log(p)-C^2n^2p^3(1+p)-np^2(1+p^k))\nonumber \\
                                 &= \exp(-o(n^2p^2))\nonumber\\
                                 &\geq p^{\varepsilon m_k/2},
\end{align}
where the first inequality follows as $1-x\geq \exp(-x-x^2)$ for all small enough $x$. 
Second, for all nonnegative integers $a,b,c$ we have $\binom{a-c}{b}/\binom{a}{b}\geq \left(\frac{a-b-c}{a-b}\right)^b$, and thus we obtain that
\[
\binom{n-k}{m_k/k}/\binom{n}{m_k/k}\geq\left({1-\frac{k}{n-m_k/k}}\right)^{m_k/k}=e^{-O(m_k/n)}.
\]
In addition, as $2\leq k\leq Cnp$ and $n^{-1}\ll p\ll n^{-1/2}$, we have
\begin{align*}
   \binom{n}{m_k/k}p^{m_k}&\leq \left(\frac{enk}{m_k}\right)^{m_k/k}p^{m_k}\leq \left(\frac{eCn^2p}{m_k}\right)^{m_k/k}p^{m_k}\\
   &\leq \exp\left(\left(1-\frac{1}{k}+O\left(\frac{1}{k\log(p)}\right)\right)m_k\log(p)\right)\\
   &\leq \exp((1+o(1))m_k\log(p)). 
\end{align*}

Furthermore, as $2\leq k\leq Cnp$ and $n^{-1}\ll p \ll n^{-1/2}$ we have $m_k/n=o(m_k\log(1/p))$ and hence, for sufficiently large $n$ we have,
\begin{equation}\label{eq_bounding_binoms}
    \binom{n-k}{m_k/k}/\binom{n}{m_k/k}\geq \binom{n}{m_k/k}^\varepsilon p^{\varepsilon m_k/2}.
\end{equation}
Combining \eqref{eq_lowerbound_upper_tail}, \eqref{eq_bounding_prob}, and \eqref{eq_bounding_binoms} gives, 
\[
    \mathbb{P}(X\geq(1+\delta)\E[X])\geq \left(\binom{n}{m_k/k} p^{m_k}\right)^{1+\varepsilon}.
\]
This finishes the first part of the proof.

For the second item, define the event $F^*$ to be the event that $G_{n,p}$ contains $K_{2m_*/n,n/2}$ as a subgraph on the vertex set $[n/2+2m_*/n]$ with sides $[2m_*/n]$ and $[n/2+2m_*/n]\setminus[2m_*/n]$. Note that $X\leq n^4$ always. Further, Lemma \ref{lemma_estimation_of_conditioned_expectation} asserts that provided $n^{-1/2}\ll p \ll 1$ we have 
\[
    \E[X\mid F^*]\geq (1+\delta+\varepsilon)\E[X],
\] 
and thus, $\Phi_X(\delta+\varepsilon)\leq -\log \mathbb{P}(F^*)=-\log(p^{m_*})$.
Therefore, applying Lemma \ref{lemma_upper} gives,
\begin{align*}
    -\log \mathbb{P}(X\geq (1+\delta)\E[X])\leq& \Phi_X(\delta+\varepsilon) + \log\left(\frac{n^4}{\varepsilon\E[X]}\right)\leq -\log\left(\frac{p^{m_*} n^4}{\varepsilon\E[X]}\right).
\end{align*}
Moreover, for sufficiently large $n$ we have, 
$\frac{\varepsilon\E[X]}{n^4}\geq p^5$. Thus, taking negative logarithms we obtain the following for large enough $n$: 
\begin{equation*}
    \mathbb{P}(X\geq (1+\delta)\E[X])\geq p^{m_*+5}\geq p^{(1+\varepsilon)m_*}.
    \end{equation*}
This is as claimed.
\end{proof}

\section{Counting the number of cores}\label{sec_5}

Recall that $X$ is the random variable counting the number of induced copies of $C_4$ in $G_{n,p}$. In this section we prove a general upper bound on the logarithmic probability of the upper tail event of $X$. The main tool we use in this section is Theorem~\ref{Main_tool} which is a variation of \cite[Theorem~9.1]{}. There are two major parts in this section. The first is an evaluation of $\Phi_{X}(\delta)$ in different regimes of $p$. The second is an evaluation of the entropic term $|\mathcal{C}_m|$. The main results of this section are the following lemmas.
\begin{lemma}\label{claim_evalutation_of_Phi}
    Suppose $\varepsilon,\delta$ are positive real numbers with $\varepsilon$ being small as a function of $\delta$. Then the following hold:
    \begin{enumerate}[label=(\roman*)]
        \item \label{eq_evaluation_of_Phi_1}
        If $p \ll n^{-1/2}$ then for large enough $n$ we have,
        \[
            2(1-\varepsilon)\sqrt{\delta \E[X]}\leq \Phi _X(\delta)/\log (1/p)\leq 2(1+\varepsilon)\sqrt{\delta \E[X]}.
        \]
        \item \label{eq_evaluation_of_Phi_2} 
        If $n^{-1/2}\ll p \ll 1$ then for large enough $n$ we have,
        \[
            (1-\varepsilon)\left(\sqrt{\frac{n^4p^4}{16}+4\delta \E[X]}-\frac{n^2p^2}{4}\right)\leq \frac{\Phi _X(\delta)}{\log (1/p)}\leq (1+\varepsilon)\left(\sqrt{\frac{n^4p^4}{16}+4\delta \E[X]}-\frac{n^2p^2}{4}\right).
        \]
    \end{enumerate} 
\end{lemma}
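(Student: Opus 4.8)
The plan is to deduce both inequalities from two elementary facts about $X=N_{ind}(C_4,G_{n,p})$: it has complexity at most $6$ (an induced $C_4$ on a fixed $4$-set is the indicator of a codimension-$6$ subcube, so $X$ is a nonnegative combination of such indicators), and $\E[X]=3\binom n4p^4(1-p)^2=(1+o(1))\tfrac{n^4p^4}{8}$. For the upper bounds on $\Phi_X(\delta)$ it then suffices to produce a \emph{single} graph $G\subseteq K_n$ with $\E_G[X]\ge(1+\delta)\E[X]$ and few edges, because the corresponding one-supcube gives $\Phi_X(\delta)\le e(G)\log(1/p)$. For the lower bounds, let $F$ be a subcube attaining $\Phi_X(\delta)$; since $p=o(1)$, Lemma~\ref{lemma_third} yields $\E_{F^{(1)}}[X]\ge(1+\delta-\varepsilon')\E[X]$ for the one-supcube $F^{(1)}$, where $\varepsilon'>0$ is a constant we may fix as small as we like, while $-\log\mathbb{P}(Y\in F)\ge-\log\mathbb{P}(Y\in F^{(1)})=e(G)\log(1/p)$ with $G$ the graph carried by $F^{(1)}$. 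So it is enough to lower-bound $e(G)$ for every $G\subseteq K_n$ with $\E_G[X]\ge(1+\delta-\varepsilon')\E[X]$; letting $\varepsilon'\to0$ the bounds obtained below converge to the stated targets, which (being positive and continuous in $\delta$) gives the $(1-\varepsilon)$ versions once $\varepsilon'$ is small in terms of $\varepsilon$ and $\delta$.

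For the upper bounds I use the families planted in Section~\ref{sec_4}. When $p\ll n^{-1/2}$, fix a large integer $k$ and a small $\varepsilon_0>0$ and plant a labelled copy of $K_{k,m_k/k}$ with $m_k=2\sqrt{k/(k-1)}\sqrt{(\delta+\varepsilon_0)\E[X]}$; the first part of Lemma~\ref{lemma_estimation_of_conditioned_expectation} gives conditional expectation at least $(1+\delta+3\varepsilon_0/4)\E[X]$, hence $\Phi_X(\delta)\le m_k\log(1/p)$, and $m_k\to 2\sqrt{\delta\E[X]}$ as $k\to\infty$, $\varepsilon_0\to0$. When $n^{-1/2}\ll p\ll1$, plant the hub $H=K_{2m_*/n,\,n/2}$, which has exactly $m_*=m_*(\varepsilon_0)$ edges; the second part of Lemma~\ref{lemma_estimation_of_conditioned_expectation} gives conditional expectation at least $(1+\delta+\varepsilon_0)\E[X]$, hence $\Phi_X(\delta)\le m_*\log(1/p)$, and an elementary computation with $\E[X]=(1+o(1))\tfrac{n^4p^4}{8}$ shows that $m_*(\varepsilon_0)\big/\big(\sqrt{\tfrac{n^4p^4}{16}+4\delta\E[X]}-\tfrac{n^2p^2}{4}\big)\to1$ as $\varepsilon_0\to0$ and $p\to0$. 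In both cases, fixing $\varepsilon_0$ (and $k$) suitably gives the claimed $(1+\varepsilon)$ upper bounds.

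For the lower bounds I expand $\E_G[X]$ over $4$-vertex subsets, exactly as in the opening lines of the proof of Lemma~\ref{claim_F_becomes_G} (where no a priori bound on $e(G)$ is used): for every $G\subseteq K_n$,
\[
  \E_G[X]\ \le\ \E[X]+N_{ind}(C_4,G)+N_{ind}(K_{1,2}\sqcup K_1,G)\,p^2+O\!\big(e(G)^2 p+e(G)\,n^2p^3\big).
\]
We may assume $e(G)=O(n^2p^2)$, for otherwise $e(G)$ already exceeds the relevant target (which is $\Theta(n^2p^2)$); then the error term is $o(\E[X])$. In the range $p\ll n^{-1/2}$ we moreover have $e(G)\ll n$, so a crude bound on the number of paths on three vertices gives $N_{ind}(K_{1,2}\sqcup K_1,G)\,p^2\le\tfrac12 e(G)^2 n p^2=o(\E[X])$; together with the inducibility bound $N_{ind}(C_4,G)\le e(G)^2/4$ (Lemma~\ref{lemma_inducibility_like}), the hypothesis forces $e(G)^2/4\ge(\delta-\varepsilon'-o(1))\E[X]$, i.e.\ $e(G)\ge (1-o(1))\,2\sqrt{(\delta-\varepsilon')\E[X]}$. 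In the range $n^{-1/2}\ll p\ll1$ the term $N_{ind}(K_{1,2}\sqcup K_1,G)p^2$ is genuinely of order $\E[X]$ and must be kept; here I invoke the refined extremal estimate
\[
  N_{ind}(C_4,G)+N_{ind}(K_{1,2}\sqcup K_1,G)\,p^2\ \le\ \frac{e(G)^2}{4}+\frac{e(G)\,n^2p^2}{8}+o(\E[X]),
\]
so that, writing $m=e(G)$, the inequality $\tfrac{m^2}{4}+\tfrac{m n^2p^2}{8}\ge(\delta-\varepsilon'-o(1))\E[X]$ solves to $m\ge-\tfrac{n^2p^2}{4}+\sqrt{\tfrac{n^4p^4}{16}+4(\delta-\varepsilon'-o(1))\E[X]}$, which is the lower bound promised in the first paragraph. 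Combining with the reduction there completes the proof.

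The hard part is the refined extremal estimate used in the dense regime. One must show that, for a graph $G$ with $m=o(n^2)$ edges on at most $n$ vertices, the weighted count $N_{ind}(C_4,G)+N_{ind}(K_{1,2}\sqcup K_1,G)p^2$ is, up to an $o(\E[X])$ error, maximised by a complete bipartite graph together with some isolated vertices; for such graphs the count is $\binom a2\binom b2+\big(a\binom b2+b\binom a2\big)(n-a-b)p^2$, and optimising over $a,b$ with $ab=m$ puts $a\approx 2m/n$, $b\approx n/2$ — roughly half the vertices are deliberately left outside, so that the paths on three vertices acquire the largest possible number of admissible ``fourth'' vertices — giving the value $\tfrac{m^2}{4}+\tfrac{m n^2p^2}{8}$. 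That complete bipartite graphs (with isolated vertices) are asymptotically extremal here is \emph{not} a consequence of the classical inducibility result for $C_4$ and requires a separate extremal/stability argument; this is the delicate step, and it is the reason $\Phi_X$ takes two different shapes in the two regimes.
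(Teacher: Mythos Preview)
Your strategy is the paper's: exhibit an explicit graph for the upper bound, and for the lower bound pass to the one-supcube and expand $\E_G[X]-\E[X]$ over spanning subgraphs of $C_4$. Two remarks.

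First, a minor imprecision in the upper bound for regime~(i). Lemma~\ref{lemma_estimation_of_conditioned_expectation}(1) conditions on the event $F_{A,k}$, which is \emph{not} a subcube (the requirement $\bigcap_{i\le k}N(i)=A$ is not a conjunction of single-coordinate constraints), so it does not directly witness $\Phi_X(\delta)\le m_k\log(1/p)$. What you need is $\E_G[X]\ge(1+\delta)\E[X]$ for the one-supcube of $G=K_{k,m_k/k}$; the same two-line computation from that lemma's proof (the $\binom{k}{2}\binom{m_k/k}{2}$ bipartite $C_4$'s survive as induced with probability $(1-p)^2$, plus the $C_4$'s on the remaining $n-O(np)$ vertices) gives this directly. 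The paper avoids the detour by planting the balanced $K_{m',m'}$ with $m'=\sqrt[4]{4(1+\varepsilon)\delta\E[X]}$ and computing $\E_H[X]$ outright.

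Second, and more substantively: what you flag as ``the hard part'' is not hard, and no extremal or stability argument is needed. The two summands are bounded \emph{separately} by crude per-edge counts. You already have $N_{ind}(C_4,G)\le m^2/4$ from Lemma~\ref{lemma_inducibility_like}. For the other term the paper proves (Observation~\ref{obs_K_1,2_cup_K1}) the exact inequality $N_{ind}(K_{1,2}\sqcup K_1,G)\le mn^2/8$ in two lines: for an edge $uv$, writing $x=|N(u)\cup N(v)\setminus\{u,v\}|$, any induced $K_{1,2}\sqcup K_1$ through $uv$ has its third vertex in $N(u)\cup N(v)\setminus\{u,v\}$ and its isolated vertex outside $N(u)\cup N(v)$, giving at most $x(n-x)\le n^2/4$ copies; sum over edges and divide by two. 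This already yields
\[
N_{ind}(C_4,G)+N_{ind}(K_{1,2}\sqcup K_1,G)\,p^2\ \le\ \frac{m^2}{4}+\frac{mn^2p^2}{8}
\]
with no error term, and solving the resulting quadratic gives the lower bound in~(ii) exactly as you wrote. That the hub $K_{2m_*/n,\,n/2}$ simultaneously saturates both of these separate inequalities is what makes the bound tight, but establishing the inequality needs nothing structural about extremal configurations.
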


Before we present the second lemma, let us remind the reader the definition of $\mathcal{C}_m$.

\begin{definition}
Let $\varepsilon,\delta,K$ be positive reals. In addition let $p\in (0,1)$. Then we define $\mathcal{C}(\varepsilon,\delta,K)$ to be collection of all $G\subset K_n$ spanning subgraphs satisfying the following:
\begin{enumerate}[label=(\subscript{C}{{\arabic*}})]
\item $e(G)\leq K\cdot\Phi _X(\delta+\varepsilon)$,
    \item $N_{ind}(C_4,G)+N_{ind}(K_{1,2}\sqcup K_{1},G)p^2\geq (\delta-\varepsilon)\E[X]$, and
    \item For all $e\in E(G)$ \[
                                    N_{ind}(e,C_4,G)+N_{ind}(e,K_{1,2}\sqcup K_{1},G)p^2\geq \varepsilon \E[X]/(2K\cdot \Phi _X(\delta+\varepsilon)).
                              \]
\end{enumerate}
We call the graphs in $\mathcal{C}(\varepsilon,\delta,K)$ core graphs. Furthermore, for every positive integer $m$ we define $\mathcal{C}_m(\varepsilon,\delta,K)$ be the set of all cores with $m$ edges.
\end{definition}

\begin{lemma}\label{claim_general_bound_3}
        Suppose $\varepsilon,\delta,C,K$ are positive reals with $\varepsilon<1$. Furthermore, suppose $n^{-1}\ll p\ll 1$ as $n$ tends to infinity. Then, there exist $D$ and $n_0$ such that the following holds for all $n>n_0$:\\
        Let $m$ be a positive integer with $Cn^2p^2 \leq m\leq K\Phi_X(\delta+\varepsilon)$. Furthermore, let 
        \[
        v_m=\max \{|\{v\in V(G):\deg(v)\neq 0\}|:G\in \mathcal{C}_m(\varepsilon,\delta,K)\}.
        \] 
        Then,
        \begin{equation*}
            |\mathcal{C}_m|\leq \log(1/p)^{Dm}\binom{n}{v_m}.
        \end{equation*}
\end{lemma}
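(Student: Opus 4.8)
The plan is to bound $|\mathcal{C}_m|$ by encoding each core graph $G \in \mathcal{C}_m(\varepsilon,\delta,K)$ using only $v_m$ "vertex choices" plus a bounded amount of extra information, so that the total count is at most $\binom{n}{v_m}$ times a factor that is only $\log(1/p)^{O(m)}$. First I would fix $G \in \mathcal{C}_m$ and let $V^\ast = \{v \in V(G) : \deg(v) \neq 0\}$, so $|V^\ast| \le v_m$. Choosing which vertices of $[n]$ form $V^\ast$ costs at most $\binom{n}{v_m}$. The remaining task is to show that, once $V^\ast$ is fixed, the number of possible edge sets on $V^\ast$ with $m$ edges that can arise as a core is at most $\log(1/p)^{Dm}$ for a suitable constant $D = D(\varepsilon,\delta,C,K)$.

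The key structural input is condition (C$_3$) in the definition of a core together with the fact that $m \le K\Phi_X(\delta+\varepsilon)$: every edge $e \in E(G)$ satisfies
\[
  N_{ind}(e,C_4,G) + N_{ind}(e,K_{1,2}\sqcup K_1,G)\,p^2 \ge \frac{\varepsilon\,\E[X]}{2K\cdot\Phi_X(\delta+\varepsilon)}.
\]
Since $\E[X] = \Theta(n^4p^4)$ and, by Claim~\ref{claim_evalutation_of_Phi}, $\Phi_X(\delta+\varepsilon) = O(\sqrt{\E[X]}\log(1/p)) = O(n^2p^2\log(1/p))$, the right-hand side is $\Omega\!\big(n^2p^2/\log(1/p)\big)$. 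So every edge lies in many induced copies of $C_4$ or of $K_{1,2}\sqcup K_1$ (weighted by $p^2$); I would translate this into a lower bound on the number of "codegree-two" configurations through $e$ — i.e. pairs of vertices with two common neighbours among the endpoints of $e$, or a vertex of degree at least two adjacent to an endpoint of $e$. The upshot I want is a bound of the form $e(G) = m = \Omega(v_m^2 p^2 / \log(1/p))$, or more directly a bound on the number of vertices of low degree: since the condition $m \ge Cn^2p^2$ is assumed, and every nonzero-degree vertex must participate in enough of these configurations, one should be able to show that $G$ restricted to $V^\ast$ has minimum degree (on $V^\ast$) bounded below by something like $\Omega(v_m p / \log(1/p))$, or at least that the number of edges forces $v_m \le O(m \log(1/p)/(n p))$ or a comparable relation.

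Given such a structural constraint, the counting step is the standard "few degrees of freedom per edge" argument: I would build $G$ on the fixed vertex set $V^\ast$ by a greedy/peeling process that repeatedly removes a vertex of small degree (which exists by the degree bound), recording its neighbourhood; since each removed vertex has degree $O(m/v_m)$ or $O(n p / \log(1/p))$ in the current graph, and there are $v_m$ vertices, the total number of recorded incidences is $O(m)$, and each incidence is one of at most $v_m \le n$ choices — but crucially we can absorb the $\binom{n}{\le d}$-type counts into $\log(1/p)^{O(m)}$ precisely because $d = O(np/\log(1/p))$ makes $\binom{v_m}{d}^{v_m} \le \exp(O(m))$ rather than $\exp(\Omega(m\log n))$. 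Alternatively, and more robustly, I would count edge sets of size $m$ on $v_m$ labelled vertices that satisfy the core condition directly by $\binom{\binom{v_m}{2}}{m} \le (e v_m^2/2m)^m$ and show $v_m^2/m = O(\log(1/p)^{O(1)})$, again using the (C$_3$)-derived relation between $v_m$, $m$ and $p$; this gives $|\mathcal{C}_m| \le \binom{n}{v_m}(ev_m^2/2m)^m \le \binom{n}{v_m}\log(1/p)^{Dm}$.

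The main obstacle I expect is establishing the quantitative relation between $v_m$ and $m$ — precisely, showing that the edge-regularity condition (C$_3$) forces $v_m^2 \le m \cdot \log(1/p)^{O(1)}$ (equivalently, that core graphs cannot be too "spread out" relative to their edge count). This requires carefully lower-bounding $N_{ind}(e,C_4,G)$ and $N_{ind}(e,K_{1,2}\sqcup K_1,G)$ in terms of local degrees and codegrees — the induced nature of the count means one must control \emph{non}-edges as well, so the naive bound "two paths of length two with common endpoints give a $C_4$" must be upgraded to "an \emph{induced} $C_4$", which is where the $p^2$-weighted $K_{1,2}\sqcup K_1$ term is doing real work and cannot be dropped. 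Handling the interplay of these two terms (one counting a $4$-edge structure, the other a $2$-edge structure discounted by $p^2$) and extracting a clean degree bound uniform over all of $V^\ast$ is the technical heart of the proof; everything after that is routine entropy counting.
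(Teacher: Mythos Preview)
Your overall architecture---choose the support $V^\ast$ (cost $\binom{n}{v_m}$), then count edge sets on $V^\ast$---matches the paper. The gap is in the second step. Your main route is to bound the number of edge sets by $\binom{\binom{v_m}{2}}{m}\le (ev_m^2/2m)^m$ and then argue $v_m^2/m=O(\log(1/p)^{O(1)})$. That inequality is simply false. In the sparse regime $p\ll n^{-1/2}$ the prototypical cores are $K_{k,m/k}$ with $k$ bounded, so $v_m$ is of order $m$ and hence $v_m^2/m$ is of order $m=\Theta(n^2p^2)$, which is polynomial in $n$, not polylogarithmic in $1/p$. (Indeed Lemma~\ref{lemma_weak_bound} only gives $v_m\le m/2+m^{3/4}$, confirming $v_m=\Theta(m)$ is possible.) Your alternative peeling sketch has the same problem: the claimed low-degree bound ``$O(m/v_m)$ or $O(np/\log(1/p))$'' is not what (C$_3$) gives you, and with $v_m\sim m/2$ the first option is $O(1)$, which does not yield a $\log(1/p)^{O(m)}$ count either.

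What the paper does instead is avoid $\binom{\binom{v_m}{2}}{m}$ altogether. From (C$_3$) it extracts, for every edge $uv$, the dichotomy that either $\deg(u)\deg(v)$ is large (at least $\Omega(n^2p^2/\log(1/p))$) or $\deg(u)+\deg(v)$ is large (at least $\Omega(1/\log(1/p))$ after normalisation). It then records, in addition to $V^\ast$, a partition of $V^\ast$ into dyadic degree classes $V_t$ (cost $\log(1/p)^{O(v_m)}$), and shows that the number of \emph{pairs} compatible with this dichotomy given the partition is only $O(m\cdot\log(1/p)^{O(1)})$, not $\binom{v_m}{2}$. Choosing $m$ edges from this restricted pool gives $\binom{O(m\,\mathrm{polylog}(1/p))}{m}=\log(1/p)^{O(m)}$. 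The point you are missing is that (C$_3$) constrains where edges can go \emph{relative to the degree profile}, and this is what cuts the pool of potential edges down from $\binom{v_m}{2}$ to $O(m\,\mathrm{polylog}(1/p))$; a bound purely in terms of $v_m$ and $m$ cannot do this.
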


We start with the first lemma. Let us give some motivation and history of the problem.

\begin{definition}
For every graph $H$ and any positive integer $m$ define, $$N(m,H)=\max|\{T\subset G : T\cong H \}|$$ where the maximum ranges over all graphs with $m$ edges.  
\end{definition}

This definition was first presented by Erd\H{o}s and Hanani in \cite{ErdHan62}. In their paper they computed the asymptotic of this function where $H$ is a clique. In \cite{Alo81} Alon generalized this and computed the asymptotic of this function for all $H$. Later, in \cite{FriKah98} Friedgut and Kahn reproved Alon's result using entropy methods which they were also able to generalize for the case of hypergraphs. 

In \cite{JanOleRuc04} Janson, Oleszkiewicz, and Ruci\'nski found a relation between $N(m,H)$ and a related parameter $N(n,m,H)$ and the probability that the random variable counting the number of copies of a fixed graph $H$ in $G_{n,p}$ exceeds its expectation by a multiplicative factor. This result used and generalized the machinery developed by Friedgut and Kahn. This led us to generalize the definition of $N(m,H)$ to fit our setting. Let us recall some definitions from Section \ref{sec_3} and introduce some new ones.

\begin{definition} \label{def_N(n,m,H)}
Suppose $H$ and $G$ are graphs. Let $n,m$ be positive integers and let $e$ be an edge of $G$. We define:
\begin{itemize}
  \item
  $N_{ind}(H,G)$ is the number of induced copies of $H$ in $G$.
    \item
  $N_{ind}(e,H,G)$ is the number of induced copies of $H$ in $G$ that contain $e$.
  \item
  ${N}_{ind}(n,m,H)$ is the maximum of $N_{ind}(H,G)$ over all graphs $G$ such that the number of vertices of $G$ is at most $n$ and the number of edges of $G$ is $m$.
\end{itemize}
\end{definition}

Note that similar generalizations have been studied in \cite{BolNarTac86} and \cite{DanDanBalMat20}. The following is a simple corollary of Lemma \ref{lemma_inducibility_like} which we prove later in Section \ref{sec_6}.

\begin{cor}\label{cor_inducibility_like_2}
For every $n,m$ positive integers such that $3<m\leq n$ we have,
$${N}_{ind}(n,m,C_4) \leq \frac{m (m-n+1)}{4}\leq \frac{m^2}{4}.$$ 
\end{cor}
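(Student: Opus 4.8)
The goal is to bound $N_{ind}(n,m,C_4)$, the maximum number of induced 4-cycles in a graph $G$ with at most $n$ vertices and exactly $m$ edges, when $3 < m \le n$. I would derive this from Lemma~\ref{lemma_inducibility_like} (whose statement is not in the excerpt but is assumed available), so the real work is extracting the stated numerical bound $\frac{m(m-n+1)}{4} \le \frac{m^2}{4}$ from whatever general inequality that lemma provides.

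**The plan.**

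First I would recall the content of Lemma~\ref{lemma_inducibility_like}: since the text says ``amongst all graphs with a given number of edges the complete and balanced bipartite graph maximizes the number of induced copies of $C_4$,'' I expect the lemma to assert that $N_{ind}(C_4, G)$ is at most the number of induced $C_4$'s in a complete bipartite graph $K_{s,t}$ with $st = m$ (or $st \approx m$), possibly with a vertex-count constraint folded in. The number of induced $C_4$'s in $K_{s,t}$ is exactly $\binom{s}{2}\binom{t}{2} = \frac{s(s-1)t(t-1)}{4}$. So the second inequality $N_{ind}(n,m,C_4) \le \frac{m^2}{4}$ should follow immediately from $\binom{s}{2}\binom{t}{2} \le \frac{(st)^2}{4} = \frac{m^2}{4}$, using $s-1 \le s$, $t-1 \le t$. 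I would write this step out cleanly.

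**The sharper bound.** For the first inequality, $N_{ind}(n,m,C_4) \le \frac{m(m-n+1)}{4}$, I would use the vertex constraint $s + t \le n$ together with $st = m$. Write $f(s,t) = \frac{s(s-1)t(t-1)}{4}$. Since $st = m$ is fixed, $s(s-1)t(t-1) = st(s-1)(t-1) = m(st - s - t + 1) = m(m - (s+t) + 1) \le m(m - n + 1)$ — wait, this needs $s + t \ge n$, which is the wrong direction. So I would instead observe that to maximize $f$ with $st = m$ fixed we want $s+t$ as small as possible, but we also need enough vertices; more carefully, a graph on at most $n$ vertices with $m \le n$ edges and many induced $C_4$'s: if the $C_4$'s live in a bipartite-like part on $s+t$ vertices using all $m$ edges, then $s(s-1)t(t-1) = m(st-s-t+1)$, and since $st = m$ and $s+t \le $ (number of vertices) $\le n$, we get $st - s - t + 1 \ge m - n + 1$ is the wrong sign again. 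The correct reading: one should show $s(s-1)t(t-1) \le m(m-n+1)$, equivalently $(s-1)(t-1) \le m - n + 1 = st - n + 1$, i.e.\ $st - s - t + 1 \le st - n + 1$, i.e.\ $s + t \ge n$. So the bound is meaningful only when the extremal configuration genuinely uses close to $n$ vertices; the honest argument is that for $m \le n$, spreading edges across a balanced complete bipartite graph forces $s + t$ close to $2\sqrt{m}$, but here we need $s+t \ge n$, which holds in the sparse regime $m \le n$ only for suitable degenerate configurations. I would therefore handle this by applying Lemma~\ref{lemma_inducibility_like} in the form that already incorporates the $n$-vertex cap and directly yields $(s-1)(t-1) \le m-n+1$, then conclude.

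**Main obstacle.** The genuine difficulty is purely bookkeeping: matching the exact hypotheses and output format of Lemma~\ref{lemma_inducibility_like} (in particular the $3 < m \le n$ range and the role of the vertex bound $n$) to the clean quadratic bound stated here, and checking that the floor/ceiling issues in choosing integer $s,t$ with $st \le m$ do not cost anything. Once the lemma is invoked correctly, both inequalities are one-line estimates: $(s-1)(t-1) \le st = m$ gives $\frac{m^2}{4}$, and the $n$-dependent refinement follows from the vertex constraint built into the lemma. I would keep the corollary's proof to a few lines, deferring all substance to Lemma~\ref{lemma_inducibility_like}.
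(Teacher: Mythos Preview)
Your proposal rests on a wrong guess about what Lemma~\ref{lemma_inducibility_like} says. The lemma does \emph{not} assert that the extremal graph is a complete bipartite graph and then hand you $\binom{s}{2}\binom{t}{2}$ to manipulate. Its actual statement is: if $G$ has $n$ vertices, $m$ edges, and minimum degree at least $2$, then
\[
N_{ind}(C_4,G)\le \frac{m(m-n+1)}{4}.
\]
The proof is a short per-edge argument: one shows $N_{ind}(e,C_4,G)\le m-n+1$ for every edge $e$ (by counting edges between $N(u)\setminus\{v\}$ and $N(v)\setminus\{u\}$) and then sums. In particular the expression $\frac{m(m-n+1)}{4}$ is already the lemma's output, not something to be extracted from a $K_{s,t}$ computation.

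Given this, the corollary is genuinely immediate, not a bookkeeping exercise. The bound $\frac{m(m-n+1)}{4}$ is the lemma's conclusion; the only gap between lemma and corollary is the minimum-degree hypothesis, and since vertices of degree at most $1$ lie in no induced $C_4$ one may discard them first. The second inequality $\frac{m(m-n+1)}{4}\le \frac{m^2}{4}$ is trivial from $n\ge 1$.

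Your attempted route via $K_{s,t}$ does not close, and you noticed this yourself: the identity $(s-1)(t-1)=st-(s+t)+1$ gives an inequality in the wrong direction, and your proposed fix (``apply the lemma in the form that already incorporates the $n$-vertex cap and directly yields $(s-1)(t-1)\le m-n+1$'') is circular---that would just be assuming the corollary. The $K_{s,t}$ calculation does play a role in the paper, but only as the sharpness example for the lemma (see the remark after Lemma~\ref{lemma_inducibility_like}), not as the mechanism of the upper bound. There are no floor/ceiling issues because no optimal integers $s,t$ are ever chosen.
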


In \cite{BolNarTac86} the authors determined the asymptotic behaviour of $\max \{N_{ind}(n,m,K_{s,s}):n\in \mathbb{N}\}$ for any $s\in \mathbb{N}$ (and actually obtained optimal bounds), and gave tight bounds in some ranges of $m$ (in the above we consider only $K_{2,2}$).
In \cite{DanDanBalMat20} the authors considered the problem of determining the asymptotic in $n$ and $m$ of the maximum number of copies of a fixed bipartite graph $H$ in a bipartite host graph with $n$ vertices and $m$ edges. They solved this problem for some class of graphs which includes all complete bipartite graphs $K_{s,t}$. Here we prove a similar statement to the ones in \cite{BolNarTac86,DanDanBalMat20}.

We wish to emphasize that there is a significant difference in the behaviour of the problem depending on whether $n^{-1} \ll p\ll n^{-1/2}$ or $n^{-1/2} \ll p\ll 1$. This can be seen for example in the first result of this section which we now start to prove. To this end we start with an observation. Recall that we denote by $K_{1,2}\sqcup K_1$ the disjoint union of a star with two leaves and an isolated vertex.

\begin{obser}\label{obs_K_1,2_cup_K1}
    Suppose $n,m$ are positive integers with $m\leq \binom{n}{2}$. Then,
    \[
        N_{ind}(n,m,K_{1,2}\sqcup K_{1})\leq mn^2/8.
    \]
\end{obser}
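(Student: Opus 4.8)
We wish to bound the maximum number of induced copies of $K_{1,2}\sqcup K_1$ over all graphs $G$ with at most $n$ vertices and exactly $m$ edges. The first observation is that an induced copy of $K_{1,2}\sqcup K_1$ is determined by a choice of an induced copy of the path $K_{1,2}$ (a cherry, i.e.\ a vertex $u$ adjacent to two distinct non-adjacent vertices $v,w$) together with a fourth vertex $x$ that is a non-neighbour of each of $u,v,w$; the ``induced'' requirement forces $\{v,w\}\notin E(G)$ and $x$ non-adjacent to all three. So the count is at most the number of cherries (induced copies of $K_{1,2}$) times the number of ways to pick the extra vertex, which is at most $n$. Hence it suffices to show that the number of induced cherries in $G$ is at most $mn/8$.

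**Counting cherries.** The number of (not necessarily induced) cherries in $G$ is exactly $\sum_{u\in V(G)}\binom{\deg(u)}{2}$, and the induced count is no larger, so $N_{ind}(n,m,K_{1,2}\sqcup K_1)\le n\cdot\sum_{u}\binom{\deg(u)}{2}\le n\sum_u \deg(u)^2/2$. Now I would invoke convexity: subject to $\sum_u \deg(u)=2m$ and at most $n$ vertices, $\sum_u \deg(u)^2$ is maximised when the degree is spread as evenly as possible, giving $\sum_u\deg(u)^2\le n\cdot(2m/n)^2 = 4m^2/n$. Wait — that bound is too weak for dense $G$ (it can exceed $mn/4$); the correct elementary bound here is different. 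Instead, use $\sum_u\binom{\deg(u)}{2}$ counts paths of length two; but I only need the crude estimate that each of the $m$ edges $\{u,v\}$ lies in at most $\deg(u)+\deg(v)-2 \le n$ cherries centred at $u$ or $v$, so $2\sum_u\binom{\deg(u)}2 = \sum_{\{u,v\}\in E}(\deg(u)+\deg(v)-2)\le m(n-2)\le mn$, hence $\sum_u\binom{\deg(u)}{2}\le mn/2$. That already gives $N_{ind}\le n\cdot mn/2$, which is far too large. The gain of the extra factor $1/4$ must come from not multiplying the cherry count by the full $n$.

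**The right bookkeeping.** The key point I missed: fix an induced cherry with centre $u$ and leaves $v,w$. The fourth vertex $x$ must avoid the neighbourhoods, but more usefully I should count differently — sum over the choice of the \emph{independent edge-free vertex} $x$ last is wasteful. Instead count induced $K_{1,2}\sqcup K_1$ as: choose the edge $\{u,v\}$ of the cherry adjacent to the centre (there are at most $2m$ ordered such edges from the cherry's two edges, but each cherry has a unique centre so this overcounts by a bounded factor); the cleanest route is to bound the number of induced $K_{1,2}$ by $\sum_u \binom{\deg(u)}{2}$ and then observe that for a \emph{fixed} pair of leaves summed appropriately one recovers a factor that, combined with $\deg(u)\le n-2$ for each centre, yields $\sum_u\binom{\deg(u)}2\le \tfrac12\sum_u\deg(u)(n-2)=\tfrac12\cdot 2m\cdot(n-2)\le mn$, and then the extra $1/8$ comes from $\binom{\deg u}{2}\le \deg(u)^2/2$ together with $\deg(u)\le n$ giving $\sum_u\binom{\deg u}2 \le \tfrac12 \sum_u \deg(u)\cdot \deg(u)\le \tfrac12 \cdot 2m \cdot n\cdot\tfrac12$? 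This is exactly the delicate constant-chasing I should present carefully: write $N_{ind}(n,m,K_{1,2}\sqcup K_1)\le n\cdot N_{ind}(n,m,K_{1,2})$ and $N_{ind}(n,m,K_{1,2})\le \sum_u\binom{\deg(u)}{2}\le \sum_u \frac{\deg(u)(n-2)}{2}\le \frac{(n-2)}{2}\cdot 2m \le mn$, so one needs instead the bound $N_{ind}(n,m,K_{1,2})\le mn/8$, which follows because a cherry uses \emph{two} edges and a fourth isolated-from-it vertex is not needed in this sub-count — so this approach gives $mn/2$, not $mn/8$.

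**Main obstacle.** The genuine difficulty, and what I would focus the write-up on, is squeezing out the constant $1/8$: the naive ``cherries times $n$'' bound loses a factor of $4$. The fix is to not use all $n$ vertices for the isolated vertex and not use the worst-case degree simultaneously. Concretely, I expect the clean argument is: $N_{ind}(n,m,K_{1,2}\sqcup K_1)\le \sum_{x} (\text{induced cherries in }G - N[x])$; summing over $x$, an induced cherry $\{u,v,w\}$ is counted once for each $x$ non-adjacent to all of $u,v,w$, which is at most $n-3$ but the cherry itself already occupies $3$ vertices \emph{and} the bound $\sum_u\binom{\deg u}{2}$ should be replaced by using that an induced cherry needs $v\not\sim w$, so pairs of non-neighbours, giving $\le \sum_u \binom{n-1-\deg(u)}{?}$ type control — balancing $\deg(u)$ against $n-\deg(u)$ is where the factor $4$ (hence the $1/8 = \tfrac12\cdot\tfrac14$) is recovered via the AM–GM inequality $\deg(u)(n-\deg(u))\le n^2/4$. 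So the plan's core step is: $N_{ind}(\text{cherry at }u) \le \tfrac12\deg(u)(n-1-\deg(u)) \le n^2/8$ is wrong dimensionally too; rather $\sum_u \deg(u)(n-\deg(u)) \le \sum_u \deg(u)\cdot n - \sum_u\deg(u)^2$, and pairing this with $\sum\deg(u)=2m$ and tossing the negative term gives $\le 2mn$; the honest route to $mn/8$ is $N_{ind}(n,m,K_{1,2}\sqcup K_1)\le N_{ind}(n,m,K_{1,2})\cdot n \le \big(\sum_u\tfrac{\deg(u)^2}{2}\big)\cdot n$? — no. I will therefore present the argument as: bound induced cherries via double counting edges, $2\cdot(\#\text{cherries}) \le \sum_{uv\in E}(\deg u + \deg v - 2) \le m\max_u 2\deg(u) \le 2m(n-1)$, hence $\#\text{cherries}\le m(n-1)/2$, then multiply by the number of valid fourth vertices which is at most $n/4$ on average after balancing — and the write-up's real work, which I flag as the main obstacle, is verifying that averaging/convexity correctly delivers the constant $1/8$ rather than $1/2$.
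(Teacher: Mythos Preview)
Your proposal does not reach a proof: you correctly diagnose that ``cherries times $n$'' loses a factor of $4$, but you never recover it. The attempts you sketch all decouple the choice of the third cherry-vertex from the choice of the isolated vertex, and that decoupling is exactly what kills the constant.

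The paper's argument organises the count by \emph{edge}, not by center-vertex. Fix an edge $uv\in E(G)$ and set $x=|N(u)\cup N(v)\setminus\{u,v\}|$. In any induced copy of $K_{1,2}\sqcup K_1$ containing $uv$, one of $u,v$ is the center and the other is a leaf; the remaining leaf must lie in $N(u)\cup N(v)\setminus\{u,v\}$ (at most $x$ choices), while the isolated vertex must avoid $N(u)\cup N(v)$ (at most $n-x$ choices). Hence
\[
N_{ind}(uv,K_{1,2}\sqcup K_1,G)\le x(n-x)\le \frac{n^2}{4}
\]
by AM--GM. Since every induced $K_{1,2}\sqcup K_1$ has exactly two edges,
\[
N_{ind}(K_{1,2}\sqcup K_1,G)=\frac{1}{2}\sum_{e\in E(G)}N_{ind}(e,K_{1,2}\sqcup K_1,G)\le \frac{1}{2}\cdot m\cdot\frac{n^2}{4}=\frac{mn^2}{8}.
\]

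The point you were circling --- balancing a neighbourhood size against its complement --- is right, but it has to be applied to $|N(u)\cup N(v)|$ for an \emph{edge} $uv$, not to $\deg(u)$ for a single center $u$. Your center-based variant $\binom{\deg(u)}{2}(n-\deg(u))$ summed over $u$ only yields $mn^2/4$: the extra factor $\tfrac12$ comes from the fact that summing over edges (rather than centers) double-counts each copy, which is precisely the division by $2$ above. Counting cherries first and then multiplying by a vertex count cannot see the complementarity between ``third leaf is a neighbour of the edge'' and ``isolated vertex is not'', so it cannot produce the $x(n-x)$ term.
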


\begin{proof}
Let $G$ be a graph achieving the maximum in the definition of $N_{ind}(n,m,K_{1,2}\sqcup K_{1})$. Let $uv=e\in E(G)$ and denote $x=|N(u)\cup N(v)\setminus \{u,v\}|$. Then,
\begin{align*}
    N_{ind}(e,K_{1,2}\sqcup K_{1},G)\leq x(n-x)\leq n^{2}/4.
\end{align*}
Moreover we have,
\[
    N_{ind}(K_{1,2}\sqcup K_{1},{G})\leq \frac{1}{2}\sum_{e\in E({G})}N_{ind}(e,K_{1,2}\sqcup K_{1},G)\leq \frac{mn^2}{8}.\qedhere
\]
\end{proof}

\begin{proof}[Proof of Lemma \ref{claim_evalutation_of_Phi}]
We start with the upper bounds both in \ref{eq_evaluation_of_Phi_1} and in \ref{eq_evaluation_of_Phi_2}.

For the upper bound in \ref{eq_evaluation_of_Phi_1}, assume that $n^{-1}\ll p\ll n^{-1/2}$. Since $\lim_{n\rightarrow \infty}\E[X]=\infty$ we may treat $m'=\sqrt[4]{4(1+\varepsilon)\delta \E[X]}$ as an integer and let $H=K_{m',m'}$. Note that $H$ has $2\sqrt{(1+\varepsilon)\delta\E[X]}$ edges and provided $n$ is large enough $H$ contains at least $(1+\varepsilon/2)\delta \E[X]>\delta \E[X]$ induced copies of $C_4$. Note further that, 
\[
    \E_H[X]\geq N_{ind}(C_4,H)(1-p)^2+\E[ N_{ind}(C_4,G_{n-2m',p})]\geq (1+\varepsilon/2) \delta \E[X]+\frac{\binom{n-2m'}{4}}{\binom{n}{4}}\E[X].
\]
As $m'\ll n$ we also have the following for sufficiently large $n$,
\[
    \binom{n-2m'}{4}/{\binom{n}{4}}\geq \left(\frac{n-2m'-3}{n}\right)^4\geq 1-\delta \varepsilon /2.
\]
Combining the above inequalities we find the following for large enough $n$,
\[
    \E_H[X]\geq (1+\varepsilon/2)\delta\E[X]+(1-\varepsilon\delta/2)\E[X]=(1+\delta)\E[X].
\]
Therefore, for large enough $n$ we have
\[
    \E_H[X]-\E[X]\geq \delta \E[X]. 
\]
Hence, by the definition of $\Phi_X(\delta)$,
\[
    \Phi_X(\delta)\leq -\log (p^{e(H)})=2\sqrt{(1+\varepsilon)\delta\E[X]}\log (1/p)\leq 2(1+\varepsilon)\sqrt{\delta \E[X]}\log (1/p).
\]

For the upper bound in \ref{eq_evaluation_of_Phi_2}, assume $n^{-1/2} \ll p\ll 1$. 
Letting 
\[
    \Tilde{m}=(1+\varepsilon)\left(\sqrt{\frac{n^4p^4}{16}+4\delta \E[X]}-\frac{n^2p^2}{4}\right),
\]
and recalling that $\lim_{n\rightarrow \infty}\sqrt{\E[X]}/n=\infty$ we may treat $\Tilde{m}/n$ as an integer and consider the following graph. Let $H$ be a graph on the vertex set $[n]$ and such that $H\left[[2\Tilde{m}/n+n/2]\right]\cong K_{2\Tilde{m}/n,n/2}$ and all vertices in $[n]\setminus [2\Tilde{m}/n+n/2]$ are isolated. Note that $H$ contains $\Tilde{m}$ edges. Let $\xi$ be a small positive real. Assuming $n$ is large enough, $H$ contains at least $(1-\xi)\Tilde{m}^2/4$ induced copies of $C_4$. Furthermore, if $n$ is large enough, $H$ contains at least $(1-\xi)\frac{\Tilde{m}n^2}{8}$ induced copies of $K_{1,2}\sqcup K_{1}$. Let $\eta$ be a small positive real (that might depend on $\xi$). Then provided $n$ is large enough,
\begin{align*}
    \E_{H}[X]\geq& \left(N_{ind}(C_4,H)+N_{ind}(K_{1,2}\sqcup K_{1},H)p^2\right)(1-p)^2+\E[N_{ind}(C_4,G_{n-2\Tilde{m}/n,p})]\\
    \geq&(1-\eta)(1-\xi)\left(\frac{\Tilde{m}^2}{4}+\frac{\Tilde{m}n^2p^2}{8} \right)+\frac{\binom{n-2\Tilde{m}/n}{4}}{\binom{n}{4}}\E[X].
\end{align*}
Where the first inequality holds as the first term is at most the expected number of induced copies $C_4$ with at least one vertex in $[2\Tilde{m}/n]$, and the second term is the expected number of induced copies of $C_4$ with no vertices in $[2\Tilde{m}/n]$.

By the definition of $\Tilde{m}$ and the choices of $\xi,\eta$ being small enough we obtain,
\begin{align*}
    (1-\eta)(1-\xi)\left(\frac{\Tilde{m}^2}{4}+\frac{\Tilde{m}n^2p^2}{8} \right)\geq  (1+\varepsilon/2)\delta \E[X]. 
\end{align*}
Further, for large enough $n$ we have,
\[
    \binom{n-2\Tilde{m}/n}{4}/\binom{n}{4}\geq \left(\frac{n-2\Tilde{m}/n-3}{n}\right)^4\geq {(1-\delta \varepsilon/2)}.
\]
Combining all of these inequalities we have,
\[
    \E_{H}[X]-\E[X]\geq \delta \E[X].
\]
Hence, by the definition of $\Phi_X(\delta)$ we have,
\[
    \Phi_X(\delta)\leq -\log (p^{e(H)})=(1+\varepsilon)\left(\sqrt{\frac{n^4p^4}{16}+4\delta \E[X]}-\frac{n^2p^2}{4}\right)\log (1/p).
\]

For the lower bounds of both \ref{eq_evaluation_of_Phi_1} and \ref{eq_evaluation_of_Phi_2} we start with a claim. 

\begin{claim}\label{very_long_eq_fixed}
    Suppose $G$ is a spanning subgraph of $K_n$ achieving the minimum in the definition of $\Phi _X (\delta)$ and let $m=e(G)$. Then for any small enough $\gamma>0$ and large enough $n$ 
    \begin{align*}
    \delta \E[X] \leq m^2/4+\min \{mn^2p^2/8,m^2np^2/2\}+\gamma n^4p^4.
\end{align*}
\end{claim}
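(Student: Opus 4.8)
The strategy is to bound $\delta\E[X]$ in terms of the quantity $\E_G[X]-\E[X]$, which by the optimality of $G$ equals exactly $\delta\E[X]$ (up to the usual $o(1)$ slack hidden in the definition of $\Phi_X$), and then to control each of the terms appearing when one expands $\E_G[X]$ over spanning subgraphs of $C_4$. Concretely, I would start from
\[
\delta\E[X] \le \E_G[X]-\E[X] \le \sum_{\emptyset\neq H\,{\subseteq}^*\,C_4} N_{ind}(H,G)\,p^{4-e(H)},
\]
exactly as in the proof of Lemma \ref{claim_F_becomes_G}; here the sum runs over the spanning subgraphs $H$ of $C_4$, namely $C_4$ itself (contributing $N_{ind}(C_4,G)$), the path $P_4$ (contributing $N_{ind}(P_4,G)p$), $K_{1,2}\sqcup K_1$ (contributing $N_{ind}(K_{1,2}\sqcup K_1,G)p^2$), the matching $M_2$ (contributing $N_{ind}(M_2,G)p^2$), the single edge $K_2\sqcup 2K_1$ (contributing $N_{ind}(K_2\sqcup 2K_1,G)p^3$), and the empty graph, which is excluded.

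Next I would bound each term. For $N_{ind}(C_4,G)$ I use Corollary \ref{cor_inducibility_like_2} to get $N_{ind}(C_4,G)\le m^2/4$. For $N_{ind}(K_{1,2}\sqcup K_1,G)$ I have two competing bounds: Observation \ref{obs_K_1,2_cup_K1} gives $N_{ind}(K_{1,2}\sqcup K_1,G)\le mn^2/8$, while the alternative bound $N_{ind}(K_{1,2}\sqcup K_1,G)\le m^2 n/2$ comes from noting that an induced $K_{1,2}$ is spanned by two edges sharing a vertex (at most $m^2$ such pairs, or better $2m\cdot\Delta$ bounded crudely by $m^2$), and the extra isolated vertex contributes a factor $n$ — this accounts for the $\min\{mn^2p^2/8,\,m^2np^2/2\}$ term. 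For the remaining three subgraphs $P_4$, $M_2$, $K_2\sqcup 2K_1$ I use the crude bounds $N_{ind}(P_4,G),N_{ind}(M_2,G)\le m^2$ and $N_{ind}(K_2\sqcup 2K_1,G)\le mn^2$ already recorded in the proof of Lemma \ref{claim_F_becomes_G}. Collecting these, the "error" terms are $m^2 p + m^2 p^2 + mn^2 p^3$.

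The final step is to absorb these error terms into $\gamma n^4 p^4$. Here I invoke the already-proven upper bounds on $\Phi_X$ (the upper bounds in Lemma \ref{claim_evalutation_of_Phi}, which are established before this claim is used), giving $m=e(G)=O(\sqrt{\E[X]})=O(n^2p^2)$ in the regime $p\ll n^{-1/2}$ and $m=O(n^2p^2)$ also in the regime $n^{-1/2}\ll p\ll 1$ — in all cases $m=O(n^2p^2)$. Then $m^2 p = O(n^4 p^5) = o(n^4p^4)$, $m^2p^2 = O(n^4p^6)=o(n^4p^4)$, and $mn^2p^3 = O(n^4 p^5)=o(n^4p^4)$, since $p\ll 1$. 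Hence for any fixed $\gamma>0$ these three terms together are at most $\gamma n^4 p^4$ once $n$ is large, which yields the claimed inequality. The main subtlety — and the step I would be most careful about — is making sure that the a priori bound $m=O(n^2p^2)$ is genuinely available at this point in the argument (it follows from the upper bounds in Lemma \ref{claim_evalutation_of_Phi}, which are proved independently of this claim), and that the two-edges-span-at-most-one-induced-copy principle is correctly applied to get both the $m^2$-type bounds and the $m^2 n/2$ bound for $K_{1,2}\sqcup K_1$.
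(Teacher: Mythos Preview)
Your proposal is correct and follows essentially the same approach as the paper's proof: expand $\E_G[X]-\E[X]$ over spanning subgraphs of $C_4$, apply Corollary~\ref{cor_inducibility_like_2} and Observation~\ref{obs_K_1,2_cup_K1} (together with the crude $\binom{m}{2}n$ bound) for the two main terms, use the $m^2$ and $mn^2$ bounds for the remaining subgraphs, and absorb the resulting error $mp(m+mp+n^2p^2)$ into $\gamma n^4p^4$ via $m=O(n^2p^2)$ from the already-established upper bound on $\Phi_X(\delta)$. Your care about the non-circularity of invoking the upper bound of Lemma~\ref{claim_evalutation_of_Phi} at this point is exactly the right concern, and the paper handles it the same way.
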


\begin{proof}
Let $G$ be a spanning subgraph of $K_n$ achieving the minimum in the definition of $\Phi _X (\delta)$ and put $m=e(G)$. By the definition of $\Phi_X (\delta)$ we have $\E_G[X]-\E [X]\geq \delta \E[X]$. Note that we can bound $\E _G [X]-\E [X]$ from above in the following way:
\begin{align}\label{claim_Phi_difference}
    \E_{{G}}[X]-\E[X] \leq \sum \limits _{ \emptyset \neq H\subseteq^* C_4}N_{ind}(H,{G})p^{4-e(H)},
\end{align}
where $\subseteq ^*$ stands for spanning subgraphs. Let $P_4$ be the path with three edges, $M_2$ be a matching of size two and $K_2\sqcup I_2$ be a disjoint union of an edge and an independent set of size two.
Since any matching of size two span at most one induced copy of $P_4$ or $M_2$ we have
\[
    N_{ind}(P_4,G),N_{ind}(M_2,G)\leq m^2. 
\]
Therefore we obtain
\begin{align*}
    \E _G [X]-\E [X] \leq& N_{ind}(C_4,{G})+N_{ind}(K_{1,2}\sqcup K_{1},{G})p^2\\
    &+\underbrace{m^2p}_{N_{ind}(P_4,{G})}+\underbrace{m^2p^2}_{N_{ind}(M_2,{G})}+\underbrace{mn^2p^3}_{N_{ind}(K_2\sqcup I_2,{G})} \\
    \leq&  N_{ind}(C_4,{G})+N_{ind}(K_{1,2}\sqcup K_{1},{G})p^2+mp(m+mp+n^2p^2).
\end{align*}
By Observation \ref{obs_K_1,2_cup_K1} we have
\[
    N_{ind}(K_{1,2}\sqcup K_{1},{G})\leq N_{ind}(m,n,K_{1,2}\sqcup K_{1}) \leq \frac{mn^2}{8}.
\]
Furthermore we always have,
\[
    N_{ind}(K_{1,2}\sqcup K_{1},{G})\leq \binom{m}{2}\cdot n\leq \frac{m^2n}{2}.
\]
Combining the two we find that,
\begin{align*}
    N_{ind}(K_{1,2}\sqcup K_{1},{G})p^2\leq \min \{mn^2p^2/8,m^2np^2/2\}.
\end{align*}
Hence, by the definition of $\Phi_X(\delta)$ and Corollary \ref{cor_inducibility_like_2}, for large enough $n$ we have,
\begin{align}\label{eq_very_long_1}
    \delta \E[X]&\leq N_{ind}(C_4,{G})+N_{ind}(K_{1,2}\sqcup K_{1},{G})p^2+mp(m+mp+n^2p^2) \nonumber\\
                &\leq m^2/4+\min \{mn^2p^2/8,m^2np^2/2\}+mp(m+mp+n^2p^2).
\end{align}
As $p\ll 1$ and the upper bound achieved before, we have $m=O(n^2p^2)$. Therefore, 
for any $\gamma>0$ and sufficiently large $n$ we obtain from \eqref{eq_very_long_1} the following inequality
\[
    \delta \E[X] \leq m^2/4+\min \{mn^2p^2/8,m^2np^2/2\}+\gamma n^4p^4. \qedhere
\]
\end{proof}

We now prove the lower bound in \ref{eq_evaluation_of_Phi_1}. For this assume that $n^{-1}\ll p\ll n^{-1/2}$ and note that this implies that $\min\{mn^2p^2/8,m^2np^2/2\}=o(m^2)$. Thus, for any $\gamma>0$ and large enough $n$ we deduce the following from Claim \ref{very_long_eq_fixed},
\[
    \delta \E[X]\leq m^2/4+\min \{mn^2p^2/8,m^2np^2/2\}+\gamma n^4p^4\leq (1/4+\gamma)m^2+\gamma n^4p^4.
\]
Taking $\gamma$ sufficiently small we obtain the following bound on $m$,
\[
    (1-\varepsilon)\delta \E[X] \leq \frac{1}{4(1-\varepsilon)}m^2,
\]
implying 
\[
    m\geq 2(1-\varepsilon)\sqrt{\delta \E[X]}.
\]
This proves the lower bound in \ref{eq_evaluation_of_Phi_1} as for sufficiently large $n$ we obtain,
\[
    \Phi_X(\delta)=-\log(p^m)\geq 2(1-\varepsilon)\sqrt{\delta \E[X]}\log (1/p).
\]

Lastly, we prove the lower bound in \ref{eq_evaluation_of_Phi_2}. For this assume $n^{-1/2}\ll p\ll1$. Let $\gamma>0$ be some small constant, then for large enough $n$ Claim \ref{very_long_eq_fixed} implies,
\begin{align*}
    \delta \E[X] \leq m^2/4+mn^2p^2/8+\gamma \E[X].
\end{align*}
We conclude the following as the above is a quadratic inequality in $m$ and the fact that $\E[X]=(1+o(1))\frac{n^4p^4}{8}$,
\[
    m\geq \sqrt{\frac{n^4p^4}{16}+4(\delta-\gamma) \E[X]}-\frac{n^2p^2}{4} \geq (1-\varepsilon)\left(\sqrt{\frac{n^4p^4}{16}+4\delta \E[X]}-\frac{n^2p^2}{4}\right) .
\]
This implies \ref{eq_evaluation_of_Phi_2} as follows,
\[
    \Phi_X(\delta)=-\log(p^m)\geq (1-\varepsilon)\left(\sqrt{\frac{n^4p^4}{16}+4\delta \E[X]}-\frac{n^2p^2}{4}\right)\log (1/p). \qedhere
\]
\end{proof}

This finishes the first evaluation we prove in this section. The second evaluation is the evaluation of the entropic term $|\mathcal{C}_m|$ given by Lemma \ref{claim_general_bound_3}. Roughly speaking Lemma \ref{claim_general_bound_3} shows that the number of core graphs with $m$ edges is determined by $v_m$ the maximum number of non-isolated vertices over all graphs in $\mathcal{C}_m$. As seen already, there is a big difference in the behaviour of the problem depending on whether $p\ll n^{-1/2}$ or $p\gg n^{-1/2}$. Later, we will derive two corollaries from Lemma \ref{claim_general_bound_3}, corresponding to these regimes, these corollaries will be very important in Section \ref{sec_6}.

%To prove Lemma \ref{claim_general_bound_3} let us make a definition.
%This definition is a modification of the function $N_{ind}$ which was introduced earlier.

%\begin{definition} \label{def_N(n,m,H)}
%Suppose $H$ and $G$ are graphs. Let $n,m$ be positive integers. Then we define:
%  ${N}_{ind}^*(n,m,H)$ is the maximum of $N_{ind}(e,H,G) $ over all pairs $(G,e)$ of a graph and an edge in it, such that the number of vertices of $G$ is at most $n$ and the number of edges of $G$ is at most $m$.    
%\end{definition}

%\begin{lemma}\label{claim_general_bound}
%        Suppose $\eta,\varepsilon,\delta,C,K$ are positive reals with $\varepsilon<1$. Furthermore, suppose $np^2\log^2(1/p)\ll 1$ as $n$ tends to infinity. Then, there exists $n_0$ such that the following holds for all $n>n_0$:\\
%        Let $m$ be a positive integer with $Cn^2p^2 \leq m\leq K\Phi_X(\delta+\varepsilon)$. Furthermore, let 
%        \[
%        v_m=\max \{|\{v\in V(G):\deg(v)\neq 0\}|:G\in \mathcal{C}_m\}.
%        \] 
%        Then,
%        \begin{equation*}
%            |\mathcal{C}_m|\leq {n}^{\eta/2 v_m+\eta/2 m}\binom{n}{v_m}.
%        \end{equation*}
%\end{lemma}

%Now we are ready to prove Lemma \ref{claim_general_bound_3}.

\begin{proof}[Proof of Lemma \ref{claim_general_bound_3}]
Let $G\in \mathcal{C}_m$ be a core graph with $m$ edges and let $uv$ be an edge in $G$. Then by the definition of a core graph we have,
\[
    N_{ind}(uv,C_4,G)+N_{ind}(uv,K_{1,2},G)np^2\geq \varepsilon \E[X]/(2K\cdot \Phi _X(\delta+\varepsilon)).
\]
Therefore, 
\begin{equation}\label{eq_condition_1_in_(*)}
    \deg(u)\deg(v)\geq N_{ind}(uv,C_4,G)\geq \varepsilon \E[X]/(4K\cdot \Phi _X(\delta+\varepsilon)),
\end{equation}
or
\begin{equation}\label{eq_condition_2_in_(*)}
    \deg(u)+\deg(v)\geq N_{ind}(uv,K_{1,2},G)\geq \varepsilon \E[X]/(4K\cdot \Phi _X(\delta+\varepsilon)np^2),
\end{equation}
call this property $(*)$.
To bound $|\mathcal{C}_m|$ it is enough to bound the number of subgraphs of $K_n$ with $m$ edges satisfying property $(*)$. This can be bounded from above by multiplying the following quantities:
\begin{enumerate}
    \item The number of possible ways to choose the set of non-isolated vertices of such graph. 
    \item The number of possible choices of the edges of the graph such that property $(*)$ is being satisfied.
\end{enumerate}
The first item can be bounded from above by the number of ways to choose a set of at most $v_m$ vertices which is at most $2^{v_m}\binom{n}{v_m}$. 

We now bound the second item. 
Let $H$ be a graph with $v$ vertices and $m$ edges that satisfies property $(*)$. For every integer $0\leq t\leq \log_2(m)$ define $V_t(H)=\{v\in V(H):2^t\leq\deg(v)<2^{t+1}\}$. Furthermore, for every integer $0\leq t\leq \log_2(m)$ define $U_t(H)= \cup_{\ell \geq t}V_\ell$. Using a standard double counting argument and the fact that $U_{t}(H)\subseteq V(H)$ we have 
\[
    |V_t(H)|\leq |U_t(H)|\leq \min\left\{\frac{m}{2^{t-1}},v_m\right\}
\]
for all $t\geq 0$.
By property $(*)$ edges can be placed between the sets $V_t$ in two ways.

The first option is when the edges satisfy \eqref{eq_condition_1_in_(*)}. Since, the degree of each vertex is always bounded from above by $n$, we obtain that the degrees of the endpoints of each edge satisfying \eqref{eq_condition_1_in_(*)} are bounded from below by $d^*$ defined as 
\[
    d^* = \frac{\varepsilon \E[X]}{4K\cdot n\Phi_X(\delta+\varepsilon)}=\frac{C_0np^2}{\log(1/p)}
\]
where $C_0$ is some positive real that might depend on $\varepsilon,\delta$ and $K$. Thus, such edges can only connect a vertex in $V_t$ and a vertex in $U_{\lfloor \log_2 \ell (t)\rfloor}$ for integers $\lfloor \log_2(d^*)\rfloor \leq t\leq \log_2(n)$ and $\ell (t) =\varepsilon \E[X]/(4K\cdot \Phi _X(\delta+\varepsilon)2^{t})$. We denote the number of such options by $T_m$. 

The second option is that the edge has an endpoint in $U_{\lfloor\log_2(r)\rfloor}(H)$ where $r$ is defined as $r=\varepsilon \E[X]/\left(8K\cdot \Phi _X(\delta+\varepsilon)np^2\right)$. That happens when the edge satisfies \eqref{eq_condition_2_in_(*)}. We denote the number of such options by $S_m$.

Furthermore, note that provided $n$ is large enough there are at most
\[
    {\left(\log_2(n)-\log_2(d^*)+2\right)^{v_m}}={\left(\log_2\left(\frac{\log(1/p)}{C_0p^2}\right)+2\right)^{v_m}}\leq 3^{v_m}\log(1/p)^{v_m}
\]
partitions of the non-isolated vertices of our graph into sets $V_t$ where $\lfloor\log_2(d^*)\rfloor\leq t\leq \log_2(n)$ is an integer and $\cup_{t=0}^{\lfloor\log_2(d^*)\rfloor-1}V_t$. We will think of the vertices in $V_t$ as vertices which will have degrees between $2^t$ and $2^{t+1}$ where $\lfloor\log_2(d^*)\rfloor\leq t\leq \log _2(n)$ is an integer.

We conclude that given the vertex set and a partition as explained above, there are at most $T_m+S_m$ pairs of vertices that can be edges of $H$.
Hence, the number of graphs with property $(*)$ is bounded from above by 
\[
    {\left(3\log(1/p)\right)^{v_m}} 2^{v_m}\binom{n}{v_m} \binom{S_m+T_m}{m}\leq (6\log(1/p))^{v_m}\binom{n}{v_m} \binom{S_m+T_m}{m}.
\]
Let us now estimate $S_m$ and $T_m$. 
\begin{align*}
    T_m=&\sum\limits_{t=\lfloor\log_2(d^*)\rfloor}^{\log_2(n)}|V_t||U_{\lfloor{\log_2(\ell (t))}\rfloor}|\leq \sum\limits_{t=\lfloor\log_2(d^*)\rfloor}^{\log_2(n)}\frac{m}{2^{t-1}}\frac{4m}{\ell (t)}=\sum\limits_{t=\lfloor\log_2(d^*)\rfloor}^{\log_2(n)}\frac{m^2}{2^{t-3}}\frac{4K\cdot \Phi _X(\delta+\varepsilon)2^{t}}{\varepsilon \E[X]}\\
    \leq & (\log_2(n)-\lfloor\log_2(d^*)\rfloor+1)\frac{64K\Phi_X(\delta+\varepsilon)m^2}{\varepsilon \E[X]}\leq\frac{192 K\Phi_X(\delta+\varepsilon)m^2\log(1/p)}{\varepsilon \E[X]} .
\end{align*}
Recalling the assumption that $m\leq K\Phi_X(\delta+\varepsilon)$ we obtain,
\[
    T_m\leq \frac{192 K^3\Phi^3_X(\delta+\varepsilon)\log(1/p)}{\varepsilon \E[X]}.
\]
Applying Lemma \ref{claim_evalutation_of_Phi} we find that there is $C_1>0$ such that,
\[
    \Phi_X(\delta+\varepsilon)\leq \frac{C_1 \E[X]}{n^2p^2}\log(1/p).
\]
By our assumptions, $Cn^2p^2\leq m\leq \Phi_X(\delta+\varepsilon)= O(n^2p^2\log(1/p))$. This implies there are $C_2,C_3>0$ such that for large enough $n_0$ we have,
\begin{align*}
    T_m\leq& C_2n^2p^2\log^4(1/p)\\
       \leq& C_3m\log^5(1/p).
\end{align*}
For $S_m$ recall that there are at most $4m/r$ vertices in $U_{\lfloor \log_2(r)\rfloor}(H)$ and further recall that $\Phi_X(\delta+\varepsilon)=O(n^2p^2\log(1/p))$. Hence,
\[
    S_m\leq n\frac{4m\log(1/p)}{r}\leq C_4m\log(1/p),
\]
where $C_4=C_4(\varepsilon,\delta,K)>0$. 
This implies that for large enough $n_0$ we have $S_m, T_m =O(m\log^5(1/p))$.
Putting it all together there exists $C_5>0$ such that provided $n_0$ is large enough we have,

\begin{align*}
    |C_m|/\binom{n}{v_m}\leq&{(6\log(1/p))}^{v_m}\binom{T_m+S_m}{m}\\
    \leq& {(6\log_2(1/p))}^{2m}\left(\frac{C_5m\log^5(1/p)}{m}\right)^m\\
    \leq& {\log(1/p)}^{O(m)}.
\end{align*}
This proves the lemma.
\end{proof}

We now deduce two corollaries and discuss how one would use them in order to obtain an upper bound on the upper tail probability of $X$. The first corollary will be used when $n^{-1}\ll p\ll n^{-1/2}$ and the second when $n^{-1/2}\ll p\ll1$.

\begin{cor}\label{claim_general_bound}
        Suppose $\varepsilon,\delta,C,K$ are positive reals with $\varepsilon<1$. Furthermore, suppose $n^{-1} \ll p\ll n^{-1/2}$ as $n$ tends to infinity. Then, there exist $D>0$ and $n_0$ such that the following holds for all $n>n_0$:\\
        Let $m$ be a positive integer with $Cn^2p^2 \leq m\leq K\Phi_X(\delta+\varepsilon)$. Furthermore, let 
        \[
        v_m=\max \{|\{v\in V(G):\deg(v)\neq 0\}|:G\in \mathcal{C}_m(\varepsilon,\delta,K)\}.
        \] 
        Then,
    \begin{equation*}
        \log \left(|\mathcal{C}_m|\right)\leq-v_m\log\left(np^2\right)+\eta m\log(n).
    \end{equation*}  
\end{cor}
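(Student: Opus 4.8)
The plan is to derive the corollary from Lemma~\ref{claim_general_bound_3} purely by taking logarithms and estimating the binomial coefficient; the only genuine input beyond that lemma is the lower bound $m\ge Cn^{2}p^{2}$ in the hypothesis, which is exactly what lets us trade the crude estimate $\binom{n}{v_m}\le n^{v_m}$ for the much better $e^{-v_m\log(np^{2})}$. If $\mathcal{C}_m=\emptyset$ the bound is vacuous, so we may assume $\mathcal{C}_m\neq\emptyset$ and hence $m\ge Cn^{2}p^{2}>0$, so $v_m\ge 1$.

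First I would invoke Lemma~\ref{claim_general_bound_3} (its hypotheses hold, since $n^{-1}\ll p\ll n^{-1/2}$ implies $n^{-1}\ll p\ll 1$ and the admissible range of $m$ is the same), obtaining a constant $D_0=D_0(\varepsilon,\delta,C,K)$ with
\[
\log|\mathcal{C}_m|\le D_0\,m\log\log(1/p)+\log\binom{n}{v_m}.
\]
Next, using $\binom{n}{k}\le(en/k)^{k}$ for all $1\le k\le n$, I would write
\[
\log\binom{n}{v_m}\le v_m\log\frac{en}{v_m}=-v_m\log(np^{2})+v_m\log\frac{en^{2}p^{2}}{v_m},
\]
where the first term is nonnegative because $p\ll n^{-1/2}$ forces $np^{2}=o(1)<1$ for large $n$. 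It then remains to control the error term $v_m\log\!\big(en^{2}p^{2}/v_m\big)$.

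The key step is a one-variable optimisation combined with the hypothesis $m\ge Cn^{2}p^{2}$. If $v_m\ge en^{2}p^{2}$, the error term is nonpositive. If $v_m<en^{2}p^{2}$, then since $x\mapsto x\log(A/x)$ is maximised over $(0,A]$ at $x=A/e$ with value $A/e$, taking $A=en^{2}p^{2}$ gives $v_m\log(en^{2}p^{2}/v_m)\le n^{2}p^{2}\le m/C$. In either case
\[
\log|\mathcal{C}_m|\le -v_m\log(np^{2})+\frac{m}{C}+D_0\,m\log\log(1/p).
\]
Finally, since $n^{-1}\ll p\ll n^{-1/2}$ yields $\log(1/p)\le\log n$ and hence $\log\log(1/p)=o(\log n)$, for any fixed $\eta>0$ there is $n_0=n_0(\eta,\varepsilon,\delta,C,K)$ such that $1/C+D_0\log\log(1/p)\le\eta\log n$ for all $n>n_0$; this gives the asserted bound $\log|\mathcal{C}_m|\le-v_m\log(np^{2})+\eta m\log n$. (I read the statement as quantifying this for every $\eta>0$, with $n_0$ depending on $\eta$; the constant labelled $D$ in the statement is the $D_0$ furnished by Lemma~\ref{claim_general_bound_3}.)

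I do not expect a real obstacle: the argument is a short manipulation. The one point to get right is the case split on $v_m$ versus $en^{2}p^{2}$ together with the observation that $\max_{x}x\log(A/x)=A/e$ is \emph{strictly smaller} than $A$; this is precisely what makes $n^{2}p^{2}$ (rather than something like $n^{2}p^{2}\log n$) appear in the error, and combining it with $m\ge Cn^{2}p^{2}$ is what keeps the whole error term of size $o(m\log n)$.
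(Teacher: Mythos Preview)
Your proposal is correct and follows essentially the same route as the paper: invoke Lemma~\ref{claim_general_bound_3}, bound $\binom{n}{v_m}\le(en/v_m)^{v_m}$, rewrite $v_m\log(en/v_m)=-v_m\log(np^2)+v_m\log(en^2p^2/v_m)$, and then do a short case split to absorb the error into $\eta m\log n$. The only cosmetic difference is the threshold in the case split: the paper splits at $v_m=\eta m/3$ (and uses $v_m\le 2m$ to absorb the factor $e$), whereas you split at $v_m=en^2p^2$ and use the elementary optimisation $\max_{x\in(0,A]}x\log(A/x)=A/e$; both yield an error of size $O(n^2p^2)=O(m)$, which is $o(m\log n)$.
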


\begin{proof}
By Lemma \ref{claim_general_bound_3} and the assumption that $p\gg n^{-1}$ we have the following provided $n_0$ is sufficiently large,
\[
    \log(|\mathcal{C}_m|)\leq \log\binom{n}{v_m}+Dm\log\log(1/p)\leq v_m\log(en/v_m)+\eta m\log(n)/3.
\]
Since, $v_m\leq 2m$ we deduce that
\[
    \log(|\mathcal{C}_m|)\leq v_m\log(n/v_m)+2\eta m \log(n)/3.
\]
We now split into two cases. First, assume $v_m\leq \eta m/3$. This assumption implies that
\[
    v_m\log(n/v_m)\leq \eta m\log(n)/3.
\]
On the other hand, if we assume $v_m\geq \eta m/3$ we obtain
\[
    v_m\log(n/v_m)\leq v_m\log(3n/\eta m)=-v_m\log(np^2)+v_m\log( O(1)),
\]
where the equality is due to the assumption that $m=\Omega(n^2p^2)$. Since $v_m\leq 2m$, in both cases we obtain that, provided $n$ is large enough, the following holds:
\begin{equation*}
    \log \left(|\mathcal{C}_m|\right)\leq -v_m\log\left(np^2\right)+\eta m\log(n). \qedhere
\end{equation*}
\end{proof}

Corollary \ref{claim_general_bound} suggests the following `plan of attack' which we implement in Section \ref{sec_6}. Assuming $p\ll n^{-1/2}$ the term $\log(np^2)$ is negative. Therefore in order to estimate $|\mathcal{C}_m|$ estimate the maximum number of vertices that a core with $m$ edges can have for each $m$. This estimation and Corollary \ref{claim_general_bound} then yield an upper bound on the number of cores with $m$ edges, denote this bound by $\beta_m$. Then we compare $p^m\beta_m$ for every $m_0\leq m\leq K\Phi_X (\delta)$, where $m_0$ is the minimum number of edges in a core graph and denote this maximum by $\beta^*$. This implies,
\[
\sum \limits _{m=m_0}^{K\Phi _\delta (X)} p^m |\mathcal {C}_m|\leq K\Phi_{X}(\delta)\beta ^*.
\]
We also show that $K\Phi_{X}(\delta)$ is negligible compared to $\beta^*$ and therefore we obtain,
\[
\log\left(\sum \limits _{m=m_0}^{K\Phi_X (\delta)} p^m |\mathcal {C}_m|\right)\leq (1-\eta)\log(\beta ^*).
\]
Then Theorem \ref{thm:main_cores_1} will imply
\[
    \log \mathbb{P}(X\geq (1+\delta)\E[X])\leq (1-\varepsilon)\log (\beta^*).
\]
Note that $\beta^*$ depends on $p$. We will also show that this upper bound is matched to the lower bounds that we obtained in Section \ref{sec_4}. 

Next we present a corollary of Lemma \ref{claim_general_bound_3} which will be used in the range where $p\gg n^{-1/2}$.

\begin{cor}\label{claim_general_bound_2}
        Suppose $\varepsilon,\delta,C,K$ are positive reals with $\varepsilon<1$. Furthermore, suppose $n^{-1/2}\ll p\ll 1$ as $n$ tends to infinity. Then, there exists $n_0$ such that the following holds for all $n>n_0$:\\
        Let $m$ be a positive integer with $Cn^2p^2 \leq m\leq K\Phi_X(\delta+\varepsilon)$. Then,
        \begin{equation*}
            |\mathcal{C}_m|\leq \left(\frac{1}{p}\right)^{\varepsilon m}.
        \end{equation*}   
\end{cor}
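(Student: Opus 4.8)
The statement follows quickly from Lemma~\ref{claim_general_bound_3} once one observes that, in the range $n^{-1/2}\ll p\ll 1$, every core with $m\ge Cn^2p^2$ edges has \emph{far more} edges than vertices, so the combinatorial cost of choosing its vertex set is negligible on the relevant scale. Concretely, the plan is to apply Lemma~\ref{claim_general_bound_3} to obtain $|\mathcal{C}_m|\le\log(1/p)^{Dm}\binom{n}{v_m}$ for $n>n_0$, where $v_m\le n$ is the maximum number of non-isolated vertices of a core in $\mathcal{C}_m(\varepsilon,\delta,K)$ (note $v_m\ge 2$ since $m\ge Cn^2p^2\ge 1$), and then to show that the logarithm of the right-hand side is at most $\varepsilon m\log(1/p)$ after possibly enlarging $n_0$.

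Taking logarithms and using $\binom{n}{v_m}\le(en/v_m)^{v_m}$, it suffices to bound $Dm\log\log(1/p)+v_m\log(en/v_m)$ by $\varepsilon m\log(1/p)$. For the first summand, since $p\to 0$ we have $\log(1/p)\to\infty$, hence $\log\log(1/p)=o(\log(1/p))$; so for $n$ large $D\log\log(1/p)\le\frac{\varepsilon}{2}\log(1/p)$, and therefore $Dm\log\log(1/p)\le\frac{\varepsilon}{2}m\log(1/p)$.

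For the second summand I would use that $v\mapsto v\log(en/v)$ has derivative $\log(n/v)$ and is thus increasing on $(0,n]$, so $v_m\log(en/v_m)\le n\log e=n$. Now the hypothesis $m\ge Cn^2p^2$ combined with $p\gg n^{-1/2}$ (equivalently $np^2\to\infty$) gives $m\ge C n\cdot(np^2)$, whence $n=o(m)$ and hence $n\le\frac{\varepsilon}{2}m\le\frac{\varepsilon}{2}m\log(1/p)$ for $n$ large (using $\log(1/p)\ge 1$ eventually). Adding the two estimates yields $\log|\mathcal{C}_m|\le\varepsilon m\log(1/p)$, i.e.\ $|\mathcal{C}_m|\le(1/p)^{\varepsilon m}$, as claimed; the constant $n_0$ is the one from Lemma~\ref{claim_general_bound_3}, enlarged so that the two displayed inequalities above hold.

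There is no genuine obstacle here beyond Lemma~\ref{claim_general_bound_3} itself: the only point that needs care is verifying that \emph{both} error contributions — the factor $\log(1/p)^{Dm}$, coming from the number of admissible degree partitions in the proof of that lemma, and the vertex-set factor $\binom{n}{v_m}\le e^{n}$ — are $o(m\log(1/p))$, which is exactly where the regime assumptions $p\to 0$ and $p\gg n^{-1/2}$ (the latter forcing $m\gg n$) enter.
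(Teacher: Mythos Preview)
Your proof is correct and follows essentially the same route as the paper: both invoke Lemma~\ref{claim_general_bound_3}, use $p\gg n^{-1/2}$ to get $n=o(m)$ so that the vertex-choice factor $\binom{n}{v_m}\le e^{n}$ (the paper writes $2^n$) is negligible, and then absorb both this and the $\log(1/p)^{Dm}$ factor into $o(m\log(1/p))$. The only cosmetic difference is that the paper repackages $2^n$ as $\log(1/p)^m$ before combining terms, whereas you compare $n$ directly to $\tfrac{\varepsilon}{2}m\log(1/p)$.
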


This is also an immediate corollary of Lemma \ref{claim_general_bound_3}. To see this note that in this range of $p$ we have $n \ll m$ and thus $\binom{n}{v_m}\leq 2^n\leq \log(1/p)^{m}$. Hence Lemma \ref{claim_general_bound_3} implies 
\[
  \log|\mathcal{C}_m|\leq O(m\log\log(1/p))=o(m\log(1/p)).  
\]

Similar to before $\Phi_X(\delta+\varepsilon)$ will be shown to be negligible and therefore Corollary \ref{claim_general_bound_2} will yield the following provided $p\gg n^{-1/2}$,
\[
    \log\left(\sum_{m=m_*}^{K\Phi_X(\delta)}p^m|\mathcal{C}_m|\right)\leq(1-\varepsilon)m_*\log\left(p\right) 
\]
where $m_*$ is the minimum number of edges in a core graph. Which then by Theorem \ref{thm:main_cores_1} implies
\[
    \log \mathbb{P}(X\geq (1+\delta)\E[X])\leq (1-\varepsilon)m_*\log (p).
\]
In the next section we will compute this $m_*$ and show that the above matches the lower bounds given in Section \ref{sec_4}.

\section{Upper bounds}\label{sec_6}
As can be seen in previous sections there is a big difference in the behaviour of the problem depending on the regime $p$ lies at. As explained in Section \ref{sec_5}, in order to obtain quantitative bounds on the upper tail probability we need to bound the entropic term $|\mathcal{C}_m|$ when $\frac{\log^9(n)}{n}\ll p\ll \frac{1}{\sqrt{n}\log(n)}$, and when $p\gg n^{-1/2}$ we only need to compute the minimum number of edges in a core graph. Actually, the situation is more involved in the sparse regime where we see a surprising change in the behaviour of the problem. To present the main result of this section let us define a sequence $c_k$ for $k\geq 2$:
\[
    c_k=\Bigg\{
    \begin{array}{@{}l@{\thinspace}l}
        0 &\text{ for } k=1, \\
        \frac{1}{2 + \sqrt{\frac{k+1}{k-1}}} &\text{ for }k\geq 2.
    \end{array}
\]
We note that $c_k$ is an increasing sequence and $\lim _{k\rightarrow \infty}c_k=1/3$. This is the sequence promised in the our main theorem \ref{thm_main_result} and in Section \ref{sec_4}. Furthermore, recall the definitions of $m_k$ and $m_*$ which were given at the beginning of Section \ref{sec_4}. For the convenience of the reader we give these definitions here as well. First, we define $m_k=r_k\sqrt{(\delta+\varepsilon)\E[X]}$, where
\[
r_k=\Bigg\{
\begin{array}{@{}l@{\thinspace}l}
        2 &\text{ for } k=0, \\
        2\sqrt{k/(k-1)} &\text{ for }k\geq 2.\\
\end{array}
\]
Second, we define
\[
    m_*=(\sqrt{r+d^2}-d)\sqrt{\E[X]}/2,
\]
where $r=16(\delta+3\varepsilon/2)$ and $d=\sqrt{2}/(1+\varepsilon)$.
We are now ready to state the main theorem of this section which is the following.

%c_k=\left(\sqrt{\frac{k-2}{k-1}}-\sqrt{\frac{k-1}{k}}\right)/\left({\sqrt{\frac {(k-3)^2}{(k-1)(k-2)}}-\sqrt{\frac{(k-2)^2}{(k-1)k}}}\right)

\begin{thm}\label{thm_main_theorem}
    Suppose $k$ is an integer greater than $1$ and suppose that $\varepsilon,\delta$ are positive reals with $\varepsilon$ small enough. Then there exists $n_0$ such that for any $n>n_0$ the following holds:
    \begin{enumerate}
        \item \label{item_1} If $ \max \left\{ \frac{\log^9(n)}{n},n^{-1+c_{k-1}}\right\}\leq p\leq n^{-1+c_{k}}$, then
        \[
            \log \mathbb{P}(X\geq (1+\delta)\E[X]) \leq {(1-\varepsilon)}\log\left(p^{m_{k}}\binom{n}{m_{k}/{k}}\right).
        \] 
        \item \label{item_2} If $n^{-2/3}\leq p\leq {\frac{1}{\sqrt{n}\log(n)}}$, then
        \[
            \log\mathbb{P}(X\geq (1+\delta)\E[X]) \leq {(1-\varepsilon)}m_0\log\left(p\right).
        \]
        \item \label{item_3} If $n^{-1/2}\ll p\ll 1$, then
        \[
            \log \mathbb{P}(X\geq (1+\delta)\E[X])\leq {(1-\varepsilon)}m_*\log (p).
        \]
    \end{enumerate}
\end{thm}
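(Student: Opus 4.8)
The plan is to feed Theorem~\ref{thm:main_cores_1} into the estimates of Section~\ref{sec_5} separately in each of the three regimes. Fix a small auxiliary parameter $\varepsilon'$ (say $\varepsilon'=\varepsilon/3$) and let $K=K(\varepsilon',\delta)$ be as in that theorem; since $p\ll 1$ it applies and gives
\[
\mathbb{P}\big(X\ge(1+\delta)\E[X]\big)\ \le\ (1+\varepsilon')\sum_{m\le K\Phi_X(\delta+\varepsilon')}|\mathcal{C}_m(\varepsilon',\delta,K)|\,p^m .
\]
Every core $G$ with $m$ edges satisfies $N_{ind}(C_4,G)+N_{ind}(K_{1,2}\sqcup K_1,G)p^2\ge(\delta-\varepsilon')\E[X]$, while Corollary~\ref{cor_inducibility_like_2} and Observation~\ref{obs_K_1,2_cup_K1} bound the left-hand side by $\tfrac{m^2}{4}+\tfrac{mn^2p^2}{8}$; hence $\mathcal{C}_m=\emptyset$ unless $m$ exceeds the positive root $m_*=m_*(\varepsilon')$ of that quadratic, which (using $\Phi_X$ from Lemma~\ref{claim_evalutation_of_Phi}) is asymptotic to the quantity named $m_*$ in Section~\ref{sec_4}. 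So in all cases the task reduces to bounding $|\mathcal{C}_m|$ on the relevant range and then performing a one-variable maximisation of $|\mathcal{C}_m|p^m$.

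\textbf{Dense regime~\ref{item_3} ($n^{-1/2}\ll p\ll 1$).} Here Corollary~\ref{claim_general_bound_2} gives the uniform bound $|\mathcal{C}_m|\le p^{-\varepsilon' m}$, so $\sum_m|\mathcal{C}_m|p^m\le K\Phi_X(\delta+\varepsilon')\cdot p^{(1-\varepsilon')m_*}$, the sum being governed by its smallest term. By Lemma~\ref{claim_evalutation_of_Phi} we have $\Phi_X(\delta+\varepsilon')=O(n^2p^2\log(1/p))$ and $m_*=\Theta(n^2p^2)$ with $n^2p^2\gg n\gg\log n$, so the prefactor $K\Phi_X(\delta+\varepsilon')$ only costs $O(\log n)=o(m_*\log(1/p))$; after checking that $m_*(\varepsilon')\to m_*$ as $\varepsilon'\to0$ (i.e.\ that the constant in the definition of $m_*$ is exactly the one produced by the above quadratic), taking logarithms and reabsorbing the slack into $(1-\varepsilon)$ gives \ref{item_3}.

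\textbf{Sparse regime~\ref{item_1},~\ref{item_2} ($p\ll n^{-1/2}$).} Now $\log(np^2)<0$, and Corollary~\ref{claim_general_bound} gives $\log|\mathcal{C}_m|\le v_m\log\tfrac1{np^2}+\eta m\log n$, where $v_m$ is the largest number of non-isolated vertices of a core with $m$ edges and $\eta$ may be made as small as we wish. The new ingredient is a bound on $v_m$: combining the core inequality with the vertex-sensitive inducibility estimate $N_{ind}(C_4,G)\le\tfrac14 m(m-v+1)$ from Section~\ref{sec_6} (Corollary~\ref{cor_inducibility_like_2}, applied with the order of $G$ in the role of $n$) and $N_{ind}(K_{1,2}\sqcup K_1,G)\le\tfrac{mn^2}{8}$ shows that a core with $m$ edges and $v$ non-isolated vertices satisfies $(\delta-\varepsilon')\E[X]\le\tfrac14 m(m-v+1)+\tfrac18 mn^2p^2$, hence
\[
v\ \le\ v_m^{\ast}\ :=\ m+1-\frac{4(\delta-\varepsilon')\E[X]}{m}+\frac{n^2p^2}{2},
\]
with $\mathcal{C}_m=\emptyset$ whenever the right-hand side is negative. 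Substituting this into Corollary~\ref{claim_general_bound}, bounding the number of summands by $K\Phi_X(\delta+\varepsilon')=n^{O(1)}=p^{o(m)}$ and the $(\log1/p)^{Dm}$ factor of Lemma~\ref{claim_general_bound_3} by $p^{o(m)}$, we are reduced to maximising
\[
\Psi(m)\ =\ m\log p\ -\ v_m^{\ast}\log(np^2)\ +\ \eta m\log n
\]
over the admissible $m$, and comparing the maximum with the planted lower bounds of Corollary~\ref{thm_main_upper_1}.

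This optimisation is the heart of the proof and the step I expect to be most delicate. The dominant part of $\Psi(m)$ is a term linear in $m$ plus a term proportional to $\E[X]/m$ (from $v_m^{\ast}$), so $\Psi$ is unimodal with maximiser $m^{\ast}\asymp 2\sqrt{\delta\E[X]}\cdot\sqrt{\tfrac{2\log(1/p)-\log n}{\log n-\log(1/p)}}$; writing $p=n^{-1+c}$ one sees that for $c\in[c_{k-1},c_k]$ this $m^{\ast}$ sits near $m_k=r_k\sqrt{(\delta+\varepsilon)\E[X]}$, i.e.\ the optimal core ``wants'' to be a complete bipartite graph $K_{k,m_k/k}$ on $v=m_k/k$ non-isolated vertices — precisely the construction planted in Section~\ref{sec_4}, for which $\binom{n}{v}$ matches $\log|\mathcal{C}_m|\approx v\log\tfrac1{np^2}$. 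The thresholds come out of the identity $\log(1/p)/\log n=\tfrac{1+\mu}{2+\mu}$ with $\mu=\sqrt{(k+1)/(k-1)}$, which is exactly where the value of $\Psi$ obtained at the integer parameter $k$ equals the one at $k+1$ (the same balancing that determines which of $\mathcal{E}_k,\mathcal{E}_{k+1}$ is the better family to plant), and solving gives $c_k=\big(2+\sqrt{(k+1)/(k-1)}\big)^{-1}$. In the window $n^{-2/3}\le p\le(\sqrt n\log n)^{-1}$ the optimum is attained ``as $k\to\infty$'', i.e.\ at the balanced $K_{\sqrt{m_0},\sqrt{m_0}}$, giving \ref{item_2}. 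Throughout one must check that all error terms — the $\eta m\log n$, the polylogarithmic-in-$n$ factors from the entropy bound, and the $n^{O(1)}$ number of summands — are $o\big(m\log(1/p)\big)$ uniformly down to the smallest relevant $m$; near $p\sim\log^9(n)/n$ one has $m=\Theta(\log^{18}n)$ and it is exactly the polylogarithmic cushion afforded by $p\gg\log^9(n)/n$ that makes this work. Exponentiating the resulting bound on $\Psi$ and invoking Theorem~\ref{thm:main_cores_1} then yields \ref{item_1} and~\ref{item_2}.
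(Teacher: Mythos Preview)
Your approach to~\ref{item_3} and the reduction (via Theorem~\ref{thm:main_cores_1} and Corollary~\ref{claim_general_bound}) to maximising $m\log p-v_m\log(np^2)$ are correct and match the paper. The gap is in~\ref{item_1}. Write $p=n^{-1+c}$; your bound $v_m\le v_m^\ast=m-4(\delta-\varepsilon')\E[X]/m$ gives a strictly concave $\Psi(m)=m\log p-v_m^\ast\log(np^2)$ with maximiser $m^\ast=m_0\sqrt{(1-2c)/c}$, and one checks directly that $\Psi(m_j)=m_ju_j$ for every integer $j\ge2$. The thresholds $c_k$ are indeed where $\Psi(m_k)=\Psi(m_{k+1})$, but for $c\in[c_{k-1},c_k]$ one has $m^\ast=m_k$ only at the single point $c=(k-1)/(3k-2)$; at every other $c$ in the interval $\Psi(m^\ast)>m_ku_k$ strictly, so your upper bound cannot yield $(1-\varepsilon)m_ku_k$ for arbitrarily small $\varepsilon$. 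Worse, for $c<1/4$ (most of the $k=2$ window) $m^\ast>m_2$, and since you do not invoke the separate bound $v_m\le m/2+o(m)$ of Lemma~\ref{lemma_weak_bound}, your maximum is $\Psi(m^\ast)=-2m_0\sqrt{c(1-2c)}\log n$, weaker than the target $m_2u_2=-m_0\log n/\sqrt{2}$ by a factor tending to infinity as $c\to0$. Your claim that ``$m^\ast$ sits near $m_k$'' is simply false.

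What the paper does for~\ref{item_1} cannot be recovered from the inducibility bound alone. It partitions the edges of a core by the degree of their low-degree endpoint, setting $x_i=|\{e\in E(G):e\text{ has an endpoint of degree }i\}|$, and bounds both $N_{ind}(C_4,G)$ and $v(G)$ in terms of the whole profile $(x_2,\dots,x_R,x_{>R})$ (Lemma~\ref{lem_upper_bound_C_4}). This turns the problem into a multi-dimensional optimisation $\max\{\langle x,u\rangle:f(x)\ge t\}$, and a mass-pushing argument (Proposition~\ref{claim_conclusion_c_less_1/3}) shows the optimum is supported on the single coordinate $e_k$ when $c\in[c_{k-1},c_k]$, giving exactly $m_ku_k$. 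Your one-dimensional relaxation is strictly looser because it admits the ``fractional-$k$'' optimum; the integrality of $k$, and with it the infinite sequence of phase transitions, comes precisely from this degree-profile analysis. (A separate, smaller issue: in the sparse regime you must drop the $n^2p^2/2$ term from $v_m^\ast$ --- the $K_{1,2}\sqcup K_1$ contribution to the core condition is negligible there, via Lemma~\ref{obs:product_of_deg} --- otherwise that term is of order $m_0$ and even~\ref{item_2} does not go through.)
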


The proof naturally splits into three parts. We will prove each part in a different subsection. Before we split into cases let us prove a lemma which makes a connection between the number of induced copies of $C_4$ in a graph to the numbers of edges and vertices of that graph. This lemma will be important for us both in the sparse regime and the dense regime.

\begin{lemma}\label{lemma_inducibility_like}
Suppose $n,m$ are positive integers such that $m>3$ and let $\gamma\geq 0$ be real. Let $G$ be a graph with $n$ vertices and $m$ edges and assume $\delta(G)\geq 2$. Then,
$$N_{ind}(C_4,G) \leq \frac{m (m-n+1)}{4}.$$
\end{lemma}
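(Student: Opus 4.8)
The plan is to count induced 4-cycles by summing over pairs of non-adjacent vertices and bounding the number of common neighbors. For a graph $G$ with $n$ vertices, $m$ edges, and $\delta(G)\geq 2$, note that an induced copy of $C_4$ is determined by an unordered pair of "diagonal" vertices $\{u,v\}$ together with an unordered pair of common neighbors $\{x,y\}$ such that $uv\notin E(G)$ and $xy\notin E(G)$; since each induced $C_4$ has exactly two such diagonal pairs, we get
\[
    N_{ind}(C_4,G) = \frac{1}{2}\sum_{\substack{\{u,v\}:\, uv\notin E(G)}} \binom{|N(u)\cap N(v)|}{2} \;-\; (\text{correction for non-induced pairs of common neighbors}),
\]
but the cleaner route is to bound $N_{ind}(C_4,G)\leq \frac{1}{2}\sum_{\{u,v\}}\binom{c(u,v)}{2}$ where $c(u,v)=|N(u)\cap N(v)|$ and the sum is over all pairs, and then relate $\sum_{\{u,v\}}\binom{c(u,v)}{2}$ to a count of paths of length $2$. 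Specifically, $\sum_{\{u,v\}} c(u,v) = \sum_{w}\binom{\deg(w)}{2}$ counts paths $P_3$ (cherries) centered at each $w$, and more usefully $\sum_{\{u,v\}}\binom{c(u,v)}{2}$ counts pairs of cherries sharing the same two endpoints — this is the number of (labeled appropriately) $C_4$'s, including degenerate ones.

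Here is the approach I would actually carry out. Count $K_{2,2}$ subgraphs (not necessarily induced): the number of copies of $K_{2,2}$ in $G$ equals $\sum_{\{u,v\}}\binom{c(u,v)}{2}$ up to the factor of $2$, and this is at most $\frac14\big((\sum_w \deg(w)^2) - \text{lower order}\big)$ by convexity; but to get the sharp bound $\frac{m(m-n+1)}{4}$ I expect the key inequality is
\[
    \sum_{w\in V(G)}\binom{\deg(w)}{2} \leq \binom{2m/n}{2}\cdot \text{something},
\]
no — rather, one should use $\sum_w \deg(w) = 2m$ together with $\deg(w)\geq 2$ for all $w$. The trick with the minimum degree hypothesis: writing $d_w=\deg(w)$, we have $\sum_w d_w = 2m$ and $d_w\geq 2$, so $\sum_w d_w(d_w-1)\geq 2\sum_w(d_w-1) = 2(2m-n) = 2(2m-n)$, which is a \emph{lower} bound, not what we want. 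Instead the upper bound on $N_{ind}(C_4,G)$ should come from: each induced $C_4$ uses $4$ edges, and charging cleverly. I would try the inductive/extremal argument: if some vertex has degree exactly $2$, contract or delete it and track how $N_{ind}(C_4,\cdot)$, $m$, and $n$ change; if every vertex has degree $\geq 3$, use a direct convexity bound. The cleanest is probably: let $e=uv\in E(G)$; the number of induced $C_4$'s through $e$ using $e$ as an \emph{edge} is at most $(\deg(u)-1)(\deg(v)-1)$ minus adjacencies, and summing $\frac14\sum_{uv\in E}(\deg u - 1)(\deg v - 1)$ and applying $\sum_{uv\in E}(\deg u + \deg v) = \sum_w \deg(w)^2$ plus Cauchy–Schwarz/the constraint $\sum \deg w = 2m$.

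The main obstacle I anticipate is pinning down exactly where the $-n+1$ term comes from — it must be extracted from the hypothesis $\delta(G)\geq 2$ via an identity like $\sum_{uv\in E(G)} 1 = m$ combined with $\sum_{w}(\deg(w)-1) = 2m-n$, so that $\sum_{uv\in E}(\deg u-1)(\deg v - 1)$ telescopes against $m(2m/n - 1)$-type quantities. I would set up the sum $4N_{ind}(C_4,G)\leq \sum_{uv\in E(G)}(\deg(u)-1)(\deg(v)-1)$ (each induced $C_4$ contains $4$ edges, and for edge $uv$ in it the two neighbors used are non-$u,v$ neighbors), then bound $\sum_{uv\in E}(\deg u-1)(\deg v-1) \leq \max_w(\deg w - 1)\cdot\sum_{uv\in E}\min(\ldots)$ — no; rather use that $\sum_{uv\in E(G)}(\deg(u)-1+\deg(v)-1) = \sum_w \deg(w)(\deg(w)-1)$ and then AM–GM per edge to get $(\deg u-1)(\deg v-1)\leq \frac14(\deg u-1+\deg v-1)^2$ is too lossy; instead I would use the bound $(\deg u - 1)(\deg v-1)\leq (\deg u-1)(\Delta-1)$ is also lossy. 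The right move, which I'd verify by checking the extremal graph $K_{s,t}$ (where equality should hold), is: for $K_{s,t}$ with $st=m$, $s+t$ vertices, $N_{ind}(C_4)=\binom s2\binom t2$, and one checks $\binom s2\binom t2 = \frac{m(m-s-t+1)}{4}$ exactly, so the bound is tight for complete bipartite graphs. So the proof should reduce, via symmetrization/compression, to the complete bipartite case, and the key lemma is a stability/extremal statement that complete bipartite graphs maximize induced $C_4$ count among graphs with given edges, refined to track vertex count. I would cite or re-derive the relevant case of \cite{BolNarTac86} or prove it directly by the edge-charging identity above plus the observation that $\sum_{uv\in E(G)}(\deg(u)+\deg(v)) - 2e(G) - (\text{number of paths that close up to triangles etc.})$ yields the clean expression after using $\delta(G)\geq 2$ to discard boundary terms.
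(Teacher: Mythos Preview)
Your proposal is exploratory brainstorming rather than a proof, and none of the threads you start actually closes. The closest you get is the edge-charging identity
\[
4N_{ind}(C_4,G)=\sum_{uv\in E(G)}N_{ind}(uv,C_4,G),
\]
which is exactly what the paper uses. But you then bound $N_{ind}(uv,C_4,G)\leq(\deg u-1)(\deg v-1)$, and from there you cannot extract the $-n+1$; as you yourself note, every continuation you try (AM--GM per edge, pulling out a $\max$, convexity of $\sum \deg(w)^2$) is too lossy, and you end by proposing a symmetrization/compression argument without details.

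The missing idea is that your bound $(\deg u-1)(\deg v-1)$ throws away the crucial constraint that in an \emph{induced} $C_4$ through the edge $uv$, the opposite edge $xy$ must actually be present in $G$. So $N_{ind}(uv,C_4,G)$ is bounded not by the number of pairs $(x,y)$ with $x\in N(u)\setminus\{v\}$ and $y\in N(v)\setminus\{u\}$, but by the number of \emph{edges} of $G$ between those two sets. This immediately puts you in the world of counting edges rather than vertex pairs. The paper then observes: the edges of $G$ that are \emph{not} between $N(u)\setminus\{v\}$ and $N(v)\setminus\{u\}$ include the $\deg(u)+\deg(v)-1$ edges incident to $u$ or $v$, and all edges touching the set $X=V(G)\setminus(N(u)\cup N(v))$. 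Since $\delta(G)\geq 2$, every vertex of $X$ contributes at least $2$ to $\sum_{x\in X}\deg(x)$, hence the number of edges meeting $X$ is at least $|X|\geq n-\deg(u)-\deg(v)$. Subtracting gives $N_{ind}(uv,C_4,G)\leq m-(n-\deg(u)-\deg(v))-(\deg(u)+\deg(v)-1)=m-n+1$, uniformly in $uv$, and multiplying by $m/4$ finishes. This is where the $\delta(G)\geq 2$ hypothesis enters and where the $-n+1$ comes from; no convexity, Cauchy--Schwarz, or compression is needed.
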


\begin{remark} 
The bound from the lemma is sharp when $n=\frac{m}{k}+k$ for some $k\in \mathbb{N}$     as the graph $K_{k,\frac{m}{k}}$ has $n$ vertices and $m$ edges and contains             $\binom{k}{2}\binom{\frac{m}{k}}{2}=\frac{m}{4}(m-k-\frac{m}{k}+1)$ induced copies of $C_4$.
\end{remark}

\begin{proof}[Proof of Lemma \ref{lemma_inducibility_like}]
Since an $n$-vertex graph with minimum degree at least $2$ has at least $n$ and at most $\binom{n}{2}$ edges we may assume that $n \leq m \leq \binom{n}{2}$. 
Since, 
$$N_{ind}(C_4,G)=\frac{1}{4}\sum_{e\in E(G)}N_{ind}(e,C_4,G) \leq \frac{m}{4}\max _{e\in E(G)} N_{ind}(e,C_4,G).$$
We only need to prove that $N_{ind}(uv,C_4,G)\leq m-n+1$ for all $uv\in E(G)$. Note that every $C_4$ containing $uv$ is determined by the `parallel' edge to $uv$ meaning the edge between the two other vertices in that $C_4$. This is because we are considering induced copies. In other words, the number of induced copies containing $uv$ is bounded by the number of edges between $N(u)\setminus\{v\}$ and $N(v)\setminus\{u\}$. Let $X$ be the set of vertices not in $N(u)\cup N(v)$ and note that $|X|\geq n-\deg(u)-\deg(v)$. The number of edges between $N(u)\setminus\{v\}$ and $N(v)\setminus\{u\}$ is bounded from above by $m-e(X)-(\deg(u)+\deg(v)-1)$ where $e(X)$ is the number of edges with an endpoint in $X$.
Since $\delta(G)\geq 2$ we may estimate $e(X)$ as follows,
$$e(X)\geq \frac{1}{2}\sum\limits_{x\in X}\deg(x)\geq |X|.$$
Thus, the number of edges between $N(u)\setminus\{v\}$ and $N(v)\setminus\{u\}$ is bounded from above by 
\[
m-(|X|+\deg(v)+\deg(u)-1)\leq m-n+1
\]
and the lemma follows.
\end{proof}

In order to prove Theorem \ref{thm_main_theorem} we now split into cases, each dealing with each item of the theorem.

\subsection{The dense regime}
Assume that $n^{-1/2} \ll p \ll 1$ and assume that $\varepsilon$ is any positive real which is small enough as a function of $\delta$.
Corollary \ref{claim_general_bound_2} asserts that in this regime of $p$ the number of core graphs is negligible. Therefore, as explained at the end of Section \ref{sec_5}, to bound the upper tail probability using Theorem \ref{thm:main_cores_1} we are only left with showing that $m_*$ is a lower bound on the minimum number of edges in a core. This is given in the following lemma.

\begin{lemma}\label{claim_last_claim}
    Suppose $\varepsilon,\delta$ are positive real numbers with $\varepsilon$ small enough as a function of $\delta$. Further suppose $p\gg n^{-1/2}$. Then, there exists $n_0$ such that for all $n>n_0$ 
    \[
        \min\{e(G):G\in \mathcal{C}\}\geq m_*.
    \]
\end{lemma}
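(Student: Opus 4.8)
The plan is to show that any core graph $G \in \mathcal{C}$ must have at least $m_*$ edges by combining the two estimates on $N_{ind}(\cdot, G)$ that are already available in the excerpt. Let $G \in \mathcal{C}$ and put $m = e(G)$. By the defining property $(C_2)$ of a core graph, we have the inequality
\[
    N_{ind}(C_4, G) + N_{ind}(K_{1,2} \sqcup K_1, G)p^2 \geq (\delta - \varepsilon)\E[X].
\]
The strategy is to upper bound each of the two terms on the left by an explicit function of $m$ (using that $m = O(n^2 p^2 \log(1/p))$, which follows from Lemma \ref{claim_evalutation_of_Phi} together with condition $(C_1)$), obtaining a quadratic inequality in $m$ whose smallest admissible root is $(1-o(1))m_*$; then absorb the $o(1)$ factor into the $(1-\varepsilon)$ by choosing $\varepsilon$ small.

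First I would bound $N_{ind}(C_4,G)$. A core graph need not have minimum degree $\ge 2$, so Lemma \ref{lemma_inducibility_like} does not apply directly; however, by condition $(C_3)$ every edge $e = uv$ of $G$ satisfies $N_{ind}(e,C_4,G) + N_{ind}(e, K_{1,2}\sqcup K_1,G)p^2 \geq \varepsilon\E[X]/(2K\Phi_X(\delta+\varepsilon)) > 0$, which forces $\deg(u), \deg(v) \geq 2$ for every edge $uv$; hence the subgraph of $G$ on its non-isolated vertices has minimum degree at least $2$, and deleting isolated vertices changes neither $m$ nor any $N_{ind}$ count. So Lemma \ref{lemma_inducibility_like} (or its Corollary \ref{cor_inducibility_like_2}) gives $N_{ind}(C_4,G) \leq m^2/4$ (in fact $\le m(m-v+1)/4$ where $v$ is the number of non-isolated vertices, but $m^2/4$ suffices here). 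For the second term, Observation \ref{obs_K_1,2_cup_K1} gives $N_{ind}(K_{1,2}\sqcup K_1,G) \leq mn^2/8$, so $N_{ind}(K_{1,2}\sqcup K_1,G)p^2 \leq mn^2p^2/8$. Plugging in,
\[
    (\delta - \varepsilon)\E[X] \leq \frac{m^2}{4} + \frac{mn^2p^2}{8}.
\]
Using $\E[X] = (1+o(1))n^4p^4/8$, solving this quadratic in $m$, and taking the larger branch yields
\[
    m \geq \sqrt{\frac{n^4p^4}{16} + 4(\delta-\varepsilon)\E[X]} - \frac{n^2p^2}{4} = (1-o(1))\Bigl(\sqrt{r + d^2} - d\Bigr)\frac{\sqrt{\E[X]}}{2},
\]
after matching constants: writing $\E[X] = (1+o(1))n^4p^4/8$ one checks $n^2p^2/4 = d\sqrt{\E[X]}/2 \cdot (1+o(1))$ with $d = \sqrt{2}$, and the term under the root becomes $(r + d^2)\E[X]/4$ with $r = 16\delta$; the slightly different constants $d = \sqrt{2}/(1+\varepsilon)$, $r = 16(\delta + 3\varepsilon/2)$ in the definition of $m_*$ are weaker, so for $\varepsilon$ small enough and $n$ large the displayed bound exceeds $m_*$. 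This gives $\min\{e(G) : G \in \mathcal{C}\} \geq m_*$.

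The main obstacle I anticipate is purely bookkeeping rather than conceptual: one has to verify carefully that the constants $r = 16(\delta + 3\varepsilon/2)$ and $d = \sqrt{2}/(1+\varepsilon)$ appearing in $m_*$ are chosen with exactly the right amount of slack so that the $(1+o(1))$ errors from $\E[X] = (1+o(1))n^4p^4/8$, from replacing $\delta - \varepsilon$ by $\delta$, and from the lower-order terms dropped in the quadratic (note we used the crude bound $m^2/4$ rather than $m(m-v+1)/4$, and dropped the cross terms like $m^2 p$, $mn^2p^3$ that appear in the analogous Claim \ref{very_long_eq_fixed}) all point in the favorable direction. Concretely I would carry the full expansion $N_{ind}(C_4,G) \le m^2/4$, $N_{ind}(P_4,G), N_{ind}(M_2,G) \le m^2$, $N_{ind}(K_2 \sqcup 2K_1, G) \le mn^2$ just as in Claim \ref{very_long_eq_fixed}, note $m = O(n^2p^2\log(1/p))$ and $p \ll 1$ to see that all extra terms are $o(\E[X])$, and conclude the clean quadratic inequality $(\delta - 2\varepsilon)\E[X] \le m^2/4 + mn^2p^2/8$ for large $n$. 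Comparing its positive root to $m_*$ then reduces to the elementary inequality $\sqrt{r' + d^2} - d \le \sqrt{r + d'^2} - d'$ with $r' = 16(\delta - 2\varepsilon) \cdot (\text{something} \ge 1 - o(1))$ versus $r = 16(\delta + 3\varepsilon/2)$ and $d = \sqrt2/(1+\varepsilon) \le d' = \sqrt2$, which holds for $\varepsilon$ sufficiently small since $x \mapsto \sqrt{x + d^2} - d$ is increasing in $x$ and decreasing in $d$.
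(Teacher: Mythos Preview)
Your approach is essentially the same as the paper's: use $(C_2)$, bound $N_{ind}(C_4,G)\le m^2/4$ via Lemma~\ref{lemma_inducibility_like} and $N_{ind}(K_{1,2}\sqcup K_1,G)\le mn^2/8$ via Observation~\ref{obs_K_1,2_cup_K1}, and solve the resulting quadratic in $m$. Your bookkeeping of the constants $r,d$ is in fact more careful than the paper's, which simply asserts the final equality with $m_*$.

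One small correction: your claim that $(C_3)$ forces $\deg(u),\deg(v)\ge 2$ for every edge $uv$ is not valid in this regime. If $\deg(u)=1$ then indeed $N_{ind}(uv,C_4,G)=0$, but $N_{ind}(uv,K_{1,2}\sqcup K_1,G)$ can still be positive (take $v$ as the centre of the $K_{1,2}$ and $u$ as a leaf), so $(C_3)$ alone does not rule out degree-$1$ vertices when $p\gg n^{-1/2}$. This does not damage the argument, however: vertices of degree at most $1$ lie in no copy of $C_4$, so passing to the subgraph on vertices of degree $\ge 2$ leaves $N_{ind}(C_4,G)$ unchanged while only decreasing the edge count, and Lemma~\ref{lemma_inducibility_like} then applies to that subgraph to give $N_{ind}(C_4,G)\le m^2/4$. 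With this fix the proof goes through exactly as you outline.
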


Before proving the claim let us recall Observation \ref{obs_K_1,2_cup_K1}. For the convenience of the reader we state it here as well:
\begin{obser}\label{obs_K_1,2_cup_K1_second}
    Suppose $n,m$ are positive integers with $m\leq \binom{n}{2}$. Then,
    \[
        N_{ind}(m,n,K_{1,2}\sqcup K_{1})\leq mn^2/8.
    \]
\end{obser}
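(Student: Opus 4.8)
The plan is a direct double-counting argument; in fact the statement is exactly the restatement of Observation~\ref{obs_K_1,2_cup_K1}, so in the write-up it would suffice to refer back to that proof, but let me spell out the argument. First I would fix a graph $G$ with at most $n$ vertices and exactly $m$ edges that attains the maximum $N_{ind}(m,n,K_{1,2}\sqcup K_1)$. Since $K_{1,2}\sqcup K_1$ has exactly two edges, both lying in its $K_{1,2}$ component, every induced copy of $K_{1,2}\sqcup K_1$ inside $G$ uses precisely two edges of $G$; hence
\[
    N_{ind}(K_{1,2}\sqcup K_1,G)=\frac12\sum_{e\in E(G)}N_{ind}(e,K_{1,2}\sqcup K_1,G),
\]
and it is enough to bound $N_{ind}(uv,K_{1,2}\sqcup K_1,G)$ for an arbitrary edge $uv$ of $G$.

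Next I would fix such an edge $e=uv$ and describe the induced copies of $K_{1,2}\sqcup K_1$ through $e$. Such a copy has vertex set $\{u,v,w,z\}$ where $\{u,v,w\}$ induces the two-edge path $K_{1,2}$ (so that $uv$ and exactly one of $uw,vw$ are edges, i.e.\ $w$ is adjacent to exactly one of $u,v$) and $z$ is the isolated vertex (adjacent to none of $u,v,w$). Writing $x=\bigl|(N(u)\cup N(v))\setminus\{u,v\}\bigr|$, the vertex $w$ must lie in $(N(u)\cup N(v))\setminus\{u,v\}$, which gives at most $x$ choices, while $z$ must avoid $(N(u)\cup N(v))\cup\{u,v\}$, leaving at most $n-x-2\le n-x$ choices; moreover distinct pairs $(w,z)$ yield distinct copies, so the crude product is a genuine upper bound. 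Hence $N_{ind}(e,K_{1,2}\sqcup K_1,G)\le x(n-x)$, and the AM--GM inequality gives $x(n-x)\le n^2/4$.

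Substituting this into the identity above yields $N_{ind}(K_{1,2}\sqcup K_1,G)\le \frac12\cdot m\cdot\frac{n^2}{4}=\frac{mn^2}{8}$, and since $G$ was chosen to maximize the left-hand side, this is exactly $N_{ind}(m,n,K_{1,2}\sqcup K_1)\le mn^2/8$. I do not anticipate any real obstacle here: the argument is short, and the only points that need a moment's care are the bookkeeping of the factor $\frac12$ (justified by $K_{1,2}\sqcup K_1$ having exactly two edges) and the remark that, for a fixed edge $uv$, an induced copy is determined by the pair $(w,z)$, so that $x(n-x)$ overcounts rather than undercounts.
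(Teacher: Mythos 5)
Your proof is correct and follows essentially the same double-counting argument as the paper's proof of Observation \ref{obs_K_1,2_cup_K1}: fix an edge $uv$, set $x=|(N(u)\cup N(v))\setminus\{u,v\}|$, bound the number of copies through $uv$ by $x(n-x)\le n^2/4$, and sum over edges with the factor $1/2$. The only cosmetic difference is that you spell out the structural description of an induced copy of $K_{1,2}\sqcup K_1$ through $uv$ (which the paper leaves implicit) and you state the double-count as an equality rather than the paper's (slightly conservative) inequality; both are fine.
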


\begin{proof}[Proof of Lemma \ref{claim_last_claim}]
Suppose $G\in \mathcal{C}_m$ is a core graph with $m$ edges. Then $G$ is also a structured seed and thus
\[
    N_{ind}(C_4,G)+N_{ind}(K_{1,2}\sqcup K_{1},G)p^2\geq (\delta -\varepsilon )\E[X].
\]
Lemma \ref{lemma_inducibility_like} and Observation \ref{obs_K_1,2_cup_K1_second} assert that 
\[
    N_{ind}(C_4,G)\leq N_{ind}(m,C_4)\leq m^2/4,
\]
and
\[
    N_{ind}(K_{1,2}\sqcup K_{1},G)\leq N_{ind}(m,n,K_{1,2}\sqcup K_{1})\leq mn^2/8.
\]
Therefore, 
\[
    m^2+mn^2p^2/2-4(\delta-\varepsilon)\E[X]\geq 0.
\]
Taking $n_0$ large enough we have $\E[X] \geq (1-\varepsilon)^2n^4p^4/8 $ and thus,
\[
    m^2+\frac{m\sqrt{2\E[X]}}{(1-\varepsilon)}-4(\delta-\varepsilon)\E[X]\geq 0.
\]
This implies that for large enough $n_0$ and small enough $\varepsilon$ we have
\[
    m\geq \left(\sqrt{\frac{2}{(1-\varepsilon)^2}+16(\delta -\varepsilon)}-\frac{\sqrt{2}}{(1-\varepsilon)}\right)\sqrt{\E[X]}/2 = m_*.\qedhere
\]
\end{proof}

As explained after Corollary \ref{claim_general_bound_2}, this implies the following corollary which is the third item in Theorem \ref{thm_main_theorem}.

\begin{cor}\label{cor_when_c>1/3}
    Suppose $\varepsilon,\delta$ are positive real numbers with $\varepsilon$ small enough as a function of $\delta$. Further suppose $n^{-1/2}\ll p\ll 1$. Then, for large enough $n$ we have
    \[
        \log \mathbb{P}(X\geq (1+\delta)\E[X])\leq {(1-\varepsilon)}m_*\log (p).
    \]
\end{cor}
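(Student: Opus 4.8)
The plan is to assemble three results already available for this range of $p$: the reduction to a weighted count of core graphs (Theorem~\ref{thm:main_cores_1}), the entropy bound on the number of cores with a given number of edges (Corollary~\ref{claim_general_bound_2}, itself a consequence of Lemma~\ref{claim_general_bound_3}), and the variational lower bound $e(G)\ge m_*$ for every core (Lemma~\ref{claim_last_claim}). Since Corollary~\ref{claim_general_bound_2} tells us the number of cores is sub-polynomial in $1/p$ per edge, the whole point is that in this regime planting a single minimal core already carries essentially all of the probability, so the answer is governed by $m_*\log(1/p)$ and everything else is a lower-order correction.

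First I would fix $\delta$, take $\varepsilon$ small as a function of $\delta$, and apply Theorem~\ref{thm:main_cores_1} (legitimate for $n$ large, since then $p$ is small), obtaining a constant $K=K(\varepsilon,\delta)$ with
\[
    \mathbb{P}(X\geq(1+\delta)\E[X])\leq (1+\varepsilon)\sum_{m} |\mathcal{C}_m(\varepsilon,\delta,K)|\,p^m .
\]
Write $m_{\min}=\min\{e(G):G\in\mathcal{C}(\varepsilon,\delta,K)\}$. Every core satisfies $e(G)\le K\Phi_X(\delta+\varepsilon)$ by definition, so only $m_{\min}\le m\le K\Phi_X(\delta+\varepsilon)$ contribute to the sum. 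By Lemma~\ref{claim_last_claim}, $m_{\min}\ge m_*$, and since $\E[X]=\Theta(n^4p^4)$ we have $m_*=\Theta(n^2p^2)$; in particular, choosing the free constant $C$ in Corollary~\ref{claim_general_bound_2} small enough we have $m_{\min}\ge m_*\ge Cn^2p^2$, so that corollary applies to every admissible $m$. What I would actually use is the slightly stronger form noted there, $\log|\mathcal{C}_m|=O(m\log\log(1/p))=o(m\log(1/p))$, uniformly over the admissible range. Consequently, using $0<p<1$ and $m\ge m_{\min}$,
\[
    \log\!\left(|\mathcal{C}_m|\,p^m\right)\le -\bigl(1-o(1)\bigr)m\log(1/p)\le -\bigl(1-o(1)\bigr)m_{\min}\log(1/p).
\]

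Next I would sum. By Lemma~\ref{claim_evalutation_of_Phi}(ii), $\Phi_X(\delta+\varepsilon)=O(n^2p^2\log(1/p))$, so the sum has at most $K\Phi_X(\delta+\varepsilon)+1=n^{O(1)}$ terms, whence
\[
    \log\mathbb{P}(X\geq(1+\delta)\E[X])\le \log(1+\varepsilon)+O(\log n)-\bigl(1-o(1)\bigr)m_{\min}\log(1/p).
\]
This is the only place the hypothesis $p\gg n^{-1/2}$ is used: it forces $m_{\min}\ge m_*=\Theta(n^2p^2)\gg n\gg\log n$, so both $O(\log n)$ and the error $o(1)\cdot m_{\min}\log(1/p)$ are at most $\tfrac{\varepsilon}{2}\,m_{\min}\log(1/p)$ once $n$ is large. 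Therefore $\log\mathbb{P}(X\geq(1+\delta)\E[X])\le -(1-\varepsilon)m_{\min}\log(1/p)=(1-\varepsilon)m_{\min}\log p$, and applying Lemma~\ref{claim_last_claim} a second time together with $\log p<0$ yields $(1-\varepsilon)m_{\min}\log p\le(1-\varepsilon)m_*\log p$, which is the assertion.

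The genuine content here — the value of $\Phi_X$, the entropy count $|\mathcal{C}_m|=p^{-o(m)}$, and the variational bound $e(G)\ge m_*$ — is contained in the lemmas being assumed, so there is no substantive obstacle left. The one point requiring care is the bookkeeping in the last two displays: one must check that the multiplicative factor $1+\varepsilon$, the polynomial count of edge-numbers coming from $\Phi_X=n^{O(1)}$, and the sub-polynomial entropy factor together amount to no more than an $\varepsilon$-fraction of $m_*\log(1/p)$, and this is precisely what $m_*=\Theta(n^2p^2)$ combined with $p\gg n^{-1/2}$ guarantees.
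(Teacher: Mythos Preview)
Your argument is correct and follows exactly the route the paper sketches after Corollary~\ref{claim_general_bound_2}: combine Theorem~\ref{thm:main_cores_1}, the entropy bound $\log|\mathcal{C}_m|=o(m\log(1/p))$ from Corollary~\ref{claim_general_bound_2}, and the edge lower bound $m_{\min}\ge m_*$ from Lemma~\ref{claim_last_claim}, then absorb the polynomially many terms and the $(1+\varepsilon)$ prefactor into the $\varepsilon$-loss. One small quibble: the hypothesis $p\gg n^{-1/2}$ is not used only in the final bookkeeping step as you claim---it is also a standing assumption of both Corollary~\ref{claim_general_bound_2} and Lemma~\ref{claim_last_claim}---but this does not affect the validity of the proof.
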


\subsection{The sparse regime}
Throughout this subsection, assume $\frac{\log^9(n)}{n}\leq p\ll \frac{1}{\sqrt{n}\log(n)}$. As mentioned earlier, there is another change in the behaviour of the problem when $\frac{\log^9(n)}{n}\leq p \leq  n^{-2/3-
\varepsilon}$ and $n^{-2/3}\leq p \ll \frac{1}{\sqrt{n}\log(n)}$. Before splitting into these two cases we show a reduction and develop some tools which we use in both cases. 

We start with the following fact which plays a major role in the later proofs.

\begin{lemma}\label{obs:product_of_deg}
        Suppose $\varepsilon,\delta,s,K$ are reals and positive with $\varepsilon<1$. Furthermore, suppose $\frac{\log(n)}{n}\leq p\ll \frac{1}{\sqrt{n}\log(n)}$. Then, there exist $n_0$ and $\xi>0$ such that the following holds for all $n>n_0$:\\
        Suppose $G\in \mathcal{C}_m$ where $Cn^2p^2\leq m\leq K\Phi_X(\delta+\varepsilon)$ and $uv\in E(G)$. Then, 
        \[
            \deg(u)\deg(v)\geq \frac{ \xi n^2p^2}{\log(1/p)}\geq s,
        \]
        and furthermore, $\deg(u),\deg(v)\geq 2$.
\end{lemma}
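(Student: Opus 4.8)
The plan is to use the core condition directly. Fix $G \in \mathcal{C}_m$ and $uv \in E(G)$. By the defining inequality of a core graph,
\[
    N_{ind}(uv,C_4,G) + N_{ind}(uv,K_{1,2}\sqcup K_1,G)\,p^2 \geq \frac{\varepsilon \E[X]}{2K\,\Phi_X(\delta+\varepsilon)}.
\]
I would bound each of the two terms on the left in terms of $\deg(u)$ and $\deg(v)$. Every induced $C_4$ through $uv$ is determined by an edge between $N(u)\setminus\{v\}$ and $N(v)\setminus\{u\}$, so $N_{ind}(uv,C_4,G) \leq \deg(u)\deg(v)$; and every induced $K_{1,2}\sqcup K_1$ ``rooted'' at $uv$ uses a third vertex adjacent to exactly one of $u,v$ and a fourth isolated-from-the-rest vertex, so $N_{ind}(uv,K_{1,2}\sqcup K_1,G) \leq (\deg(u)+\deg(v))\cdot n$. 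Hence
\[
    \deg(u)\deg(v) + (\deg(u)+\deg(v))\,np^2 \geq \frac{\varepsilon\E[X]}{2K\,\Phi_X(\delta+\varepsilon)}.
\]

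Next I would estimate the right-hand side from below. Since $\E[X] = \Theta(n^4p^4)$ and, by Lemma \ref{claim_evalutation_of_Phi}\ref{eq_evaluation_of_Phi_2}—wait, here $p \ll n^{-1/2}$, so by part \ref{eq_evaluation_of_Phi_1}—we have $\Phi_X(\delta+\varepsilon) = \Theta\big(\sqrt{\E[X]}\,\log(1/p)\big) = \Theta\big(n^2p^2\log(1/p)\big)$. Therefore there is a constant $\xi_0 = \xi_0(\varepsilon,\delta,K) > 0$ with
\[
    \frac{\varepsilon\E[X]}{2K\,\Phi_X(\delta+\varepsilon)} \geq \frac{\xi_0\, n^2 p^2}{\log(1/p)}.
\]
Now observe that the term $(\deg(u)+\deg(v))\,np^2$ is \emph{lower order}: since $\deg(u),\deg(v) \leq n$, it is at most $2n^2p^2$, while we will see $\deg(u)\deg(v)$ must be of order $n^2p^2/\log(1/p)$, so on its own the degree-sum term could a priori be comparable. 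To handle this cleanly I would argue in two steps: first, if $\deg(u)\deg(v) \geq \frac{\xi_0}{2}\cdot\frac{n^2p^2}{\log(1/p)}$ we are done (with $\xi = \xi_0/2$); otherwise $\deg(u)\deg(v)$ is negligible and the inequality forces $(\deg(u)+\deg(v))\,np^2 \geq \frac{\xi_0}{2}\cdot\frac{n^2p^2}{\log(1/p)}$, i.e. $\deg(u)+\deg(v) \geq \frac{\xi_0 n}{2\log(1/p)}$. Then one of the two degrees, say $\deg(u)$, is at least $\frac{\xi_0 n}{4\log(1/p)}$. But every vertex of a core graph has positive degree (it lies on an edge), and more strongly, applying the core inequality to \emph{any} edge incident to $u$ together with the bound $N_{ind}(e,C_4,G)\le \deg(u)\deg(\cdot)$ gives that the neighbour of $u$ also has large degree; iterating, or simply using that $v$ itself must satisfy $\deg(u)\deg(v) \geq $ something nontrivial by symmetry of the same inequality applied with the roles balanced — here I would instead note that since $p \ll \frac{1}{\sqrt n \log n}$, we have $\frac{n}{\log(1/p)} \gg \sqrt{n}\,\frac{\log n}{\log(1/p)} \to \infty$, so $\deg(u) \to \infty$; then since $uv \in E(G)$ and $G$ has minimum degree... — actually the cleanest route is: in the ``otherwise'' case, replace $uv$ by an edge maximizing $\deg(u)\deg(v)$ is not available, so instead I would directly show $\deg(u)+\deg(v)$ large is impossible to be the only contribution by bounding $N_{ind}(uv, K_{1,2}\sqcup K_1, G)$ more carefully: it is at most $\big(|N(u)\triangle N(v)|\big)\cdot(n - |N(u)\cup N(v)|) \le x(n-x) \le n^2/4$ where $x = |N(u)\cup N(v)\setminus\{u,v\}|$, independent of which term dominates, which combined with $m = O(n^2p^2\log(1/p)) = o(n^2)$... .

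**Main obstacle.** The genuinely delicate point is separating the two contributions on the left-hand side, since a priori the ``star'' term $N_{ind}(uv,K_{1,2}\sqcup K_1,G)p^2$ could account for the whole lower bound without forcing $\deg(u)\deg(v)$ to be large. The resolution I expect to work is to exploit that $p \ll \frac{1}{\sqrt n\,\log n}$: in this regime $\frac{n^2p^2}{\log(1/p)} \gg np^2 \cdot \sqrt n$, so once $\deg(u)+\deg(v) \geq \frac{\xi_0 n}{2\log(1/p)}$ one of the degrees, wlog $\deg(u)$, exceeds $\frac{\xi_0 n}{4\log(1/p)} \geq n^{2/3}$, and then \emph{every} edge of $G$ incident to $u$ — in particular $uv$ — has, via the core condition applied to it, $\deg(v) \geq \frac{\xi_0\E[X]}{8K\Phi_X(\delta+\varepsilon)\deg(u)} \cdot(\text{after subtracting the lower-order star term})$, which is still a growing quantity; multiplying, $\deg(u)\deg(v) \gg \frac{n^2p^2}{\log(1/p)}$, contradicting the ``otherwise'' assumption. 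So the ``otherwise'' case cannot occur, which yields both $\deg(u)\deg(v) \geq \xi n^2p^2/\log(1/p) \geq s$ for large $n$ and, a fortiori, $\deg(u),\deg(v) \geq 2$. I would write the argument as: assume for contradiction $\deg(u)\deg(v) < \xi n^2p^2/\log(1/p)$ with $\xi$ small, derive $\deg(u)$ (say) is $\gg n^{2/3}$, then feed this back into the core inequality for $uv$ to bound $\deg(v)$ from below, and reach a contradiction with the smallness of $\deg(u)\deg(v)$ by choosing $\xi$ appropriately and $n$ large.
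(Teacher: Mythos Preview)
Your setup is right, but the ``main obstacle'' is real and your proposed resolution is circular. In the ``otherwise'' case you assume $\deg(u)\deg(v)$ is small, deduce $\deg(u)$ is large, and then want to ``feed this back into the core inequality for $uv$'' after ``subtracting the lower-order star term'' --- but you have not shown the star term is lower order for $uv$; that is exactly what you are trying to prove. The bootstrap as written does not close.

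What the paper does instead is much simpler and avoids any case analysis: it bounds the star term \emph{globally} rather than in terms of $\deg(u)+\deg(v)$. Namely, $N_{ind}(uv,K_{1,2},G)\leq m$ (a copy of $K_{1,2}$ through $uv$ is determined by one more edge of $G$), and since $m\leq K\Phi_X(\delta+\varepsilon)=O\big(\sqrt{\E[X]}\log(1/p)\big)$ by Lemma~\ref{claim_evalutation_of_Phi}, the star contribution is at most
\[
m\cdot np^2 \;=\; O\big(\sqrt{\E[X]}\, np^2\log(1/p)\big)\;=\;o\!\left(\frac{\sqrt{\E[X]}}{\log(1/p)}\right),
\]
the last step using $np^2\log^2(1/p)=o(1)$, which holds precisely because $p\ll 1/(\sqrt{n}\log n)$. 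This kills the star term outright and gives $N_{ind}(uv,C_4,G)\geq \text{const}\cdot n^2p^2/\log(1/p)$ directly; then $\deg(u)\deg(v)\geq N_{ind}(uv,C_4,G)$ finishes. You were one observation away: your own bound $N_{ind}(uv,K_{1,2}\sqcup K_1,G)\leq (\deg(u)+\deg(v))n$ becomes exactly this once you note $\deg(u)+\deg(v)\leq 2m$, which you never use.

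There is a second gap: even if your case analysis were patched to yield the product bound, the ``a fortiori, $\deg(u),\deg(v)\geq 2$'' does not follow. The product bound alone is consistent with $\deg(v)=1$ and $\deg(u)$ huge (since $n^2p^2/\log(1/p)\ll n$). The paper gets $\deg(u),\deg(v)\geq 2$ for free because it proves the stronger statement $N_{ind}(uv,C_4,G)>0$, and any $C_4$ through $uv$ forces both endpoints to have a second neighbour.
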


%\begin{obser}\label{obs:product_of_deg}
%        Suppose $\varepsilon,\delta,s,K$ are positive reals and $n$ is some positive integer. Suppose further that $ \frac{\log(n)}{n}\ll p \ll \frac{1}{\sqrt{n\log(n)}}$.
%        Then there exists $n_0$ such that for all $n>n_0$ we have the following:
%        Suppose $G$ is a core graph with $m_0 \leq m\leq K\Phi_X(\delta+\varepsilon)$ edges and let $uv\in E(G)$ be an edge of $G$. Then, $\deg(u)\deg(v)\geq s$ and further $\deg(u),\deg(v)\geq 2$.
%\end{obser}

\begin{proof}
Let $G\in \mathcal{C}_m$ be a core graph with $Cn^2p^2\leq m\leq K\Phi_X(\delta+\varepsilon)$ edges and let $uv\in E(G)$ be an edge of $G$. By the definition of $\mathcal{C}_m$ we have the following:
\[
    N_{ind}(uv,C_4,G)+N_{ind}(uv,K_{1,2},G)np^2\geq \frac{\varepsilon\E[X]}{2K\Phi_X(\delta+\varepsilon)}.
\]
Note that $N_{ind}(uv,K_{1,2},G)\leq m\leq K\Phi_X(\delta+\varepsilon)$. Lemma \ref{claim_evalutation_of_Phi} asserts that there exists $D>0$ such that provided $n_0$ is large enough we have
\[
   \Phi_{X}(\delta+\varepsilon)\leq D \sqrt{\E[X]}\log(1/p).
\]
Hence as $p\ll \frac{1}{\sqrt{n}\log(n)}$ we deduce the following for large enough $n_0$:
\[
    N_{ind}(uv,K_{1,2},G)np^2\leq KD \sqrt{\E[X]} np^2 \log(1/p) \ll \sqrt{\E[X]}/\log(1/p),
\]
and 
\begin{equation}\label{equation - bound on N_ind(uv)}
    N_{ind}(uv,C_4,G)\geq \frac{\varepsilon  \sqrt{\E[X]}}{2DK\log(1/p)}-N_{ind}(uv,K_{1,2},G)np^2\geq \frac{\varepsilon  \sqrt{\E[X]}}{4DK\log(1/p)}.
\end{equation}
Note also that $\deg(u)\deg(v)\geq N_{ind}(C_4,uv,G)$ and thus the assertion follows for small enough $\xi$,
\[
    \deg(u)\deg(v)\geq \frac{\varepsilon \sqrt{\E[X]}}{4DK\log(1/p)}\geq \frac{\xi n^2p^2}{\log(1/p)}
\]
and if $\deg(u) = 1$ or $\deg(v) = 1$ then $N_{ind}(C_4, uv,G) = 0$ which is a contradiction to \eqref{equation - bound on N_ind(uv)}.
\end{proof}

The above lemma will be used many times throughout this subsection, mostly the fact that when $\frac{\log^9(n)}{n}\leq p\ll \frac{1}{\sqrt{n}\log(n)}$ the minimum degree of a core graph is at least $2$. We will omit the reference to this lemma and keep in mind that the minimum degree of all graphs considered is at least $2$.

The following is a corollary which bounds $v_m=\max |\{v\in V(G):G\in \mathcal{C}_m \text{ and }\deg(v)>0\}|$. This will be used afterwards to formalize the discussion after Corollary \ref{claim_general_bound}.

\begin{lemma}\label{lemma_weak_bound}
        Suppose $\varepsilon,\delta,K,\gamma$ are positive reals. Suppose further that $\frac{\log^9(n)}{n} \leq p \ll \frac{1}{\sqrt{n}\log(n)}$. Then, there exists an integer $n_0$ such that for all integers $n>n_0$ the following holds:\\
        Suppose $m$ is an integer with $m_0\leq m \leq K\Phi_{X}(\delta+\varepsilon)$. Then,
        \[
        v_m\leq \frac{m}{2}+m^{3/4}.
        \]
\end{lemma}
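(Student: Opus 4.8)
The plan is to set the trivial degree–sum identity $2m=\sum_v\deg(v)$ against the much stronger fact, supplied by Lemma~\ref{obs:product_of_deg}, that in a core graph $G\in\mathcal{C}_m$ every edge $uv$ has $\deg(u)\deg(v)\ge t$ with $t:=\xi n^2p^2/\log(1/p)$ for some $\xi=\xi(\varepsilon,\delta,K)>0$, and that every non-isolated vertex has degree at least $2$. (To invoke that lemma one needs $m\ge Cn^2p^2$; this is fine because a core graph is in particular a structured seed, so $m^2/4+mn^2p^2/8\ge N_{ind}(C_4,G)+N_{ind}(K_{1,2}\sqcup K_1,G)p^2\ge(\delta-\varepsilon)\E[X]$ by Lemma~\ref{lemma_inducibility_like} and Observation~\ref{obs_K_1,2_cup_K1}, and together with $\E[X]=(1+o(1))n^4p^4/8$ and $p\ll n^{-1/2}$ this forces $m_0=\Omega(n^2p^2)$.) First I would record that $t$ dwarfs $\sqrt m$: the hypothesis $m\le K\Phi_X(\delta+\varepsilon)$ together with Lemma~\ref{claim_evalutation_of_Phi} (the regime $p\ll n^{-1/2}$) gives $m=O\bigl(n^2p^2\log(1/p)\bigr)$, so $\sqrt m=O\bigl(np\sqrt{\log(1/p)}\bigr)$ and hence $t/\sqrt m=\Omega\bigl(np/\log^{3/2}(1/p)\bigr)$; the hypothesis $p\gg\log^9(n)/n$ makes this tend to infinity, so $t>9\sqrt m$ once $n$ is large.

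Next I would choose the threshold $D:=\lceil 2m^{1/4}\rceil$, so that $D\ge 2m^{1/4}$ while $D^2\le 9\sqrt m<t$, and split the non-isolated vertices of $G$ into the light set $S=\{v:2\le\deg(v)<D\}$ and the heavy set $B=\{v:\deg(v)\ge D\}$. The crucial point is that $S$ is an independent set: an edge with both endpoints in $S$ would have degree product $<D^2<t$, contradicting Lemma~\ref{obs:product_of_deg}. Consequently every edge meeting $S$ has exactly one endpoint in $S$, so the number of such edges is $\sum_{v\in S}\deg(v)$; these are distinct edges of $G$, hence $\sum_{v\in S}\deg(v)\le m$, and since $\deg(v)\ge 2$ on $S$ we get $|S|\le m/2$. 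On the other hand $\sum_{v\in B}\deg(v)\le 2m$ and every summand is at least $D\ge 2m^{1/4}$, so $|B|\le 2m/D\le m^{3/4}$. Adding the two bounds shows that $G$ has at most $m/2+m^{3/4}$ non-isolated vertices, and since $G\in\mathcal{C}_m$ was arbitrary this is exactly $v_m\le m/2+m^{3/4}$.

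I do not expect a genuine obstacle here. The only idea is the balancing choice $D\asymp m^{1/4}$, which equalises (up to the $m^{3/4}$ slack) the independent-set bound $|S|\le m/2$ with the heavy-vertex bound $|B|\le 2m/D$, together with the observation that it is precisely the core/structured-seed condition — realised through the edgewise degree-product bound of Lemma~\ref{obs:product_of_deg} — that rules out long sparse configurations (a long cycle being the obvious example) which would otherwise inflate the vertex count all the way up to $\approx m$. The rest is bookkeeping: the polylogarithmic factors only ever enter through the comfortably-satisfied inequality $t>9\sqrt m$, and the verification $m_0=\Omega(n^2p^2)$ is an elementary computation of the same flavour as the one in the proof of Lemma~\ref{claim_last_claim}.
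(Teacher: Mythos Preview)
Your proof is correct and follows essentially the same approach as the paper: split non-isolated vertices by a degree threshold, use Lemma~\ref{obs:product_of_deg} to conclude the low-degree set is independent (giving $|S|\le m/2$ from $\deg\ge 2$), and bound the high-degree set by the degree-sum identity. The only cosmetic difference is the choice of threshold: the paper takes $\sqrt{m}/\log^2(n)$ (yielding $|A^c|\le 2\sqrt m\log^2 n$, then checks $2\sqrt m\log^2 n\le m^{3/4}$ from $m\gg\log^8 n$), whereas you pick $D\approx 2m^{1/4}$ directly so that $|B|\le m^{3/4}$ falls out without a separate polylog comparison.
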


\begin{proof}
Fix an arbitrary $G\in \mathcal{C}_m$ and define, $A=\{v\in V(G):\deg(v)<\sqrt{m}/\log^2(n)\}$. We have
\[
    2m=\sum_{v\in V(G)} \deg(v)=\sum_{v\in A} \deg(v)+\sum_{v\in A^c} \deg(v) \geq |A^c|\sqrt{m}/\log^2(n).
\]
Thus, $|A^c|\leq 2\sqrt{m}\cdot \log^2(n)$.

By Theorem \ref{claim_evalutation_of_Phi} we have $\Phi_{X}(\delta+\varepsilon)=O(n^2p^2\log(1/p))$ and hence,
\[
    \left(\frac{\sqrt{m}}{\log^2(n)}\right)^2\leq \frac{O(n^2p^2\log(1/p))}{\log^4(n)}\ll \frac{n^2p^2}{\log(1/p)}.
\] 
Thus, by Lemma \ref{obs:product_of_deg} for sufficiently large $n_0$ no two vertices of $A$ can be connected by an edge. Therefore,
\[
    m \geq \sum_{v\in A} \deg(v).
\]
Since $\deg(v)\geq 2$ for all of the vertices of $G$, we obtain $|A|\leq \frac{m}{2}$.

Therefore for large enough $n_0$,
\[
|V(G)|=|A \sqcup A^c|=|A|+|A^c|\leq \frac{m}{2}+2\sqrt{m}\cdot \log^2(n).
\]
Provided $n_0$ is large enough, $2\sqrt{m}\log^2(n) \leq m^{3/4}$. Hence, for large enough $n_0$ the assertion of the lemma holds.
\end{proof}

We now formalize the discussion after Corollary \ref{claim_general_bound} in the following proposition.

\begin{prop}\label{claim_second_interval}
   Suppose $\delta$ is a positive real and $\varepsilon,\eta$ are sufficiently small positive reals. Furthermore, suppose $\frac{\log^9(n)}{n} \leq p \ll \frac{1}{\sqrt{n}\log(n)}$. Then, there exists $n_0$ such that for all $n>n_0$ we have,
    \[
        \log \mathbb{P}(X\geq (1+\delta)\E[X]) \leq (1-\eta )\max_{m_0\leq m \leq m_2}\{m\log(p)-v_m\log(np^2)\}.
    \]
\end{prop}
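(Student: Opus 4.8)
The plan is to plug the core-counting theorem of Section~\ref{sec_3} into the entropy estimate of Section~\ref{sec_5} and then collapse the resulting sum over $m$ onto its dominant term. First I would invoke Theorem~\ref{thm:main_cores_1} (legitimate here since $p\ll n^{-1/2}\ll1$), which furnishes a constant $K=K(\varepsilon,\delta)$ with
\[
  \mathbb{P}(X\ge(1+\delta)\E[X])\ \le\ (1+\varepsilon)\sum_m|\mathcal{C}_m(\varepsilon,\delta,K)|\,p^m .
\]
Next I would identify the $m$ for which $\mathcal{C}_m\neq\emptyset$. Since a core is a structured seed, $N_{ind}(C_4,G)+N_{ind}(K_{1,2}\sqcup K_1,G)p^2\ge(\delta-\varepsilon)\E[X]$ for $G\in\mathcal{C}_m$; bounding $N_{ind}(C_4,G)\le m^2/4$ (Corollary~\ref{cor_inducibility_like_2}) and $N_{ind}(K_{1,2}\sqcup K_1,G)\le\binom{m}{2}n$, and using $np^2=o(1)$ in this regime, this forces $m\ge(1-o(1))\cdot2\sqrt{(\delta-\varepsilon)\E[X]}$; from above, $m\le K\Phi_X(\delta+\varepsilon)=O\bigl(\sqrt{\E[X]}\log(1/p)\bigr)$ by the first part of Lemma~\ref{claim_evalutation_of_Phi}. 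In particular $m\ge C'n^2p^2$ for a suitable $C'=C'(\delta)$, so Corollary~\ref{claim_general_bound} applies with $C=C'$ and gives, for any prescribed $\eta_0>0$ and all large $n$, $\log\bigl(p^m|\mathcal{C}_m|\bigr)\le g(m)+\eta_0 m\log n$, where $g(m):=m\log p-v_m\log(np^2)$.

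It then suffices to establish that $g(m)+\eta_0 m\log n\le(1-\tfrac{\eta}{2})\max_{m_0\le m'\le m_2}g(m')$ for all such $m$: summing over the at most $K\Phi_X(\delta+\varepsilon)$ relevant values and using $\log\bigl(K\Phi_X(\delta+\varepsilon)\bigr)=O(\log n)=o(n^2p^2\log n)=o(|\max g|)$ — this is where the hypothesis $p\gg\log^9(n)/n$ enters, ensuring $n^2p^2\gg1$ — absorbs the number of terms and the factor $1+\varepsilon$, leaving $\log\mathbb{P}(X\ge(1+\delta)\E[X])\le(1-\eta)\max_{m_0\le m'\le m_2}g(m')$, as desired. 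To prove the displayed inequality I would control $v_m$ in three ranges. Throughout, $p\ll n^{-1/2}$ makes $|\log(np^2)|=2\log(1/p)-\log n$ positive, and Lemma~\ref{lemma_weak_bound} gives $v_m\le m/2+m^{3/4}$, which already yields $g(m)\le-(\tfrac12-o(1))m\log n$ for every $m\ge m_0$, so $|\max g|=\Omega(m_0\log n)$. On $[m_0,m_2]$ the inequality is then automatic once $\eta_0$ is small relative to $\eta$, since $g(m)\le\max g$ and $\eta_0 m_2\log n\le\tfrac{\eta}{2}|\max g|$. For $m>m_2$, the quantity $g(m)+\eta_0 m\log n$ is more negative than $g(m_2)$ by a margin of order $(m-m_2)\log n$, while $g(m_2)\ge-(m_2/2)\log n$ because $K_{2,m_2/2}$ is itself a core (hence $v_{m_2}\ge m_2/2$); a short computation with $\eta_0$ small closes this case. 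For $m<m_0$, cores here have minimum degree at least $2$ (Lemma~\ref{obs:product_of_deg}), so Lemma~\ref{lemma_inducibility_like} gives $N_{ind}(C_4,G)\le\tfrac14 m(m-v_m+1)$; together with the structured-seed bound this forces the much stronger estimate $v_m=O\bigl((\varepsilon/\delta)m\bigr)$, whence $g(m)\le g(m_0)+O\bigl((\varepsilon/\delta)\,|g(m_0)|\bigr)$, and the loss is swallowed by the $\eta/2$ slack once $\varepsilon$ is small in terms of $\delta$ and $\eta$.

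I expect the $m>m_2$ range to be the main obstacle. There the extremal cores are essentially the graphs $K_{2,m/2}$, which carry about $m/2$ non-isolated vertices, so $g(m)$ is just the continuation of $g$ beyond $m_2$ and agrees with $\max_{[m_0,m_2]}g$ up to lower-order terms; one must verify that the entropy loss $\eta_0 m\log n$ forced by Corollary~\ref{claim_general_bound} does not overwhelm the geometric gain coming from $m$ exceeding $m_2$. This balances out, but only after choosing $\eta_0$ small compared with $\eta$ and $\varepsilon$ small compared with $\delta$ and $\eta$, so the quantifier dependencies have to be threaded carefully; the symmetric subtlety just below $m_0$ is neutralised by the stronger bound $v_m=o(m)$ available there.
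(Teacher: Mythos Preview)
Your proposal is correct and follows essentially the same route as the paper: invoke Theorem~\ref{thm:main_cores_1}, feed it through Corollary~\ref{claim_general_bound}, bound $v_m\le m/2+o(m)$ via Lemma~\ref{lemma_weak_bound}, and use the fact that $K_{2,m_2/2}$ is a core (so $v_{m_2}\ge m_2/2$) to show that the contribution of $m>m_2$ is already dominated by $g(m_2)$. The paper packages the first half of this as Lemma~\ref{lem_all_interval} and then carries out the $m>m_2$ comparison exactly as you describe; your extra treatment of the tiny window $m<m_0$ is not in the paper (there $m_0$ is implicitly taken to be the least $m$ with $\mathcal{C}_m\neq\emptyset$), and your remark that ``$p\gg\log^9(n)/n$ enters to ensure $n^2p^2\gg1$'' undersells it---that hypothesis is really what makes $m^{3/4}\ge 2\sqrt{m}\log^2 n$ in Lemma~\ref{lemma_weak_bound} and $\deg(u)\deg(v)\gg1$ in Lemma~\ref{obs:product_of_deg} go through---but neither point affects the argument.
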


To prove this proposition we start with a lemma reducing the problem of estimating $m\log(p)-v_m\log(np^2)$ only when $m_0\leq m\leq K\Phi_X(\delta+\varepsilon)$.

\begin{lemma}\label{lem_all_interval}
    Suppose $\delta$ is a positive real and $\varepsilon,\eta$ are sufficiently small positive reals. Furthermore, suppose $n^{-1}\ll p\ll n^{-1/2}/\log(n)$. Then, there exists $K,n_0>0$ such that for all $n>n_0$ we have,
    \[
        \log \mathbb{P}(X\geq (1+\delta)\E[X]) \leq (1-\eta )\max_{m_0\leq m \leq K\Phi_X(\delta+\varepsilon)}\left\{m\log(p)-v_m\log(np^2)\right\}.
    \]
\end{lemma}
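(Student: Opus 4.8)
The plan is to combine Theorem~\ref{thm:main_cores_1} with the entropy estimate of Corollary~\ref{claim_general_bound}, which reduces the bound on the upper tail probability to a one–parameter optimisation over the number $m$ of edges of a core, and then to argue that this optimum is not helped by cores with fewer than $m_0$ edges; the last point is the delicate one.

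First I would fix $\eta>0$ small, then $\varepsilon>0$ small enough in terms of $\delta$ and $\eta$, let $K=K(\varepsilon,\delta)$ be the constant from Theorem~\ref{thm:main_cores_1}, and put $\eta_0=\eta/100$. Theorem~\ref{thm:main_cores_1} gives
\[
    \mathbb{P}(X\geq(1+\delta)\E[X])\le(1+\varepsilon)\sum_{m}|\mathcal{C}_m(\varepsilon,\delta,K)|\,p^{m},
\]
and since $\mathcal{C}_m=\emptyset$ unless $0\le m\le\binom n2$, after taking logarithms it suffices to estimate $\max_m\log(|\mathcal{C}_m|p^m)$ up to an additive $O(\log n)$. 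Every core $G$ has $e(G)\le K\Phi_X(\delta+\varepsilon)$ by definition; being a structured seed with $\delta(G)\ge2$ (Lemma~\ref{obs:product_of_deg}), it satisfies, by Lemma~\ref{lemma_inducibility_like} and Observation~\ref{obs_K_1,2_cup_K1},
\[
    (\delta-\varepsilon)\E[X]\le N_{ind}(C_4,G)+N_{ind}(K_{1,2}\sqcup K_1,G)p^{2}\le\frac{m^{2}}{4}+\frac{mn^{2}p^{2}}{8}.
\]
In the regime $p\ll n^{-1/2}/\log n$, and using $m=O(n^{2}p^{2}\log(1/p))$ from Lemma~\ref{claim_evalutation_of_Phi}, the second summand is $o(\E[X])$, so $m\ge 2\sqrt{(\delta-2\varepsilon)\E[X]}\ge Cn^{2}p^{2}$ for some fixed $C=C(\delta,\varepsilon)>0$; hence Corollary~\ref{claim_general_bound} applies to every $m$ with $\mathcal{C}_m\ne\emptyset$, giving $\log(|\mathcal{C}_m|p^{m})\le f(m)+\eta_0 m\log n$ where $f(m):=m\log p-v_m\log(np^{2})$.

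To absorb the error term, I would write $f(m)=(2v_m-m)\log(1/p)-v_m\log n$; using $v_m\le m/2+m^{3/4}$ (Lemma~\ref{lemma_weak_bound}) and $\log(1/p)>\tfrac12\log n$ (as $p\le n^{-1/2}$), a short case distinction on whether $v_m\le m/2$ shows $f(m)\le0$ and $|f(m)|\ge\tfrac14 m\log n$ for all large $n$. Hence $\eta_0 m\log n\le4\eta_0|f(m)|$, so $\log(|\mathcal{C}_m|p^{m})\le(1-4\eta_0)f(m)$; and since $|f(m)|\ge\tfrac14 m\log n\ge\tfrac14 Cn^{2}p^{2}\log n\to\infty$, the additive $O(\log n)$ and the factor $(1+\varepsilon)$ are absorbed into a factor close to $1$. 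For $m$ in the range $[m_0,K\Phi_X(\delta+\varepsilon)]$ this already gives a term at most $(1-4\eta_0)\max_{m_0\le m'\le K\Phi_X(\delta+\varepsilon)}f(m')$, which is the bound we want (the maximum being non-positive and $4\eta_0<\eta$).

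The main obstacle is to handle $m<m_0$, where a smaller $m$ produces a larger $p^{m}$. Here I would use Lemma~\ref{lemma_inducibility_like} quantitatively: a core $G$ with $m$ edges and $v$ non-isolated vertices has $N_{ind}(C_4,G)\ge(\delta-2\varepsilon)\E[X]$ (again the $K_{1,2}\sqcup K_1$ term is $o(\E[X])$) and $N_{ind}(C_4,G)\le m(m-v+1)/4$, so $v_m\le m+1-\tfrac{4(\delta-2\varepsilon)\E[X]}{m}$. Substituting this bound on $v_m$ into $\log(|\mathcal{C}_m|p^{m})\le f(m)+\eta_0 m\log n$ yields a majorant $g(m)$ of the form $\alpha m-\beta/m+O(\log(1/p))$ with $\beta\ge0$, which has at most one interior critical point, so on $[2\sqrt{(\delta-2\varepsilon)\E[X]},m_0]$ it is maximised at an endpoint or at that critical point; a direct computation then bounds each of these three candidate values by $-(1-O(\varepsilon/\delta+\eta_0))\,m_0\log(1/p)$. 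On the other hand $\max_{m_0\le m'\le K\Phi_X(\delta+\varepsilon)}f(m')\ge f(m_0)\ge m_0\log p=-m_0\log(1/p)$, so once $\varepsilon$ is small enough relative to $\delta$ and $\eta$ every $m<m_0$ also contributes at most $(1-\eta)\max_{m_0\le m'\le K\Phi_X(\delta+\varepsilon)}f(m')$. Combining the two ranges of $m$ with the absorbed lower-order terms would yield the lemma. I expect the main work to be exactly this last comparison, carried out uniformly over the whole range $n^{-1}\ll p\ll n^{-1/2}/\log n$ — in particular splitting according to whether $p$ is above or below $n^{-2/3}$, which is where the monotonicity of $g$ on the interval changes.
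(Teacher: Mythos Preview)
Your proposal is correct and, for $m\ge m_0$, is exactly the paper's argument: combine Theorem~\ref{thm:main_cores_1} with Corollary~\ref{claim_general_bound}, use Lemma~\ref{lemma_weak_bound} to get $f(m)\le -\Omega(m\log n)$, and absorb the $\eta_1 m\log n$ error and the $O(\log n)$ additive term into the $(1-\eta)$ factor. The paper's own proof stops there and does not explicitly treat the range $m<m_0$ at all; it tacitly uses that any core has at least $(1-O(\varepsilon/\delta))m_0$ edges, so the slack between $m_{\min}$ and $m_0$ is absorbed by choosing $\varepsilon$ small relative to $\eta$.

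You are right that this deserves a line of justification, but your plan for it is heavier than needed. The interval $\bigl[2\sqrt{(\delta-2\varepsilon)\E[X]},\,m_0\bigr]$ has relative length $O(\varepsilon/\delta)$, and on it the bound $v_m\le m+1-4(\delta-2\varepsilon)\E[X]/m$ from Lemma~\ref{lemma_inducibility_like} already forces $v_m\le O(\varepsilon/\delta)\,m_0$ (at $m=m_0$ the subtracted term is $(1-O(\varepsilon/\delta))m_0$, and it only grows as $m$ decreases). Since $|\log(np^2)|\le\log n\le 2\log(1/p)$, this immediately gives
\[
    f(m)\;\le\; m\log p + O(\varepsilon/\delta)\,m_0\,|\log(np^2)|\;\le\;-(1-O(\varepsilon/\delta))\,m_0\log(1/p),
\]
uniformly over the whole range $n^{-1}\ll p\ll n^{-1/2}/\log n$. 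Combined with your observation $\max_{m'\ge m_0}f(m')\ge f(m_0)\ge m_0\log p$, this already yields $f(m)\le(1-\eta)\max_{m'\ge m_0}f(m')$ once $\varepsilon$ is small enough. There is no need for the majorant $g$, its critical-point analysis, or any case split at $p=n^{-2/3}$; that split only becomes relevant later, in Lemmas~\ref{claim_when_c>1/3}--\ref{lemma_when_c<1/3}, when one optimises over the full interval $[m_0,m_2]$.
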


\begin{proof}
For all $\eta_1 ,K'>0$ Corollary \ref{claim_general_bound} asserts that provided $n_0$ is large enough we have the following for all $m_0\leq m\leq K'\Phi_X(\delta+\varepsilon)$:
\[
    \log(p^m|\mathcal{C}_m|)\leq m\log(p)-v_m\log(np^2)+\eta_1 m\log(n),
\]
where $v_m=\max |\{v\in V(G):G\in \mathcal{C}_m \text{ and }\deg(v)>0\}|$.

Since $n^{-1} \ll p\ll \frac{1}{\sqrt{n}\log(n)}$, provided $n$ is sufficiently large, we have $v_m\leq m/2+m^{3/4}$ for all $m_0 \leq m\leq K'\Phi_X(\delta+\varepsilon)$ by Lemma \ref{lemma_weak_bound}. Thus, 
\[
    m\log(p)-v_m\log(np^2)\leq -\Omega\left(m\log(n)\right).
\]

For any positive $\eta_2$ we may choose $\eta_1$ small enough such that we have the following for sufficiently large $n$,
\[
    m\log(p)-v_m\log(np^2)+\eta_1 m\log(n)\leq (1-\eta_2)(m\log(p)-v_m\log(np^2)).
\]

Therefore, provided $\eta_2$ is sufficiently small Theorem \ref{thm:main_cores_1} and Theorem \ref{claim_evalutation_of_Phi} then yield that there exist $K,n_0>0$ such that for all $n>n_0$: 
\[
    \log \mathbb{P}(X\geq(1+\delta)\E[X])\leq (1-\eta)\max_{m_0\leq m\leq K\Phi_X(\delta+\varepsilon)}\{m\log(p)-v_m\log(np^2)\}. \qedhere
\]
\end{proof}

We now use the above lemma to prove Proposition \ref{claim_second_interval}

\begin{proof}[Proof of Proposition \ref{claim_second_interval}]
By Lemma \ref{lem_all_interval} it is sufficient to prove that for all $m_2\leq m\leq K\Phi_X(\delta+\varepsilon)$ we have the following provided $\eta'$ is sufficiently small
\[
    m\log(p)-v_m\log(np^2)\leq (1-\eta')(m_2\log(p)-v_{m_2}\log(np^2)).
\]

Provided $n_0$ is large enough Lemma \ref{lemma_weak_bound} gives $v_m\leq m/2+m^{3/4}\leq m/2+\eta' m$ for all $m_2\leq m\leq K\Phi_X(\delta+\varepsilon)$. 
%Note also that $v_m\geq m/2$ for large enough $n_0$ which is uniform for all $m\in I$, that is because $K_{m/2,2}$ is a core graph with $m$ edges. Therefore, $m/2\leq v_m\leq (1/2+\eta)m$. Applying Corollary \ref{claim_general_bound} and recalling the remark after it, there is a constant $D$ such that for all large enough $n$ the following holds:
%\[
%    \log(p^m|\mathcal{C}_m|)\leq m\log(p)+v_m\log\left(n/v_m\right)+Dm\log\log(1/p).
%\]
%Note that $x\log(n/x)$ is an increasing function of $x$ when $x\leq n/e$. Thus, as 
%\[
%    v_m\leq (1/2+\eta)m\leq K\Phi_X(\delta+\varepsilon)=O(n^2p^2\log(1/p))=o(n),
%\]
%we obtain the following for large enough $n$,
%\[
%    v_m\log(n/v_m)\leq (1/2+\eta)m\log\left(\frac{n}{(1/2+\eta)m}\right)\leq (1/2+\eta)m\log\left(\frac{2n}{m}\right).
%\]
%Furthermore, $m\geq m_2\geq cn^2p^2$ for a constant $c>0$. Hence, for large enough $n$,
%\begin{align*}
%        v_m\log(n/v_m)&\leq (1/2+\eta)m\log\left(\frac{2n}{cn^2p^2}\right)=-(1/2+\eta)m\log\left(cnp^2/2\right)\\
%        &=-(1/2+2\eta)m\log(np^2).
%\end{align*}
Therefore,
\begin{align*}
    m\log(p)-v_{m}\log(np^2)&\leq m\log(p)-(1/2+\eta')m\log\left(np^2\right)\\
    &\leq (-1/2-2\eta')m\log(n)\leq(-1/2-2\eta')m_2\log(n).
\end{align*}
Noting that each copy of $K_{m_2/2,2}$ in $K_n$ is a core graph with $m_2$ edges we find that $v_{m_2}\geq m_2/2$. Therefore,
\[
    m_2\log(p)-v_{m_2}\log(np^2)\geq m_2\log(p)-\frac{m_2}{2}\log(np^2)=-\frac{m_2}{2}\log(n).
\]
These inequalities imply the following for all $m_2\leq m\leq K\Phi_X(\delta+\varepsilon)$ and sufficiently small $\eta''$
\[
    m\log(p)-v_m\log(np^2) \leq (1-\eta'')(m_2\log(p)-v_{m_2}\log(np^2)). 
\]
This establish the proposition provided $\eta''$ is small enough.
\end{proof}

Proposition \ref{claim_second_interval} and Corollary \ref{claim_general_bound} show that to prove Theorem \ref{thm_main_theorem} when $\frac{\log^9(n)}{n} \leq p \ll \frac{1}{\sqrt{n}\log(n)}$ we only need to bound $m\log(p)-v_m\log(np^2)$ for $m_0\leq m\leq m_2$. Therefore, to prove the first item in Theorem \ref{thm_main_theorem} it is enough to prove that, when $n^{-1+c_{k-1}}\leq p\leq n^{-1+c_k}$ we have
\[
    m\log(p)-v_m\log(np^2)\leq (1-\eta)(m_k\log(p)-v_{m_k}\log(np^2)),
\]
for all $m_0\leq m\leq m_2$.
Moreover, to prove the second item in Theorem \ref{thm_main_theorem} it is enough to prove that, when $n^{-2/3}\leq p\ll \frac{1}{\sqrt{n}\log(n)}$ we have
\[
    m\log(p)-v_m\log(np^2)\leq (1-\eta)m_0\log(p),
\]
for all $m_0\leq m\leq m_2$.
We now split into these two cases mentioned above and prove them. 
First we deal with the denser case where $n^{-2/3}\leq  p \ll \frac{1}{\sqrt{n}\log(n)}$.

\subsubsection{The denser case in the sparse regime} Assume $n^{-2/3}\leq p \ll \frac{1}{\sqrt{n}\log(n)}$. Proposition \ref{claim_second_interval} implies that we need to evaluate the term $m\log(p)-v_m\log(np^2)$ only for $m_0\leq m\leq m_2$. To do this we use Lemma \ref{lemma_inducibility_like}  which implies a connection between the number of induced copies of $C_4$ in a graph and the number of edges and vertices in it. We start with a simple corollary of Lemma \ref{lemma_inducibility_like}.

\begin{cor} \label{claim_weak_bound}
Suppose $\varepsilon,\delta,K$ are positive reals. Suppose further that $\frac{\log(n)}{n} \leq p \ll 1$ and $m$ is an integer. Then, for any $G\in\mathcal{C}_m$ the number of vertices of $G$ is at most 
\[
    m-4(\delta -\varepsilon)\E[X]/m+1.
\]
\end{cor}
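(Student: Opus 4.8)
The plan is to discard the isolated vertices of $G$ and apply the extremal estimate of Lemma~\ref{lemma_inducibility_like}. Let $G'$ be the subgraph of $G$ induced on its non-isolated vertices and set $v=|V(G')|$; then $e(G')=m$ and $v$ is exactly the quantity (``the number of vertices of $G$'') that we must bound. Since $G\in\mathcal{C}_m$ is in particular a structured seed,
\[
    N_{ind}(C_4,G)+N_{ind}(K_{1,2}\sqcup K_1,G)p^2\geq(\delta-\varepsilon)\E[X],
\]
and, because an isolated vertex lies in no induced $C_4$, we have $N_{ind}(C_4,G')=N_{ind}(C_4,G)$.

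Next I would verify that $G'$ satisfies the hypotheses of Lemma~\ref{lemma_inducibility_like}. Since $\mathcal{C}_m$ is nonempty, the displayed inequality together with the crude bounds $N_{ind}(C_4,G)\leq m^2/4$ and $N_{ind}(K_{1,2}\sqcup K_1,G)\leq mn^2/8$ (Observation~\ref{obs_K_1,2_cup_K1}) forces $m=\Omega(n^2p^2)$; hence Lemma~\ref{obs:product_of_deg} applies (for $p$ in the range under consideration) and gives $\deg(u),\deg(v)\geq 2$ for every edge $uv$ of $G$, so $\delta(G')\geq 2$. As also $m>3$ for large $n$, Lemma~\ref{lemma_inducibility_like} yields
\[
    N_{ind}(C_4,G)=N_{ind}(C_4,G')\leq\frac{m(m-v+1)}{4}.
\]

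It remains to dispose of the $K_{1,2}\sqcup K_1$ term. An induced copy of $K_{1,2}\sqcup K_1$ is determined by a pair of adjacent edges of $G$ (forming its induced $K_{1,2}$) together with one of the $n$ vertices of $G$, so $N_{ind}(K_{1,2}\sqcup K_1,G)\leq\tfrac12 m^2 n$; combining this with $m\leq K\Phi_X(\delta+\varepsilon)=O\!\left(n^2p^2\log(1/p)\right)$ (Lemma~\ref{claim_evalutation_of_Phi}) and $p\ll 1/(\sqrt n\log n)$ shows $N_{ind}(K_{1,2}\sqcup K_1,G)p^2=o(\E[X])$. Substituting the resulting bound $N_{ind}(C_4,G)\geq(\delta-\varepsilon)\E[X]-o(\E[X])$ into the previous display and dividing by $m$ gives
\[
    m-v+1\;\geq\;\frac{4(\delta-\varepsilon)\E[X]}{m}-\frac{o(\E[X])}{m},
\]
and since the error term is of strictly smaller order than the main term, rearranging produces $v\leq m+1-4(\delta-\varepsilon)\E[X]/m$ for $n$ large, which is the assertion.

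The only genuine obstacle is making this last bookkeeping honest: one must be sure that, in the window of $p$ where the corollary is actually used, the $K_{1,2}\sqcup K_1$ contribution really sits at lower order than $\E[X]/m$ — this is precisely where the bound $m=O(n^2p^2\log(1/p))$, and hence the restriction $p\ll 1/(\sqrt n\log n)$, enters — and that the minimum-degree hypothesis of Lemma~\ref{lemma_inducibility_like} is legitimately supplied by Lemma~\ref{obs:product_of_deg} (equivalently, that $m=\Omega(n^2p^2)$). Granting these two points, the corollary is essentially a one-line rearrangement of the extremal inequality.
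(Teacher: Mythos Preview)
Your approach matches the paper's: use $\delta(G)\geq 2$ from Lemma~\ref{obs:product_of_deg}, apply Lemma~\ref{lemma_inducibility_like}, and rearrange; the paper in fact silently drops the $K_{1,2}\sqcup K_1$ term that you take pains to control. Your final absorption of the $o(\E[X])/m$ error into the exact stated bound is not literally valid---it only gives the inequality with $\varepsilon$ replaced by any $\varepsilon'>\varepsilon$---but this is the same imprecision present in the paper's own one-line proof, and it is harmless for the downstream application in Lemma~\ref{claim_when_c>1/3}.
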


\begin{proof}
Since the minimum degree of $G$ is at least $2$, we may apply Lemma \ref{lemma_inducibility_like} and obtain,
\begin{equation}
    (\delta-\varepsilon)\E[X]\leq N(C_4,G)\leq N_{ind}(v(G),m,C_4)\leq \frac{m(m-v(G)+1)}{4}
\end{equation}
By algebraic manipulations we see that, $v(G)\leq m-4(\delta -\varepsilon)\E[X]/m+1$ which is the assertion of the corollary.
\end{proof}

Now we obtain the desired bound for $m\log(p)-v_m\log(np^2)$ in this regime of $p$.

\begin{lemma}\label{claim_when_c>1/3}
    Suppose that $\varepsilon,\delta,\gamma$ are positive reals with $\varepsilon$ small enough, and $\gamma$ small enough as a function of $\varepsilon$. Furthermore, suppose $n^{-2/3}\leq p\ll \frac{1}{\sqrt{n}\log(n)}$ and $K(\varepsilon,\delta)$ is some constant that might depends on $\varepsilon$ and $\delta$. Then there exists $n_0$ such that for any $n>n_0$ and any $m_0\leq m\leq m_2$ we have, 
        \[
            m\log(p)-v_m\log(np^2)\leq (1-\gamma)m_0\log(p).
        \] 
\end{lemma}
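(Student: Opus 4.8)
The plan is to feed the vertex bound of Corollary~\ref{claim_weak_bound} into a one‑variable optimization in $m$, the crucial observation being that on the window $m_0\le m\le m_2$ the resulting bound is monotone, so only its value at the left endpoint $m=m_0$ matters.

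First I would reduce $v_m$. Fix $m$ with $m_0\le m\le m_2$ and any $G\in\mathcal C_m(\varepsilon,\delta,K)$; by Lemma~\ref{obs:product_of_deg} the graph $G$ has minimum degree at least $2$, so Corollary~\ref{claim_weak_bound} gives $|\{v:\deg_G(v)\neq 0\}|\le m-\frac{4(\delta-\varepsilon)\E[X]}{m}+1$, and hence $v_m\le m-\frac{4(\delta-\varepsilon)\E[X]}{m}+1$. Writing $L=\log(1/p)$, note that $p\ll n^{-1/2}$ gives $\log(np^2)=\log n-2L<0$, while $n^{-2/3}\le p\ll \frac{1}{\sqrt n\log n}$ gives $\tfrac12\log n<L\le\tfrac23\log n$, so both $\log n-L$ and $2L-\log n$ are strictly positive. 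Plugging the bound on $v_m$ into $g(m):=m\log p-v_m\log(np^2)$, using $-\log(np^2)=2L-\log n$ and the identity $m\log p+m(2L-\log n)=-m(\log n-L)$, I obtain
\[
  g(m)\ \le\ -\varphi(m)+(2L-\log n),\qquad
  \varphi(m):=(\log n-L)\,m+\frac{4(\delta-\varepsilon)\E[X]\,(2L-\log n)}{m}.
\]

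Next comes the optimization. The function $\varphi$ has the form $Am+B/m$ with $A=\log n-L>0$ and $B=4(\delta-\varepsilon)\E[X](2L-\log n)>0$, hence it is convex and attains its minimum at $m^\star=\sqrt{B/A}=2\sqrt{(\delta-\varepsilon)\E[X]}\,\sqrt{\frac{2L-\log n}{\log n-L}}$. Since $L\le\tfrac23\log n$ forces $2L-\log n\le\log n-L$, this gives $m^\star\le 2\sqrt{(\delta-\varepsilon)\E[X]}<2\sqrt{(\delta+\varepsilon)\E[X]}=m_0$. Therefore $\varphi$ is increasing on $[m_0,m_2]$, the right‑hand side $-\varphi(m)+(2L-\log n)$ is decreasing there, and so $g(m)\le -\varphi(m_0)+(2L-\log n)$ for every $m\in[m_0,m_2]$. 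This is precisely the step that uses $p\ge n^{-2/3}$, and it explains why $n^{-2/3}$ is a phase boundary: once $p$ drops below $n^{-2/3}$ the minimizer $m^\star$ enters the window and a different extremal structure would take over.

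Finally I would evaluate at $m_0=2\sqrt{(\delta+\varepsilon)\E[X]}$. A short computation gives $\varphi(m_0)=\frac{2\sqrt{\E[X]}}{\sqrt{\delta+\varepsilon}}\bigl(2\varepsilon\log n+(\delta-3\varepsilon)L\bigr)$, whereas $(1-\gamma)m_0\log p=-2(1-\gamma)\sqrt{(\delta+\varepsilon)\E[X]}\,L$. Since $\E[X]\to\infty$ and $L=\Theta(\log n)$, the error term satisfies $0<2L-\log n\le\log n=o\bigl(\sqrt{\E[X]}\,L\bigr)$, so for large $n$ it is at most $\gamma\sqrt{(\delta+\varepsilon)\E[X]}\,L$; hence it suffices to check $\varphi(m_0)\ge 2\bigl(1-\tfrac\gamma2\bigr)\sqrt{(\delta+\varepsilon)\E[X]}\,L$, which after clearing denominators reads $2\varepsilon\log n\ge\bigl(4\varepsilon-\tfrac\gamma2(\delta+\varepsilon)\bigr)L$. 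Using $L\le\tfrac23\log n$ once more, this holds provided $\gamma(\delta+\varepsilon)\ge 2\varepsilon$ (and is trivial when $4\varepsilon\le\tfrac\gamma2(\delta+\varepsilon)$), which is exactly the compatibility between $\varepsilon$ and $\gamma$ assumed in the statement. Chaining the three displayed inequalities yields $g(m)\le(1-\gamma)m_0\log p$ for all $m_0\le m\le m_2$ and all large $n$, as required. The main obstacle is the optimization step: recognizing that the bound on $m\log p-v_m\log(np^2)$ is monotone on $[m_0,m_2]$ because the unconstrained optimum $m^\star$ sits below $m_0$, so the estimate collapses to the single value $m=m_0$; everything else is either a direct citation or routine bookkeeping, the only inequality with content being the final linear one, which merely records how small $\varepsilon$ must be taken relative to $\gamma$.
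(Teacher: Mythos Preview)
Your proof is correct and follows essentially the same route as the paper: both feed the vertex bound from Corollary~\ref{claim_weak_bound} into a one-variable optimization of the form $Am+B/m$, observe that the unconstrained minimizer lies at or below $m_0$ precisely because $p\ge n^{-2/3}$, and evaluate at $m=m_0$. The only difference is cosmetic---the paper writes $p=n^{-1+c}$ and works with $c\in[1/3,1/2]$, while you work directly with $L=\log(1/p)$; your derivative-free convexity argument and the paper's computation $g_c'(m)\le 1-3c\le 0$ are the same observation in different coordinates.
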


\begin{proof}
%For any small constant $\eta$ and large enough $n_0$ the assertion of Corollary \ref{claim_general_bound} is the following
%\[
%    \log(p^m|\mathcal{C}_m|)\leq m\log(p)-v_m\log(np^2)+\eta m\log(n),
%\]
%where $v_m$ is the maximum number of non-isolated vertices that a core graph on $m$ edges can have.
Let $c\in [1/3, 1/2]$ be such that $p=n^{-1+c}$ and recall that $m=\Theta (n^2p^2)$. Applying Corollary \ref{claim_weak_bound} we obtain the following for any $m_0\leq m\leq m_2$ and sufficiently large $n_0$:
\begin{align*}
    m\log(p)-v_m\log(np^2)&\leq (-1+c)m\log(n)+(1-2c)(m-4(\delta-\varepsilon)\E[X]/m+\eta m)\log(n)\\
    &\leq -cm\log(n)-4(1-2c)(\delta-\varepsilon)\E[X]\log(n)/m+{\eta m\log(n)}/{3}.
\end{align*}
Recalling that $m_0=2\sqrt{(\delta-\varepsilon)\E[X]}$ we may rewrite the above as follows provided $n_0$ is large enough:
\begin{align*}
    m\log(p)-v_{m}\log(np^2)&\leq (-cm-(1-2c)m_0^2/m))\log(n)+\eta m\log(n)/3. 
\end{align*}

We claim that $g_c(m)\coloneqq -cm-(1-2c)m_0^2/m\leq (-1+c)m_0$ for all $m_0\leq m\leq m_2$.
Indeed, $g'_c(m)=-c+(1-2c)(m_0/m)^2\leq 1-3c\leq 0$ for $m\geq m_0$ and $c\geq 1/3$.
This concludes the proof as now provided $\gamma$ is small enough we have the following for all $m_0\leq m\leq m_2$:
\begin{align*}
    m\log(p)-v_m\log(np^2)&\leq (-1+c)m_0\log(n)+\eta m\log(n)/3\\
    &= m_0\log(p)+\eta m\log(n)/3\leq  m_0\log(p)+\eta m_2\log(n)/3\\
    &\leq (1-\gamma)m_0\log(p). \qedhere
\end{align*}
\end{proof}
This together with Proposition \ref{claim_second_interval} conclude the proof for the second item in Theorem \ref{thm_main_theorem}.
\subsubsection{The sparser case in the sparse regime}

Assume $\frac{\log^9(n)}{n}\leq p\ll n^{-2/3-\varepsilon}$.
It was already seen that there is a big difference in the behaviour of the upper tail probability depending on the regime $p$ lies at. In this section we present a surprising change of the behaviour in the sparser regime. This phenomenon comes from the fact that the number of core graphs is significant and matters a lot in the evaluation of the upper tail probability. This is completely different from the previous cases where the number of cores was negligible. This will be explained in more details later on.

The essence of the proofs in this subsection is still to make use of the connection between the number of induced copies of $C_4$ in a core graph and the number of vertices in it. To state the claims in this section it is useful to introduce the following notations.

\begin{notation}\label{notation - vertices and edges}
    Suppose $G$ is a graph and $k$ is a nonnegative integer. Then we denote the following sets accordingly:
    \begin{itemize}
        \item Denote by $X_k(G)$ the set of all edges of $G$ with an endpoint of degree $k$ and denote its cardinality by $x_k(G)$.
        \item Denote by $X_{> k}(G)$ the set of all edges of $G$ whose both endpoints have degree greater than $k$; similarly we denote by $x_{>k}(G)$ the cardinality of $X_{>k}(G)$.
    \end{itemize}
     In most cases we omit `$G$' for brevity.
\end{notation}

The following lemma bounds from above the number of induced copies of $C_4$ in any core graph with $m$ edges where $p\ll \frac{1}{\sqrt{n}\log(n)}$.

\begin{lemma}\label{lem_upper_bound_C_4}
    Suppose $\varepsilon,\delta,R,K$ are positive reals and $\frac{\log(n)}{n}\leq p \ll \frac{1}{\sqrt{n}\log(n)}$. Then there exists $n_0$ such that for any $n>n_0$ the following holds:\\
    Let $G$ be a core graph with $m$ edges. Then 
    \begin{equation}\label{eq_main_bound}
        N_{ind}(C_4,G)\leq \sum ^{R}_{i=2}\frac{x_i}{i}\binom{i}{2}\left(\sum_{i<j}\frac{x_j}{j}+\frac{x_i}{2i}\right)+\sum ^{R}_{i=2}\frac{x_ie(G)(i-1)}{R}+\frac{x_{> R}^2}{4},
    \end{equation}  
    and
    \begin{equation}\label{eq_bound_number_vertices}
        v(G)\leq \sum _{i=2}^R \frac{x_i}{i}+\frac{2e(G)}{R}.
    \end{equation}
\end{lemma}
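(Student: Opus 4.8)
The idea is to bound $N_{ind}(C_4, G)$ by splitting the edges of $G$ according to the degrees of their endpoints, using the cut-off $R$. For the vertex count \eqref{eq_bound_number_vertices}, first recall that $\delta(G) \ge 2$ (Lemma~\ref{obs:product_of_deg}), so every vertex of $G$ has positive degree. Partition $V(G)$ into the set $A$ of vertices of degree at most $R$ and the set $B$ of vertices of degree greater than $R$. Each vertex $v \in A$ of degree $i$ is incident to $i$ edges in $X_i$, and each edge of $X_i$ has at most $i$ endpoints counted this way (in fact at most two), so $|A \cap \{v : \deg v = i\}| \le x_i / i$ — more carefully, summing $\deg v = i$ over such vertices gives $i \cdot |\{v : \deg v = i\}| \le x_i$ since each edge with an endpoint of degree $i$ is counted at least once. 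Hence $|A| \le \sum_{i=2}^R x_i/i$. For $B$, every vertex in $B$ has degree $> R$, so $R|B| \le \sum_{v \in B} \deg v \le 2e(G)$, giving $|B| \le 2e(G)/R$. Adding these two estimates yields \eqref{eq_bound_number_vertices}.

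For \eqref{eq_main_bound}, I would classify each induced $C_4$ by the minimum degree appearing among its four vertices (or rather, by the structure of a distinguished edge). The cleanest approach: every induced $C_4$ contains an edge $uv$; count each induced $C_4$ via a suitable edge. Split induced copies of $C_4$ into three types. First, those all of whose vertices have degree $> R$: any such $C_4$ is determined by its two ``parallel'' edges, both lying in $X_{>R}$ (the edge between $N(u)\setminus\{v\}$ and $N(v)\setminus\{u\}$ argument from Lemma~\ref{lemma_inducibility_like}), so their number is at most $x_{>R}^2/4$ — the division by $4$ accounting for the fact that a $C_4$ has two pairs of parallel edges and each pair is unordered. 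Second, for copies containing a vertex of degree $i \le R$ where $i$ is the \emph{smallest} such degree: pick the vertex $w$ of degree $i$ realizing this. I would split again depending on whether the vertex opposite $w$ in the $C_4$ (the non-neighbour) has degree $> R$ or not; in the former subcase bound it by the ``error'' term $\sum_i x_i e(G)(i-1)/R$, and in the latter subcase, where both $w$ and its opposite vertex have degree $\le R$, one gets the main term $\sum_{i=2}^R \frac{x_i}{i}\binom{i}{2}\bigl(\sum_{i<j}\frac{x_j}{j} + \frac{x_i}{2i}\bigr)$. Here $x_i/i$ bounds the number of vertices of degree $i$ (as above), $\binom{i}{2}$ the ways to pick the two neighbours of $w$ in the $C_4$, and the parenthesized factor bounds the number of choices for the opposite vertex $w'$ (which has degree $\ge i$, hence lies among the $\sum_{i<j} x_j/j$ vertices of larger degree, or among the $x_i/(2i)$ other vertices of degree exactly $i$, with the factor $2$ avoiding double-counting the ordered pair $(w,w')$).

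\textbf{The main obstacle.} The delicate point is bookkeeping the error term: when a $C_4$ has a vertex $w$ of minimal degree $i \le R$ but its opposite vertex $w'$ has degree $> R$. Such a $C_4$ is still determined by $w$, the edge of $X_i$ incident to $w$ used to enter one neighbour, and then the parallel edge; I would bound this by noting $w$ ranges over $\le x_i/i$ vertices, contributing a factor absorbed so that the total is $\le \sum_i x_i e(G)(i-1)/R$ — the $(i-1)$ coming from the $\binom{i}{2}/$(something) count of neighbour-pairs at $w$ scaled against the degree threshold $R$ in the denominator, and $e(G)$ bounding the choice of the parallel edge. Making this comparison tight enough to land exactly on the stated bound, while ensuring no induced $C_4$ is missed (in particular $C_4$'s with exactly one or two low-degree vertices are handled consistently), is where the argument requires care; everything else is routine double counting.
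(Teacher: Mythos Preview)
Your overall approach is the same as the paper's, but there is a genuine gap that breaks the argument as written. You never establish that the vertices of degree at most $R$ form an \emph{independent set} in $G$, and without this your double counting actually goes the wrong way.

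Concretely: you write ``summing $\deg v = i$ over such vertices gives $i \cdot |\{v : \deg v = i\}| \le x_i$ since each edge with an endpoint of degree $i$ is counted at least once.'' But ``counted at least once'' gives $i v_i \ge x_i$, not $\le$. In general $x_i \le i v_i \le 2x_i$, since an edge of $X_i$ may have one or two endpoints of degree $i$; to get the needed identity $v_i = x_i/i$ you must know that no edge of $X_i$ has both endpoints of degree $\le R$. The paper obtains this by invoking Lemma~\ref{obs:product_of_deg} with $s = R^2 + 1$: if $uv \in E(G)$ then $\deg(u)\deg(v) \ge s > R^2$, so not both endpoints can have degree $\le R$. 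This single observation does several things at once: it makes $\{X_i : 2 \le i \le R\} \cup \{X_{>R}\}$ a genuine partition of $E(G)$, it gives $v_i = x_i/i$ exactly, and it forces any induced $C_4$ to contain at most two low-degree vertices, which (since those two must then be non-adjacent, hence opposite) makes your trichotomy exhaustive. With that one missing sentence added, the rest of your sketch matches the paper's proof.
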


\begin{proof}
Let $G$ be a core graph. First, let us make some notations for the proof. For every nonnegative integer $k$ let $V_k(G)$ be set of all vertices of $G$ with degree $k$ and denote its cardinality by $v_k(G)$. Furthermore, let $V_{> k}(G)$ be the set of all vertices of $G$ with degree greater than $k$ and denote its cardinality by $v_{> k}(G)$. 

We now proceed with the proof. Note that taking $n_0$ large enough according to Lemma~\ref{obs:product_of_deg} applied with $s=R^2+1
$, we are guaranteed that no two vertices of degree less or equal to $R$ can be connected by an edge. This implies that $\{X_i:i\leq R\}\cup\{X_{> R}\}$ is a partition of the edges of $G$. Furthermore, $v_{>R}(G)\leq 2e(G)/R$ and for any $i\leq R$ we have $v_i=x_i/i$. This follows by a standard double counting argument and as the set of all vertices of degree at most $R$ is an independent set. We thus obtain \eqref{eq_bound_number_vertices} as
\[
    v(G)=\sum _{i=2}^{R}v_i(G)+v_{>R}(G)\leq \sum _{i=2}^R \frac{x_i}{i}+\frac{2e(G)}{R}.
\]

To prove \eqref{eq_main_bound} we note that there are three types of induced copies of $C_4$ in core graphs:
\begin{enumerate}[label=(\roman*)]
    \item Induced copies of $C_4$ with exactly two vertices in $\cup _{i=2}^{R}V_i$.
    \item Induced copies of $C_4$ with exactly one vertex in $\cup _{i=2}^{R}V_i$.
    \item Induced copies of $C_4$ with no vertices in $\cup _{i=2}^{R}V_i$ i.e.\ all vertices in $V_{> R}$.
\end{enumerate}
Let us bound from above the number of induced copies of $C_4$ of each type. We claim that the number of induced copies of $C_4$ of the first type is at most 
\begin{align*}
    \sum ^{R}_{i=2}v_i\binom{i}{2}\left(\sum_{i<j}v_j+\frac{v_i}{2}\right)=\sum ^{R}_{i=2}\frac{x_i}{i}\binom{i}{2}\left(\sum_{i<j}\frac{x_j}{j}+\frac{x_i}{2i}\right).
\end{align*}
Indeed, for every $i$, there are at most $\binom{v_i}{2}\binom{i}{2}$ copies of $C_4$ with two vertices in $v_i$ and, for all $i<j$ there are at most $v_iv_j\binom{i}{2}$ copies of $C_4$ with one vertex in $V_i$ and one vertex in $V_j$.

We now bound the number of induced copies of $C_4$ of the second kind. Note that each such induced copy of $C_4$ is determined by choosing its vertex of degree less or equal to $R$, two of its neighbours and another vertex of degree larger than $R$. Therefore, the number of induced copies of $C_4$ of the second kind it at most 
\[
\sum _{i=2}^{R} v_iv_{> R}\binom{i}{2}=\sum _{i=2}^{R}\frac{x_iv_{> R}(i-1)}{2}\leq \sum _{i=2}^{R}\frac{x_ie(G)(i-1)}{R}.
\]

To bound from above the number of induced copies of $C_4$ of the third kind we observe that each such induced copy of $C_4$ is determined by each of the two perfect matchings in it. Since each induced copy of $C_4$ of the third type contains only edges of $X_{>R}$, there are at most $\frac{1}{2}\binom{x_{>R}}{2}\leq \frac{x_{>R}^2}{4}$ such copies. Summing all of these bounds we obtain the assertion of the lemma. 
\end{proof}
Our only use of Lemma \ref{lem_upper_bound_C_4} is when $R=\lceil1/\varepsilon\rceil$. Therefore, form now on we set $R=\left\lceil 1/\varepsilon \right\rceil$. We will only consider core graphs $G$ with at most $m_2$ edges, thus we may bound from above the right-hand side of \eqref{eq_main_bound} by
\begin{align*}
    f(x_1,x_2,\ldots,x_{R},x_{> R})=&\sum ^{R}_{i=2}\binom{i}{2}\frac{x_i}{i}\left(\sum_{i<j}\frac{x_j}{j}+\frac{x_i}{2i}\right)+\varepsilon m_2\sum ^{R}_{i=2}{(i-1)x_i}+\frac{x_{> R}^2}{4}\\
    =&\sum ^{R}_{i=2}{(i-1)x_i}\left(\sum_{i<j\leq R}\frac{x_j}{2j}+\frac{x_i}{4i}+\varepsilon m_2\right)+\frac{x_{> R}^2}{4}.
\end{align*}
In particular, for every core graph $G$ with at most $m_2$ edges, 
\[
    (\delta-\varepsilon)\E [X]\leq N_{ind}(C_4,G)\leq  f(x_1(G),x_2(G),\ldots,x_{R}(G),x_{> R}(G)).
\]
Recall that in this range of $p$ we have $\log(np^2)<0$ and note that by Lemma \ref{lem_upper_bound_C_4} for any core graph $G$ the term $e(G)\log(p)-v(G)\log(np^2)$ can be bounded from above using $x_i(G)$ by
\begin{align*}
    \sum _{i=2}^{R}\left(x_i(G)\log(p)-x_i(G)\log(np^2)/i\right)+x_{>R}(G)\log(p)-2e(G)\log(np^2)/R.
\end{align*}
Furthermore, for $\varepsilon <\eta /2$ we have,
\[
    2e(G)/R\leq 2e(G)\varepsilon\leq \eta e(G).
\]
Therefore, letting $n_0$ sufficiently large we find that,
\begin{align}\label{eq_extended_bound_on_entropic_term}
    \max_{m_0\leq m\leq m_2}m\log(p)-v_m\log(np^2)\leq& \max_{G\in \bigcup_{m=m_0}^{m_2} \mathcal{C}_m} \sum _{i=2}^{R}\left(x_i(G)\log(p)-x_i(G)\log(np^2)/i\right) \nonumber \\
                                                        &+x_{>R}(G)\log(p)+\eta m_2\log(n).
\end{align}
We now restate this bound in a more compact way. To this end we introduce the following notations. For every graph $G$ define the following:
\begin{itemize}
    \item $x(G)\in \mathbb{R}^{R+1}$ is the vector defined by $x_i(G)$ in its $i$-th coordinate for $1\leq i\leq R$ and $x_{>R}(G)$ in the last coordinate.
    \item $u\in \mathbb{R}^{R+1}$ is defined by $0$ as the first coordinate, $\log(p)-\log(np^2)/i$ as the $i$-th coordinate for $2\leq i\leq R$, and $\log(p)$ in the last coordinate.
\end{itemize}
Using these notations we may rewrite \eqref{eq_extended_bound_on_entropic_term} as follows,
\begin{align} \label{eq_entropic_bound}
    \max_{m_0\leq m\leq m_2}m\log(p)-v_m\log(np^2)\leq \max_{G\in \cup _{m=m_0}^{m_2}\mathcal{C}_m}\langle x(G),u \rangle+\eta m_2\log(n).
\end{align}
From now on we let $t=(\delta-\varepsilon)\E[X]$. Since $t\leq  f(x(G))$ for every $G\in \bigcup_{m=m_0}^{m_2}\mathcal{C}_m$,
\begin{align*}
    \max_{G\in \cup _{m=m_0}^{m_2}\mathcal{C}_m} \langle x(G),u \rangle \leq \max\{\langle x,u \rangle : x\in \mathbb{R}_{\geq 0}^{R+1} \land f(x)\geq t\}.
\end{align*}

Note that in the above we dropped the condition that $x$ came from a graph. By this we only consider more possible cases and thus obtain an upper bound on the maximization problem.

Now we are ready to state the main technical proposition in order to bound from above the term $m\log(p)-v_m\log(np^2)$ when $\frac{\log^9(n)}{n}\leq p\leq n^{-2/3-\varepsilon}$.

\begin{prop}\label{claim_conclusion_c_less_1/3}
    Suppose $k>1$ is an integer and suppose that $\eta,\varepsilon,\delta$ are positive reals with $\varepsilon$ small enough as a function of $\eta$ and $k$. Then, there exists $n_0$ such that for any $n>n_0$ and any $n^{-1+c_{k-1}} \leq p \leq n^{-1+c_{k}}$ we have 
        \[
            \max\{\langle x,u \rangle : x\in \mathbb{R}_{\geq 0}^{R+1} \land f(x)\geq t\} \leq (1-\eta)m_{k} u_{k}.
        \]
\end{prop}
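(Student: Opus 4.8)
The plan is to bound $M:=\max\{\langle x,u\rangle:x\in\mathbb{R}_{\ge 0}^{R+1},\ f(x)\ge t\}$ from above by $(1-\eta)m_ku_k$. First write $p=n^{-1+c}$, so that the hypothesis $n^{-1+c_{k-1}}\le p\le n^{-1+c_k}$ becomes $c\in[c_{k-1},c_k]$, and record the elementary computations: $u_1=0$, $u_{R+1}=\log p<0$, and for $2\le i\le R$,
\[
u_i=\frac{\log n}{i}\big(c(i-2)-(i-1)\big)<0,\qquad |u_i|=\frac{[1+(1-c)(i-2)]\log n}{i},\qquad |u_i|\sqrt{\tfrac{i}{i-1}}=g(i)\log n,
\]
where $g(i)=\frac{1+(1-c)(i-2)}{\sqrt{i(i-1)}}$; also $|\log p|=(1-c)\log n=\lim_{i\to\infty}g(i)\log n$. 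Since every coordinate of $-u$ is non‑negative, $\langle x,u\rangle=-z$ with $z:=\langle x,-u\rangle=\sum_{i=2}^{R}|u_i|x_i+|\log p|x_{R+1}\ge 0$, so it is equivalent to prove that every $x\ge0$ with $f(x)\ge t$ satisfies $z\ge(1-\eta)m_k|u_k|$.

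Next I would exploit convexity of $f$. Setting $S_j=\sum_{i\le j}(i-1)x_i$, Abel summation rewrites the quadratic part of $f$ as $\tfrac14\sum_j a_jS_j^2+\tfrac14 x_{R+1}^2$ with all $a_j\ge0$, so $f$ is the sum of a positive semidefinite quadratic form and the non‑negative linear form $\varepsilon m_2\sum_i(i-1)x_i$; hence $f$ is convex on $\mathbb{R}_{\ge0}^{R+1}$. For a fixed cost level $z$, the slice $\{x\ge0:\langle x,-u\rangle=z\}$ is a simplex with vertices $\tfrac{z}{|u_i|}e_i$ $(2\le i\le R)$ and $\tfrac{z}{|\log p|}e_{R+1}$, and a convex function attains its maximum over a simplex at a vertex. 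Therefore $f(x)\ge t$ forces, for some coordinate, either $\frac{(i-1)}{4i|u_i|^2}z^2+\frac{\varepsilon m_2(i-1)}{|u_i|}z\ge t$ for some $2\le i\le R$, or $\frac{z^2}{4|\log p|^2}\ge t$. Solving each of these quadratic inequalities in $z$ gives a coordinate‑wise lower bound $z\ge z_i^{\min}$, and it remains to check $z_i^{\min}\ge(1-\eta)m_k|u_k|$ for every coordinate.

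The coordinate $R+1$ is immediate: $z\ge 2|\log p|\sqrt t=2(1-c)\log n\,\sqrt{(\delta-\varepsilon)\E[X]}\ge 2g(k)\log n\,\sqrt{(\delta-\varepsilon)\E[X]}=2\sqrt{\tfrac{k}{k-1}}\,|u_k|\sqrt{(\delta-\varepsilon)\E[X]}\ge(1-\eta)m_k|u_k|$ once $\varepsilon$ is small enough that $\sqrt{\delta-\varepsilon}\ge(1-\eta)\sqrt{\delta+\varepsilon}$; here I use $g(k)\le 1-c$, which holds because $g(i)\le g(i+1)\iff c\le c_i$ and $(c_i)$ is increasing, so $g$ is non‑decreasing on $\{k,k+1,\dots\}$ with limit $1-c$. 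For $2\le i\le R$ the same monotonicity together with $c\ge c_{k-1}$ shows $g$ is non‑increasing on $\{2,\dots,k\}$, so $g(i)\ge g(k)$ for all such $i$; the crucial algebraic input here is the identity $c_k=1-\frac{2}{\sqrt{k^2-1}-k+3}$, obtained by rationalising $c_k=(2+\sqrt{(k+1)/(k-1)})^{-1}$, which is exactly the threshold making $g(k)\le g(k\pm1)$ when $c\in[c_{k-1},c_k]$. Discarding the (non‑negative) linear term only strengthens the constraint, so $z\ge|u_i|\sqrt{it/(i-1)}=g(i)\log n\cdot 2\sqrt{(\delta-\varepsilon)\E[X]}\ge g(k)\log n\cdot 2\sqrt{(\delta-\varepsilon)\E[X]}\ge(1-\eta)m_k|u_k|$.

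The step that needs real care — and which I expect to be the main obstacle — is precisely the linear term $\varepsilon m_2\sum_i(i-1)x_i$ in $f$ for coordinates $i$ close to $R=\lceil1/\varepsilon\rceil$, where its coefficient $\varepsilon m_2(i-1)$ is of order $m_2\asymp\sqrt{\E[X]}$, i.e.\ of the same order as $\sqrt t$, so it cannot simply be thrown away when bounding $z_i^{\min}$ for those $i$. To control it I would carry along the fact that $x$ ultimately comes from a core graph with between $m_0$ and $m_2$ edges — in particular $\sum_i x_i+x_{R+1}\ge m_0\asymp 2\sqrt{(\delta-\varepsilon)\E[X]}$ — which excludes the low‑mass vectors $se_i$ that satisfy $f\ge t$ only through the linear term; once $x_i\gtrsim m_0$ the quadratic part of $f$ already dominates, so the binding constraint at such a coordinate is $x_i\ge m_0$, whence $z\ge|u_i|m_0$, and one checks $|u_i|m_0\ge g(k)\log n\cdot 2\sqrt{(\delta-\varepsilon)\E[X]}\ge(1-\eta)m_k|u_k|$ for all $2\le i\le R$ using $|u_i|\ge|u_2|=\tfrac{\log n}{2}$ and $\sqrt{k/(k-1)}\,|u_k|=g(k)\log n\le|\log p|$. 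Assembling the coordinate‑wise bounds and taking $\varepsilon$ small as a function of $\eta$ and $k$ (so that $R>k$ and all the $\delta\pm\varepsilon$ losses are absorbed into $\eta$) then gives $z\ge(1-\eta)m_k|u_k|$ for every feasible $x$, i.e.\ $M\le(1-\eta)m_ku_k$, as required.
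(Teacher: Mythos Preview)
Your convexity reduction is an elegant alternative to the paper's iterative ``mass--pushing'' argument, and your identification of the monotonicity of $g(i)=|u_i|\sqrt{i/(i-1)}/\log n$ with the threshold $c_i$ is exactly the right structural fact (it is the content of the paper's Claim~\ref{claim_technical}). There is, however, a genuine gap.

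When you write ``Discarding the (non-negative) linear term only strengthens the constraint, so $z\ge 2|u_i|\sqrt{it/(i-1)}$'', the inequality goes the wrong way. What you have derived is $f(z/|u_i|\,e_i)\ge t$, i.e.\ $\tfrac{i-1}{4i}(z/|u_i|)^2+\varepsilon m_2(i-1)(z/|u_i|)\ge t$; dropping the non-negative linear term makes the left side smaller, so it yields an inequality that is \emph{harder} to satisfy and hence an \emph{upper} bound on the true root $z_i^{\min}$, not the lower bound you need. For bounded $i$ this is harmless (the linear term is an $O(\varepsilon k)$ perturbation), but for $i$ close to $R=\lceil 1/\varepsilon\rceil$ it is not: solving the full quadratic at $i=R$ gives $\alpha_R^*\approx 2(\sqrt{m_2^2+t}-m_2)\approx (3-2\sqrt 2)\,m_0$ and hence $|u_R|\alpha_R^*\approx (3-2\sqrt 2)(1-c)\log n\cdot m_0$, which is strictly smaller than $m_k|u_k|=g(k)\log n\cdot m_0$ for every $k\ge 2$ in the stated range of $c$. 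In other words, the vector $\alpha_R^*e_R$ is feasible with cost below $(1-\eta)m_k|u_k|$, so the Proposition \emph{as literally stated} (an unconstrained optimisation over $\mathbb{R}_{\ge 0}^{R+1}$) is false; your convexity argument uncovers this rather than proving it. The paper's own proof has the same blind spot: in the right-to-left push the displayed formula for $f(x^{(j,k)})-f(x)$ omits the term $\varepsilon m_2 x_j\big[(j-2)\tfrac{u_j}{u_{j-1}}-(j-1)\big]$, which is negative for $j$ near $R$ and is exactly what makes $\alpha_R^*e_R$ a counterexample.

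Your instinct at the end is therefore correct: one must carry along a size constraint coming from the core graph, but the fix ``$\sum_i x_i\ge m_0$'' does not survive your vertex argument (the maximising vertex of the cost-$z$ simplex need not satisfy it), and the estimate $|u_i|m_0\ge g(k)\log n\cdot 2\sqrt{(\delta-\varepsilon)\E[X]}$ you propose fails already at $i=2$ (where $|u_2|=\tfrac12\log n<g(k)\log n$ for small $k$). The clean repair is to keep the original middle term $\sum_i x_i e(G)(i-1)/R$ from Lemma~\ref{lem_upper_bound_C_4} as a genuine quadratic in $x$ (using $e(G)=\sum_i x_i+x_{>R}$) rather than bounding $e(G)\le m_2$; then $f$ becomes a homogeneous quadratic with no linear part, and both your convexity-vertex argument and the paper's mass-pushing go through uniformly in $i$.
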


This proposition and Proposition \ref{claim_second_interval} imply the following corollary. This corollary yields the first item in Theorem \ref{thm_main_theorem} as explained before.

\begin{cor}\label{lemma_when_c<1/3}
        Suppose $k>1$ is an integer and suppose that $\eta,\varepsilon,\delta,K$ are positive reals with $\varepsilon$ small as a function of $\eta$ and $k$. Then, there exists $n_0$ such that for any $n>n_0$ and any $\max \left\{ \frac{\log^9(n)}{n},n^{-1+c_{k-1}}\right\}\leq p\leq n^{-1+c_{k}}$ we have the following: 
        \[
            m\log(p)-v_m\log(np^2) \leq (1-\eta)m_k u_{k}=(1-\eta)m_k(\log(p)-\log(np^2)/k)
        \]
        for every $m_0\leq m\leq K\Phi_X(\delta+\varepsilon)$.
\end{cor}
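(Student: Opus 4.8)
The plan is to derive this corollary by plugging Proposition~\ref{claim_conclusion_c_less_1/3} into the chain of inequalities that immediately precedes it (culminating in~\eqref{eq_entropic_bound}), and then to dispatch the two boundary ranges of $m$ separately: $m_0\le m\le m_2$ and $m_2<m\le K\Phi_X(\delta+\varepsilon)$.

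For $m_0\le m\le m_2$: combining~\eqref{eq_entropic_bound} with the relaxation $\max_{G\in\bigcup_m\mathcal{C}_m}\langle x(G),u\rangle\le\max\{\langle x,u\rangle:x\in\mathbb{R}_{\ge 0}^{R+1},\,f(x)\ge t\}$ established just above Proposition~\ref{claim_conclusion_c_less_1/3} reduces the task to that proposition, at the cost of an additive error $\eta_0 m_2\log n$, where $\eta_0$ denotes the quantity written $\eta$ in~\eqref{eq_entropic_bound}; $\eta_0$ can be made as small as we wish by shrinking $\varepsilon$ (hence enlarging $R=\lceil 1/\varepsilon\rceil$). I would apply Proposition~\ref{claim_conclusion_c_less_1/3} with its parameter taken to be $\eta/4$, which is legitimate since $\varepsilon$ is small as a function of $\eta$ and $k$, to get $\max_{m_0\le m\le m_2}\{m\log p-v_m\log(np^2)\}\le(1-\eta/4)m_k u_k+\eta_0 m_2\log n$. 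Since $m_k\ge m_2/\sqrt2$ and $-u_k\ge\tfrac12\log n$, one has $-m_k u_k\ge m_2\log n/(2\sqrt2)$; choosing $\eta_0\le\eta/(8\sqrt2)$ absorbs the error term, and using $m_k u_k<0$ we conclude $m\log p-v_m\log(np^2)\le(1-\eta/2)m_k u_k\le(1-\eta)m_k u_k$.

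For $m_2<m\le K\Phi_X(\delta+\varepsilon)$: I would reuse the monotonicity estimate already carried out inside the proof of Proposition~\ref{claim_second_interval}. By Lemma~\ref{lemma_weak_bound}, $v_m\le m/2+m^{3/4}$, and since $p\le n^{-1+c_k}<n^{-1/2}$ we have $\log(np^2)<0$; writing $p=n^{-1+c}$ one computes $m\log p-\tfrac12 m\log(np^2)=-\tfrac12 m\log n=m u_2$, so $m\log p-v_m\log(np^2)\le m u_2+m^{3/4}\log n$. The function $-\tfrac12 m+m^{3/4}$ is decreasing for $m$ large, so the right-hand side is maximized at $m=m_2$, giving the bound $m_2 u_2+m_2^{3/4}\log n$. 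Now $m_2 u_2\le m_k u_k$ is equivalent to $\psi(c):=(1-c)+(2c-1)/k\le\sqrt{(k-1)/(2k)}$; since $\psi$ is non-increasing in $c$ for $k\ge2$, it suffices to check this at the left endpoint $c=c_{k-1}$ (with $c_1=0$), where a direct computation using $c_{k-1}=1/(2+\sqrt{k/(k-2)})$ shows it holds — with equality for $k\in\{2,3\}$ and strictly for $k\ge4$. Since $|m_k u_k|=\Theta(m_2\log n)$, the term $m_2^{3/4}\log n$ is $o(|m_k u_k|)$, and as $\eta$ is a fixed positive constant we conclude $m\log p-v_m\log(np^2)\le m_k u_k(1-o(1))\le(1-\eta)m_k u_k$.

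The only delicate point I anticipate is bookkeeping: one must fix the three small parameters — $\eta_0$ from~\eqref{eq_entropic_bound}, the parameter fed to Proposition~\ref{claim_conclusion_c_less_1/3}, and the $\varepsilon$ controlling $R$ — in a compatible order so that every cited result applies at once and the additive $\Theta(m_2\log n)$ errors get swallowed by the multiplicative slack $(1-\eta)$; this works exactly because $-m_k u_k$ and $m_2\log n$ have the same order of magnitude. The equality $\psi(c_{k-1})=\sqrt{(k-1)/(2k)}$ for $k\in\{2,3\}$, which is why the $(1-\eta)$ and $o(1)$ factors cannot be dropped, is precisely the matching of the $k$-th and $(k-1)$-th lower-bound constructions at the phase transition $p=n^{-1+c_{k-1}}$; no idea beyond Propositions~\ref{claim_conclusion_c_less_1/3} and~\ref{claim_second_interval} and Lemma~\ref{lemma_weak_bound} is required.
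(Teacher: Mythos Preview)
Your proof is correct and uses the same ingredients the paper invokes (Proposition~\ref{claim_conclusion_c_less_1/3} together with \eqref{eq_entropic_bound} for $m_0\le m\le m_2$, and Lemma~\ref{lemma_weak_bound} for $m_2<m\le K\Phi_X(\delta+\varepsilon)$). The only difference is in the second range: you bound $m\log p-v_m\log(np^2)$ by $m_2u_2+m_2^{3/4}\log n$ and then verify the explicit inequality $m_2u_2\le m_ku_k$ via the computation $\psi(c_{k-1})\le\sqrt{(k-1)/(2k)}$, whereas the paper's intended route (through the proof of Proposition~\ref{claim_second_interval}) bounds the same quantity by $(1-\eta'')\bigl(m_2\log p-v_{m_2}\log(np^2)\bigr)$ and then simply applies the already-established $m_0\le m\le m_2$ case at $m=m_2$, avoiding the direct comparison altogether. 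Both arguments are valid; the paper's is slightly shorter since it recycles the first case, while yours makes the phase-transition matching $m_2u_2=m_ku_k$ at $c=c_{k-1}$ (for $k=2,3$) transparent.
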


We now turn to the proof of Proposition \ref{claim_conclusion_c_less_1/3}. We show that for $\max\left \{\frac{\log^9(n)}{n},n^{-1+c_{k-1}}\right\} \leq p \leq  n^{-1+c_{k}}$ the maximum of $\langle x,u \rangle$ is achieved by a vector $x$ of the form $\alpha \cdot e_k$ where $e_k$ is the $k$-th element in the standard basis. The strategy of the proof is to think of the vector $x$ as a distribution vector and push its mass towards the `center of mass' while keeping $\langle x,u \rangle$ constant and ensuring that $f$ evaluated on the vector after the transformation is still greater than $t$. To be more precise, the center of mass will be the $k$-th coordinate such that $k$ satisfies  $ \max \left\{ \frac{\log^9(n)}{n},n^{-1+c_{k-1}}\right\}\leq p\leq n^{-1+c_{k}}$. 
This will be done iteratively by showing that if $x$ is a vector achieving the maximum of $\langle x,u \rangle$ and $x_i=0$ for all $i\leq j<k$ then we may define a vector $\Tilde{x}$ which also achieves this maximum and satisfies $\Tilde{x_i}=0$ for all $i\leq j+1$. A similar statement will be shown for the other direction i.e.\ pushing the mass from the right to the left.

Let us introduce the following notations of pushing mass. Let $2\leq k\leq R$ be some integer (should be thought of as the center of mass) and suppose that $x\in \mathbb{R}^{R+1}$. Then for any integers $2 \leq i<k< j\leq R+1$ define the following vectors,
\[
    x^{(i,k)}=x-x_ie_i+\frac{u_{i}x_i}{u_{i+1}}e_{i+1},
\]
\[
    x^{(j,k)}=x-x_je_j+\frac{u_{j}x_j}{u_{j-1}}e_{j-1}.
\]
One should think of these operations as pushing the mass towards the $k$-th coordinate while staying on the hyperplane defined by $\langle x,u \rangle=s$ for some constant $s$.

\begin{proof}[Proof of Proposition \ref{claim_conclusion_c_less_1/3}]
As explained earlier we prove this proposition iteratively. We do so in several claims. First, we show that we may assume that the distribution vector $x$ satisfies $x_{R+1}=0$. This is given in the following claim.

\begin{claim}\label{lem:no_last_coordinate}
    If $p \leq n^{-2/3-\eta}$ for some positive $\eta$, then provided $\varepsilon$ is small enough there exists $n_0$ such that for any $n>n_0$ the following holds:
    \[
        \max\{\langle x,u \rangle : x\in \mathbb{R}_{\geq 0}^{R+1} \land f(x)\geq t\}=\max\{\langle x,u \rangle : x\in \mathbb{R}_{\geq 0}^{R+1} \land x_{R+1}=0\land f(x)\geq t\}.
    \]
\end{claim}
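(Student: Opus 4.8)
The plan is as follows. This is the first reduction in the proof of Proposition~\ref{claim_conclusion_c_less_1/3}: it eliminates the ``$x_{R+1}$'' coordinate. The inequality ``$\geq$'' is immediate, since the right-hand side is a maximum over a subset of the feasible set defining the left-hand side. For the reverse inequality I would show that every $x\in\mathbb{R}_{\geq 0}^{R+1}$ with $f(x)\geq t$ can be replaced by some $\tilde x\in\mathbb{R}_{\geq 0}^{R+1}$ with $\tilde x_{R+1}=0$, $f(\tilde x)\geq t$ and $\langle\tilde x,u\rangle\geq\langle x,u\rangle$; the two maxima then bound each other and hence coincide. Given such an $x$ with $a:=x_{R+1}>0$, the idea is to \emph{push the mass in the last coordinate onto the $R$-th coordinate}, setting
\[
\tilde x \;=\; x - a\,e_{R+1} + b\,e_R, \qquad b=\frac{\log p}{u_R}\,a .
\]
Here $u_R=\log p-\tfrac1R\log(np^2)<0$ in the range considered, because $\lvert\log(np^2)\rvert=2\lvert\log p\rvert-\log n<2\lvert\log p\rvert\le R\lvert\log p\rvert$; hence $b>0$, $\tilde x\in\mathbb{R}_{\geq 0}^{R+1}$, $\tilde x_{R+1}=0$, and by the choice of $b$ one has $\langle\tilde x,u\rangle-\langle x,u\rangle=-a\log p+b\,u_R=0$. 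So everything comes down to checking $f(\tilde x)\geq f(x)$.

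To do that I would use the explicit form of $f$. The last coordinate $x_{>R}$ enters $f$ only through the summand $x_{>R}^2/4$, so deleting $a\,e_{R+1}$ decreases $f$ by exactly $a^2/4$. Adding $b\,e_R$ affects only the terms of $f$ containing $x_R$: the ``diagonal'' contribution $\tfrac{(R-1)x_R^2}{4R}$ increases by at least $\tfrac{(R-1)b^2}{4R}$, while all other affected terms (the cross terms $(i-1)x_i\cdot\tfrac{x_R}{2R}$ with $i<R$, and $(R-1)\varepsilon m_2 x_R$) only increase. Hence
\[
f(\tilde x)\;\geq\; f(x)-\frac{a^2}{4}+\frac{(R-1)b^2}{4R},
\]
and $f(\tilde x)\geq f(x)\geq t$ follows provided $b\geq\sqrt{R/(R-1)}\,a$, i.e.\ provided $\lvert\log p\rvert/\lvert u_R\rvert\geq\sqrt{R/(R-1)}$.

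The last step is the quantitative heart of the argument and the only place where the hypothesis $p\leq n^{-2/3-\eta}$ is used. Writing $L=-\log p>0$ and $N=-\log(np^2)=2L-\log n>0$, one has $\lvert u_R\rvert=L-N/R$, and a short manipulation (using $R-\sqrt{R(R-1)}=R/(R+\sqrt{R(R-1)})$ together with $R+\sqrt{R(R-1)}\geq 2R-1$) shows it is enough to verify $N\geq\tfrac{LR}{2R-1}$. From $p\leq n^{-2/3-\eta}$ we get $L\geq(\tfrac23+\eta)\log n$, so $N=2L-\log n\geq L\cdot\tfrac{1+6\eta}{2+3\eta}$, and the required bound holds as soon as $\tfrac{1+6\eta}{2+3\eta}\geq\tfrac{R}{2R-1}$, i.e.\ $R\geq\tfrac{1+6\eta}{9\eta}$ --- which is guaranteed once $\varepsilon$, and hence $1/R=1/\lceil 1/\varepsilon\rceil$, is small enough as a function of $\eta$.

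I expect this estimate to be the main obstacle. At the borderline exponent (that is, $c=1/3$) the inequality $\lvert\log p\rvert/\lvert u_R\rvert\geq\sqrt{R/(R-1)}$ becomes an asymptotic equality as $R\to\infty$ and in fact fails for every fixed $R\geq 2$, which is exactly why one must stay a fixed distance $\eta$ below $n^{-2/3}$ and why $\varepsilon$ must be taken small depending on $\eta$; getting the constants in this comparison right (and choosing to push mass onto the largest admissible coordinate $e_R$, rather than some smaller $e_i$, which would not suffice when $c$ is close to $1/3$) is the delicate point.
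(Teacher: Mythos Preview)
Your proposal is correct and follows essentially the same approach as the paper: push the mass from coordinate $R+1$ to coordinate $R$ via $\tilde x = x - a\,e_{R+1} + \frac{u_{R+1}}{u_R}a\,e_R$ (this is exactly the paper's $x^{(R+1,k)}$, since $u_{R+1}=\log p$), preserve $\langle x,u\rangle$, and verify $f(\tilde x)\geq f(x)$ by reducing to the inequality $(R-1)u_{R+1}^2\geq R\,u_R^2$. The paper computes the full difference $f(\tilde x)-f(x)$ and then checks the same quadratic condition via the substitution $y=\log(np^2)/(R\log p)$, whereas you drop the (nonnegative) cross terms immediately and handle the algebra through the estimate $R-\sqrt{R(R-1)}\leq R/(2R-1)$; both routes lead to the same threshold and the same requirement that $R$ be large in terms of $\eta$.
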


\begin{proof}
Let 
\[
        s=\max\{\langle x,u \rangle : x\in \mathbb{R}_{\geq 0}^{R+1} \land f(x)\geq t\}.
\] 
Furthermore, let $x\in \mathbb{R}_{\geq 0}^{R+1}$ be such that $s=\langle x,u \rangle$ and $f(x)\geq t$. By the definition of $x^{(R+1,k)}$ we have, 
\[
    \langle x,u \rangle=\langle x^{(R+1,k)},u \rangle
\]
and therefore, $s=\langle x^{(R+1,k)},u \rangle$.
In order to prove the lemma it is sufficient to prove $f(x^{(R+1,k)})\geq f(x)$. To do so let 
\[
    f_1(x_1,x_2,\ldots,x_{R+1})=\sum ^{R-1}_{i=2}(i-1){x_i}\left(\sum_{i<j\leq R-1}\frac{x_j}{2j}+\frac{x_i}{4i}+\varepsilon m_2\right),
\]
which depends only on $x_1,x_2,\ldots,x_{R-1}$, and 
\[
    f_2(x_1,x_2,\ldots,x_{R+1})=\sum ^{R-1}_{i=2}\frac{(i-1)x_ix_R}{2R}+(R-1)x_R\left(\frac{x_R}{4R}+\varepsilon m_2\right)+\frac{x_{R+1}^2}{4}.
\]
Note that for all $y\in \mathbb{R}^{R+1}$ we have
\[
    f(y)=f_1(y)+f_2(y).
\]
Note also that for all $1\leq i\leq R-1$ we have $x^{(R+1,k)}_i=x_i$.
Therefore, we obtain that $f(x^{(R+1,k)})-f(x)=f_2(x^{(R+1,k)})-f_2(x)$.
As $x_R^{(R+1,k)}=x_R+\frac{u_{R+1}}{u_R}x_{R+1}$ and $x^{(R+1,k)}_{R+1}=0$ a straightforward computation gives
\begin{align*}\label{eq_difference_in_f}
    f(x^{(R+1,k)})-f(x)&=\left(\sum ^{R}_{i=2}\frac{(i-1)x_i}{2R}+\varepsilon m_2 (R-1)\right)\frac{u_{R+1}}{u_{R}}x_{R+1}\\
    &\quad+\left(\frac{(R-1)u^2_{R+1}}{Ru^2_{R}}-1\right)\frac{x_{R+1}^2}{4}.
\end{align*}
Note that $u_{R+1}=\log(p)<0$ and $u_{R}<0$. This holds as $u_R=\log(p)-\log(np^2)/R$ and we assume $\varepsilon$ is small enough so that $p^{R-2}\leq n$, as then the left-hand side decays to zero and the right-hand side approaches infinity. This implies that 
\[
    \frac{u_{R+1}}{u_{R}}>0.
\]
Therefore, 
\[
    f(x^{(R+1,k)})-f(x)\geq \left(\frac{(R-1)u^2_{R+1}}{Ru^2_{R}}-1\right)\frac{x^2_{R+1}}{4}.
\]
Proving that the above is nonnegative provided $n_0$ is sufficiently large will conclude the proof. Indeed, letting $y=\frac{\log(np^2)}{R\log(p)}$
\[
    \frac{(R-1)u^2_{R+1}}{Ru^2_{R}}=\frac{(R-1)\log^2(p)}{R(\log(p)-\log(np^2)/R)^2}\geq 1
\]
is equivalent to,
\[
    -y^2+2y-1/R\geq 0,
\]
which is also equivalent to
\[
    1-\sqrt{1-1/R}\leq y\leq 1+\sqrt{1-1/R}.
\]
 Since $1-\sqrt{1-x}\leq x/2+x^2$ and $1+\sqrt{1-x}\geq 2-x$ for all $0\leq x\leq1$, it is enough to prove that
\[
   2/R+1/R^2 \leq \frac{\log(np^2)}{R\log(p)}\leq 2-1/R.
\]
First, as $\log(p)<0$, the right inequality is equivalent to 
\[
    \log(np^2)\geq (2R-1)\log(p).
\]
Equivalently, $\log(n)\geq (2R-3)\log(p)$, which holds for $\varepsilon$ satisfying $2R-3\geq 0$ as then the left-hand side is positive for $n>1$ and the right-hand side tends to minus infinity as $p$ tends to infinity.
Second, as $np^2\ll 1$, the left inequality is equivalent to 
\[
    (1/2+1/R)\log(p)\geq \log(np^2).
\]
This holds if and only if $p\leq n^{-1/(3/2-1/R)}$. Finally, assuming that $\varepsilon$ is small enough so that $1/R\leq 3/2-1/(2/3+\eta)$ implying the claim.
\end{proof}
We now continue to the more technical part of the proof which is to push the mass to the $k$-th coordinate provided that the last coordinate is $0$, which we may assume due to the above claim.

\begin{claim}\label{claim_the_right_center}
    If  $n^{-1+c_{k-1}}\leq p\leq n^{-1+c_{k}} $ then there exists $n_0$ such that for any $n>n_0$ the following holds:
        \[
        \max\{\langle x,u \rangle : x\in \mathbb{R}_{\geq 0}^{R+1} \land f(x)\geq t\}=\max\{\alpha u_k:f(\alpha e_k)\geq t\}.
        \]
\end{claim}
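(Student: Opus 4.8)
The plan is to prove the nontrivial inequality ``$\leq$'' (the inequality ``$\geq$'' is immediate, since $\alpha e_k$ is itself a competitor in the left-hand maximum). So fix a vector $x\in\mathbb R_{\geq 0}^{R+1}$ attaining the maximum on the left. By Claim~\ref{lem:no_last_coordinate} (applicable because $n^{-1+c_{k-1}}\leq p\leq n^{-1+c_k}$ with $c_k<1/3$ forces $p\leq n^{-2/3-\eta}$ for $\eta=1/3-c_k>0$) we may assume $x_{R+1}=0$; and since $x_1$ appears neither in $f$ nor, because $u_1=0$, in $\langle x,u\rangle$, we may assume $x_1=0$. I would then transform $x$ into a multiple of $e_k$ by applying the moves $x\mapsto x^{(i,k)}$ for $i=2,3,\dots,k-1$ in increasing order (pushing the left coordinates rightwards onto $k$) and then $x\mapsto x^{(j,k)}$ for $j=R,R-1,\dots,k+1$ in decreasing order (pushing the right coordinates leftwards onto $k$). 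Each move keeps $\langle x,u\rangle$ fixed by construction, so once I know that every move keeps the vector in $\mathbb R_{\geq 0}^{R+1}$ and keeps $f\geq t$, the resulting vector is some $\alpha e_k$ with $\alpha u_k=\langle x,u\rangle$ and $f(\alpha e_k)\geq t$, which is exactly what ``$\leq$'' asks for.

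For nonnegativity I would note that with $p=n^{-1+c}$ one has $u_\ell=-\frac{(1-c)\ell-(1-2c)}{\ell}\log n<0$ for every $\ell\geq 2$ (because $\frac{1-2c}{1-c}<1$ when $c<1/3$), so the multipliers $u_i/u_{i+1}$ and $u_j/u_{j-1}$ appearing in the moves are positive and the moves preserve $\mathbb R_{\geq 0}^{R+1}$. For feasibility I would write $f=Q+L$, where $Q$ is the quadratic part and $L(x)=\varepsilon m_2\sum_{i=2}^R(i-1)x_i$ (the $x_{>R}^2/4$ term having vanished). Expanding $Q(x^{(i,k)})-Q(x)$ and using that before the $i$-th up-move all coordinates below $i$ are already $0$ (and similarly, before the $j$-th down-move all coordinates above $j$ are $0$), this difference becomes a sum of products of $x_i,x_{i+1},\dots$ (resp. $x_j,x_{j-1},\dots$) with real coefficients, and all of these coefficients are nonnegative precisely when $\bigl(\tfrac{u_i}{u_{i+1}}\bigr)^2\geq 1-\tfrac1{i^2}$ (resp. $\bigl(\tfrac{u_j}{u_{j-1}}\bigr)^2\geq\tfrac{(j-1)^2}{j(j-2)}$); the binding one is the coefficient of $x_i^2$ (resp. $x_j^2$), the rest reducing to the weaker $u_i/u_{i+1}\geq\tfrac{i-1}{i}$ type bounds. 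Substituting the formula for $u_\ell$ turns these into $c\geq c_i$ and $c\leq c_{j-1}$ respectively — this is exactly where the sequence $c_k=\bigl(2+\sqrt{(k+1)/(k-1)}\bigr)^{-1}$ is forced — and on the interval $[c_{k-1},c_k]$ they all hold, because $c_i\leq c_{k-1}\leq c$ for $i\leq k-1$ and $c\leq c_k\leq c_{j-1}$ for $j\geq k+1$.

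The remaining issue, which I expect to be the main obstacle, is the linear term $L$: for an up-move one checks $L$ does not decrease, but a down-move from $j$ changes $L$ by $\varepsilon m_2\,x_j\bigl[(j-2)\tfrac{u_j}{u_{j-1}}-(j-1)\bigr]$, which is negative, so a down-move can in principle push $f$ below $t$. I would resolve this by using that $x$ is a \emph{maximizer}, not just feasible: the KKT conditions for maximizing $\langle\cdot,u\rangle$ over $\{f\geq t\}\cap\mathbb R_{\geq0}^{R+1}$ give $\nabla f(x)=\mu^{-1}(u+\nu)$ on the feasible point, with $\mu<0$ and $\nu\geq0$ supported off $\operatorname{supp}(x)$. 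Since every move direction $v$ satisfies $\langle v,u\rangle=0$, the first-order change $\langle\nabla f(x),v\rangle=\mu^{-1}\langle\nu,v\rangle$ vanishes whenever $v$ only touches coordinates in $\operatorname{supp}(x)$, and the Hessian of $Q$ is strictly positive along $v$ (the relevant discriminant is $-\tfrac{2(\cdot)}{(\cdot)^2}<0$), so $f$ strictly increases along such moves. The genuinely delicate case is when a down-move deposits mass on a coordinate $j-1$ not currently in the support; there I would instead push the mass of coordinate $j$ \emph{directly} onto $k$ (using the multiplier $u_j/u_k$), which again preserves $\langle\cdot,u\rangle$, keeps the direction inside the support, and hence does not decrease $f$. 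Iterating these moves exhausts every coordinate except $k$, finishing the proof.

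In short: the pure quadratic bookkeeping is explicit but lengthy, and its payoff is that the feasibility inequalities for the moves collapse exactly to $c\geq c_i$ and $c\leq c_{j-1}$, which is what makes the thresholds $c_k$ the right ones; the hard part is the extra argument — running the moves at a maximizer and handling the support boundary — needed to neutralize the non-monotone linear term $L$, since that term is exactly what obstructs a move-by-move argument valid for every feasible vector.
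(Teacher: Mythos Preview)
Your mass-pushing scheme is exactly the paper's, and the way you locate the thresholds --- the coefficient of $x_i^2$ in an up-move and of $x_j^2$ in a down-move reducing to $c\geq c_i$ and $c\leq c_{j-1}$ respectively --- is precisely Claim~\ref{claim_technical}. For the up-move the paper groups the linear contribution $\varepsilon m_2 x_i$ with the cross terms, so that it multiplies the nonnegative factor $\tfrac{iu_i}{u_{i+1}}-(i-1)$, just as you anticipate. For the down-move, however, the paper's displayed expression for $f(x^{(j,k)})-f(x)$ simply \emph{omits} the linear contribution $\varepsilon m_2 x_j\bigl[(j-2)\tfrac{u_j}{u_{j-1}}-(j-1)\bigr]$, and one checks (the condition $(j-2)\tfrac{u_j}{u_{j-1}}<j-1$ is equivalent to $j>\tfrac{2(1-2c)}{1-c}$, automatic for $j\geq 3$ and $c<1/3$) that it is strictly negative in our regime. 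So the obstacle you flag is genuine, and the paper does not address it.

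Your KKT repair has its own gap. The vanishing of $\langle\nabla f(x),v\rangle$ requires the move direction $v$ to be supported on $\operatorname{supp}(x)$; your fallback of pushing $x_j$ directly to $k$ needs $x_k>0$, and nothing you have said rules out a maximizer supported on a singleton $\{j\}$ with $j>k$. At the threshold $c=c_k$ with $j=k+1$, for instance, the quadratic gain from the direct move to $k$ vanishes exactly while the linear change stays negative, so $f$ strictly drops --- the move genuinely fails there, and your Hessian argument does not save it. The honest conclusion is that the exact equality stated in the Claim may be too strong. What the proof of Proposition~\ref{claim_conclusion_c_less_1/3} actually needs is only $f(\alpha e_k)\geq(1-\eta')t$ for some small $\eta'$; since the purely quadratic part $Q$ of $f$ \emph{is} nondecreasing under all moves (this is exactly what your coefficient analysis establishes), one gets $Q(\alpha e_k)\geq Q(x)=t-L(x)$ and hence $f(\alpha e_k)\geq t-L(x)$, and it is this residual $L(x)$ --- rather than the move-by-move linear drops --- that one should aim to control to finish.
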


\begin{proof}
Let
\[
    s=\max\{\langle x,u \rangle : x\in \mathbb{R}_{\geq 0}^{R+1} \land f(x)\geq t\}.
\] 
Furthermore, let $x\in \mathbb{R}_{\geq 0}^{R+1}$ be such that $s=\langle x,u \rangle$ and $f(x)\geq t$. By Claim \ref{lem:no_last_coordinate} we may assume $x_{R+1}=0$. As $x$ satisfies $\langle x,u \rangle=s$ we also have $\langle x^{(i,k)},u \rangle=\langle x^{(j,k)},u \rangle=s$ for any $2 \leq i<k<j\leq R$.

We will start with the push of the mass from the left to the right. To this end we prove iteratively the following:
Let $x\in \mathbb{R}_{\geq 0}^{R+1}$ and suppose that $x_r=0$ for all $1\leq r<i<k$. Then $f\left(x^{(i,k)}\right)-f(x)\geq 0$ provided $n_0$ is large enough. To see this let 

\begin{align*}
    f_1(x_{1},x_{2},\ldots,x_{R+1})=\sum ^{R}_{\ell =i+2}{(\ell-1)x_\ell}\left(\sum_{\ell<j\leq R}\frac{x_j}{2j}+\frac{x_\ell}{4\ell}+\varepsilon m_2\right),
\end{align*}
which depends only on $x_{i+2},x_{i+3},\ldots, x_{R}$, and let
\begin{align*}
    f_2(x_1,x_{2},\ldots,x_{R+1})=\sum ^{i+1}_{\ell=i}{(\ell-1)x_\ell}\left(\sum_{\ell<j\leq R}\frac{x_j}{2j}+\frac{x_\ell}{4\ell}+\varepsilon m_2\right),
\end{align*}
which depends only on $x_i,x_{i+1},\ldots,x_R$.
Note that for all $y$ with $y_r=0$ for $r<i$ we have $f(y)=f_1(y)+f_2(y)$. This implies that $f(x^{(i,k)})-f(x)=f_2(x^{(i,k)})-f_2(x)$. By the definition of $x^{(i,k)}$ we have
\begin{align*}
    f\left(x^{(i,k)}\right)-f(x)=&\left(\sum _{i+1 \leq j}\frac{x_ix_j}{2j}+\varepsilon m_2 x_i\right)\left(\frac{iu_i}{u_{i+1}}-(i-1)\right)+\frac{x_i^2}{4}\left(\frac{iu^2_i}{(i+1)u^2_{i+1}}-\frac{i-1}{i}\right).
\end{align*}
Thus, to show that $f\left(x^{(i,k)}\right)-f(x)\geq 0$ it is suffices to show the following:
\begin{align}\label{eq_the_c_k_term_1}
    \frac{iu_i}{u_{i+1}}-(i-1)\geq 0,
\end{align}
and
\begin{align}\label{eq_the_c_k_term}
    \frac{iu^2_i}{(i+1)u^2_{i+1}}-\frac{i-1}{i}\geq 0.
\end{align}
As $np^2=o(1)$ we have $0>u_{i}>u_{i+1}$. Therefore, inequality \eqref{eq_the_c_k_term_1} is equivalent to
\begin{align*}
    i\log(p)-\log(np^2)\leq (i-1)\log(p)-\frac{i-1}{i+1}\log(np^2).
\end{align*}
By algebraic manipulation the above is equivalent to
\begin{align*}
    \log(p)\leq \frac{2}{i+1}\log(np^2).
\end{align*}
This always holds in our settings provided $n$ is sufficiently large. Indeed, the above is equivalent to $p^{i-3}\leq n^{2}$ which holds as $n^{-1}\leq p\ll 1$.
It is left to show inequality \eqref{eq_the_c_k_term}. We prove this in the following claim.

\begin{claim}\label{claim_technical}
    \[
        i^2u_i^2\geq (i+1)(i-1)u^2_{i+1} \iff p\geq n^{-1+c_i}. 
    \]
\end{claim}

\begin{proof}
The inequality in the left-hand side is equivalent to $iu_i\leq \sqrt{(i+1)(i-1)}u_{i+1}$ as $0>u_{i}>u_{i+1}$.
Recall that $u_\ell =\log(p)-\log(np^2)/\ell$ and plug this in the above inequality to obtain
\[
    i\log(p)-\log(np^2)\leq \sqrt{{(i-1)}{(i+1)}}\log(p)-\sqrt{\frac{i-1}{i+1}}\log(np^2).
\]
Equivalently, 
\[
    \left(i-2-\sqrt{(i+1)(i-1)}+2\sqrt{\frac{i-1}{i+1}}\right)\log(p)\leq \left(1-\sqrt{\frac{i-1}{i+1}}\right)\log(n)
\]
Thus we conclude that our assumption is equivalent to the following:
\[
    \log(p)\geq \frac{\sqrt{i+1}-\sqrt{i-1}}{(i-2)\sqrt{i+1}-(i-1)^{3/2}}\log(n)=\left(\frac{1}{2 + \sqrt{\frac{i+1}{i-1}}}-1\right)\log(n)=(-1+c_i)\log(n).\qedhere
\]
\end{proof}

Since we assumed $ p\geq n^{-1+c_{k-1}}$ and for all $i<k$ we have $ c_{i}\leq c_{k-1}$ we also have $p\geq n^{-1+c_{k-1}}\geq n^{-1+c_{i}}$. Therefore, for any $x\in \mathbb{R}^{R+1}$ we may iterate this process for $2\leq i\leq k-1$ and obtain that $f\left(x^{(2,k)(3,k)\ldots (k-1,k)}\right)\geq f(x)$. This conclude the push of the mass of $x$ from the $i$-th coordinates for $i<k$ to the $k$-th coordinate.

Now provided what we showed so far we prove a similar statement for $x^{(j,k)}$. More specifically, we prove iteratively the following: Let $x\in \mathbb{R}^{R+1}$ and suppose that $x_r=0$ for all $k<j<r\leq R+1$ and $x_\ell=0$ for all $\ell<k$. Then $f\left(x^{(j,k)}\right)-f(x)\geq 0$ provided $n_0$ is large enough. To see this let 

\begin{align*}
    f_3(x_1,x_2,\ldots,x_{R+1})=\sum ^{j-2}_{i=k}{(i-1)x_i}\left(\sum_{i<\ell\leq j-2}\frac{x_\ell}{2\ell}+\frac{x_i}{4i}+\varepsilon m_2\right),
\end{align*}
which depends only on $x_i$ for $k\leq i\leq j-2$, and let 
\begin{align*}
    f_4(x_1,x_2,\ldots,x_{R+1})=&\sum ^{j-2}_{i=k}{(i-1)x_i}\sum_{\ell=j-1}^{j}\frac{x_\ell}{2\ell}+\sum ^{j}_{i=j-1}{(i-1)x_i}\left(\sum_{i<\ell\leq j}\frac{x_\ell}{2\ell}+\frac{x_i}{4i}+\varepsilon m_2\right),
\end{align*}
which depends only on $x_i$ for $k\leq i\leq j$.

Note that for all $y$ with $y_i=0$ for all $i\leq k$ or $i\geq j+1$ we have $f(y)=f_3(y)+f_4(y)$. Note also that $x^{(j,k)}_i=x_i$ for all $i\leq j-2$ thus, $f(x^{(j,k)})-f(x)=f_4(x^{(j,k)})-f_4(x)$.
This implies that
\begin{align*}
    f\left(x^{(j,k)}\right)-f(x)=&\sum _{i=k}^{j-1}\frac{(i-1)x_ix_j}{2}\left(\frac{u_j}{u_{j-1}(j-1)}-\frac{1}{j}\right)+\frac{x_j^2}{4}\left(\frac{(j-2)u_{j}^2}{(j-1)u_{j-1}^2}-\frac{j-1}{j}\right).
\end{align*}
Thus, to show that $f\left(x^{(j,k)}\right)-f(x)\geq 0$ it is suffices to show the following:
\[
    \frac{u_j}{u_{j-1}}\geq \frac{j-1}{j},
\]
and
\begin{align*}
    \frac{u^2_j}{u^2_{j-1}}\geq \frac{(j-1)^2}{j(j-2)}.
\end{align*}
Note that these two inequalities are respectively equivalent to the following inequalities:
\begin{align}\label{eq_first_cond}
    j\log(p)-\log(np^2)\leq(j-1)\log(p)-\log(np^2),
\end{align}
\begin{align}\label{eq_second_cond}
    \frac{u^2_{j-1}}{u^2_{j}}\leq \frac{j(j-2)}{(j-1)^2}.
\end{align}
Inequality \eqref{eq_first_cond} holds always as $p<1$. Moreover, by Claim \ref{claim_technical} inequality \eqref{eq_second_cond} holds if and only if $p\leq n^{-1+c_{j-1}}$.This indeed holds as we assume $p\leq n^{-1+c_{k}}$ and we also have $c_{k}\leq c_{j-1}$ for all $j> k$. This conclude the proof of the lemma.
\end{proof}

Now we deduce the proposition from the above claims. Assuming $n_0$ is large enough and using Claim \ref{claim_the_right_center} it is sufficient to bound from above
\[
   s\coloneqq \max\{\alpha u_k:f(\alpha e_k)\geq t\}.
\]
Let $\alpha\in \mathbb{R}$ be a witness for that. We have, 
\begin{align*}
    t \leq f(\alpha e_k) &= \alpha ^2 \frac{k-1}{4k}+\varepsilon \alpha (k-1) m_2\\
    &= \frac{(\delta+\varepsilon)\E[X]\alpha^2}{m_k^2}+2\varepsilon\alpha(k-1)\sqrt{(\delta+\varepsilon)\E[X]}\\
    &=\frac{\delta+\varepsilon}{\delta-\varepsilon}\cdot \left(\left(\frac{\alpha}{m_k}\right)^2t+4\varepsilon\sqrt{k(k-1)}\left( \frac{\alpha}{m_k}\right)t\right).
\end{align*}
Denoting $x=\alpha/m_k$ implies 
\[
    x\geq -2\varepsilon\sqrt{k(k-1)}+2\sqrt{\varepsilon^2k(k-1)+\frac{\delta-\varepsilon}{\delta+\varepsilon}} \text{ or }x\leq -2\varepsilon\sqrt{k(k-1)}-2\sqrt{\varepsilon^2k(k-1)+\frac{\delta-\varepsilon}{\delta+\varepsilon}}.
\]
Hence, for every positive $\eta$ and sufficiently small $\varepsilon$ we obtain
\[
    x\geq 1-\eta \text{ or }x\leq -1+\eta.
\] 
The second option is not possible as $\alpha$ is nonnegative. 
Thus we have, $\alpha/m_k\geq 1-\eta$ and hence
\[
    s=\alpha u_k\leq (1-\eta)m_ku_k.
\]
This is as claimed.
\end{proof}

\section{The solution to the variational problem}\label{sec_7}

In this section we solve the naive mean field variational problem (when $\sqrt{\log(n)}/n\ll p\ll n^{-1/2-\varepsilon}$) for the upper tail of the number of induced copies of $C_4$ in $G_{n,p}$. Let us recall the definition of the variational problem.

Let $N$ be a positive integer and let $Y=(Y_1,Y_2,\ldots, Y_N)$ be a sequence of independent Bernoulli random variables with mean $p$. Further, let $X$ be a function from the hypercube to $\mathbb{R}$. Then the naive mean field variational problem associated to the upper tail of $X(Y)$ is the function $\Psi_X\colon \mathbb{R} _{\geq 0}\to \mathbb{R}_{\geq 0}\cup\{ \infty \}$ such that for every $\delta\geq 0$:
    \[
        \Psi_{X}(\delta)=\inf\left\{\sum_{i\in N}I_p(z_i):\E[X(Z)]\geq (1+\delta)\E[X(Y)]\right\},
    \]
    where $I_{p}(q)=D_{KL}(\Ber(q)\|\Ber(p))=q\log\left(\frac{q}{p}\right)+(1-q)\log\left(\frac{1-q}{1-p} \right)$, and the infimum is taken over all $Z=(Z_1,Z_2,\ldots,Z_N)$ sequences of $N$ Bernoulli random variables with means $z_i$ respectively.

We will be interested in the case where $N=\binom{n}{2}$, the random variables $Y_i$ correspond to the edges of $G_{n,p}$, and $X$ is the random variable counting the number of induced copies of $C_4$. The main proposition of this section is the following.

\begin{prop}\label{prop_solution_to_var_problem}
    Suppose $\varepsilon,\delta$ are positive reals. Further, suppose that $\sqrt{\log(n)}/n\ll p\ll n^{-1/2-\varepsilon}$. Then, for large enough $n$, we have 
    \[
        (1-\varepsilon) \sqrt{\frac{\delta}{2}}\leq \frac{\Psi_{X}(\delta)}{n^2p^2\log(1/p)}\leq (1+\varepsilon) \sqrt{\frac{\delta}{2}}.
    \]
\end{prop}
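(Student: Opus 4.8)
The proof has two parts: the upper bound on $\Psi_X(\delta)$ is short, and the lower bound is the substantive one.

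\emph{Upper bound.} It suffices to observe that $\Psi_X(\delta)\le\Phi_X(\delta)$. Indeed, a subcube $F=F(I,x)\subseteq\{0,1\}^{\binom{n}{2}}$ is exactly the product measure $Z=(z_e)$ with $z_e=x_e\in\{0,1\}$ for $e\in I$ and $z_e=p$ for $e\notin I$, and for it $\sum_e I_p(z_e)=-\log\mathbb{P}(Y\in F)$ and $\E[X(Z)]=\E_F[X]$; hence every subcube competing in the definition of $\Phi_X(\delta)$ competes in the infimum defining $\Psi_X(\delta)$. By Lemma~\ref{claim_evalutation_of_Phi}(i), for every fixed $\varepsilon'>0$ and all large $n$ we get $\Psi_X(\delta)\le\Phi_X(\delta)\le 2(1+\varepsilon')\sqrt{\delta\E[X]}\,\log(1/p)$; since $\E[X]=3\binom{n}{4}p^4(1-p)^2=(1+o(1))\,n^4p^4/8$, the right side equals $(1+o(1))\sqrt{\delta/2}\,n^2p^2\log(1/p)$, and letting $\varepsilon'\downarrow0$ yields the claimed upper bound. (The extremal product measure is $z_e=1$ on a balanced complete bipartite graph with $\approx 2\sqrt{\delta\E[X]}$ edges, and $z_e=p$ elsewhere.)

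\emph{Lower bound.} Let $Z=(z_e)$ be feasible, i.e.\ $\E[X(Z)]\ge(1+\delta)\E[X]$, and assume for contradiction that $\sum_e I_p(z_e)<(1-\varepsilon)\sqrt{\delta/2}\,n^2p^2\log(1/p)=:C^{\ast}$; the goal is to deduce $\E[X(Z)]<(1+\delta)\E[X]$. Write
\[
  \E[X(Z)]=\sum_{C}\ \prod_{e\in E(C)}z_e\ \prod_{f\in D(C)}(1-z_f),
\]
the sum ranging over the $3\binom{n}{4}$ four-cycles $C$ in $K_n$, with $D(C)$ the pair of chords of $C$. Fix a small constant $c_0>0$, call $e$ \emph{heavy} if $z_e\ge c_0$, and let $B=\{e:z_e\ge c_0\}$. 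The estimate splits the sum by how many of the four edges of $C$ are heavy. For four-cycles all of whose edges lie in $B$ we use the weighted analogue of Lemma~\ref{lemma_inducibility_like},
\[
  \sum_{C:\,E(C)\subseteq B}\ \prod_{e\in E(C)}z_e\ \prod_{f\in D(C)}(1-z_f)\ \le\ \tfrac14\Big(\sum_{e\in B}z_e\Big)^2,
\]
proved by mimicking Lemma~\ref{lemma_inducibility_like}: writing the left side as $\tfrac14\sum_{uv\in B}z_{uv}\cdot\big(\text{weighted count of }C\ni uv\big)$, the inner weighted count is at most $\sum_{e\in B}z_e$ because, for fixed $u,v$ and a fixed opposite edge $wx\in B$, the two four-cycles through $uv$ with opposite edge $wx$ contribute together at most $z_{wx}$ (the remaining cycle-edge and chord factors summing to at most $1$). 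On the other hand, $I_p(z)=z\log(1/p)+z\log z+(1-z)\log(1-z)+O(p)$ gives $I_p(z)\ge(1-o(1))z\log(1/p)$ uniformly for $z\ge c_0$ as $p\to0$, so $\sum_{e\in B}z_e\le(1+o(1))\,C^{\ast}/\log(1/p)=(1-\varepsilon+o(1))\sqrt{\delta/2}\,n^2p^2$, whence the heavy contribution is at most $(1-\varepsilon+o(1))^2\,\delta\,\E[X]$. One shows separately that the four-cycles all of whose edges have weight $\le(1+\varepsilon_1)p$ contribute at most $(1+\varepsilon_1)^4(1+o(1))\E[X]$, and that all remaining four-cycles (using at least one edge of weight $>(1+\varepsilon_1)p$ but not all four cycle edges heavy) contribute $o(\E[X])$. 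Summing, $\E[X(Z)]\le(1+\varepsilon_1)^4\E[X]+(1-\varepsilon+o(1))^2\delta\E[X]+o(\E[X])<(1+\delta)\E[X]$ once $\varepsilon_1$ is small and $n$ large, the required contradiction; letting $\varepsilon\downarrow0$ completes the proof.

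\emph{Main obstacle.} The delicate point is controlling the ``cross'' four-cycles, those mixing edges of weight $\approx1$, of intermediate weight, and of weight $\approx p$. Expanding $\prod_{e\in E(C)}z_e=\prod_{e\in E(C)}(p+d_e)$ with $d_e=z_e-p$, one must show that every term other than the background $\prod p=p^4$ and the all-heavy term is negligible. This uses that, within the budget $C^{\ast}$, the heavy graph has $|B|=O(C^{\ast}/I_p(c_0))=O(n^2p^2)$ edges and $\sum_{e\in B}d_e=O(n^2p^2)$, while the mildly elevated edges (weight in $((1+\varepsilon_1)p,c_0)$) are controlled through the Bernstein-type bound $I_p(z)\ge(z-p)^2/(2z)$ and Cauchy--Schwarz; it is essential that the chord factors $\prod_f(1-z_f)$ be retained throughout (dropping them would be off by a constant factor on configurations whose chords are themselves heavy). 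Each resulting term — for instance $n^2p^3\sum_e d_e$, $p^2 n\sum_v\big(\sum_u d_{uv}\big)^2$, the mixed heavy/light cycle counts, and the ``hub'' contribution $p^2\,N_{ind}(K_{1,2}\sqcup K_1,\cdot)$ — is of order $n^4p^{4+\Omega(1)}$ or $n^5p^6\,\mathrm{polylog}(1/p)$, hence $o(\E[X])$, and this is exactly where the hypotheses $p\ll n^{-1/2-\varepsilon}$ (which kills these terms) and $p\gg\sqrt{\log n}/n$ (which keeps the extremal bipartite graph of the upper bound non-degenerate) enter. This bookkeeping is the Lubetzky--Zhao / Bhattacharya--Ganguly--Lubetzky--Zhao localisation argument — reproved, in the language of graphs, in Appendix~\ref{app_A} — transplanted to the induced $C_4$ count.
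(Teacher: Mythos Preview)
Your outline is structurally the same as the paper's: the upper bound via $\Psi_X\le\Phi_X$ and the balanced complete bipartite graph is identical, and your weighted-inducibility inequality
\[
\sum_{C:\,E(C)\subseteq B}\prod_{e\in E(C)}z_e\prod_{f\in D(C)}(1-z_f)\le\tfrac14\Bigl(\sum_{e\in B}z_e\Bigr)^2
\]
is exactly the paper's inequality \eqref{eq_induced_bound}, proved by the same pairing of the two $C_4$'s through a fixed opposite pair $\{uv,wx\}$. Two genuine gaps remain, however.

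First, you never reduce to $z_e\ge p$. The paper does this via Lemma~\ref{lem_q_i>p}; without it, your deviations $d_e=z_e-p$ can be negative, the expansion $\prod(p+d_e)$ has sign cancellations you do not address, and the Appendix~\ref{app_A} bounds you invoke (Lemma~\ref{lem_estimation of B}) are stated for $u_i\ge0$. This is a short step but it is not optional.

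Second, your treatment of the ``cross'' four-cycles is an assertion, not an argument. You write that every mixed term is $n^4p^{4+\Omega(1)}$ or $n^5p^6\mathrm{polylog}(1/p)$ and cite Appendix~\ref{app_A}, but the appendix only supplies the estimates of Lemma~\ref{lem_estimation of B}; it does not do the classification of four-cycles needed here. The paper carries this out through Claims~\ref{claim_7.6}, \ref{claim_three_edges}, \ref{claim_7.8}, which use a \emph{sequence} of thresholds $\sqrt{p}/\log(1/p)$, $p^{1-2\varepsilon}$, and a constant $\gamma$: one first shows a heavy $K_{1,2}$ must be present, then upgrades a third edge to weight $\ge p^{1-2\varepsilon}$, and only then pushes all four weights above a constant. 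It is precisely in Claim~\ref{claim_three_edges} that the hypothesis $p\ll n^{-1/2-\varepsilon}$ is used (to get $n^{3/2}p^{1+2\varepsilon}\gg n^2p^2$); your single heavy/light split at a constant $c_0$ and a single light threshold $(1+\varepsilon_1)p$ does not by itself explain where this assumption enters, nor does it handle, for instance, four-cycles with two adjacent edges of weight $\sim p^{1/2}$ and two adjacent edges of weight $\sim p^{1-2\varepsilon}$ --- exactly the configuration that forces the three-threshold argument.
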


The upper bound follows immediately from
\[
    \Psi_X(\delta)\leq \log(1/p)\min\{e(C):\E[X\mid C\subseteq G_{n,p}]\geq (1+\delta)\E[X]\}.
\]
In Section \ref{sec_4} we this minimum. It is attained when $C$ is the complete bipartite graph $K_{m_0,m_0}$ where $m_0^2\approx \sqrt{\frac{\delta}{2}}n^2p^2$. For more details the reader is referred to Section \ref{sec_4}. 

For the lower bound we start with the following reduction.

\begin{lemma} \label{lem_q_i>p}
    Suppose $\varepsilon,\delta$ are positive reals. Furthermore, suppose $p\ll 1$. Then, for large enough $n$, we have
    \begin{align*}
        \Psi_X(\delta)\geq \inf_{\bar{q}}\left\{\sum_{i\in \binom{[n]}{2}}I_p(q_i):\E[N_{ind}(C_4,G_{n,\bar{q}})]\geq (1+\delta-\varepsilon)\E[X]\text{ and } \forall i\in \binom{[n]}{2}\, q_i\geq p \right\}.
    \end{align*}
\end{lemma}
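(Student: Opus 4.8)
The plan is to start from an arbitrary configuration $\bar q$ feasible for the problem defining $\Psi_X(\delta)$, i.e.\ with $\E[N_{ind}(C_4,G_{n,\bar q})]\ge (1+\delta)\E[X]$, and to replace it by $\bar q'$ where $q_i'=\max\{q_i,p\}$ for every $i\in\binom{[n]}{2}$. Since $I_p$ is nonnegative, vanishes at $p$, and is nonincreasing on $[0,p]$, we get $I_p(q_i')\le I_p(q_i)$ for each $i$, so $\sum_i I_p(q_i')\le\sum_i I_p(q_i)$ with no further work. Moreover, by the upper bound for $\Psi_X(\delta)$ recorded right after the statement, the infimum defining $\Psi_X(\delta)$ is at most $(\sqrt{\delta/2}+o(1))\,n^2p^2\log(1/p)$, so I may restrict that infimum to configurations with $\sum_i I_p(q_i)\le C_0\,n^2p^2\log(1/p)$ for a suitable $C_0=C_0(\delta)$ without changing its value. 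Thus it is enough to show that for every such $\bar q$ the replacement loses at most $\varepsilon\E[X]$ induced $4$-cycles on average, i.e.\ $L:=\E[N_{ind}(C_4,G_{n,\bar q})]-\E[N_{ind}(C_4,G_{n,\bar q'})]\le\varepsilon\E[X]$: then $\bar q'$ is feasible for the right-hand side (with $\delta-\varepsilon$ in place of $\delta$) and has no larger entropy, giving the lemma.

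To estimate $L$ I would go term by term over ordered quadruples $(a,b,c,d)$ of distinct vertices indexing a potential induced cycle $a-b-c-d-a$ with diagonals $\{a,c\}$, $\{b,d\}$. Write the $\bar q$-contribution of such a quadruple as $\pi_1\pi_2$ with $\pi_1=q_{ab}q_{bc}q_{cd}q_{da}$ and $\pi_2=(1-q_{ac})(1-q_{bd})$, and let $\pi_1',\pi_2'$ be the analogues for $\bar q'$; then $\pi_1'\ge\pi_1$ and $\pi_2'\le\pi_2$, so $D_{(a,b,c,d)}:=\pi_1\pi_2-\pi_1'\pi_2'\le\pi_1(\pi_2-\pi_2')$. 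Using $q_i'-q_i=(p-q_i)^+\le p(1-q_i)$ for every $i$ and $1-q_i'=1-p\le 2(1-q_i)$ whenever $q_i<p<1/2$, a short case analysis according to how many of the two diagonal coordinates lie strictly below $p$ yields $D_{(a,b,c,d)}\le 3p\,\pi_1\pi_2$ in every case (and $D_{(a,b,c,d)}\le 0$ when both diagonal coordinates are at least $p$). Summing over all quadruples,
\[
L\ \le\ 3p\sum_{(a,b,c,d)}q_{ab}q_{bc}q_{cd}q_{da}(1-q_{ac})(1-q_{bd})\ =\ O\!\left(p\cdot\E[N_{ind}(C_4,G_{n,\bar q})]\right).
\]

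The crux is then to control $\E[N_{ind}(C_4,G_{n,\bar q})]$ using only $\sum_i I_p(q_i)\le C_0\,n^2p^2\log(1/p)$ and the hypothesis $p\ll n^{-1/2-\varepsilon}$. I would prove the a priori bound $\E[N_{ind}(C_4,G_{n,\bar q})]=O_\delta(n^4p^4)=O_\delta(\E[X])$, from which $L=O_\delta(p\,\E[X])=o(\E[X])\le\varepsilon\E[X]$ for $n$ large, finishing the proof. For this I would use $\E[N_{ind}(C_4,G_{n,\bar q})]\le\frac14\sum_{a,c}W_{ac}^2$ where $W_{ac}=\sum_b q_{ab}q_{bc}$ is the weighted codegree, and then split $\sum_{a,c}W_{ac}^2$ by a dyadic decomposition of the weighted degrees of $a$ and $c$, in the spirit of the proof of Lemma~\ref{claim_general_bound_3}: the entropy budget permits at most $O(n^2p^2)$ edges of weight $\Omega(1)$ and, more generally, restricts how the excess weight can be concentrated, so in the regime $p\ll n^{-1/2-\varepsilon}$ one gets $W_{ac}=O(np^2)$ for all but a negligible set of pairs $\{a,c\}$, while the exceptional pairs contribute $O(n^4p^4)$ once one trades the number of heavy vertices against their degrees.

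The routine ingredients here are the monotonicity of $I_p$, the reduction to bounded entropy, and the term-by-term loss estimate; I expect the a priori bound $\E[N_{ind}(C_4,G_{n,\bar q})]=O_\delta(\E[X])$ — the statement that a low-entropy tilt of $G_{n,p}$ with $p\ll n^{-1/2-\varepsilon}$ cannot inflate the expected induced $C_4$ count by more than a constant factor — to be the main obstacle, since it is where the sparsity of $p$ and the extremal behaviour of $N_{ind}(\cdot,C_4)$ (Lemma~\ref{lemma_inducibility_like} and Corollary~\ref{cor_inducibility_like_2}) must be combined.
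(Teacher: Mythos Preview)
Your replacement $\bar q\mapsto \bar q'=\max\{\bar q,p\}$ and the entropy comparison are exactly what the paper does, and your term-by-term bound $D_{(a,b,c,d)}\le 3p\,\pi_1\pi_2$ is correct. But you then miss that this multiplicative bound already finishes the proof: summing gives
\[
\E[N_{ind}(C_4,G_{n,\bar q'})]\ \ge\ (1-3p)\,\E[N_{ind}(C_4,G_{n,\bar q})]\ \ge\ (1-3p)(1+\delta)\E[X]\ \ge\ (1+\delta-\varepsilon)\E[X]
\]
for $n$ large, since the lemma only assumes $p\ll 1$. The paper does precisely this, in one line, via the pointwise inequalities $r_i\ge q_i$ and $1-r_i\ge(1-p)(1-q_i)$, which yield the factor $(1-p)^2$ directly.

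Your final paragraph is therefore an unnecessary detour. You recast the multiplicative loss as an additive one, $L\le\varepsilon\E[X]$, and then propose to prove the a priori bound $\E[N_{ind}(C_4,G_{n,\bar q})]=O_\delta(\E[X])$. That bound is (i) not needed, (ii) not proved in your sketch, and (iii) not available under the lemma's hypotheses: you invoke $p\ll n^{-1/2-\varepsilon}$ and a bounded-entropy restriction, neither of which is assumed in Lemma~\ref{lem_q_i>p}. Drop that paragraph and conclude directly from your own inequality $L\le 3p\,\E[N_{ind}(C_4,G_{n,\bar q})]$.
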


To prove the lemma we define the set $\mathcal{C}_4$ as follows:
Let $\mathcal{C}_4$ be a set of representatives for the set of all tuples $(x,y,z,w)\in [n]^4$ with distinct coordinates and the equivalence relation, $(x,y,z,w)\sim(x',y',z',w')$ if and only if $\{xy,yz,zw,wx\}=\{x'y',y'z',z'w',w'x'\}$; i.e.\ both $(x,y,z,w)$ and $(x',y',z',w')$ induce the same 4-cycle in $K_n$.

\begin{proof}
    Let $\bar{q}\in [0,1]^{\binom{n}{2}}$ be such that $\E[N_{ind}(C_4,G_{n,\bar{q}})]\geq (1+\delta)\E[X]$. Define $\bar{r}$ by setting $r_i=\max\{p,q_i\}$. Since $I_p(x)$ is monotone decreasing between $0$ and $p$, we obtain that
    \[
        \sum I_{p}(q_i)\geq \sum I_{p}(r_i).
    \]
    To conclude the proof, we prove that $\E[N_{ind}(C_4,G_{n,\bar{r}})]\geq (1+\delta-\varepsilon)\E[X]$. Indeed, provided $n$ is sufficiently large, we have
    \begin{align*}
        \E[N_{ind}(C_4,G_{n,\bar{r}})] &= \sum_{(x,y,z,w)\in\mathcal{C}_4} r_{xy}r_{yz}r_{zw}r_{wx}(1-r_{xz})(1-r_{yw})\\
                                       &\geq \sum_{(x,y,z,w)\in\mathcal{C}_4} q_{xy}q_{yz}q_{zw}q_{wx}(1-q_{xz})(1-q_{yw})(1-p)^2\\
                                       & \geq (1+\delta)(1-p)^2\E[X]\geq (1+\delta-\varepsilon)\E[X]. \qedhere 
    \end{align*}    
\end{proof}

Next we present a lemma about the structure of $G_{n,\bar{q}}$ when $\sum I_{p}(q_i)$ is close to the infimum in Lemma \ref{lem_q_i>p}.

\begin{restatable}{lemma}{lemmaone}
    \label{lem_only_C_4}
    Suppose $\varepsilon,\delta$ are positive reals with $\varepsilon$ sufficiently small. Suppose also that $\sqrt{\log(n)}/n \ll p\ll n^{-1/2}$. Then, for any sequence $\bar{q}$ satisfying:
    \begin{enumerate}
        \item $q_i=p+u_i$ for some $u_i\geq 0$,
        \item $\sum_{i\in \binom{[n]}{2}}I_p(q_i) \leq \sqrt{2\delta}n^2p^2\log(1/p)$, and
        \item $\E[N_{ind}(C_4,G_{n,\bar{q}})]\geq (1+\delta-\varepsilon)\E[X]$,
    \end{enumerate}  
    we have $\E[N_{ind}(C_4,G_{n,\bar{u}})]\geq (\delta-2\varepsilon)\E[X]$ provided $n$ is large enough.
\end{restatable}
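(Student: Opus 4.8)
The plan is to strip the ``background'' density $p$ off of $\bar q$ by expanding $\E[N_{ind}(C_4,G_{n,\bar q})]$ as a polynomial in the $u_i$'s and showing that, up to lower-order terms, only the top-degree term survives. First I would discard the non-edge factors: since $1-q_e\leq 1$,
\[
\E[N_{ind}(C_4,G_{n,\bar q})]=\sum_{(x,y,z,w)\in\mathcal C_4}q_{xy}q_{yz}q_{zw}q_{wx}(1-q_{xz})(1-q_{yw})\leq \sum_{(x,y,z,w)\in\mathcal C_4}\prod_{e\in\{xy,yz,zw,wx\}}(p+u_e).
\]
Multiplying out the last product and grouping the $2^4$ resulting terms by the set $S$ of factors in which $u_e$ is chosen gives, after summing over $4$-cycles: (a) the term $S=\emptyset$, equal to $p^4\cdot 3\binom n4=(1+O(p))\E[X]$; (b) the term where $S$ is the whole $4$-cycle, equal to $\sum_{(x,y,z,w)\in\mathcal C_4}u_{xy}u_{yz}u_{zw}u_{wx}$; and (c) ``mixed'' terms with $1\le|S|\le 3$, each of which is $p^{4-|S|}$ times a weighted count into $K_n$ of a proper subgraph of $C_4$ with $|S|$ edges ($K_2$, $P_3$, $2K_2$, or $P_4$). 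Finally, the term in (b) exceeds $\E[N_{ind}(C_4,G_{n,\bar u})]$ by at most $\sum_{(x,y,z,w)\in\mathcal C_4}u_{xy}u_{yz}u_{zw}u_{wx}\bigl(u_{xz}+u_{yw}\bigr)$, a weighted count of copies of $C_4$ with a chord.

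The crux is then to show that every term in (c), as well as the chord correction, is $o(\E[X])$. Granting this, the chain
\[
\E[N_{ind}(C_4,G_{n,\bar u})]\geq \E[N_{ind}(C_4,G_{n,\bar q})]-(1+O(p))\E[X]-o(\E[X])\geq(1+\delta-\varepsilon)\E[X]-\E[X]-o(\E[X])
\]
finishes the proof for $n$ large, the right-hand side being $(\delta-\varepsilon-o(1))\E[X]\geq(\delta-2\varepsilon)\E[X]$.

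To bound the error terms I would extract from hypothesis (2) the following localisation estimates, using the explicit formula for $I_p$ (namely $I_p(p+u)\geq c_0 u^2/p$ for $0\le u\le p$ and $I_p(p+u)\geq c_0\,u\log(u/p)$ for $u\geq ep$, for an absolute constant $c_0$): (i) $\sum_{e:\,u_e\leq p}u_e^2=O(n^2p^3\log(1/p))$; (ii) $\sum_{e:\,u_e>p}u_e=O(n^2p^2\log(1/p))$; and, after a dyadic decomposition of the ``heavy'' edges (using $\sum_{e:\,u_e\geq\lambda}u_e=O\bigl(n^2p^2\log(1/p)/\log(\lambda/p)\bigr)$ for $\lambda\geq ep$), (iii) $\sum_{e:\,u_e>p}u_e^2=O_\delta(n^2p^2)$ and (iv) $\max_v\deg_{\bar u}(v)=O_\delta(n^2p^2)$, where $\deg_{\bar u}(v)=\sum_{e\ni v}u_e$. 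Feeding these bounds into Cauchy--Schwarz estimates for the weighted subgraph counts --- splitting every vertex-degree into its light and heavy parts, bounding $\sum_v\deg_{\bar u}(v)^2$ by $\max_v\deg_{\bar u}(v)\cdot\sum_v\deg_{\bar u}(v)$ on the heavy part and by $n\sum_e u_e^2$ on the light part, and bounding the weighted $P_4$- and $C_4$-type counts through $\sum_e u_e^2$ and $\max_v\deg_{\bar u}(v)$ --- every error term is seen to be $\E[X]$ times a product of a power of $n$, a power of $p$, and a power of $\log(1/p)$, which tends to $0$ because $np^2=o(1)$ in the range $p\ll n^{-1/2}$.

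The main obstacle is precisely this last bookkeeping step. The heavy part of $\bar u$ may be ``concentrated'' (a few vertices of degree $\Theta(n^2p^2)$ using up essentially the whole entropy budget) or ``spread'' (many edges of weight $\Theta(p)$), and a one-scale use of the entropy bound loses logarithmic factors that are fatal when $p$ is only slightly below $n^{-1/2}$; one has to exploit the trade-off between $\sum_{e:\,u_e>p}u_e$ and $\max_v\deg_{\bar u}(v)$ --- equivalently, between the $\ell^1$- and $\ell^\infty$-profiles of the heavy part --- which is exactly what the dyadic layering behind (iii)--(iv) accomplishes. The remaining ingredients (the combinatorial identification of the terms in the expansion, and the final arithmetic) are routine.
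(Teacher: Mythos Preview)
Your overall strategy---expand $q_e=p+u_e$ on the four edges of the cycle, isolate the all-$p$ and all-$u$ contributions, and kill the mixed terms using the entropy budget---is exactly the paper's. Two of your steps, however, do not go through as written.

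The more serious one is the chord correction. Replacing $(1-q_{xz})(1-q_{yw})$ by $1$ and then subtracting $\sum_{(x,y,z,w)\in\mathcal C_4}u_{xy}u_{yz}u_{zw}u_{wx}(u_{xz}+u_{yw})$ does not work, because this weighted $K_4\setminus e$ count need not be $o(\E[X])$ under (1)--(3). Take $\bar u$ to be the indicator of the disjoint union of a clique on $\Theta(np)$ vertices and a balanced complete bipartite graph on $\Theta(np)+\Theta(np)$ vertices. The total entropy is $\Theta(n^2p^2\log(1/p))$, the bipartite piece furnishes $\Theta(\E[X])$ induced $C_4$'s in $G_{n,\bar q}$ (so (3) holds for an appropriate $\delta$), yet the clique piece carries $\Theta((np)^4)=\Theta(\E[X])$ copies of $K_4\setminus e$, all with weight $1$. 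Your final chain then gives a vacuous lower bound on $\E[N_{ind}(C_4,G_{n,\bar u})]$. The paper sidesteps this completely by bounding $1-q_e\le 1-u_e$ (rather than $\le 1$) on the two non-edges \emph{before} expanding the four $(p+u_e)$ factors; the top-degree term is then literally $\E[N_{ind}(C_4,G_{n,\bar u})]$, the $(1-u)$ factors on the mixed terms are harmlessly dropped afterwards, and no chord term ever appears.

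The second issue concerns the weighted cherry count. Your $\ell^\infty\!\cdot\!\ell^1$ estimate
\[
\sum_v\deg_{\bar u}(v)^2\le\max_v\deg_{\bar u}(v)\cdot\sum_v\deg_{\bar u}(v)=O(n^2p^2)\cdot O\bigl(n^2p^2\log(1/p)\bigr)
\]
makes the $K_{1,2}$ error of order $np^2\cdot n^4p^4\log(1/p)$, and $np^2\log(1/p)\to 0$ fails whenever $p$ is within a factor $\sqrt{\log n}$ of $n^{-1/2}$. The dyadic facts (iii)--(iv) you state do not by themselves close this logarithmic gap; what is needed is a \emph{per-vertex} use of convexity. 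The paper applies Jensen at each vertex,
\[
(n-1)\,I_p\!\Bigl(p+\tfrac{\deg_{\bar u}(v)}{n-1}\Bigr)\le\sum_{e\ni v}I_p(q_e),
\]
together with the pointwise ratio bound $I_p(p+x)\ge (x/b)^2 I_p(p+b)$ for $0\le x\le b$, after first checking that no vertex has $\deg_{\bar u}(v)>bn$ for a suitably chosen $b\gg\max\{np^2,\sqrt{p\log(1/p)}\}$ with $b=o(1)$. This yields $\sum_v\deg_{\bar u}(v)^2=o(n^3p^2)$, which is exactly what the $K_{1,2}$ term needs.
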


This lemma is a variant of the methods in the papers of Lubetzky and Zhao \cite{LubZha17} and in Bhattacharya, Ganguly, Lubetzky and Zhao \cite[Section 5.2]{BhaGanLubZha17}. In these papers the authors use the language of graph limits or `graphons'; we recreate their proof in the language of graphs in the Appendix.
Further, Lubetzky and Zhao \cite{LubZha17} and Bhattacharya, Ganguly, Lubetzky and Zhao \cite{BhaGanLubZha17}, analysed the function $I_p(p+x)$ and proved several lemmas in the language of graphons. We use these lemmas in the proof of Proposition \ref{prop_solution_to_var_problem}, once again in the language of graphs and not graphons. For a proof of the first lemma we refer the reader to \cite{LubZha17} and \cite{BhaGanLubZha17}; the second lemma will be proven in the Appendix with some extensions.

\begin{restatable}[\cite{LubZha17}]{lemma}{lemmatwo}\label{lem_exact_estimations}
    The following holds, 
    \[
       I_{p}(p+x)=\begin{cases}
                    (1+o(1))\frac{x^2}{2p}, & 0\leq x\ll p,\\
                    
                  (1+o(1))x\log(x/p), & p\ll x\leq 1-p.  
                  \end{cases}
    \] 
\end{restatable}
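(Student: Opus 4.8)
The plan is to handle the two ranges separately, in each case reducing the estimate to elementary bounds on $\log(1+t)$. Throughout we use the form
\[
I_p(p+x)=(p+x)\log\!\Big(1+\tfrac{x}{p}\Big)+(1-p-x)\log\!\Big(1-\tfrac{x}{1-p}\Big),
\]
and we restrict to the regime $p=o(1)$ in which Lemma~\ref{lem_only_C_4} and Proposition~\ref{prop_solution_to_var_problem} invoke the estimate.

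\emph{The range $0\le x\ll p$.} Here $x\to 0$, so both $t_1:=x/p$ and $t_2:=-x/(1-p)$ tend to $0$. I would substitute $\log(1+t)=t-\tfrac{t^2}{2}+O(t^3)$ into the two factors and multiply out. The terms linear in $x$ cancel, the quadratic terms combine to $\tfrac{x^2}{2p}+\tfrac{x^2}{2(1-p)}=\tfrac{x^2}{2p(1-p)}$, and the remainder is $O(x^3/p^2)+O(x^3)$. Since $x\ll p$ forces $x^3/p^2=o(x^2/p)$ (and a fortiori $x^3=o(x^2/p)$), while $p=o(1)$ gives $(1-p)^{-1}=1+o(1)$, this yields $I_p(p+x)=(1+o(1))\tfrac{x^2}{2p}$, uniformly over the range.

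\emph{The range $p\ll x\le 1-p$.} I would bound the two summands of $I_p(p+x)$ separately. For the first, $x/p\to\infty$ gives $p+x=(1+o(1))x$ and $\log\tfrac{p+x}{p}=\log\tfrac{x}{p}+\log(1+\tfrac px)$; since $\log(1+p/x)=O(p/x)=o(1)$ while $\log(x/p)\to\infty$, the second logarithm is negligible relative to the first, so $(p+x)\log\tfrac{p+x}{p}=(1+o(1))\,x\log\tfrac{x}{p}$. For the second summand, put $y=x/(1-p)\in[0,1]$ and use $-\log(1-y)\le \tfrac{y}{1-y}$ on $[0,1)$:
\[
0\le -(1-p-x)\log\!\Big(1-\tfrac{x}{1-p}\Big)=(1-p)(1-y)\log\tfrac{1}{1-y}\le (1-p)(1-y)\cdot\tfrac{y}{1-y}=(1-p)y=x.
\]
Since $\log(x/p)\to\infty$, this contribution is $o\big(x\log(x/p)\big)$, and therefore $I_p(p+x)=(1+o(1))\,x\log\tfrac{x}{p}$.

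There is no genuine obstacle: the argument is a bookkeeping exercise. The only points deserving care are the cancellation of the linear terms in the first range (which is precisely what makes the leading order quadratic rather than linear), the fact that the bound $-\log(1-y)\le y/(1-y)$ stays useful uniformly as $y\to1$ so that the second summand in the second range is controlled all the way up to $x=1-p$, and the observation that ``$x\ll p$'' and ``$p\ll x$'' mean exactly $x/p\to0$ and $x/p\to\infty$, so that each $o(1)$ above is uniform over the stated range. Finally one should note that writing $\tfrac{x^2}{2p}$ in place of the exact $\tfrac{x^2}{2p(1-p)}$ is legitimate only because $p=o(1)$.
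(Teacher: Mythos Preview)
Your argument is correct. The paper does not give its own proof of this lemma; it explicitly refers the reader to \cite{LubZha17} and \cite{BhaGanLubZha17} for the proof, so there is nothing to compare against beyond noting that your direct computation---Taylor expansion to second order when $x\ll p$, and separating the dominant $(p+x)\log\tfrac{p+x}{p}$ term from the bounded $(1-p-x)\log\tfrac{1-p-x}{1-p}$ term when $p\ll x$---is the standard route and matches what those references do.
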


%\begin{lemma}[\cite{LubZha15}]\label{lem_not_exact_estimations}
%    There exists $p_0>0$ such that, for all $0<p\leq p_0$ and $0\leq x\leq b\leq 1-p-1/\log(1/p)$, we have
%    \[
%        I_{p}(p+x)\geq \frac{x^2I_{p}(p+b)}{b^2}.
%    \]
%\end{lemma}

%\begin{cor}[\cite{LubZha15}]\label{cor_not_exact_estimations}
%    There exists $p_0 > 0$ such that, for all $0 < p \leq p_0$ and all $0 \leq  x \leq 1-p$, we have
%    \[
%    I_p(p + x) \geq x^2 I_p(1-1/\log(1/p)) = (1 + o(1))x^2I_p(1),
%    \]
%    where the o(1)-term goes to zero as $p \to 0$.
%\end{cor}

\begin{restatable}[\cite{LubZha17}]{lemma}{lemmathree}\label{lem_estimation of B}
    Suppose $\varepsilon,\delta,C$ are positive reals and $\sqrt{\log(n)}/n\ll p\ll n^{-1/2}$. Suppose also that $\bar{u}\in [0,1-p]^{\binom{n}{2}}$ such that $\sum_{i\in \binom{[n]}{2}}I_{p}(p+u_i)\leq Cn^2p^2\log(1/p)$.
    Let $b=b(n)$ be such that $\max\{np^2,\sqrt{p\log(1/p)}\}\ll b\leq 1-\varepsilon$. Then, provided $n$ is large enough there is a constant $D>0$ such that the following holds:
        \begin{enumerate}[label=(\roman*)]
            \item \label{eq_estimation_2_pre_Appendix} $\sum_{x\in [n]}\left(\sum_{y\neq x}u_{xy}\right)^2\leq Dn^{3}p^{2}b$ and
            \item \label{eq_estimation_3_pre_Appendix} $\sum_{i\in \binom{[n]}{2}}u_i\leq Dn^2 p^{3/2}\sqrt{\log(1/p)}$.
        \end{enumerate}
\end{restatable}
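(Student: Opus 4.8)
The plan is to follow the analysis of Lubetzky and Zhao \cite{LubZha17} (see also \cite[Section~5.2]{BhaGanLubZha17}), recasting it in the language of graphs; the only new point is to check that the argument survives in the present range of $p$. Everything is driven by the two-regime behaviour of the rate function recorded in Lemma~\ref{lem_exact_estimations}: together with the convexity of $q\mapsto I_p(q)$ this gives absolute constants $c_1,c_2>0$ with $I_p(p+x)\ge c_1 x^2/p$ for $0\le x\le p$ and $I_p(p+x)\ge c_2\,x\log(x/p)$ for $p\le x\le 1-p$ (once $p$ is small), and in particular $I_p(p+x)\ge c_2 x$ on the latter range. Throughout write $B=Cn^2p^2\log(1/p)$ for the entropy budget and $d_x=\sum_{y\ne x}u_{xy}$.

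For part (ii) I would split the index set into the ``small'' set $S=\{i:u_i\le p\}$ and the ``large'' set $L=\{i:u_i>p\}$. On $S$ the quadratic bound gives $\sum_{i\in S}u_i^2\le (p/c_1)\sum_{i\in S}I_p(p+u_i)\le (C/c_1)\,n^2p^3\log(1/p)$, so Cauchy--Schwarz over the at most $\binom n2$ indices of $S$ yields $\sum_{i\in S}u_i\le\sqrt{\binom n2}\,\bigl(\sum_{i\in S}u_i^2\bigr)^{1/2}=O\!\bigl(n^2p^{3/2}\sqrt{\log(1/p)}\bigr)$. On $L$ the linear bound gives $\sum_{i\in L}u_i\le (1/c_2)\sum_{i\in L}I_p(p+u_i)\le (C/c_2)\,n^2p^2\log(1/p)$, which is $o\!\bigl(n^2p^{3/2}\sqrt{\log(1/p)}\bigr)$ since $p\log(1/p)\to0$. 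Adding the two estimates proves (ii).

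For part (i) I would start from $\sum_x d_x^2=2\sum_i u_i^2+2W$, where $W$ is the $u$-weighted count of cherries; the diagonal term $\sum_i u_i^2$ is handled as in (ii) (it is $O(n^2p^2\log(1/p))$, hence negligible against $n^3p^2b$), so the task is to bound $W$. For that I would partition each vertex's incident edges by the dyadic scale of $u_{xy}$: for edges with $u_{xy}$ below a suitable threshold $\tau$ chosen between $p$ and $b$, one applies Cauchy--Schwarz on each neighbourhood, $\bigl(\sum_{y:\,u_{xy}\le\tau}u_{xy}\bigr)^2\le n\sum_{y}u_{xy}^2\,\mathbbm{1}[u_{xy}\le\tau]$, and sums over $x$, invoking the $\sum u_i^2$-type bounds above together with (ii); for edges with $u_{xy}>\tau$ one uses the stronger lower bound $I_p(p+u_{xy})\ge c_2 u_{xy}\log(u_{xy}/p)\ge c_2 u_{xy}\log(\tau/p)$ to control their total weight, and hence their contribution to $\sum_x d_x^2$, by $O\!\bigl(n^3p^2\log(1/p)/\log(\tau/p)\bigr)$. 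The hypotheses $b\gg np^2$ and $b\gg\sqrt{p\log(1/p)}$ (and the restriction $p\ll n^{-1/2}$) are exactly what allow $\tau$ to be chosen so that $\tau/p$ and the quantities arising in both pieces are of the right order to absorb the factor $\log(1/p)$ carried by the budget $B$, producing a bound of the form $Dn^3p^2b$. The main obstacle is precisely this book-keeping for the high-weight edges: the naive bound for their contribution to the row sums $d_x$ loses a factor $\log(1/p)$ relative to the target, and recovering it requires exploiting the super-constant gap between $b$ and $\max\{np^2,\sqrt{p\log(1/p)}\}$ and arranging the geometric series in the dyadic decomposition to be dominated by a term of size $o(n^3p^2b)$; this is the delicate computational heart of the proof, carried out in the Appendix.
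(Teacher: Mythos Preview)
Your argument for part~(ii) is correct. It differs from the paper's: the paper applies Jensen's inequality directly to the global average $\frac{1}{\binom{n}{2}}\sum_i u_i$ and then inverts using the monotonicity of $I_p$, whereas you split into $\{u_i\le p\}$ and $\{u_i>p\}$ and combine the quadratic bound with Cauchy--Schwarz on the first piece and the linear bound on the second. Both routes give the same estimate.

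For part~(i), however, your sketch has a genuine gap. The bound you quote for the high-weight piece, $O\bigl(n^3p^2\log(1/p)/\log(\tau/p)\bigr)$, comes from pairing $\sum_{i:u_i>\tau}u_i\le B/(c_2\log(\tau/p))$ with the trivial estimate $d_x^{\text{high}}\le n$. To make this at most $Dn^3p^2b$ you would need $\log(\tau/p)\ge \Omega(\log(1/p)/b)$, i.e.\ $\tau\ge p^{1-\Omega(1/b)}$; when $b=o(1)$ this forces $\tau$ to blow up, contradicting $\tau\le b$. The low-weight piece has the same defect: Cauchy--Schwarz plus the bound $\sum_i u_i^2=O(n^2p^2)$ (which is what Corollary~\ref{cor_not_exact_estimations_Appenxid} gives) yields only $O(n^3p^2)$, again too large when $b=o(1)$. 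But $b=o(1)$ is precisely the regime used later (see the proof of Lemma~\ref{lem_P_4_and_K_1,2_are_small}), so the argument as outlined does not deliver the lemma.

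The paper's proof avoids this entirely by working at the vertex level. One first applies Jensen to each row, obtaining $\sum_i I_p(p+u_i)\gtrsim (n-1)\sum_x I_p\bigl(p+d_x/(n-1)\bigr)$; this, together with Lemma~\ref{lem_exact_estimations} and the hypothesis $b\gg np^2$, shows that the set $\{x:d_x\ge bn\}$ is empty. With all row averages below $b$, one then invokes the secant-type lower bound of Lemma~\ref{lem_not_exact_estimations_Appenxid}, $I_p(p+x)\ge x^2 I_p(p+b)/b^2$ for $0\le x\le b$, applied to $x=d_x/(n-1)$, to get $\sum_x d_x^2 \lesssim n b^2 B/I_p(p+b)$. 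Since $b\gg\sqrt{p\log(1/p)}$ gives $\log(b/p)\ge\tfrac12\log(1/p)$, this is $O(n^3p^2b)$. The two hypotheses on $b$ thus enter at separate, clean steps, and no dyadic decomposition is needed; the missing ingredient in your plan is exactly this combination of Jensen on row averages with Lemma~\ref{lem_not_exact_estimations_Appenxid}.
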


Now we are ready to prove Proposition \ref{prop_solution_to_var_problem}.

\begin{proof}[Proof of Proposition \ref{prop_solution_to_var_problem}]
    We wish to prove the following:
    \[
        \sum_{i\in \binom{[n]}{2}} I_{p}(q_i)\geq (1-\eta)\sqrt{\frac{\delta}{2}}n^2p^2\log(1/p)
    \]
    for sufficiently small $\eta>0$ and for $\bar{q}\in [0,1]^{\binom{n}{2}}$ such that 
    \[
            \E[N_{ind}(C_4,G_{n,\bar{q}})]\geq (1+\delta)\E[X].
    \]
    We may assume that
    \[
        \sum_{i\in \binom{[n]}{2}} I_{p}(q_i) \leq  Cn^2p^2\log(1/p),
    \]
    for some absolute constant $C>0$ as otherwise there is nothing to prove.
    By Lemma \ref{lem_q_i>p} and Lemma \ref{lem_only_C_4} we may assume that $q_i=p+u_i$ for some $u_i\geq 0$ which satisfies
    \[
        \E[N(C_4,G_{n,\bar{u}})] \geq \E[N_{ind}(C_4,G_{n,\bar{u}})]\geq (\delta-\varepsilon)\E[X]
    \]
    where $N(C_4,G_{n,\bar{u}})$ is the number of copies of $C_4$ in $G_{n,\bar{u}}$. Note that
    \begin{align*}
         \E[N(C_4,G_{n,\bar{u}})]&={\sum_{(x,y,z,w)\in \mathcal{C}_4}u_{xy}u_{yz}u_{zw}u_{wx}}\geq (\delta-\varepsilon)\E[X].
    \end{align*}
    In the next three claims we collect some properties of $(x,y,z,w)\in \mathcal{C}_4$ with non-negligible contribution to the sum above. Afterwards, we use these properties to show that the set of all 4-cycles with non-negligible contribution to the sum above is a subset of 
    \[
        \mathcal{C}_4^{\gamma}\coloneqq\{(a_0,a_1,a_2,a_3)\in \mathcal{C}_4:u_{a_ia_{i+1}}> \gamma \text{ for all } i=0,1,2,3\},
    \]
    where the summation is modulo $4$ and $\gamma$ is some positive constant depending on $\varepsilon$.
    We now explain briefly the proof strategy:
    First, we show that to have a non-negligible contribution a 4-cycle $(a_0,a_1,a_2,a_3)\in \mathcal{C}_4$ must contain a $K_{1,2}$ with both edge weights at least $\sqrt{p}/\log(1/p)$; for convenience assume that these edges are $a_0a_1,a_1a_2$. Afterwards, we show that, among such 4-cycles, the only ones contributing a non-negligible amount must additionally satisfy $u_{a_2a_3}\geq p^{1-2\varepsilon}$ or $u_{a_3a_0}\geq  p^{1-2\varepsilon}$, assume $u_{a_3a_0}\geq  p^{1-2\varepsilon}$. Then, we prove that a 4-cycle with the above properties contributes a non-negligible amount only if $u_{a_0a_1},u_{a_2a_3}\geq \gamma$ (in the complementary case we show $u_{a_1a_2},u_{a_3a_1}\geq \gamma$). Using symmetries of the 4-cycle, we may use this argument again from a different point of view. We then obtain that the only 4-cycles contributing a non-negligible amount to the sum are ones with $u_{a_ia_{i+1}}\geq \gamma$ for all $i$. This explanation can also be viewed in Figure \ref{fig_C_4_flow}.

    \begin{figure}[htb]

  \begin{subfigure}[b]{0.2\linewidth}
    \begin{tikzpicture}
    \node [align=right] at (0.3,2.7) {Densities:};
    \fill[fill=black] (-0.4,0) circle (0.1);
	\fill[fill=black] (0.6,0) circle (0.1);
	\draw [line width=1.1] (-0.4,0) -- (0.6,0);
	\node [align=right] at (1.1 ,0) {$\ge \gamma$};
	\fill[fill=black] (-0.4,1) circle (0.1);
	\fill[fill=black] (0.6,1) circle (0.1);
	\draw[dashed, line width=1.1] (-0.4,1) -- (0.6,1);
	\node [align=right] at (1.6 ,1) {$\ge \frac{\sqrt{p}}{\log (1/p)}$};
	\fill[fill=black] (-0.4,2) circle (0.1);
	\fill[fill=black] (0.6,2) circle (0.1);
	\draw[loosely dotted, line width=1.1] (-0.4,2) -- (0.6,2);
	\node [align=right] at (1.5 ,2) {$\ge p^{1-2\varepsilon}$};
    \end{tikzpicture}%
  \end{subfigure}
\begin{subfigure}[b]{0.7\linewidth}
  \begin{tikzpicture}  
\def \r{2.2};
\def \t{1};
\node (a) at (\t + 0,\r + 1) {$a_0$};
\node (b) at (\t + 1.15,\r + 1) {$a_1$};
\node (c) at (\t + 1.15,\r + 0) {$a_2$};
\node (d) at (\t + 0, \r + 0) {$a_3$};
\draw [-to, line width=1.5] (\t + 2, \r + 0.5)-- node [text width=2.5cm,midway,above=0.2em,align=center ]{Claim \ref{claim_7.6}} (\t + 3.15, \r + 0.5);

\def \t{5};
\node (a) at (\t + 0,\r + 1) {$a_0$};
\node (b) at (\t + 1.15,\r + 1) {$a_1$};
\node (c) at (\t + 1.15,\r + 0) {$a_2$};
\node (d) at (\t + 0, \r + 0) {$a_3$};
\draw [-to, line width=1.5] (\t + 2, \r + 0.5)-- node [text width=2.5cm,midway,above=0.2em,align=center ]{Claim \ref{claim_three_edges}} (\t + 3.15, \r + 0.5);

\draw [dashed, line width=1.1] (a)--(b);
\draw [dashed, line width=1.1] (b)--(c);

\def \t{9};
\node (a) at (\t + 0,\r + 1) {$a_0$};
\node (b) at (\t + 1.15,\r + 1) {$a_1$};
\node (c) at (\t + 1.15,\r + 0) {$a_2$};
\node (d) at (\t + 0, \r + 0) {$a_3$};
\draw [-to, line width=1.5] (\t + 2, \r + 0.5)-- node [text width=2.5cm,midway,above=0.2em,align=center ]{Claim \ref{claim_7.8}} (\t + 3.15, \r + 0.5);

\draw [dashed, line width=1.1] (a)--(b);
\draw [dashed, line width=1.1] (b)--(c);
\draw [loosely dotted, line width=1.1] (a)--(d);

\def \r{0};
\def \t{5};

\draw [-to, line width=1.5] (\t-2, \r + 0.5) -- node [text width=2.5cm,midway,above=0.2em,align=center ]{Claim \ref{claim_7.8}} (\t-0.85, \r + 0.5);

\node (a) at (\t + 0,\r + 1) {$a_0$};
\node (b) at (\t + 1.15,\r + 1) {$a_1$};
\node (c) at (\t + 1.15,\r + 0) {$a_2$};
\node (d) at (\t + 0, \r + 0) {$a_3$};
\draw [-to, line width=1.5] (\t + 2, \r + 0.5)-- node [text width=2.5cm,midway,above=0.2em,align=center ]{Claim \ref{claim_7.8}} (\t + 3.15, \r + 0.5);

\draw [line width=1.1] (a)--(b);
\draw [dashed, line width=1.1] (b)--(c);
\draw [loosely dotted, line width=1.1] (a)--(d);
\draw [line width=1.1] (d)--(c);

\def \t{9};
\node (a) at (\t + 0,\r + 1) {$a_0$};
\node (b) at (\t + 1.15,\r + 1) {$a_1$};
\node (c) at (\t + 1.15,\r + 0) {$a_2$};
\node (d) at (\t + 0, \r + 0) {$a_3$};

\draw [line width=1.1] (a)--(b)--(c)--(d)--(a);

\end{tikzpicture}
\end{subfigure}

\caption{}\label{fig_C_4_flow} 
\end{figure}

    We now execute this plan rigorously.
    
    \begin{claim}\label{claim_7.6}
    For all $\eta>0$ the following holds for large enough $n$
        \[
            \sum_{(x,y,z,w)\in \mathcal{A}}u_{xy}u_{yz}u_{zw}u_{wx}\leq \eta^2 n^4p^4,
        \]
        where $\mathcal{A}$ is the set of all tuples $(x,y,z,w)\in \mathcal{C}_4$ with 
        \[
            u_{xy},u_{zw}\leq \sqrt{p}/\log(1/p)\quad \text{or}\quad u_{xw},u_{yz}\leq \sqrt{p}/\log(1/p).
        \]
    \end{claim}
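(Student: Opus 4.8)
The plan is to bound the sum over $\mathcal{A}$ by factoring out the product of the two small opposite edge weights and recognising what is left as essentially $\big(\sum_i u_i\big)^2$, which is controlled by Lemma~\ref{lem_estimation of B}. Since the two alternatives defining $\mathcal{A}$ are symmetric under rotating the $4$-cycle $(x,y,z,w)$ by one step, it suffices to bound the contribution of the tuples with $u_{xy},u_{zw}\le \sqrt{p}/\log(1/p)$; the same bound with the roles of the opposite pairs $\{xy,zw\}$ and $\{yz,wx\}$ exchanged handles the tuples with $u_{xw},u_{yz}\le \sqrt p/\log(1/p)$, and adding the two contributions gives the claim.

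First I would use $u_{xy}u_{zw}\le p/\log^2(1/p)$ on the restricted sum, so that
\[
 \sum_{\substack{(x,y,z,w)\in\mathcal{C}_4 \\ u_{xy},u_{zw}\le \sqrt p/\log(1/p)}} u_{xy}u_{yz}u_{zw}u_{wx}
 \;\le\; \frac{p}{\log^2(1/p)}\sum_{(x,y,z,w)\in\mathcal{C}_4} u_{yz}u_{wx}.
\]
Because every tuple in $\mathcal{C}_4$ has four distinct coordinates, the right-hand sum is at most $\big(\sum_{a\ne b}u_{ab}\big)^2=4\big(\sum_i u_i\big)^2$, where $u_i$ denotes the weight on edge $i$. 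Now Lemma~\ref{lem_estimation of B}\ref{eq_estimation_3_pre_Appendix} is available: the proof of Proposition~\ref{prop_solution_to_var_problem} is run under the hypothesis $\sum_i I_p(p+u_i)\le Cn^2p^2\log(1/p)$, and we are in the range $\sqrt{\log n}/n\ll p\ll n^{-1/2}$, so it gives $\sum_i u_i\le Dn^2p^{3/2}\sqrt{\log(1/p)}$ and hence $\big(\sum_i u_i\big)^2\le D^2 n^4 p^3\log(1/p)$. Combining the displays,
\[
 \sum_{\substack{(x,y,z,w)\in\mathcal{C}_4 \\ u_{xy},u_{zw}\le \sqrt p/\log(1/p)}} u_{xy}u_{yz}u_{zw}u_{wx}
 \;\le\; \frac{4D^2 n^4 p^4}{\log(1/p)},
\]
and adding the two symmetric contributions yields a total of $O\!\big(n^4 p^4/\log(1/p)\big)$. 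Since $p\ll 1$ forces $\log(1/p)\to\infty$, this is $o(n^4p^4)$ and in particular at most $\eta^2 n^4 p^4$ once $n$ is large enough.

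There is no real obstacle here: the content of the claim is entirely carried by Lemma~\ref{lem_estimation of B}\ref{eq_estimation_3_pre_Appendix} (the $\ell^1$-bound on the excess densities $u_i$ coming from the entropy budget, due to Lubetzky--Zhao). The only points requiring a line of care are that $\mathcal{C}_4$ consists of genuinely distinct $4$-tuples, so the combinatorial factor incurred when passing to $\big(\sum_i u_i\big)^2$ is an absolute constant, and that the assumed entropy bound is exactly the form in which Lemma~\ref{lem_estimation of B} was stated, so its invocation is legitimate.
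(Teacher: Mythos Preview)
Your proof is correct and follows essentially the same approach as the paper: bound the two small opposite edge weights by $p/\log^2(1/p)$, control the remaining factor by $\big(\sum_i u_i\big)^2$, and invoke Lemma~\ref{lem_estimation of B}\ref{eq_estimation_3_pre_Appendix}. The only cosmetic difference is that the paper phrases it as a contradiction argument whereas you give a direct bound; the content is identical.
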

    
    \begin{proof}
        Suppose otherwise, meaning,
        \[
            \sum_{(x,y,z,w)\in \mathcal{A}}u_{xy}u_{yz}u_{zw}u_{wx}> \eta^2 n^4p^4.
        \]
        By the definition of $\mathcal{A}$ we have 
        \begin{align*}
            u_{xy},u_{zw}\leq \sqrt{p}/\log(1/p)\quad \text{or}\quad u_{xz},u_{yw}\leq \sqrt{p}/\log(1/p).
        \end{align*}
        This implies that 
        \[
            \eta ^2 n^4p^4 < \sum_{(x,y,z,w)\in \mathcal{A}}u_{xy}u_{yz}u_{zw}u_{wx}\leq \left(\sum_{x,y}u_{xy}\right)^2\frac{p}{\log^2(1/p)},
        \]
        and therefore,
        \[
            \eta n^2p^{3/2}\log(1/p) \leq \sum_{x,y}u_{xy}.
        \]
        By Lemma \ref{lem_estimation of B} we have $\sum_{x,y}u_{xy}=O\left(n^2 p^{3/2}\sqrt{\log(1/p)}\right)$. This implies,
        \[
            \eta n^2p^{3/2}\log(1/p)\leq O\left(n^2 p^{3/2}\sqrt{\log(1/p)}\right),
        \]
        contradicting the assumption that $p=o(1)$.
    \end{proof}
    
    \begin{claim}\label{claim_three_edges}
    For all $\eta>0$ the following holds for large enough $n$
        \[
        \sum_{(a_0,a_1,a_2,a_3)\in \mathcal{B}\setminus \mathcal{A}}u_{a_0a_1}u_{a_1a_2}u_{a_2a_3}u_{a_3a_0}\leq \eta^2 n^4p^4,
        \]
        where $\mathcal{B}=\cup_{i=0}^{3}\mathcal{B}_i$, and $\mathcal{B}_i$ is the set of all $(a_0,a_1,a_2,a_3)\in \mathcal{C}_4$ with
        \[
            u_{a_ia_{i+1}},u_{a_{i+1}a_{i+2}}\leq p^{1-2\varepsilon}.
        \]
    \end{claim}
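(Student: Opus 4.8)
The plan is to begin with a symmetry reduction. Call an edge \emph{heavy} if $u_e>\sqrt p/\log(1/p)$ and \emph{light} if $u_e\le p^{1-2\varepsilon}$; since $\varepsilon<1/4$ and $p$ is small we have $p^{1-2\varepsilon}<\sqrt p/\log(1/p)$, so no edge is both. By the definition of $\mathcal{A}$ (from Claim~\ref{claim_7.6}), every tuple $(a_0,a_1,a_2,a_3)\notin\mathcal{A}$ contains a heavy ``$K_{1,2}$'', i.e.\ two adjacent heavy edges meeting at one of the four vertices. If such a tuple also lies in $\mathcal{B}=\bigcup_i\mathcal{B}_i$, then, because both edges of that heavy $K_{1,2}$ exceed $p^{1-2\varepsilon}$, the only $\mathcal{B}_i$ it can belong to is the one indexed by the pair of edges \emph{disjoint} from the heavy $K_{1,2}$. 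Using the cyclic symmetry of $C_4$ and passing to ordered $4$-tuples (which only over-counts), it therefore suffices to show $S=o(n^4p^4)$, where
\[
    S:=\sum u_{a_0a_1}u_{a_1a_2}u_{a_2a_3}u_{a_3a_0}
\]
is the sum over ordered $4$-tuples with $u_{a_0a_1},u_{a_1a_2}>\sqrt p/\log(1/p)$ and $u_{a_2a_3},u_{a_3a_0}\le p^{1-2\varepsilon}$; then $\sum_{\mathcal{B}\setminus\mathcal{A}}(\cdots)\le 4S\le\eta^2 n^4p^4$ for large $n$.

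Next I would collapse $S$ to a quadratic form. On the relevant event $u_{a_2a_3}\le p^{1-2\varepsilon}$, so $S\le p^{1-2\varepsilon}\sum (U_h)_{a_0a_1}(U_h)_{a_1a_2}u_{a_3a_0}$, where $U_h$ is the matrix with entries $u_{ij}\mathbbm{1}[u_{ij}>\sqrt p/\log(1/p)]$ and the sum is unrestricted. Summing over $a_3$ and $a_2$ and writing $\mathbf{d}=(d_x)_x$ with $d_x=\sum_{y\neq x}u_{xy}$, this gives
\[
    S\le p^{1-2\varepsilon}\sum_{a_0,a_1}(U_h)_{a_0a_1}d_{a_0}d_{a_1}=p^{1-2\varepsilon}\,\mathbf{d}^{\top}U_h\mathbf{d}\le p^{1-2\varepsilon}\,\|U_h\|_{\mathrm{op}}\,\|\mathbf{d}\|_2^2 .
\]
By Lemma~\ref{lem_estimation of B}\ref{eq_estimation_2_pre_Appendix}, $\|\mathbf{d}\|_2^2=\sum_x d_x^2\le Dn^3p^2b$ for every admissible $b$. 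For the operator norm I would use $\|U_h\|_{\mathrm{op}}\le\|U_h\|_F\le\bigl(2\sum_{e\text{ heavy}}u_e\bigr)^{1/2}$ (since $u_e\le 1$), and bound the heavy mass by the entropy budget: by Lemma~\ref{lem_exact_estimations}, a heavy edge has $u_e\gg p$ and $\log(u_e/p)\ge(\tfrac12-o(1))\log(1/p)$, hence $I_p(q_e)\ge\tfrac14 u_e\log(1/p)$, so $\sum_{e\text{ heavy}}u_e\le\tfrac{4}{\log(1/p)}\sum_i I_p(q_i)\le 4Cn^2p^2$, giving $\|U_h\|_{\mathrm{op}}=O(np)$.

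Combining, $S\le p^{1-2\varepsilon}\cdot O(np)\cdot Dn^3p^2b=O(n^4p^{4-2\varepsilon}b)$, so it is enough to pick an admissible $b$ with $b\ll p^{2\varepsilon}$, i.e.\ $\max\{np^2,\sqrt{p\log(1/p)}\}\ll b\ll p^{2\varepsilon}$. The bound $\sqrt{p\log(1/p)}\ll p^{2\varepsilon}$ is automatic, and $np^2\ll p^{2\varepsilon}$ amounts to $np^{2-2\varepsilon}=o(1)$, which holds precisely because $p\ll n^{-1/2-\varepsilon}$ (one checks $(1/2+\varepsilon)(2-2\varepsilon)>1$ for small $\varepsilon$); concretely $b=\sqrt n\,p^{1+\varepsilon}$ is admissible and satisfies $b=o(p^{2\varepsilon})$. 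This yields $S=o(n^4p^4)$ and completes the proof. The step I expect to be the most delicate is exactly this final balancing: every natural estimate for the cubic/quadratic form $\mathbf{d}^{\top}U_h\mathbf{d}$ loses a little, so one must verify that the slack in the hypothesis $p\ll n^{-1/2-\varepsilon}$ beats the resulting $O(\varepsilon^2)$ loss in the exponent; the rest of the argument is a routine (if careful) collapse of sums together with the already-available estimates of Lemmas~\ref{lem_exact_estimations} and~\ref{lem_estimation of B} and the entropy bound assumed in the proof of Proposition~\ref{prop_solution_to_var_problem}.
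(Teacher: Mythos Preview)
Your argument is correct and takes a genuinely different route from the paper's. After the same symmetry reduction (a heavy $K_{1,2}$ on one side, the two opposite edges both $\le p^{1-2\varepsilon}$), the paper simply bounds \emph{both} light factors by $p^{1-2\varepsilon}$ and sums over the fourth vertex crudely, obtaining
\[
\sum_{(a_0,a_1,a_2,a_3)\in\mathcal{B}\setminus\mathcal{A}}u_{a_0a_1}u_{a_1a_2}u_{a_2a_3}u_{a_3a_0}\;\le\; n\,p^{2(1-2\varepsilon)}\Bigl(\sum_{\{x,y\}\in\mathcal{L}_1}u_{xy}\Bigr)^{2},
\]
with $\mathcal{L}_1$ the set of heavy pairs; the heavy mass is then controlled by the entropy budget exactly as you do, and the conclusion follows since $p\ll n^{-1/2-\varepsilon}$ forces $n^{3/2}p^{1+2\varepsilon}$ to dominate $n^{2}p^{2}$. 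This is shorter and needs neither Lemma~\ref{lem_estimation of B} nor any spectral input. Your approach spends only one factor of $p^{1-2\varepsilon}$ and absorbs the remaining cubic sum into the quadratic form $\mathbf d^{\top}U_h\mathbf d$, controlled via $\|U_h\|_{\mathrm{op}}\le\|U_h\|_F$ together with Lemma~\ref{lem_estimation of B}\ref{eq_estimation_2_pre_Appendix}. What this buys you is a cleaner final arithmetic: you need only $np^{2-2\varepsilon}=o(1)$, for which $(1/2+\varepsilon)(2-2\varepsilon)=1+\varepsilon-2\varepsilon^{2}>1$, whereas the paper's route needs $np^{2-4\varepsilon}=o(1)$, where the analogous check $(1/2+\varepsilon)(2-4\varepsilon)=1-4\varepsilon^{2}$ is just short of $1$; so your version is actually a bit more robust at the top end of the range.

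One small slip: your concrete choice $b=\sqrt{n}\,p^{1+\varepsilon}$ need not satisfy $b\gg\sqrt{p\log(1/p)}$ near the bottom of the range (try $p$ close to $\sqrt{\log n}/n$). This is harmless, since you have already verified $\max\{np^{2},\sqrt{p\log(1/p)}\}\ll p^{2\varepsilon}$ and hence the admissible interval for $b$ is nonempty; any $b$ strictly between the two endpoints (for instance their geometric mean) works.
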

    \begin{proof}
        Assume towards contradiction that
        \[
            \sum_{(a_0,a_1,a_2,a_3)\in \mathcal{B}\setminus \mathcal{A}}u_{a_0a_1}u_{a_1a_2}u_{a_2a_3}u_{a_3a_0}> \eta^2 n^4p^4.
        \]
        We claim that for any $(a_0,a_1,a_2,a_3)\in \mathcal{B}\setminus \mathcal{A}$ there is $i$ satisfying
        \[
            u_{a_{i}a_{i+1}},u_{a_{i+1}a_{i+2}}>\sqrt{p}/\log(1/p)\quad \text{and}\quad u_{a_{i+2}a_{i+3}},u_{a_{i+3}a_{i}}\leq p^{1-2\varepsilon},
        \]    
        where summation is taken modulo $4$.
        Indeed, let $(a_0,a_1,a_2,a_3)\in \mathcal{B}\setminus\mathcal{A}$. Since $(a_0,a_1,a_2,a_3) \not \in \mathcal{A}$  we have 
        \[
            u_{a_{0}a_{1}}>\sqrt{p}/\log(1/p) \quad \text{or}\quad u_{a_{2}a_{3}}>\sqrt{p}/\log(1/p)   
        \]
        and 
        \[
            u_{a_{0}a_{3}}>\sqrt{p}/\log(1/p) \quad\text{or}\quad  u_{a_{1}a_{2}}>\sqrt{p}/\log(1/p).
        \]
        Without loss of generality, assume that $u_{a_{0},a_{1}},u_{a_{1},a_{2}}>\sqrt{p}/\log(1/p)$. Note that as $(a_0,a_1,a_2,a_3)\in \mathcal{B}$ there is $i$ such that $u_{a_ia_{i+1}},u_{a_{i+1}a_{i+2}}\leq p^{1-2\varepsilon}$. For $\varepsilon<1/4$ we have $\sqrt{p}/\log(1/p)\geq p^{1-2\varepsilon}$ implying that $i\neq 0,1,3$ and therefore, $i=2$.
        We continue by letting $\mathcal{L}_1$ be the set of all $\{x,y\}\in \binom{[n]}{2}$ such that $u_{xy}>\sqrt{p}/\log(1/p)$, and we claim that
        \begin{align*}
            \sum_{(a_0,a_1,a_2,a_3)\in \mathcal{B}\setminus \mathcal{A}}u_{a_0a_1}u_{a_1a_2}u_{a_2a_3}u_{a_3a_0}\leq np^{2(1-2\varepsilon)}\left(\sum_{\{x,y\}\in\mathcal{L}_1}u_{xy}\right)^2.
        \end{align*}
        This follows from the fact that any $K_{1,2}$ in $K_n$ can be extended to a $C_4$ in at most $n$ ways. Therefore, we obtain the following:
        \[
            \sum_{\{x,y\}\in\mathcal{L}_1}u_{xy} > \eta n^{3/2}p^{1+2\varepsilon}.
        \]
        By Lemma \ref{lem_exact_estimations} we have,
        \[
            \sum_{x,y}I_{p}(p+u_{xy})\geq \sum_{\{x,y\}\in \mathcal{L}_1}I_{p}(p+u_{xy})=\sum_{\{x,y\}\in \mathcal{L}_1}(1+o(1))u_{xy}\log(u_{xy}/p)=\Omega\left(n^{3/2}p^{1+2\varepsilon}\log(1/p)\right).
        \]
        This is a contradiction to our assumption that $\sum_{x,y}I_{p}(p+u_{xy})=O(n^2p^2\log(1/p))$ as $p\ll n^{-1/2-\varepsilon}$ and therefore, $n^{3/2}p^{1+2\varepsilon}=\omega(n^2p^2)$.
    \end{proof}

    \begin{claim}\label{claim_7.8} For any $\eta>0$ there exists $\gamma>0$ such that the following holds provided $n$ is large enough
        \[
        \sum_{(a_0,a_1,a_2,a_3)\in \mathcal{D(\gamma)}}u_{a_0a_1}u_{a_1a_2}u_{a_2a_3}u_{a_3a_0}\leq \eta ^2 n^4p^4,
        \]
        where $\mathcal{D}(\gamma)=\mathcal{D}_0(\gamma)\cup \mathcal{D}_1(\gamma)$, and $\mathcal{D}_i(\gamma)$ is the set of all tuples $(a_0,a_1,a_2,a_3)\in \mathcal{C}_4$ with 
        \[
            u_{a_{i}a_{i+1}}\leq \gamma \quad \text{or}\quad u_{a_{i+2}a_{i+3}}\leq \gamma,
        \]
        and
        \[
            u_{a_{i+1}a_{i+2}}, u_{a_{i}a_{i+3}}\geq p^{1-2\varepsilon}.
        \]
    \end{claim}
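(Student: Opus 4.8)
The plan is to prove Claim \ref{claim_7.8} by following the same template as Claim \ref{claim_three_edges}: assume the contrary, extract from the structure of $\mathcal{D}(\gamma)$ a favourable combinatorial bound on the offending sum, and then derive a contradiction with the entropy budget $\sum_{x,y} I_p(p+u_{xy}) = O(n^2p^2\log(1/p))$ via Lemma \ref{lem_estimation of B} and Lemma \ref{lem_exact_estimations}. First I would fix $\eta>0$ and suppose towards a contradiction that
\[
    \sum_{(a_0,a_1,a_2,a_3)\in \mathcal{D}(\gamma)}u_{a_0a_1}u_{a_1a_2}u_{a_2a_3}u_{a_3a_0}> \eta^2 n^4p^4
\]
for every $\gamma>0$; by the pigeonhole principle (the sum over $\mathcal{D}(\gamma)$ is at most the sum over $\mathcal{D}_0(\gamma)$ plus the sum over $\mathcal{D}_1(\gamma)$) we may assume without loss of generality that the bound holds with $\mathcal{D}_0(\gamma)$ in place of $\mathcal{D}(\gamma)$, costing only a factor of $2$ in $\eta$.

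Next I would unpack the definition of $\mathcal{D}_0(\gamma)$: each contributing tuple has $u_{a_1a_2}, u_{a_0a_3}\ge p^{1-2\varepsilon}$ and ($u_{a_0a_1}\le \gamma$ or $u_{a_2a_3}\le \gamma$), so in particular one of the two "vertical" edges has weight at most $\gamma$. Splitting again by which of the two holds, I may assume $u_{a_0a_1}\le \gamma$ for all contributing tuples. Now bound the sum by fixing the two heavy edges $a_1a_2$ and $a_0a_3$ (a "matching" of two edges, which spans at most $O(1)$ cyclic orderings), using $u_{a_0a_1}\le\gamma$ and $u_{a_2a_3}\le 1$, to get
\[
    \eta^2 n^4 p^4 < \gamma \cdot \Big(\sum_{\{x,y\}\in \mathcal{L}_2} u_{xy}\Big)^2,
\]
where $\mathcal{L}_2$ is the set of pairs with $u_{xy}\ge p^{1-2\varepsilon}$; here I have used that a pair of disjoint edges extends to a $C_4$ in a bounded number of ways and that $u_{a_2a_3}\le 1$ contributes the slack. (One must be a little careful: if $a_2a_3$ is also forced to be heavy one gets an even stronger bound, so the worst case is the one written.) This yields $\sum_{\{x,y\}\in\mathcal{L}_2} u_{xy} > \eta\gamma^{-1/2} n^2 p^2$. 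On the other hand, applying Lemma \ref{lem_exact_estimations} on the set $\mathcal{L}_2$ where $u_{xy}\ge p^{1-2\varepsilon}\gg p$,
\[
    \sum_{x,y} I_p(p+u_{xy}) \ge \sum_{\{x,y\}\in \mathcal{L}_2} (1+o(1)) u_{xy}\log(u_{xy}/p) \ge (1+o(1))\,2\varepsilon\log(1/p)\sum_{\{x,y\}\in\mathcal{L}_2} u_{xy} > \varepsilon\eta\gamma^{-1/2} n^2 p^2\log(1/p),
\]
which, for $\gamma$ small enough (depending on $\eta$ and the implicit constant $C$ in the entropy budget), contradicts $\sum_{x,y} I_p(p+u_{xy})\le C n^2 p^2\log(1/p)$. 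This fixes the choice of $\gamma = \gamma(\eta)$ and completes the argument.

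The step I expect to be the main obstacle is the combinatorial reduction that produces the clean inequality $\eta^2 n^4p^4 < \gamma(\sum_{\mathcal{L}_2} u_{xy})^2$: one needs to argue carefully that every tuple in $\mathcal{D}_0(\gamma)$ (after the symmetry reduction) really is captured by choosing two heavy edges forming a matching and that the remaining two factors can be bounded by $\gamma$ and $1$ respectively, without double-counting and while keeping track of which edges are "horizontal" versus "vertical" in the $C_4$. The bookkeeping of the four sub-cases coming from $\mathcal{D}_0$ versus $\mathcal{D}_1$ and from which vertical edge is small is routine but must be done consistently with the labelling convention used in Figure \ref{fig_C_4_flow}; once the matching bound is in place, the contradiction via Lemmas \ref{lem_exact_estimations} and \ref{lem_estimation of B} is immediate and parallels Claim \ref{claim_three_edges} verbatim.
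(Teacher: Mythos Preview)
Your proposal is correct and follows essentially the same approach as the paper: bound the contribution of $\mathcal{D}(\gamma)$ by $\gamma\bigl(\sum_{\{x,y\}\in\mathcal{L}_2}u_{xy}\bigr)^2$ and then contradict the entropy budget via Lemma~\ref{lem_exact_estimations} (Lemma~\ref{lem_estimation of B} is not actually needed here). The paper simply fixes $\gamma=(\eta\varepsilon/C)^2$ at the outset and derives the matching bound directly from the definition of $\mathcal{D}(\gamma)$, without your extra pigeonhole split into $\mathcal{D}_0$, $\mathcal{D}_1$ and the sub-cases on which vertical edge is small---those reductions are harmless but unnecessary.
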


     \begin{proof}
        Let $\gamma = \left({\eta \varepsilon}/{C}\right)^2$ and assume towards contradiction that
        \[
            \sum_{(a_0,a_1,a_2,a_3)\in \mathcal{D(\gamma)}}u_{a_0a_1}u_{a_1a_2}u_{a_2a_3}u_{a_3a_0}> \eta^2 n^4p^4.
        \]
        Note that any $(a_0,a_1,a_2,a_3)\in \mathcal{D}(\gamma)$ satisfies
        \[
            u_{a_{i}a_{i+1}}\leq \gamma \quad\text{or}\quad u_{a_{i+2}a_{i+3}}\leq \gamma 
        \]    
        and 
        \[
            u_{a_{i}a_{i+3}},u_{a_{i+1}a_{i+2}} \geq p^{1-2\varepsilon},
        \]
        for some $i$ and where summation is taken modulo $4$. Letting $\mathcal{L}_2$ be the set of all $\{x,y\}\in \binom{[n]}{2}$ such that $u_{xy}\geq p^{1-2\varepsilon}$ we have the following by the definition of $\mathcal{D}(\gamma)$:
        \begin{align*}
            \sum_{(a_0,a_1,a_2,a_3)\in \mathcal{D}(\gamma)}u_{a_0a_1}u_{a_1a_2}u_{a_2a_3}u_{a_3a_0}\leq \gamma \left(\sum_{\{x,y\}\in \mathcal{L}_2}u_{xy}\right)^2.
        \end{align*}
        This implies that
        \[
            \sum_{\{x,y\}\in \mathcal{L}_2}u_{xy}> \eta n^{2}p^{2}/\sqrt{\gamma}.
        \]
        Therefore, by the definition of $\mathcal{L}_2$ and by Lemma \ref{lem_exact_estimations} we have,
        \begin{align*}
            \sum_{x,y}I_{p}(p+u_{xy})&\geq \sum_{\{x,y\}\in \mathcal{L}_2}I_{p}(p+u_{xy})\geq  \sum_{\{x,y\}\in \mathcal{L}_2}u_{xy}\log(u_{xy}/p)\geq \frac{2\eta\varepsilon }{\sqrt{\gamma}}n^2p^2\log(1/p).
        \end{align*}
        This is a contradiction to  the choice of $\gamma$ as by our assumptions we have $\sum_{x,y}I_{p}(p+u_{xy})\leq Cn^2p^2\log(1/p)$.
    \end{proof}
    
    We now proceed with the proof of Proposition \ref{prop_solution_to_var_problem}. By the our assumptions and the above three claims we obtain the following for any $\eta>0$ and small enough $\gamma>0$:
    \[
        \sum_{(a_0,a_1,a_2,a_3)\in \mathcal{C}_4\setminus\left(\mathcal{A}\cup \mathcal{B}\cup \mathcal{D}(\gamma)\right)} u_{a_0a_1}u_{a_1a_2}u_{a_2a_3}u_{a_3a_0}(1-u_{a_0a_2})(1-u_{a_1a_3})\geq \left(\delta-\eta \right)\E[X].
    \]
    
    We claim that provided $n$ is large enough we have:
    \[  
        \mathcal{C}_4\setminus\left(\mathcal{A}\cup \mathcal{B}\cup \mathcal{D}(\gamma)\right)\subseteq \mathcal{C}_4^\gamma.
    \]
    
    Indeed, as $(a_0,a_1,a_2,a_3)\not \in \mathcal{A}$, there exists $i$ such that provided $\varepsilon<1/4$ and $n$ is large enough we have:
    \[
        u_{a_{i}a_{i+1}},u_{a_{i+1}a_{i+2}}>\sqrt{p}/\log(1/p)> p^{1-2\varepsilon}.
    \]
    Further, as $(a_0,a_1,a_2,a_3)\not \in \mathcal{B}_{i+2}$ we have
    \[
        u_{a_{i+2}a_{i+3}}>p^{1-2\varepsilon} \quad \text{or} \quad u_{a_{i}a_{i+3}}>p^{1-2\varepsilon}.
    \]
    Without loss of generality, assume the latter holds. In particular we have,
    \[
        u_{a_{i}a_{i+3}},u_{a_{i+1}a_{i+2}}>p^{1-2\varepsilon}.
    \]
    Since $(a_0,a_1,a_2,a_3)\not \in \mathcal{D}_i(\gamma)$, we have
    \[
        u_{a_{i}a_{i+1}},u_{a_{i+2}a_{i+3}}>\gamma.
    \]
    Finally, as $(a_0,a_1,a_2,a_3)\not \in \mathcal{D}_{i+1}(\gamma)$, for large enough $n$ we have $p^{1-2\varepsilon}<\gamma$ implying that 
    \[
        u_{a_{i}a_{i+3}},u_{a_{i+1}a_{i+2}}>\gamma.
    \]
    
    Letting $\mathcal{L}_3$ be the set of all $\{x,y\}\in \binom{[n]}{2}$ with $u_{xy}>\gamma$, we claim the following holds:
    \begin{equation}\label{eq_induced_bound}
         \sum_{(a_0,a_1,a_2,a_3)\in \mathcal{C}_4^{\gamma}} u_{a_0a_1}u_{a_1a_2}u_{a_2a_3}u_{a_3a_0}(1-u_{a_0a_2})(1-u_{a_1a_3})\leq \frac{\left(\sum_{\{a_0,a_1\}\in \mathcal{L}_3}u_{a_0a_1}\right)^2}{4}.
    \end{equation}
    Indeed, 
    \begin{align*}
        2\sum_{(a_0,a_1,a_2,a_3)\in\mathcal{C}_4^{\gamma}} u_{a_0a_1}u_{a_1a_2}u_{a_2a_3}u_{a_3a_0}(1-u_{a_0a_2})(1-u_{a_1a_3})
    \end{align*}
    is at most
    \[
        \sum u_{a_0a_1}u_{a_2a_3}(\underbrace{u_{a_1a_2}u_{a_3a_0}(1-u_{a_0a_2})(1-u_{a_1a_3})+u_{a_0a_2}u_{a_1a_3}(1-u_{a_1a_2})(1-u_{a_3a_0})}_{(\star)}),
    \]
    where the sum ranges over unordered pairs, $\{\{a_0,a_1\},\{a_2,a_3\}\}\subseteq \mathcal{L}_3$, such that $a_0,a_1,a_2,a_3$ are all distinct.
    We also have
    \[
       (\star)\leq (1-u_{a_0a_2})+u_{a_0a_2}=1.
    \]
    This implies that 
    \[
         (\delta -\eta)\E[X]\leq \sum_{(a_0,a_1,a_2,a_3)\in \mathcal{C}_4^{\gamma}} u_{a_0a_1}u_{a_1a_2}u_{a_2a_3}u_{a_3a_0}(1-u_{a_0a_2})(1-u_{a_1a_3})\leq \frac{\sum u_{a_0a_1}u_{a_2a_3}}{2},
    \]
    where the summation in the right-hand side is over unordered pairs, $\{\{a_0,a_1\},\{a_2,a_3\}\}\subseteq \mathcal{L}_3$, such that $a_0,a_1,a_2,a_3$ are all distinct.
    This implies \eqref{eq_induced_bound} as
    \[
         \frac{\left(\sum_{\{a_0,a_1\}\in \mathcal{L}_3}u_{a_0a_1}\right)^2}{2}=\frac{\sum_{\{a_0,a_1\}\in \mathcal{L}_3}u^2_{a_0a_1}+2\sum u_{a_0a_1}u_{a_2a_3}}{2}\geq {\sum u_{a_0a_1}u_{a_2a_3}},
    \]
    where the unlabeled summations are over unordered pairs $\{\{a_0,a_1\},\{a_2,a_3\}\}\subseteq \mathcal{L}_3$.
    We conclude that:
    \[
         2\sqrt{(\delta-\eta)\E[X]}\leq \sum_{\{a_0,a_1\}\in \mathcal{L}_3}u_{a_0a_1}.
    \]
    Now we can bound from below the `cost' using Lemma \ref{lem_exact_estimations} as follows:
    \begin{align*}
        \sum_{\{x,y\}\in \binom{[n]}{2}}I_{p}(p+u_{xy}) &\geq \sum_{\{x,y\}\in \mathcal{L}_3}I_{p}(p+u_{xy}) = (1+o(1))\sum_{\{x,y\}\in \mathcal{L}_3}u_{xy}\log(u_{xy}/p)\\
        &\geq  (1+o(1))\sum_{\{x,y\}\in \mathcal{L}_3}u_{xy}\log(1/p)\geq  (1+o(1))2\sqrt{(\delta-\eta)\E[X]}\log(1/p).
    \end{align*}    
    This finishes the proof of the proposition as $\E[X]=(1+o(1))\frac{n^2p^2}{8}.$\qedhere
    
\end{proof}

\bibliography{bibliography.bib}
\bibliographystyle{amsplain}

\appendix
\section{Translating graphons' language into graphs' language}\label{app_A}

The aim of this Appendix is to add a proof (in the language of graphs instead of graphons) for Lemmas \ref{lem_only_C_4} and \ref{lem_estimation of B} which were proven by Lubetzky--Zhao \cite{LubZha17} and Bhattacharya, Ganguly, Lubetzky and Zhao \cite{BhaGanLubZha17} in the language of graphons.

We start with a discussion of how one would prove Lemma \ref{lem_only_C_4}, then we state some lemmas and an extension of Lemma \ref{lem_estimation of B}. To prove Lemma \ref{lem_only_C_4} we will use Lemma \ref{lem_estimation of B}, and therefore it will be proven before Lemma \ref{lem_only_C_4}.
Recall Lemma \ref{lem_only_C_4}:

\lemmaone*
    
    We start with by analysing the term $\E[N_{ind}(C_4,G_{n,\bar{q}})]$. Suppose $\bar{q}\in [0,1]^{\binom{n}{2}}$ satisfies the conditions of Lemma \ref{lem_only_C_4}. Note that
    \[
        \E[N_{ind}(C_4,G_{n,\bar{q}}))]={\sum_{(x,y,z,w)\in \mathcal{C}_4}q_{xy}q_{yz}q_{zw}q_{wx}(1-q_{xz})(1-q_{yw})}.
    \]
    Since $q_i=p+u_i\geq u_i$ we have the following for large enough $n$: 
    \begin{align*} \E[N_{ind}(C_4,G_{n,\bar{q}}))]&\leq {\sum_{(x,y,z,w)\in \mathcal{C}_4}q_{xy}q_{yz}q_{zw}q_{wx}(1-u_{xz})(1-u_{yw})}\\
    &={\sum_{(x,y,z,w)\in \mathcal{C}_4}(p+u_{xy})(p+u_{yz})(p+u_{zw})(p+u_{wx})(1-u_{xz})(1-u_{yw})}\\
    &=\E[N(C_4,G_{n,p})]+\E[N_{ind}(C_4,G_{n,\bar{u}})]+\sum_{\emptyset \not = H\subsetneq^* C_4}\E[N(H,G_{n,\bar{u}})]p^{4-e_H}\\
    &=\E[X]/(1-p)^2+\E[N_{ind}(C_4,G_{n,\bar{u}})]+\sum_{\emptyset \not = H\subsetneq^* C_4}\E[N(H,G_{n,\bar{u}})]p^{4-e_H}\\
    &=(1+\varepsilon/2)\E[X]+\E[N_{ind}(C_4,G_{n,\bar{u}})]+\sum_{\emptyset \not = H\subsetneq^* C_4}\E[N(H,G_{n,\bar{u}})]p^{4-e_H},
    \end{align*}
    where $\subsetneq ^*$ stands for a spanning subgraph which is not equal to the host graph.
    Recalling that $\E[N_{ind}(C_4,G_{n,\bar{q}}))]-\E[X]\geq (\delta-\varepsilon) \E[X]$, we obtain,
        \begin{align*}
        (\delta-3\varepsilon/2)\E[X]\leq & \, \E[N_{ind}(C_4,G_{n,\bar{u}})]+\E[N(P_4,G_{n,\bar{u}})]p+\E[N(M_2,G_{n,\bar{u}})]p^2\\
                                        &\, +\E[N(K_{1,2}\sqcup K_1,G_{n,\bar{u}})]p^2+\E[N(K_2\sqcup 2K_1,G_{n,\bar{u}})]p^3.\\
                                        \leq&\, \E[N_{ind}(C_4,G_{n,\bar{u}})]+\E[N(P_4,G_{n,\bar{u}})]p+\E[N(M_2,G_{n,\bar{u}})]p^2\\ &\,+\E[N(K_{1,2},G_{n,\bar{u}})]np^2+\E[N(K_2,G_{n,\bar{u}})]n^2p^3,
    \end{align*}
    where $P_4$ is the path with three edges, $M_2$ is the matching of size two, $K_{1,2}\sqcup K_1$ is the complete bipartite graph with sides of size $1$ and $2$ and an extra isolated vertex and $K_2\sqcup 2K_1$ is the disjoint union of an edge and an independent set of size two.
    Therefore, to prove the lemma, it is enough to show that all of the terms in the right-hand side of the above inequality except for $\E[N_{ind}(C_4,G_{n,\bar{u}})]$ are negligible in comparison to $n^4p^4$.
    
    For this we remind the reader Lemma \ref{lem_exact_estimations} and cite two useful lemmas from \cite{LubZha17} and \cite{BhaGanLubZha17}:
    
    \lemmatwo*
    
    \begin{lemma}[\cite{LubZha17}]\label{lem_not_exact_estimations_Appenxid}
        There exists $p_0>0$ such that for all $0<p\leq p_0$ and $0\leq x\leq b\leq 1-p-1/\log(1/p)$ we have,
        \[
            I_{p}(p+x)\geq \frac{x^2I_{p}(p+b)}{b^2}.
        \]
    \end{lemma}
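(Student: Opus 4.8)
The plan is to show that, for $p$ sufficiently small, the function $h(x)\coloneqq I_p(p+x)/x^{2}$ is non-increasing on the interval $(0,\,1-p-1/\log(1/p)]$. Granting this, for $0<x\le b$ we get $h(x)\ge h(b)$, which rearranges exactly to $I_p(p+x)\ge x^{2}I_p(p+b)/b^{2}$, and the case $x=0$ is trivial. Writing $g(x)\coloneqq I_p(p+x)$, a direct differentiation gives $h'(x)=\big(xg'(x)-2g(x)\big)/x^{3}=-\phi(x)/x^{3}$, where $\phi(x)\coloneqq 2g(x)-xg'(x)$; hence it suffices to prove $\phi\ge 0$ on $(0,\,1-p-1/\log(1/p)]$.

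I would analyse $\phi$ by elementary calculus. Since $g(0)=I_p(p)=0$ and $g'(0)=I_p'(p)=0$, we have $\phi(0)=0$ and, because $\phi'(x)=g'(x)-xg''(x)$, also $\phi'(0)=0$. Differentiating again gives $\phi''(x)=-xg'''(x)$, and from $g''(x)=\frac{1}{(p+x)(1-p-x)}$ a short computation yields
\[
\phi''(x)=\frac{x\,(1-2p-2x)}{\big[(p+x)(1-p-x)\big]^{2}} .
\]
Thus $\phi''>0$ on $(0,\tfrac12-p)$ and $\phi''<0$ on $(\tfrac12-p,\,1-p)$. Combined with $\phi'(0)=0$ and the fact that $\phi'(x)=g'(x)-\frac{x}{(p+x)(1-p-x)}\to-\infty$ as $x\to(1-p)^-$ (the rational term diverges like $1/(1-p-x)$ while $g'$ only diverges like $-\log(1-p-x)$), this forces the existence of a unique $x_*\in(\tfrac12-p,\,1-p)$ with $\phi'>0$ on $(0,x_*)$ and $\phi'<0$ on $(x_*,1-p)$. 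Therefore $\phi$ increases from $\phi(0)=0$ on $[0,x_*]$ and decreases on $[x_*,1-p)$, so $\min_{[0,b]}\phi=\phi(b)$ for every $b\in(0,1-p)$; by monotonicity of $\phi$ on $[x_*,1-p)$ it is enough to verify $\phi(x_0)\ge 0$ at the single point $x_0\coloneqq 1-p-1/\log(1/p)$ (for $b\le x_*$ the bound is automatic).

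The last step is the explicit estimate. Put $\ell\coloneqq\log(1/p)$, so $p+x_0=1-1/\ell$ and $1-p-x_0=1/\ell$. Using the identity $\phi(x)=(2p+x)\log\frac{p+x}{p}+(2-2p-x)\log\frac{1-p-x}{1-p}$ one obtains
\[
\phi(x_0)=\Big(1+p-\tfrac1\ell\Big)\Big(\ell+\log\big(1-\tfrac1\ell\big)\Big)+\Big(1-p+\tfrac1\ell\Big)\Big(-\log\ell-\log(1-p)\Big) .
\]
As $p\to0$ we have $\ell\to\infty$, $p\ell=p\log(1/p)\to 0$, $\log(1-1/\ell)\to0$ and $\log(1-p)\to0$, so the first product equals $\ell\,(1+o(1))$ and the second equals $-\log\ell\,(1+o(1))$; since $\ell\gg\log\ell$, there is $p_0>0$ with $\phi(x_0)>0$ for all $0<p\le p_0$, finishing the proof. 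I expect this final estimate to be the only delicate point: one must track the lower-order terms carefully to confirm that the positive contribution of order $\log(1/p)$ genuinely dominates the negative contribution of order $\log\log(1/p)$, and to pin down a concrete admissible $p_0$.
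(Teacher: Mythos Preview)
The paper does not supply its own proof of this lemma; it is stated with a citation to \cite{LubZha17} and used as a black box. Your argument is correct and is essentially the standard one: reduce to monotonicity of $x\mapsto I_p(p+x)/x^2$, which in turn reduces to showing $\phi(x)=2g(x)-xg'(x)\ge 0$, and then analyse the sign of $\phi$ via $\phi''(x)=x(1-2p-2x)/[(p+x)(1-p-x)]^2$, checking the endpoint $x_0=1-p-1/\log(1/p)$ directly. The asymptotic $\phi(x_0)=\log(1/p)\,(1+o(1))-\log\log(1/p)\,(1+o(1))$ is exactly right, so no further tracking of lower-order terms is needed beyond what you wrote; the only thing you might tighten is to state explicitly that the argument also yields the corollary $I_p(p+x)\ge (1+o(1))x^2 I_p(1)$ used immediately afterwards in the paper.
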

    
    \begin{cor}[\cite{LubZha17}]\label{cor_not_exact_estimations_Appenxid}
        There exists $p_0 > 0$ such that for all $0 < p \leq p_0$ and all $0 \leq  x \leq 1-p$ we have,
        \[
        I_p(p + x) \geq x^2 I_p(1-1/\log(1/p)) = (1 + o(1))x^2I_p(1)
        \]
        where the $o(1)$-term goes to zero as $p \to 0$.
    \end{cor}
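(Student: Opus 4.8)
The plan is to obtain Corollary~\ref{cor_not_exact_estimations_Appenxid} as a short consequence of the immediately preceding Lemma~\ref{lem_not_exact_estimations_Appenxid}, using only two elementary facts about relative entropy: that $q\mapsto I_p(q)$ is monotone increasing on $[p,1]$ (since $I_p'(q)=\log\frac{q(1-p)}{p(1-q)}\ge 0$ for $q\ge p$), and that $I_p(1)=\log(1/p)$ (the second summand in the definition of $I_p$ vanishes at $q=1$). First I would set $b=b(p)=1-p-1/\log(1/p)$, so that $p+b=1-1/\log(1/p)$ and $0<b<1$ for $p$ below a suitable $p_0$.

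The second step is a case split on the given $x\in[0,1-p]$. If $x\le b$, then the hypothesis of Lemma~\ref{lem_not_exact_estimations_Appenxid} is met and it yields
\[
  I_p(p+x)\ \ge\ \frac{x^2 I_p(p+b)}{b^2}\ \ge\ x^2 I_p(p+b)\ =\ x^2 I_p\!\left(1-\tfrac{1}{\log(1/p)}\right),
\]
where the middle inequality uses $b<1$. If instead $b<x\le 1-p$, then $p+x\in(p+b,1]$ and monotonicity of $I_p$ on $[p,1]$ gives $I_p(p+x)\ge I_p(p+b)$; combining this with $x^2<1$ (as $x\le 1-p<1$) yields $x^2 I_p(p+b)\le I_p(p+b)\le I_p(p+x)$. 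So in both cases $I_p(p+x)\ge x^2 I_p\big(1-\tfrac{1}{\log(1/p)}\big)$, which is the asserted inequality.

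The third and last step is to check the asymptotic identity $I_p\big(1-\tfrac1{\log(1/p)}\big)=(1+o(1))\log(1/p)=(1+o(1))I_p(1)$ as $p\to 0$. Writing $L=\log(1/p)$ and expanding the definition of $I_p$ at $q=1-1/L$ (using $\log p=-L$), one gets
\[
  I_p\!\left(1-\tfrac1L\right)=L-1+\Big(1-\tfrac1L\Big)\log\!\Big(1-\tfrac1L\Big)-\frac{\log L}{L}-\frac{\log(1-p)}{L},
\]
and every term on the right except $L$ is $o(L)$ as $L\to\infty$ (a bounded constant $-1$ and three terms tending to $0$), so the right-hand side is $L(1+o(1))$. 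Since $I_p(1)=\log(1/p)=L$, this finishes the argument.

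I do not anticipate any real obstacle: this is a corollary-level statement and the work is done by Lemma~\ref{lem_not_exact_estimations_Appenxid}. The only points demanding (minor) care are the sub-range $b<x\le 1-p$, which lies outside the hypothesis of that lemma and must be dispatched by bare monotonicity of $I_p$ together with $x<1$, and the elementary bookkeeping in the expansion of $I_p(1-1/\log(1/p))$ to confirm that the $L$ term dominates.
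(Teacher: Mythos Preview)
Your proposal is correct and is exactly the natural derivation the paper implicitly intends by labeling this statement a corollary of Lemma~\ref{lem_not_exact_estimations_Appenxid} (the paper gives no explicit proof, citing \cite{LubZha17}). Your case split for $x>b$ via monotonicity and the asymptotic computation of $I_p(1-1/\log(1/p))$ are both fine.
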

    
    Another useful tool we use is the generalized H\"{o}lder inequality:
    
    \begin{lemma}[Generalized H\"{o}lder inequality \cite{BhaGanLubZha17}]
        Let $\mu_1,\mu_2,\ldots ,\mu_n$ be probability measures on $\Omega_1,\Omega_2,\ldots,\Omega_n$ respectively, and let $\mu=\prod_{i=1}^n \mu_i$. Let $A_1,A_2,\ldots,A_m$ be non-empty subsets of $[n]$ and for $A\subseteq [n]$ put $\mu_A =\prod_{j\in A}\mu_j$ and $\Omega_A=\prod_{j\in A}\Omega_j$. Let $f_i\in L^{p_i}(\Omega_{A_i},\mu_{A_i})$ for each $i\in[m]$, and suppose that $\sum_{i:A_{i}\ni j}\frac{1}{p_i}\leq 1$ for all $j\in [n]$. Then,
        \[
            \int \prod_{i=1}^m|f_i|d\mu \leq \prod_{i=1}^m \left(\int|f_i|^{p_i}d\mu_{A_i}\right)^{1/p_i}.
        \]
    \end{lemma}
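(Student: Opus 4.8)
The plan is to prove this generalized H\"older (Finner-type) inequality by induction on the number $n$ of factor spaces, integrating out one coordinate at a time and reducing everything to the classical two-function H\"older inequality. Throughout one works with the nonnegative functions $|f_i|$ and allows the value $+\infty$, so that all integrals below make sense and all inequalities remain valid without any a priori integrability assumption.

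For the base case $n=1$ every $A_i$ equals $\{1\}$ and the hypothesis reduces to $\sum_{i=1}^m 1/p_i\le 1$. The asserted bound $\int\prod_i|f_i|\,d\mu_1\le\prod_i\big(\int|f_i|^{p_i}\,d\mu_1\big)^{1/p_i}$ is then the ordinary generalized H\"older inequality on a single probability space: one proves it by induction on $m$ using the two-function H\"older inequality, and if $\sum_i 1/p_i$ is strictly less than $1$ one first appends a dummy factor $f_{m+1}\equiv 1$ with an exponent $q$ chosen so that $\sum_{i=1}^m 1/p_i+1/q=1$ — this is the only place it matters that $\mu_1$ is a \emph{probability} measure, since it makes the extra term $\big(\int 1^q\,d\mu_1\big)^{1/q}$ equal to $1$ and hence discardable.

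For the inductive step I would write $\mu=\mu'\times\mu_n$ with $\mu'=\prod_{i<n}\mu_i$ and set $S=\{i:n\in A_i\}$. The exponent condition at $j=n$ reads $\sum_{i\in S}1/p_i\le 1$, so applying the one-variable case in the coordinate $x_n\in\Omega_n$, with the remaining coordinates frozen, gives $\int_{\Omega_n}\prod_{i\in S}|f_i|\,d\mu_n\le\prod_{i\in S}g_i$, where $g_i$ is the function $\big(\int_{\Omega_n}|f_i|^{p_i}\,d\mu_n\big)^{1/p_i}$ of the coordinates in $A_i\setminus\{n\}$ (a constant if $A_i=\{n\}$; measurability of $g_i$ is Tonelli's theorem). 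Multiplying by $\prod_{i\notin S}|f_i|$ and integrating over $\Omega'$ reduces the problem to bounding $\int_{\Omega'}\big(\prod_{i\notin S}|f_i|\big)\big(\prod_{i\in S}g_i\big)\,d\mu'$. The functions $\{f_i:i\notin S\}\cup\{g_i:i\in S\}$ are nonnegative functions on subsets of $[n-1]$ carrying the same exponents $p_i$, and the hypothesis $\sum_{i:j\in A_i}1/p_i\le 1$ for $j\in[n-1]$ passes through unchanged, because for $i\in S$ the set $A_i\setminus\{n\}$ contains exactly the same elements of $[n-1]$ as $A_i$. Hence the induction hypothesis bounds the integral by $\prod_{i\notin S}\big(\int|f_i|^{p_i}\,d\mu_{A_i}\big)^{1/p_i}\prod_{i\in S}\big(\int|g_i|^{p_i}\,d\mu_{A_i\setminus\{n\}}\big)^{1/p_i}$, and Tonelli gives $\int|g_i|^{p_i}\,d\mu_{A_i\setminus\{n\}}=\int|f_i|^{p_i}\,d\mu_{A_i}$ for $i\in S$, which is exactly the claimed inequality.

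Since the statement is classical, there is no real obstacle; the ``hard part'' is purely bookkeeping — applying the one-variable H\"older step with the exponents in the correct slots, checking that the exponent condition restricts cleanly from $[n]$ to $[n-1]$, and making sure every iterated integral is handled by Tonelli, which is legitimate precisely because all functions in sight are nonnegative. One could instead induct on $m$ (peeling off one function at a time), but inducting on $n$ keeps the exponents fixed and makes the transfer of the hypothesis the most transparent.
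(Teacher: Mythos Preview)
Your proof is correct and follows the standard Finner-type induction on the number of coordinate spaces. However, the paper does not actually prove this lemma at all: it is stated with a citation to \cite{BhaGanLubZha17} and used as a black box, so there is no ``paper's own proof'' to compare against.
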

    
    In particular, if every element of $[n]$ is contained in at most two sets $A_j$, then we can take all $p_i=2$ and obtain:
    \[
        \int \prod_{i=1}^m f_i d\mu \leq \prod_{i=1}^m \left(\int |f_i|^2d\mu_{A_i} \right)^{1/2}.
    \]
    
    Next, we recreate a proof of Lubetzky--Zhao \cite{LubZha17} and Bhattacharya, Ganguly, Lubetzky and Zhao \cite{BhaGanLubZha17} of an extension of Lemma \ref{lem_estimation of B}. Then, we will derive the required bounds.

    \begin{lemma}\label{lem_estimation of B_Appendix}
        Suppose $\varepsilon,\delta,C$ are positive reals and $\sqrt{\log(n)}/n\ll p\ll n^{-1/2}$. Suppose also that $\bar{u}\in [0,1-p]^{\binom{n}{2}}$ such that $\sum_{i\in \binom{[n]}{2}}I_{p}(p+u_i)\leq Cn^2p^2\log(1/p)$.
        Let $b=b(n)$ be such that $\max\{np^2,\sqrt{p\log(1/p)}\}\ll b\leq 1-\varepsilon$. Then, provided $n$ is large enough there is a constant $D>0$ such that the following holds:
        \begin{enumerate}[label=(\roman*)]
            \item \label{eq_estimation_2_Appendix} $\sum_{x\in [n]}\left(\sum_{y\neq x}u_{xy}\right)^2\leq Dn^{3}p^{2}b,$
            \item \label{eq_estimation_3_Appendix} $\sum_{i\in \binom{[n]}{2}}u_i\leq Dn^2 p^{3/2}\sqrt{\log(1/p)}$, and
            \item \label{eq_estimation_4_Appendix} $\sum_{i\in \binom{[n]}{2}}u_i^2 \leq Dn^2 p^{2}.$
        \end{enumerate}
    \end{lemma}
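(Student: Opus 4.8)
The plan is to establish the three bounds in the order (iii), (ii), (i), each time combining the pointwise lower bounds on $I_{p}(p+x)$ recorded in Lemma~\ref{lem_exact_estimations}, Lemma~\ref{lem_not_exact_estimations_Appenxid} and Corollary~\ref{cor_not_exact_estimations_Appenxid} with the Cauchy--Schwarz inequality; this is the graph-language version of the arguments of Lubetzky--Zhao~\cite{LubZha17} and Bhattacharya--Ganguly--Lubetzky--Zhao~\cite{BhaGanLubZha17}. Write $u_{i}=u_{xy}$ for $i=\{x,y\}$, put $E:=\sum_{i}I_{p}(p+u_{i})\le Cn^{2}p^{2}\log(1/p)$, and note that $I_{p}(1)=\log(1/p)$, that $I_{p}(\cdot)$ is increasing on $[p,1]$ with $I_{p}(2p)=\Theta(p)$, and that since $p=o(1)$ we have $1-\varepsilon\le 1-p-1/\log(1/p)$ for large $n$, so Lemma~\ref{lem_not_exact_estimations_Appenxid} is applicable at any level up to $1-\varepsilon$.

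Part (iii) is immediate from Corollary~\ref{cor_not_exact_estimations_Appenxid}: $I_{p}(p+u_{i})\ge(1+o(1))u_{i}^{2}I_{p}(1)=(1+o(1))u_{i}^{2}\log(1/p)$, hence $\sum_{i}u_{i}^{2}\le(1+o(1))E/\log(1/p)=O(n^{2}p^{2})$. For part (ii) I would split the edges by the size of $u_{i}$. When $u_{i}\le p$, Lemma~\ref{lem_not_exact_estimations_Appenxid} at level $p$ gives $I_{p}(p+u_{i})\ge u_{i}^{2}I_{p}(2p)/p^{2}=\Omega(u_{i}^{2}/p)$, so $\sum_{u_{i}\le p}u_{i}^{2}=O(pE)=O(n^{2}p^{3}\log(1/p))$, and Cauchy--Schwarz over the at most $\binom{n}{2}$ edges gives $\sum_{u_{i}\le p}u_{i}=O(n\cdot np^{3/2}\sqrt{\log(1/p)})=O(n^{2}p^{3/2}\sqrt{\log(1/p)})$, which is the target. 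The heavier edges contribute less: there are at most $E/I_{p}(2p)=O(n^{2}p\log(1/p))$ edges with $u_{i}>p$, so applying Lemma~\ref{lem_not_exact_estimations_Appenxid} at level $\sqrt{p}$ to the range $p<u_{i}\le\sqrt{p}$ (where $I_{p}(p+\sqrt{p})=\Theta(\sqrt{p}\log(1/p))$) yields $\sum_{p<u_{i}\le\sqrt{p}}u_{i}^{2}=O(n^{2}p^{5/2})$ and then, by Cauchy--Schwarz, $\sum_{p<u_{i}\le\sqrt{p}}u_{i}=O(n^{2}p^{7/4}\sqrt{\log(1/p)})=o(n^{2}p^{3/2}\sqrt{\log(1/p)})$; while for $u_{i}>\sqrt{p}$ one has $\log(u_{i}/p)\ge\tfrac12\log(1/p)$, so Lemma~\ref{lem_exact_estimations} gives $u_{i}\le O(I_{p}(p+u_{i})/\log(1/p))$ and $\sum_{u_{i}>\sqrt{p}}u_{i}=O(E/\log(1/p))=O(n^{2}p^{2})=o(n^{2}p^{3/2}\sqrt{\log(1/p)})$. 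Summing the three ranges proves (ii).

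Part (i) is the crux. Fixing the admissible level $b$, write $d_{x}:=\sum_{y}u_{xy}=d_{x}^{\le b}+d_{x}^{>b}$, splitting according to whether $u_{xy}\le b$ or $u_{xy}>b$; since $(a+c)^{2}\le 2a^{2}+2c^{2}$ it suffices to bound $\sum_{x}(d_{x}^{\le b})^{2}$ and $\sum_{x}(d_{x}^{>b})^{2}$ separately. For the light part, Lemma~\ref{lem_not_exact_estimations_Appenxid} at level $b$ together with $I_{p}(p+b)=\Theta(b\log(b/p))=\Theta(b\log(1/p))$ — here I use $b\gg\sqrt{p\log(1/p)}>\sqrt{p}$, whence $\log(b/p)\ge\tfrac12\log(1/p)$, and $b\le 1-\varepsilon$ — gives $\sum_{u_{i}\le b}u_{i}^{2}\le b^{2}E/I_{p}(p+b)=O(n^{2}p^{2}b)$, and then the Cauchy--Schwarz bound $(d_{x}^{\le b})^{2}\le n\sum_{y:\,0<u_{xy}\le b}u_{xy}^{2}$, summed over $x$, yields $\sum_{x}(d_{x}^{\le b})^{2}\le 2n\sum_{u_{i}\le b}u_{i}^{2}=O(n^{3}p^{2}b)$. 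The difficult part is the heavy contribution: the obvious Cauchy--Schwarz bound only gives $\sum_{x}(d_{x}^{>b})^{2}\le 2n\sum_{u_{i}>b}u_{i}^{2}=O(n^{3}p^{2})$, which is too weak once $b=o(1)$. To gain the factor $b$ I would instead use that every heavy edge is \emph{expensive per unit weight}: since $u_{xy}>b>\sqrt{p}$, Lemma~\ref{lem_exact_estimations} gives $I_{p}(p+u_{xy})\ge(1+o(1))u_{xy}\log(u_{xy}/p)\ge(1+o(1))\tfrac12 u_{xy}\log(1/p)$, so $u_{xy}\le O(I_{p}(p+u_{xy})/\log(1/p))$. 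Summing over the heavy neighbours of a fixed $x$ gives $d_{x}^{>b}\le O(E/\log(1/p))=O(n^{2}p^{2})$ for \emph{every} $x$, and summing over all heavy edges gives $\sum_{x}d_{x}^{>b}=2\sum_{u_{i}>b}u_{i}=O(E/\log(1/p))=O(n^{2}p^{2})$; hence $\sum_{x}(d_{x}^{>b})^{2}\le(\max_{x}d_{x}^{>b})\sum_{x}d_{x}^{>b}=O(n^{4}p^{4})=o(n^{3}p^{2}b)$, the last step being exactly where the hypothesis $b\gg np^{2}$ is used. Adding the light and heavy contributions gives (i). The main obstacle is this last estimate: one must realise that the degree restricted to heavy edges is small simultaneously in $\ell^{\infty}$ and in $\ell^{1}$ — both of order $n^{2}p^{2}$, forced by the global entropy budget — since an $\ell^{2}$-bound derived only from $\sum_{i}u_{i}^{2}$ does not see the improvement by the factor $b$.
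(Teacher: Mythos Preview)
Your argument is correct and complete. However, your route differs from the paper's in parts~(i) and~(ii): where you split by the size of $u_{i}$ (or of $u_{xy}$ relative to $b$) and then apply Cauchy--Schwarz together with an $\ell^{\infty}\!\times\!\ell^{1}$ trick for the heavy contribution, the paper exploits the \emph{convexity} of $x\mapsto I_{p}(p+x)$ directly. For (ii) it applies Jensen once over all $\binom{n}{2}$ edges, obtaining $\binom{n}{2}\,I_{p}\!\bigl(p+\tfrac{1}{\binom{n}{2}}\sum_{i}u_{i}\bigr)\le E$, and then compares with the explicit value $I_{p}(p+\sqrt{12Cp^{3}\log(1/p)})$ via Lemma~\ref{lem_exact_estimations}. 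For (i) it first applies Jensen over $y$ for each fixed $x$ to get $\sum_{y}I_{p}(p+u_{xy})\ge (n-1)\,I_{p}\!\bigl(p+\tfrac{d_{x}}{n-1}\bigr)$, then shows, using $b\gg np^{2}$ and $b\gg\sqrt{p\log(1/p)}$, that the set $B=\{x:d_{x}\ge bn\}$ is empty; once every averaged argument $d_{x}/(n-1)$ lies below $b$, Lemma~\ref{lem_not_exact_estimations_Appenxid} at level $b$ gives $I_{p}(p+d_{x}/(n-1))\ge I_{p}(p+b)\,d_{x}^{2}/(b^{2}(n-1)^{2})$ and summing over $x$ yields~(i) in one stroke. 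Part~(iii) is identical in both proofs. Your approach is more hands-on and perhaps more robust to settings where convexity is unavailable, but the paper's Jensen-based argument is shorter and avoids the case analysis and the $\ell^{\infty}$ degree bound altogether.
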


\begin{proof}
    We first show that the set $B=\{x\in [n]:\sum_{y\neq x }u_{xy}\geq bn\}$ is empty, provided $n$ is large enough. If this were not true, then, as $I_{p}(p+x)$ is a convex function of $x$, we have the following by Jensen's inequality
    \begin{align*}
        \sum_{x,y}I_{p}(p+u_{xy})&\geq (n-1)\sum_{x}I_{p}\left(p+\frac{\sum_{y\neq x }u_{xy}}{n-1}\right)\geq (n-1)\sum_{x\in B} I_{p}\left(p+\frac{\sum_{y\neq x }u_{xy}}{n-1}\right)\\
        &\geq (n-1)\sum_{x\in B}I_{p}(p+b)\geq {(1+o(1))(n-1)|B|b\log(b/p)}
    \end{align*}
    where the last inequality follows from Lemma \ref{lem_exact_estimations}. Since $b\gg \sqrt{p\log(1/p)}$ and $\sum_{i\in \binom{[n]}{2}}I_{p}(p+u_i)\leq Cp^2n^2\log(1/p)$ we obtain the following:
    \[
        |B|\leq (1+o(1))\frac{Cp^2n^2\log(1/p)}{(n-1)b\log(\sqrt{\log(1/p)/p})} \ll 1,
    \]
    and therefore, $B$ is empty.
    To prove \ref{eq_estimation_2_Appendix}, we use the convexity of $I_{p}(p+x)$ and Lemma \ref{lem_not_exact_estimations_Appenxid},
    \begin{align*}
        \sum_{x,y}I_p(p+u_{xy})&\geq (n-1)\sum_{x}I_{p}\left(p+\frac{\sum_{y\neq x }u_{xy}}{n-1}\right)= (n-1)\sum_{x\not \in B}I_{p}\left(p+\frac{\sum_{y\neq x }u_{xy}}{n-1}\right)\\
        &\geq (n-1)I_{p}(p+b)\sum_{x\not \in B}\left(\frac{\left(\sum_{y\neq x }u_{xy}\right)^2}{b^2n^2}\right).
    \end{align*}
    
    Since, $\sum_{i\in \binom{[n]}{2}}I_{p}(p+u_i)\leq Cp^2n^2\log(1/p)$ we obtain:
    
    \[
        \sum_{x}\left(\sum_{y\neq x }u_{xy}\right)^2\leq \frac{Cp^2n^4b^2\log(1/p)}{(n-1)I_{p}(p+b)}.
    \]
    As $b\gg \sqrt{p\log(1/p)}\gg p$ we may use Lemma \ref{lem_exact_estimations} and we obtain:
    \[
         \sum_{x\in [n]}\left(\sum_{y\neq x}u_{xy}\right)^2 \leq \frac{Cn^4p^{2}b^2\log(1/p)}{(1+o(1))(n-1)b\log(b/p)} \leq  2Cn^{3}p^{2}b.
    \]
    
    Now we prove \ref{eq_estimation_3_Appendix}. By convexity we have,
    \[
        \binom{n}{2}I_{p}\left(p+\frac{\sum_{i\in \binom{[n]}{2}}u_i}{\binom{n}{2}}\right)\leq \sum_{x,y}I_{p}(p+u_{xy})\leq Cn^2p^2\log(1/p).
    \]
    Since $p\gg \sqrt{p^3\log(1/p)}$ we may use Lemma \ref{lem_exact_estimations}, and obtain the following for large enough $n$: 
    \[
        I_{p}\left(p+\sqrt{12Cp^{3}\log(1/p)}\right)\geq \frac{12Cp^3\log(1/p)}{3p}=4Cp^2\log(1/p).
    \]
    This implies the following provided $n$ is large enough,
    \[
        I_p\left(p+\frac{\sum_{i\in \binom{[n]}{2}}u_i}{\binom{n}{2}}\right)\leq 4Cp^2\log(1/p)\leq I_{p}\left(p+\sqrt{12Cp^{3}\log(1/p)}\right).
    \]
    By the monotonicity of $I_{p}(p+x)$ for $x>0$ we obtain \ref{eq_estimation_3_Appendix}. That is 
    \[
        \sum_{i\in\binom{[n]}{2}}u_i\leq \sqrt{3Cn^4p^{3}\log(1/p)}.
    \]
    Lastly, we prove \ref{eq_estimation_4_Appendix}. By Corollary \ref{cor_not_exact_estimations_Appenxid},
    \[
      \sum_{i\in \binom{[n]}{2}}I_{p}(p+u_i)\geq (1+o(1))\sum_{i\in \binom{[n]}{2}}u_i^2\log(1/p).
    \]
    This and the assumption of the lemma implies \ref{eq_estimation_4_Appendix}.
    \end{proof}

    Now we can finish the proof of Lemma \ref{lem_only_C_4}.

    \begin{lemma}\label{lem_P_4_and_K_1,2_are_small}
        Suppose $\delta,C$ are positive reals and $\sqrt{\log(n)}/n\ll p\ll n^{-1/2}$. Suppose also that $\bar{u}$ is a sequence of $\binom{n}{2}$ reals between $0$ and $1$ such that $\sum_{i\in \binom{[n]}{2}}I_{p}(p+u_i)\leq Cp^2n^2\log(1/p)$. Then,
        \[
            \E[N(M_2,G_{n,\bar{u}})]p^2=o(n^4p^4),
        \]
        \[
            \E[N(K_2,G_{n,\bar{u}})]n^2p^3=o(n^4p^4),
        \]
        \[
            \E[N(P_4,G_{n,\bar{u}})]p=o(n^4p^4),
        \]
        \[
            \E[N(K_{1,2},G_{n,\bar{u}})]np^2=o(n^4p^4).
        \]
        
    \end{lemma}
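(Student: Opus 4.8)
The plan is to bound each of the four expectations by a product of graph-parameter quantities that are controlled by Lemma~\ref{lem_estimation of B_Appendix}, and then verify that in the regime $\sqrt{\log(n)}/n\ll p\ll n^{-1/2}$ the resulting bounds are all $o(n^4p^4)$. Throughout, write $u_{xy}$ for the coordinates of $\bar u$, set $S_1=\sum_i u_i$, $S_2=\sum_i u_i^2$, and $W=\sum_{x}\bigl(\sum_{y\neq x}u_{xy}\bigr)^2$; Lemma~\ref{lem_estimation of B_Appendix} gives $S_1\le Dn^2p^{3/2}\sqrt{\log(1/p)}$, $S_2\le Dn^2p^2$, and (taking $b$ to be, say, $\sqrt{p\log(1/p)}\cdot\log\log n$ or any admissible choice just above $\max\{np^2,\sqrt{p\log(1/p)}\}$) $W\le Dn^3p^2 b = n^3p^2\cdot o(1)$ once we pick $b=o(1)$, which is possible because $np^2=o(1)$ and $\sqrt{p\log(1/p)}=o(1)$ here.

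First I would handle the two easy terms. We have $\E[N(K_2,G_{n,\bar u})]=\sum_{\{x,y\}}u_{xy}=S_1$, so $\E[N(K_2,G_{n,\bar u})]n^2p^3\le Dn^4p^{3/2}\sqrt{\log(1/p)}\,p^3 = Dn^4p^{9/2}\sqrt{\log(1/p)}$, which is $o(n^4p^4)$ since $p^{1/2}\sqrt{\log(1/p)}=o(1)$. For the matching $M_2$, $\E[N(M_2,G_{n,\bar u})]=\tfrac12\sum_{\{x,y\},\{z,w\} \text{ disjoint}}u_{xy}u_{zw}\le \tfrac12 S_1^2\le \tfrac12 D^2 n^4p^3\log(1/p)$, hence $\E[N(M_2,G_{n,\bar u})]p^2\le \tfrac12 D^2 n^4p^5\log(1/p)=o(n^4p^4)$ since $p\log(1/p)=o(1)$.

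Next the two paths-of-length-two terms. For $K_{1,2}$, note $\E[N(K_{1,2},G_{n,\bar u})]=\sum_x\sum_{\{y,z\}\subseteq N(x)}u_{xy}u_{xz}\le \tfrac12\sum_x\bigl(\sum_{y\neq x}u_{xy}\bigr)^2=\tfrac12 W$, so $\E[N(K_{1,2},G_{n,\bar u})]np^2\le \tfrac12 Wnp^2\le \tfrac12 Dn^4p^4 b = o(n^4p^4)$ by the choice $b=o(1)$. Finally $P_4$ (path with three edges): a $P_4$ is obtained by choosing a central edge $yz$ and then a neighbour of $y$ and a neighbour of $z$, so $\E[N(P_4,G_{n,\bar u})]\le \sum_{\{y,z\}}u_{yz}\bigl(\sum_{x}u_{yx}\bigr)\bigl(\sum_{w}u_{zw}\bigr)$. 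By Cauchy--Schwarz over the edges $\{y,z\}$ and then by $W$, this is at most $\bigl(\sum_{\{y,z\}}u_{yz}^2\bigr)^{1/2}\bigl(\sum_{\{y,z\}}(\sum_x u_{yx})^2(\sum_w u_{zw})^2\bigr)^{1/2}$; crudely bounding $(\sum_x u_{yx})^2\le$ (the relevant term of $W$) and pulling out $\max_y\sum_x u_{yx}\le bn$ (which holds since the set $B$ in the proof of Lemma~\ref{lem_estimation of B_Appendix} is empty) gives $\E[N(P_4,G_{n,\bar u})]\le bn\cdot W^{1/2}\cdot S_2^{1/2}$ or, even more simply, $\E[N(P_4,G_{n,\bar u})]\le (\max_y\sum_x u_{yx})\cdot W\le bn\cdot Dn^3p^2 b$, and hence $\E[N(P_4,G_{n,\bar u})]p\le Dn^4p^3 b^2 = o(n^4p^4)$ since $p b^2=o(p)=o(1)\cdot p$ and in fact $pb^2/p^4\to 0$. (Whichever bookkeeping is cleanest; the margins in all four estimates are comfortable.)

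The one point that needs a little care — and is the only real obstacle — is the choice of the auxiliary scale $b$: Lemma~\ref{lem_estimation of B_Appendix}\ref{eq_estimation_2_Appendix} requires $\max\{np^2,\sqrt{p\log(1/p)}\}\ll b\le 1-\varepsilon$, and we simultaneously want $b=o(1)$ so that $Wnp^2=O(n^4p^4 b)$ and $n^4p^3b^2$ are genuinely $o(n^4p^4)$. Since $p\ll n^{-1/2}$ forces $np^2\ll 1$, and $p\ll 1$ forces $\sqrt{p\log(1/p)}\ll1$, such a $b$ exists (e.g.\ $b=\bigl(\max\{np^2,\sqrt{p\log(1/p)}\}\bigr)^{1/2}$, which $\to0$ and dominates the required lower threshold). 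With $b$ fixed this way, all four displayed estimates follow from the three bounds of Lemma~\ref{lem_estimation of B_Appendix} and elementary inequalities, completing the proof of Lemma~\ref{lem_P_4_and_K_1,2_are_small}, and thereby, as explained in the paragraph preceding Lemma~\ref{lem_P_4_and_K_1,2_are_small}, the proof of Lemma~\ref{lem_only_C_4}.
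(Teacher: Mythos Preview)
Your treatment of the $K_2$, $M_2$, and $K_{1,2}$ terms is correct and matches the paper's argument. The gap is in the $P_4$ estimate.

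Your ``even more simply'' bound gives $\E[N(P_4,G_{n,\bar u})]p\le Dn^4p^3b^2$, and you then assert this is $o(n^4p^4)$. That would require $b^2=o(p)$. But Lemma~\ref{lem_estimation of B_Appendix} forces $b\gg\sqrt{p\log(1/p)}$, hence $b^2\gg p\log(1/p)\gg p$, so $b^2/p\to\infty$ and the bound fails. (Your justification ``$pb^2=o(p)$'' only says $b^2=o(1)$, which is not enough; and ``$pb^2/p^4\to0$'' is not the relevant ratio.) The first bound you sketch, $bn\cdot W^{1/2}\cdot S_2^{1/2}$, has the same problem: it yields $\E[N(P_4)]p=O(n^{7/2}p^3b^{3/2})$ (or $O(n^4p^3b^{3/2})$ if you track the sum over the extra vertex), and $b^{3/2}=o(p)$ again contradicts $b\gg\sqrt{p\log(1/p)}$.

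The issue is that pulling out $\max_y\sum_x u_{yx}\le bn$ is too lossy. The fix is already implicit in your Cauchy--Schwarz line: after writing
\[
\E[N(P_4,G_{n,\bar u})]\le S_2^{1/2}\Bigl(\sum_{\{y,z\}}d(y)^2d(z)^2\Bigr)^{1/2},
\]
with $d(v)=\sum_{x\ne v}u_{vx}$, simply bound the second factor by $\bigl(\sum_y d(y)^2\bigr)\bigl(\sum_z d(z)^2\bigr)^{1/2}=W$ (since $\sum_{y\ne z}d(y)^2d(z)^2\le W^2$). This gives $\E[N(P_4)]\le S_2^{1/2}W\le D^{3/2}n^4p^3b$, hence $\E[N(P_4)]p=O(n^4p^4b)=o(n^4p^4)$ with any admissible $b=o(1)$. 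The paper obtains essentially the same estimate via the generalized H\"older inequality applied to $f_1(y)=d(y)$, $f_2(y,z)=u_{yz}$, $f_3(z,w)=u_{zw}$ with exponents $2$, yielding $\E[N(P_4)]=O(n^{1/2}W^{1/2}S_2)$ and the same conclusion.
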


    \begin{proof}
    By item \ref{eq_estimation_3_Appendix} in Lemma \ref{lem_estimation of B_Appendix} we have $\E[e(G_{n,\bar{u}})]=o(n^2p^2)$, and therefore,
    \[
        \E[N_{ind}(M_{2},G_{n,\bar{u}})]p^2\leq \E[N(M_2,G_{n,\bar{u}})]p^2\leq \E[e(G_{n,\bar{u}})]^2p^2=o(n^4p^4),
    \]
    \[
        \E[N_{ind}(K_2,G_{n,\bar{u}})]n^2p^3=\E[e(G_{n,\bar{u}})]n^2p^3=o(n^4p^4),
    \]
    where $N(M_2,G_{n,\bar{u}})$ is the number of $M_2$ in $G_{n,\bar{u}}$. 
    Let $\max\{np^2,\sqrt{p\log(1/p)}\}\ll b= b(n)\ll 1$. Note that each $P_4\subseteq K_{n}$ with vertices $a_0,a_1,a_2,a_3$ can be represented in exactly two ways as a tuple $(a_0,a_1,a_2,a_3)$ where edges are consecutive vertices in this tuple. Let $\mathcal{P}_4$ be a collection of exactly one such representative for each $P_4$ in $K_n$. Observe the following:
     \begin{align*}
         \E[N(P_4,G_{n,\bar{u}})]&= \sum_{(x,y,z,w)\in \mathcal{P}_4 }u_{xy}u_{yz}u_{zw}\leq \sum_{y,z,w\in[n]} \left(\sum_{x\neq y}u_{xy}\right) u_{yz}u_{zw},
     \end{align*}
    where $N(P_4,G_{n,\bar{u}})$ is the number of $P_4$ in $G_{n,\bar{u}}$. 
    By the generalized H\"{o}lder inequality we have:
    \[
        \sum_{y,z,w\in[n]} \left(\sum_{x\neq y}u_{xy}\right) u_{yz}u_{zw}\leq \left(\sum_{y}\E[\deg(y)]^2\right)^{1/2}\left(\sum_{x,y}u_{xy}^2\right).
    \]
    Applying items \ref{eq_estimation_2_Appendix} and \ref{eq_estimation_4_Appendix} in Lemma \ref{lem_estimation of B_Appendix} we obtain
    \[
        \E[N(P_4,G_{n,\bar{u}})]p\leq D^{3/2}n^{7/2}p^{4}\sqrt{b}=o(n^4p^4).
    \]
    This establishes the third assertion of the lemma.
    The fourth assertion of the lemma follows immediately from Lemma \ref{lem_estimation of B_Appendix} item \ref{eq_estimation_2_Appendix} and the definition of $b$:
    \[
        \E[N(K_{1,2},G_{n,\bar{u}})] \leq \sum_{x,y,z\in [n]} u_{xy}u_{yz}\leq  \sum_{y\in [n]}\left(\sum_{x\in [n]}u_{xy}\right)^2\leq Dn^2p^{3/2}b=o(n^3p^2).\qedhere
    \]
    \end{proof}

\end{document}